%% file: Markov_G_fan.tex
\documentclass[11pt,a4paper,reqno]{amsart}
\fontsize{6.5pt}{6.5pt}\selectfont 
\makeatletter
\def\@evenhead{{\fontsize{6.5pt}{6.5pt}\selectfont \hfil \leftmark\hfil\thepage}}
\def\@oddhead{{\fontsize{6.5pt}{6.5pt}\selectfont\hfil\rightmark \hfil\thepage}}
\makeatother
\usepackage{amsfonts}
\usepackage{booktabs}
\usepackage{amsmath,amssymb,amsthm,amsxtra}
\usepackage{float}
\usepackage{amssymb}
\usepackage{mathabx}
\usepackage{bbm}
\usepackage[colorlinks, linkcolor=blue,anchorcolor=Periwinkle,
citecolor=Red,urlcolor=Emrald]{hyperref}
\usepackage[usenames,dvipsnames]{xcolor}
\usepackage{enumitem}
\usepackage{geometry,array} 
\usepackage{graphicx}
\usepackage{subfigure}
\usepackage{bookmark}
\usepackage{tikz}\usetikzlibrary{matrix,intersections, calc, arrows.meta}
\usepackage{url}
\usepackage{dsfont}
\usepackage[colorinlistoftodos]{todonotes}
\usepackage{tablists} \restorelistitem
\usepackage{bm}

\usetikzlibrary{decorations.markings}
\tikzset{->-/.style={decoration={  markings,  mark=at position #1 with
    {\arrow{>}}},postaction={decorate}}}
\tikzset{-<-/.style={decoration={  markings,  mark=at position #1 with
    {\arrow{<}}},postaction={decorate}}}
\usepackage{extarrows}
\usepackage[all]{xy}
\usepackage{setspace}
\usepackage{thmtools}
\usepackage{thm-restate}
\usepackage{hyperref}
\usepackage{cleveref}
\usepackage{multirow}
\usepackage{ifthen}
\usepackage{tikz-3dplot}
\usepackage{comment}
\usepackage{mathrsfs}
\usepackage{subfiles}
\usepackage{aligned-overset}

\usepackage{pgfplots}
\pgfplotsset{compat=1.18}

\usepackage{caption}

\newcommand{\mbR}{\mathbb{R}}

\theoremstyle{plain}
\newtheorem{theorem}{Theorem}[section]

\newtheorem{lemma}[theorem]{Lemma}
\newtheorem{corollary}[theorem]{Corollary}
\newtheorem{proposition}[theorem]{Proposition}

\theoremstyle{definition}
\newtheorem{definition}[theorem]{Definition}

\newtheorem{example}[theorem]{Example}

\newtheorem{remark}[theorem]{Remark}

\newtheorem{question}[theorem]{Question}

\numberwithin{equation}{section}
\newtheorem{definition-proposition}[theorem]{Definition-Proposition}

\newcommand{\Clinearmap}{\Phi}
\newcommand{\Glinearmap}{\Psi}

\begin{document}
\title{Fractal phenomenon in $c$- and $g$-vectors of the Markov quiver}

\author{Ryota Akagi}
\address{Graduate School of Mathematics\\ nagoya University\\Chikusa-ku\\ Nagoya\\464-8602\\ Japan.}
\email{ryota.akagi.e6@math.nagoya-u.ac.jp}

\author{Zhichao Chen}
\address{School of Mathematical Sciences\\ University of Science and Technology of China \\ Hefei, Anhui 230026, P. R. China.}
\email{czc98@mail.ustc.edu.cn}
\maketitle
\begin{abstract}
 We study the $C$- and $G$-patterns associated with rank $3$ skew-symmetrizable matrices of $B$-invariant type, including the Markov quiver. Motivated by the self-contained simple mutations in Markov-type cluster algebras, we prove that large classes of subpatterns of modified $c$- and $g$-vectors are linearly isomorphic, yielding a fractal structure of the corresponding $G$-fan. We further derive explicit recursive formulas for all modified $c$- and $g$-vectors in terms of integer pairs satisfying a recursion analogous to the Calkin-Wilf tree, which leads to a parameterization by coprime integers. As an application, we describe all connected components of the complement of the support of the $G$-fan, and show that they are generated recursively by three kinds of linear maps.\\\\
Keywords: $B$-invariant type, fractal structure, $G$-fan, Calkin-Wilf tree, complements. \\
2020 Mathematics Subject Classification: 13F60, 05E10, 11D09. 
\end{abstract}

\tableofcontents
\section{Introduction}

\subsection{Background}
The $c$- and $g$-vectors are interesting combinatorial objects which arise from the cluster algebra theory in \cite{FZ02}. They were originally introduced by \cite{FZ07} as degree vectors of cluster variables and their coefficients. After that, many interesting phenomena and applications were found, and now they are regarded as one of the most fundamental tools in cluster algebra theory.
\par
The most important objects in $c$- and $g$-vectors are the \emph{$G$-fans}, which are simplicial fans consisting of $g$-vectors. Many investigations show that the $G$-fan is a cluster analogue of the Coxeter chamber structure (e.g. \cite{RS16,RS18}). 
\par
In general, particularly in the case of infinite type, the structure of the $G$-fans seems to be rather complicated, and there is only little progress. Observing many examples of rank $3$ in \cite[\S~6.7]{Nak23}, one might suspect that the $G$-fan possesses a ``fractal" structure. (Here, the term ``fractal" refers to the existence of numerous internal isomorphisms, rather than the non-integer dimension associated with the Hausdorff dimension.) The purpose of this paper is to justify this phenomenon in the following simplest infinite case called the {\em $B$-invariant type} of rank $3$.
\subsection{$B$-invariant type}
In this paper, we focus on the following real exchange matrices:
\begin{equation}\label{eq: invariant type}
B=\pm\left(\begin{matrix}
0 & -p' & r\\
p & 0 & -q'\\
-r' & q & 0
\end{matrix}\right),
\end{equation}
where $p,p',q,q',r,r' \in \mathbb{R}_{>0}$ satisfy $pp'=qq'=rr'=4$ and $pqr=p'q'r'$. (As a consequence, we have $pqr=p'q'r'=8$.) 
This is certainly a skew-symmetrizable matrix. For example, if we set $D=\mathrm{diag}(pr',p'r',pr)$, then $DB$ is skew-symmetric.
We say that such a skew-symmetrizable matrix is of {\em $B$-invariant type} of rank $3$.
\par
In the usual cluster theory, we assume that all the entries in exchange matrices are integers. However, the combinatorial aspects of cluster algebras can be generalized to the real entries through $c$- and $g$-vectors \cite{AC25a}. Hence, we consider this more general setting. 
\par
The most important property is that the mutation of this exchange matrix is given by 
\begin{equation}
\mu_i(B)=-B
\end{equation}
for each $i=1,2,3$. By \cite[Thm.~1.1]{CL25}, such situation occurs for the rank $3$ irreducible skew-symmetrizable matrices only when $B$ is given by \eqref{eq: invariant type} and, if the rank is greater than $3$, this never happens. See also \cite[Lem.~9.8, Rem.~9.10]{Aka24}.
\par
We can give the following examples with integer entries.
\begin{equation}\label{eq: integer examples of B invariant type}
B=
\pm\left(\begin{matrix}
0 & -2 & 2\\
2 & 0 & -2\\
-2 & 2 & 0
\end{matrix}\right),
\quad
\pm \left(\begin{matrix}
0 & -4 & 4\\
1 & 0 & -2\\
-1 & 2 & 0
\end{matrix}\right).
\end{equation}
According to \cite[Thm.~1.1]{CL25}, all \emph{integer} skew-symmetrizable matrices of $B$-invariant type are enumerated in \eqref{eq: integer examples of B invariant type} up to simultaneous row and column permutations.
The first one in \eqref{eq: integer examples of B invariant type} is known as the skew-symmetric matrix corresponding to the {\em Markov quiver} in \Cref{fig: Markov quiver}.
\begin{figure}[htbp]
\centering
\begin{minipage}[b]{0.45\textwidth}
\centering
\begin{tikzpicture}[dot/.style={circle, fill, inner sep=1.5pt, outer sep=3pt}, >={Classical TikZ Rightarrow[length=4, width=3]}]
  \node[dot] (1) at ({cos(90)},{sin(60)}) {};
  \node[dot] (2) at ({cos(210)},{sin(210)}) {};
  \node[dot] (3) at ({cos(330)},{sin(330)}) {};
  \draw[->] (1.190)->(2.80);
  \draw[->] (1.240)->(2.30);
  \draw[->] (2.15)->(3.165);
  \draw[->] (2.-35)->(3.215);
  \draw[->] (3.150)->(1.-60);
  \draw[->] (3.100)->(1.-15);
\end{tikzpicture}
\caption{Markov quiver.}\label{fig: Markov quiver}
\end{minipage}
\begin{minipage}[b]{0.54\textwidth}
\centering
\begin{tikzpicture}[scale=0.6]
    \draw (-2,0) to [out=-90, in=-90] coordinate [pos=0.47] (FB4) coordinate [pos=0.15] (FB3) (2,0);    
    \draw (-2,0) to [out=90, in=90] (2,0);  
    \draw (-1,0.2) to [out=-40, in=-140] coordinate [pos=0.2] (SBL) coordinate [pos=0.4] (SB4) coordinate [pos=0.5] (SBM) coordinate [pos=0.8] (SBR) (1,0.2);
    \draw (SBL) to [out=40, in=140] coordinate [pos=0.5] (SAM) coordinate [pos=0.7] (SA7) (SBR);  
    \coordinate (Pun) at ($(0,-1)!0.5!(SBM)$);
    \coordinate (APun) at ($(0,1)!0.5!(SAM)+(0,0.2)$);
    \coordinate (RPun) at (1.5,0);
    \coordinate (LPun) at (-1.5,0);
    \fill (Pun) circle [radius=0.08];
    \draw (FB4) to [out=60, in=-90] (Pun) to [out=90, in =-40] (SB4);
    \draw[dashed] (SB4) to [out=-150, in=150] (FB4);
    \draw (Pun) to [out=0, in=-90] (RPun) to [out=90, in=0] (APun) to [out=180, in=90] (LPun) to [out=-90, in=0] (Pun);
    \draw (Pun) to [out=30, in=-30] (1.1,0.4) to [out=150, in=20] (SAM);
    \draw[dashed] (SAM) to [out=160, in=30] (-1.5,0.4) to [out=-150, in=110] (FB3);
    \draw (FB3) to [out=-30, in=-150] (Pun);
\end{tikzpicture}
\caption{Once punctured torus.}
\label{fig: once punctured torus}
\end{minipage}
\end{figure}

\subsection{Progress in the Markov-type cluster algebra}
In cluster algebra theory, the Markov quiver appears as an example of many different classes. For example, we can find the Markov quiver and, equivalently, the once punctured torus in the following papers.
\begin{itemize}
    \item In \cite{FST08}, the geometric realization of the corresponding cluster algebra is given by triangulations of the once punctured torus (\Cref{fig: once punctured torus}).
    \item In \cite{FG16}, the tropical boundary, which is an analogue of the Thurston boundary in the Teichm\"uller theory, associated with the once punctured torus was calculated.
    \item In \cite{DWZ08}, the Jacobi algebra and a nondegenerate quiver with potential associated with the Markov quiver appear. In \cite{Ric15}, the detailed properties of this Jacobi algebra were studied.
    \item In \cite{Pro20}, the mutation of cluster variables was found in the {\em Markov tree}, which is a binary tree that enumerates the Markov triplets. Recently, from this correspondence, some combinatorial tools to study the Markov triplets were imported into the cluster algebra by \cite{BG25}.
\end{itemize}
As previously mentioned, our main motivation is to establish the fractal structure of the $G$-fan. Since this class arises in such a wide range of fields, it is also interesting to study how and why our fractal theorem appears in these applications. This broad applicability further justifies our focus on this particular class.
\par
For the purpose of studying $c$- and $g$-vectors, the relation between $c$- and $g$-vectors and the Farey triplets is obtained by \cite{Cha12}, and an alternative construction was given by \cite{Rea15} from the surface. Although our forthcoming approach is different from the ones in \cite{Cha12, Rea15}, one important result can also be obtained by using the technique in this paper, see \Cref{cor: coprime number expression}.
\subsection{Modified $c$- and $g$-vectors}
Throughout this paper, we always fix one skew-symmetrizable matrix $B \in \mathrm{M}_3(\mathbb{R})$ of $B$-invariant type and its skew-symmetrizer $D=\mathrm{diag}(d_1,d_2,d_3)$.
A sequence $\mathbf{w}=[k_1,\dots,k_r]$ of $1,2,3$ is said to be {\em reduced} if $k_i \neq k_{i+1}$ for any $i=1,\dots,r-1$. By convention, the empty sequence $\emptyset=[\ ]$ is also reduced. We write the set of all reduced sequences by $\mathcal{T}$.
For any reduced sequence $\mathbf{w} \in \mathcal{T}$, we define the $3 \times 3$ matrix $B=(b_{ij}^{\mathbf{w}}) \in \mathrm{M}_3(\mathbb{R})$ as
\begin{equation}
B^{\mathbf{w}}=(-1)^{|\mathbf{w}|}B,
\end{equation}
which corresponds to the mutated exchange matrix $B^{\mathbf{w}}=\mu_{k_r}\cdots\mu_{k_1}(B)$ in general cluster algebra theory.
\par
For any $\mathbf{w}=[k_1,\dots,k_r] \in \mathcal{T}$ and $k=1,2,3$ with $k_r \neq k$, define $\mathbf{w}[k]=[k_1,\dots,k_r,k] \in \mathcal{T}$.
Then, the $c$-vectors $\mathbf{c}_{j}^{\mathbf{w}}=(c_{ij}^{\mathbf{w}})_{i=1}^3 \in \mathbb{R}^3$ and the $g$-vectors $\mathbf{g}_j^{\mathbf{w}}=(g_{ij}^{\mathbf{w}})_{i=1}^3 \in \mathbb{R}^3$ is defined by the initial condition $\mathbf{c}_j^{\emptyset}=\mathbf{g}_j^{\emptyset}=\mathbf{e}_j$ (the $j$th unit vector) and the following recursion:
\begin{equation}\label{eq: recursion of C}
\begin{aligned} 
c_{ij}^{\mathbf{w}[k]}&=\begin{cases}
-c_{ik}^{\mathbf{w}} & j=k,\\
c_{ij}^{\mathbf{w}} + c_{ik}^{\mathbf{w}}\max(b_{kj}^{\mathbf{w}},0)+\max(-c_{ik}^{\mathbf{w}},0)b_{kj}^{\mathbf{w}} & j \neq k,
\end{cases},
\\
g_{ij}^{\mathbf{w}[k]}&=\begin{cases}
-g_{ik}^{\mathbf{w}}+\sum_{l=1}^{3}g_{il}^{\mathbf{w}}\max(-b_{lk}^{\mathbf{w}},0)-\sum_{l=1}^3\max(c_{lk}^{\mathbf{w}},0)b_{lk}^{\emptyset} & j=k,
\\
g_{ij}^{\mathbf{w}} & j \neq k.
\end{cases}
\end{aligned}
\end{equation}
We will not treat this formula directly. Later, we introduce another expression of this recursion formulated in terms of the $c$- and $g$-vectors in \Cref{lem: mutation of modified vectors}.
\par
Instead of the usual $c$- and $g$-vectors, we employ the {\em modified $c$-vectors} $\tilde{\mathbf{c}}_{j}^{\mathbf{w}}$ and the {\em modified $g$-vectors} $\tilde{\mathbf{g}}_{j}^{\mathbf{w}}$, which are defined by 
\begin{equation}
\tilde{\mathbf{c}}_{j}^{\mathbf{w}}=\frac{1}{\sqrt{d_j}}\mathbf{c}_j^{\mathbf{w}},
\quad
\tilde{\mathbf{g}}_j^{\mathbf{w}}=\frac{1}{\sqrt{d_j}}\mathbf{g}_j^{\mathbf{w}}.
\end{equation}
Hence, the essential structure does not change. On the other hand, by considering this modification, all cases in this paper follow the same recursions as in \Cref{lem: mutation of modified vectors}.
In accordance with the above modification, we introduce the {\em modified standard vector} $\tilde{\mathbf{e}}_j$ ($j=1,2,3$) by
\begin{equation}
\tilde{\mathbf{e}}_j=\frac{1}{\sqrt{d}_j}\mathbf{e}_j.
\end{equation}
\subsection{Additional notations}
To make the fractal structure clearer, we use the maps $K,S,T: \mathcal{T}\setminus\{\emptyset\} \to \{1,2,3\}$ to express the indices $1,2,3$, see \Cref{sec: tropical signs}. This notation was introduced in \cite{AC25b} for the recursion of tropical signs, see also \Cref{thm: recursion for tropical signs}. Let $\mathcal{M}$ be the set of all words consisting of two letters $S$ and $T$. As in the usual sense, this set $\mathcal{M}$ has the standard monoid structure. Then, except for the empty sequence $\emptyset$, each reduced sequence $\mathbf{w} \in \mathcal{T} \setminus \{\emptyset\}$ can be expressed as $\mathbf{w}=[i]X$ uniquely, where $i=1,2,3$ is the initial mutation direction of $\mathbf{w}$ and $X \in \mathcal{M}$. By using these notations, each $c$-vector and $g$-vector can be expressed as $\mathbf{c}_{M}^{[i]X}=\mathbf{c}_{M([i]X)}^{[i]X}$ and $\mathbf{g}_{M}^{[i]X}=\mathbf{g}_{M([i]X)}^{[i]X}$, where $i=1,2,3$, $X \in \mathcal{M}$, and $M=K,S,T$.
\par
We introduce the partial order $\leq$ on $\mathcal{T}$ by
\begin{equation}
[k_1,\dots,k_r] \leq [l_1,\dots,l_{r'}] \Longleftrightarrow r \leq r'\ \textup{and}\ k_{i}=l_{i}\ \textup{for any}\ i=1,\dots,r.
\end{equation}
For any $\mathbf{w} \in \mathcal{T} \setminus \{\emptyset\}$, define $\mathcal{T}^{\geq \mathbf{w}}=\{\mathbf{u} \mid \mathbf{w} \leq \mathbf{u}\}$. Note that $\mathcal{T}^{\geq [i]}=\{[i]X \mid X \in \mathcal{M}\}$ holds. We decompose $\mathcal{T}^{\geq [i]}$ into the following two kinds of sets:
\begin{itemize}
    \item {\em trunk}: $\mathcal{T}^{<[i]S^{\infty}}=\{[i]S^n \mid n \in \mathbb{Z}_{\geq 0}\}$.
    \item {\em branch}: $\mathcal{T}^{\geq [i]X}$ where $X \in \mathcal{M}$ contains at least one $T$.
\end{itemize}
We can also see \Cref{fig: example of tropical signs} as an example. Depending on the initial exchange matrix $B$ and the initial mutation direction $i=1,2,3$, we set $k_0=K([i])$, $s_0=S([i])$, and $t_0=T([i])$, see \Cref{tab: List of initial indices}.
\subsection{Main theorems}
Our first theorem presents the fractal structure appearing in $c$- and $g$-vectors. We say that two sequences $\mathbf{w},\mathbf{u} \in \mathcal{T}\setminus\{\emptyset\}$ are {\em admissible} if both belong to trunks or branches. Then, we can consider the sub modified $C$-patterns $\widetilde{\mathbf{C}}^{\geq \mathbf{w}}(B)=\{(\tilde{\mathbf{c}}_{1}^{\mathbf{v}},\tilde{\mathbf{c}}_2^{\mathbf{v}},\tilde{\mathbf{c}}_3^{\mathbf{v}})\}_{\mathbf{v} \geq \mathbf{w}}$ and $\widetilde{\mathbf{C}}^{\geq \mathbf{u}}(B)$, and the sub modified $G$-pattern $\widetilde{\mathbf{G}}^{\geq \mathbf{w}}(B)=\{(\tilde{\mathbf{g}}_{1}^{\mathbf{v}},\tilde{\mathbf{g}}_2^{\mathbf{v}},\tilde{\mathbf{g}}_3^{\mathbf{v}})\}_{\mathbf{v} \geq \mathbf{w}}$ and $\widetilde{\mathbf{G}}^{\geq \mathbf{u}}(B)$. Then, our first theorem serves as the isomorphism between these two patterns.
\begin{theorem}[\Cref{thm: inner isomorphisms between subfans in the Markov case}]\label{thm: intro inner isomorphisms between subfans in the Markov case}
For any admissible pair $\mathbf{w},\mathbf{u} \in \mathcal{T}\setminus\{\emptyset\}$, there exist linear isomorphisms $\Clinearmap_{\mathbf{w}}^{\mathbf{u}}, \Glinearmap_{\mathbf{w}}^{\mathbf{u}} \in \mathrm{GL}(\mathbb{R}^3)$ such that, for any $M=K,S,T$ and $X \in \mathcal{M}$,
\begin{equation}
\Clinearmap_{\mathbf{w}}^{\mathbf{u}}(\tilde{\mathbf{c}}_{M}^{\mathbf{w}X})=\tilde{\mathbf{c}}_{M}^{\mathbf{u}X},
\quad
\Glinearmap_{\mathbf{w}}^{\mathbf{u}}(\tilde{\mathbf{g}}_{M}^{\mathbf{w}X})=\tilde{\mathbf{g}}_{M}^{\mathbf{u}X}.
\end{equation}
\end{theorem}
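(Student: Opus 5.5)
The plan is to exploit the fact that the modified mutation rule of \Cref{lem: mutation of modified vectors} is entirely "local": once the tropical signs along a path are fixed, the triple $(\tilde{\mathbf{c}}_K^{\mathbf{v}X},\tilde{\mathbf{c}}_S^{\mathbf{v}X},\tilde{\mathbf{c}}_T^{\mathbf{v}X})$ is obtained from $(\tilde{\mathbf{c}}_K^{\mathbf{v}},\tilde{\mathbf{c}}_S^{\mathbf{v}},\tilde{\mathbf{c}}_T^{\mathbf{v}})$ by a product of fixed matrices depending only on $X$ and on the sign sequence. The same holds for the modified $g$-vectors. In the modified normalization, $B$-invariance ($\mu_i(B)=-B$) makes all the relevant exchange coefficients equal up to the labels $K,S,T$: relative to the modified basis, $B$ becomes a skew-symmetric matrix with all off-diagonal entries $\pm 2$, so the data of $B$ disappears after relabelling. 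Hence, if the tropical sign patterns on $\mathcal{T}^{\geq \mathbf{w}}$ and $\mathcal{T}^{\geq \mathbf{u}}$ agree after the shift $\mathbf{w}X\leftrightarrow\mathbf{u}X$, the linear map sending the initial triple at $\mathbf{w}$ to that at $\mathbf{u}$ will intertwine everything.

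Concretely, the steps are as follows.

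\emph{Step 1.} Use the recursion for tropical signs (\Cref{thm: recursion for tropical signs}) to show that the sign of $\tilde{\mathbf{c}}_M^{\mathbf{w}X}$ depends only on $X$ and on whether $\mathbf{w}$ lies on a trunk or a branch. On a branch all further signs follow a fixed rule. On the trunk $[i]S^n$ the signs are those of the trunk rule. Admissibility is exactly what guarantees this.

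\emph{Step 2.} Define $\Clinearmap_{\mathbf{w}}^{\mathbf{u}}$ as the unique linear map with $\tilde{\mathbf{c}}_M^{\mathbf{w}}\mapsto\tilde{\mathbf{c}}_M^{\mathbf{u}}$ for $M=K,S,T$. It is invertible because each $C$-matrix is invertible, by sign-coherence/duality of $c$-vectors. Define $\Glinearmap_{\mathbf{w}}^{\mathbf{u}}$ analogously on the $g$-vectors.

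\emph{Step 3.} Induct on the length of $X$. Writing the modified mutation as $\tilde{\mathbf{c}}_{M}^{\mathbf{v}[k]}=\sum_{N}a_{MN}(\varepsilon)\tilde{\mathbf{c}}_N^{\mathbf{v}}$, with coefficients depending only on the sign $\varepsilon$ and the relabelling rule for $K,S,T$, linearity gives $\Clinearmap(\tilde{\mathbf{c}}_M^{\mathbf{w}XY})=\tilde{\mathbf{c}}_M^{\mathbf{u}XY}$ whenever the signs match. For $g$-vectors the recursion also involves the term $\sum_l\max(c_{lk},0)b_{lk}^{\emptyset}$. Sign-coherence and the identity relating $g$-mutation to the sign of $\tilde{\mathbf{c}}_k$ turn this into $\tilde{\mathbf{g}}^{\mathbf{v}[k]}_{M}=-\tilde{\mathbf{g}}_{k}+2\tilde{\mathbf{g}}_{N}$ (or the analogous formula), with $N$ chosen by the sign, so the $g$-case is again linear in the previous triple.

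The main obstacle is Step 1, specifically checking that the tropical-sign recursion of \Cref{thm: recursion for tropical signs} is genuinely translation-invariant within trunks and within branches. The first branch node $[i]S^nT$ has a special sign configuration, and one must verify that deeper branch roots $\mathbf{w}XT$ reproduce it regardless of $\mathbf{w}$. I would first prove that on any branch the sign vector at $\mathbf{v}$ is determined solely by the last letter, which should follow directly from the $S/T$ recursion. I would then treat the trunk separately, where the signs are constant in $n$.
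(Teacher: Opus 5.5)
\textbf{Step 1 is false for branches.} Your architecture is the paper's: define $\Clinearmap_{\mathbf w}^{\mathbf u},\Glinearmap_{\mathbf w}^{\mathbf u}$ by sending the triple at $\mathbf w$ to the triple at $\mathbf u$, then induct on $X$ using that $\mathbf wX$ and $\mathbf uX$ obey the same linear mutation rule. The problem is that Step~3 is explicitly conditioned on Step~1 (``whenever the signs match''), and that condition fails for some admissible pairs in branches.

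By \Cref{thm: recursion for tropical signs} and \Cref{lem: K S T mutation formula}:
\begin{itemize}
\item an $S$-mutation preserves $(\varepsilon_K,\varepsilon_S,\varepsilon_T)$;
\item a $T$-mutation in a branch sends it to $(-\varepsilon_T,\varepsilon_K,\varepsilon_S)$.
\end{itemize}
Starting from $(-,+,-)$ at $[i]S^nT$, the sign vector at $[i]S^nTY$ is therefore $(-1)^{\#_T(Y)}(-,+,-)$. Two counterexamples follow.
\begin{itemize}
\item $\mathbf w=[i]T$, $\mathbf u=[i]T^2$ is an admissible pair in branches, yet the sign vectors at $\mathbf wX$ and $\mathbf uX$ are opposite for every $X$.
\item $[i]TS$ and $[i]T^2S$ end in the same letter but have opposite signs (compare \Cref{fig: example of tropical signs}). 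This refutes your ``determined by the last letter'' claim.
\end{itemize}
For trunk pairs Step~1 does hold, since every trunk vertex carries $(-,+,+)$.

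The error comes from ``the data of $B$ disappears after relabelling.'' In the modified normalization, the entry $b^{\mathbf v}_{K(\mathbf v)S(\mathbf v)}$ (rescaled to $\pm2$) is not fixed. \Cref{thm: recursion for tropical signs}~($a$) forces its sign to equal $\varepsilon_K^{\mathbf v}$, so it flips together with the signs.

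\textbf{What is actually uniform.} Under sign-coherence, the mutation coefficients are $[\varepsilon_k b^{\mathbf v}_{kj}]_+$ for $c$-vectors and $[-\varepsilon_k b^{\mathbf v}_{lk}]_+$ for $g$-vectors. What must be uniform is the sign of these products, not of $\varepsilon$ itself, and it is:
\begin{itemize}
\item $\varepsilon_K b^{\mathbf v}_{KS}>0$ by \Cref{thm: recursion for tropical signs}~($a$);
\item $\varepsilon_K b^{\mathbf v}_{KT}<0$ by the cyclic sign pattern of $B$;
\item the only remaining datum is whether $\varepsilon_T=-\varepsilon_K$ (trunk) or $\varepsilon_T=\varepsilon_K$ (branch).
\end{itemize}
This is exactly the content of \Cref{lem: mutation of modified vectors}. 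You invoke that lemma in your opening sentence but then try to rebuild it through sign-matching.

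\textbf{The fix.} Use \Cref{lem: mutation of modified vectors} as stated, and Steps~1 and~3 become unnecessary. Since $\mathbf wX$ and $\mathbf uX$ are admissible, the same formula among \eqref{eq: S mutation of c- g-vectors}, \eqref{eq: T mutation of c- g-vectors in trunks}, \eqref{eq: T mutation of c- g-vectors in branches} expresses the triple at $\mathbf wXM'$ in terms of the triple at $\mathbf wX$, and the triple at $\mathbf uXM'$ in terms of the triple at $\mathbf uX$. Linearity of $\Glinearmap_{\mathbf w}^{\mathbf u}$ (resp.\ $\Clinearmap_{\mathbf w}^{\mathbf u}$) then closes the induction; this is the paper's entire proof.

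Invertibility is also immediate and needs no duality argument. Each step of that lemma is a change of basis with determinant $\pm1$, starting from the basis in \eqref{eq: initial setup}.
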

Thanks to this theorem, we can give many isomorphisms between two subpatterns via certain linear isomorphisms $\Clinearmap_{\mathbf{w}}^{\mathbf{u}}, \Glinearmap_{\mathbf{w}}^{\mathbf{u}} \in \mathrm{GL}(\mathbb{R}^3)$. In particular, suppose that $\mathbf{w}$ is in a branch and $\mathbf{u}=\mathbf{w}X$. Then, $\widetilde{\mathbf{C}}^{\geq \mathbf{w}X}(B)$ and $\widetilde{\mathbf{G}}^{\geq \mathbf{w}X}(B)$ are subpatterns of $\widetilde{\mathbf{C}}^{\geq \mathbf{w}}(B)$ and $\widetilde{\mathbf{G}}^{\geq \mathbf{w}}(B)$, respectively. On the other hand, due to this theorem, these two patterns are isomorphic via $\Clinearmap_{\mathbf{w}}^{\mathbf{w}X}:\widetilde{\mathbf{C}}^{\geq \mathbf{w}}(B) \cong \widetilde{\mathbf{C}}^{\geq \mathbf{w}X}(B)$ and $\Glinearmap_{\mathbf{w}}^{\mathbf{w}X}:\widetilde{\mathbf{G}}^{\geq \mathbf{w}}(B) \cong \widetilde{\mathbf{G}}^{\geq \mathbf{w}X}(B)$. Namely, $\Clinearmap_{\mathbf{w}}^{\mathbf{w}X}$ and $\Glinearmap_{\mathbf{w}}^{\mathbf{w}X}$ can be seen as maps explaining their fractal structure.  We write these maps as $\Clinearmap_{\mathbf{w}}^{X}=\Clinearmap_{\mathbf{w}}^{\mathbf{w}X}$ and $\Glinearmap_{\mathbf{w}}^{X}=\Glinearmap_{\mathbf{w}}^{\mathbf{w}X}$. 
\par
For applications, we take appropriate bases for $c$- and $g$-vectors in \eqref{eq: change of basis for c} and \eqref{eq: change of basis for g}. Originally, these bases are taken as the Jordan bases of $\Clinearmap_{\mathbf{w}}^S$ and $\Glinearmap_{\mathbf{w}}^{S}$. However, these bases are also compatible for $\Clinearmap_{\mathbf{w}}^T$ and $\Glinearmap_{\mathbf{w}}^T$, see \Cref{lem: mutations for eigen vectors}. One important fact is that, if $\mathbf{w}$ is in a branch with the initial mutation direction $i=1,2,3$, the vector $\mathfrak{c}$ (resp. $\mathfrak{g}_i$) defined by \eqref{eq: fixed points of mutations} is invariant under the linear maps $\Clinearmap_{\mathbf{w}}^{X}$ (resp. $\Glinearmap_{\mathrm{w}}^{X}$) for all $X \in \mathcal{M}$. As one of the important corollaries, we obtain the following recursive formulas to calculate all $c$-vectors and $g$-vectors. For the sake of brevity, we only present the formula for $g$-vectors here.
\begin{theorem}[\Cref{thm: recursive formula in a branch}]
Fix a branch $\mathcal{T}^{\geq \mathbf{w}}$. Set $\tilde{\mathbf{v}}_{SK}^{\mathbf{w}}=\tilde{\mathbf{g}}_{K}^{\mathbf{w}}-\tilde{\mathbf{g}}_{S}^{\mathbf{w}}$ and $\tilde{\mathbf{v}}_{TK}^{\mathbf{w}}=\tilde{\mathbf{g}}_{K}^{\mathbf{w}}-\tilde{\mathbf{g}}_{T}^{\mathbf{w}}$. Then, for any $X \in \mathcal{M}$ and $M=K,S,T$, we may express
\begin{equation}
\tilde{\bf g}_{M}^{{\bf w}X}=\mathfrak{g}_i+q_{M;S}^{X}\tilde{\bf v}_{SK}^{\bf w}+q_{M;T}^{X}\tilde{\bf v}_{TK}^{\bf w}
\end{equation}
for some $(q_{M;S}^{X}, q_{M;T}^{X}) \in \mathbb{Z}_{\geq 0}^2$ independent of ${\bf w}$.
Moreover, for any $M=K,S,T$ and $X \in \mathcal{M}$, the coefficients $(a^{X},b^X)=(q_{M;S}^{X},q_{M;T}^{X})$ obey the following recursion independent of $M=K,S,T$:
\begin{equation}\label{eq: intro generating rule}
(a^{M'X},b^{M'X})=\begin{cases}
(a^{X}+b^{X},b^X) & M'=S,\\
(b^X,a^X+b^X) & M'=T.
\end{cases}
\end{equation}
\end{theorem}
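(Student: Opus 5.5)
The plan is to first understand a single step $\mathbf{w}\mapsto\mathbf{w}M'$ with $M'\in\{S,T\}$ inside a branch, and then induct on the length of $X$. I will take the mutation rule of \Cref{lem: mutation of modified vectors} and the recursion for tropical signs (\Cref{thm: recursion for tropical signs}) as given. From these, inside a branch the mutation at $\mathbf{w}$ in direction $S(\mathbf{w})$ or $T(\mathbf{w})$ is governed by a fixed sign pattern. It therefore acts on $(\tilde{\mathbf{g}}_K^{\mathbf{w}},\tilde{\mathbf{g}}_S^{\mathbf{w}},\tilde{\mathbf{g}}_T^{\mathbf{w}})$ by a fixed affine rule independent of $\mathbf{w}$.

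Concretely, I expect that after the step $M'$ two of the three vectors persist and get relabeled, while the new one is obtained by a reflection-type formula $\tilde{\mathbf{g}}_{\mathrm{new}}=-\tilde{\mathbf{g}}_{\mathrm{old}}+\tilde{\mathbf{g}}_{a}+\tilde{\mathbf{g}}_{b}$ or similar. The modification by $1/\sqrt{d_j}$ is what should make all coefficients equal to $1$.

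The key intermediate claim is the following. With $\mathfrak{g}_i$ the invariant vector of \eqref{eq: fixed points of mutations} and $\Glinearmap_{\mathbf{w}}^{X}$ from \Cref{thm: intro inner isomorphisms between subfans in the Markov case} fixing $\mathfrak{g}_i$, we have
\[
\tilde{\mathbf{g}}_M^{\mathbf{w}X}-\mathfrak{g}_i=\Glinearmap_{\mathbf{w}}^{X}\bigl(\tilde{\mathbf{g}}_M^{\mathbf{w}}-\mathfrak{g}_i\bigr).
\]
So it suffices to show that $\Glinearmap_{\mathbf{w}}^{S}$ and $\Glinearmap_{\mathbf{w}}^{T}$ act on the span of $\tilde{\mathbf{v}}_{SK}^{\mathbf{w}},\tilde{\mathbf{v}}_{TK}^{\mathbf{w}}$, in the basis $(\tilde{\mathbf{v}}_{SK}^{\mathbf{w}},\tilde{\mathbf{v}}_{TK}^{\mathbf{w}})$, by the matrices $\left(\begin{smallmatrix}1&1\\0&1\end{smallmatrix}\right)$ and $\left(\begin{smallmatrix}0&1\\1&1\end{smallmatrix}\right)$ respectively. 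For the base case I write $\tilde{\mathbf{g}}_M^{\mathbf{w}}-\mathfrak{g}_i$ in this basis. One has to check that $\tilde{\mathbf{g}}_K^{\mathbf{w}}-\mathfrak{g}_i$ has nonnegative integer coordinates; I expect $\tilde{\mathbf{g}}_K^{\mathbf{w}}=\mathfrak{g}_i+\tilde{\mathbf{v}}_{SK}^{\mathbf{w}}+\tilde{\mathbf{v}}_{TK}^{\mathbf{w}}$, i.e.\ $\mathfrak{g}_i=\tilde{\mathbf{g}}_S^{\mathbf{w}}+\tilde{\mathbf{g}}_T^{\mathbf{w}}-\tilde{\mathbf{g}}_K^{\mathbf{w}}$. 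This can be verified from the definition of $\mathfrak{g}_i$ together with the Jordan-basis description in \Cref{lem: mutations for eigen vectors}. Then $\tilde{\mathbf{g}}_S^{\mathbf{w}}-\mathfrak{g}_i=\tilde{\mathbf{v}}_{TK}^{\mathbf{w}}$ and $\tilde{\mathbf{g}}_T^{\mathbf{w}}-\mathfrak{g}_i=\tilde{\mathbf{v}}_{SK}^{\mathbf{w}}$, so the base coordinates are $(1,1),(0,1),(1,0)$, all nonnegative integers.

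For the induction, write $\mathbf{w}M'X=(\mathbf{w}M')X$ and use $\Glinearmap_{\mathbf{w}}^{M'X}=\Glinearmap_{\mathbf{w}M'}^{X}\circ\Glinearmap_{\mathbf{w}}^{M'}$; this should follow from the uniqueness implicit in the theorem, because the images span. However, $\Glinearmap_{\mathbf{w}M'}^{X}$ is expressed in the basis at $\mathbf{w}M'$ rather than at $\mathbf{w}$. It is cleaner to use the conjugation relation $\Glinearmap_{\mathbf{w}M'}^{X}=\Glinearmap_{\mathbf{w}}^{M'}\Glinearmap_{\mathbf{w}}^{X}(\Glinearmap_{\mathbf{w}}^{M'})^{-1}$, which gives $\Glinearmap_{\mathbf{w}}^{M'X}=\Glinearmap_{\mathbf{w}}^{M'}\Glinearmap_{\mathbf{w}}^{X}$ as maps. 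In coordinates this means $(a^{M'X},b^{M'X})^{\top}=A_{M'}(a^X,b^X)^{\top}$, which is exactly \eqref{eq: intro generating rule}: $A_S(a,b)=(a+b,b)$ and $A_T(a,b)=(b,a+b)$. Nonnegativity and integrality are preserved, and independence of $\mathbf{w}$ follows since the matrices are universal.

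The main obstacle is the one-step computation: verifying that $\Glinearmap_{\mathbf{w}}^{S}$ sends $\tilde{\mathbf{v}}_{SK}^{\mathbf{w}}\mapsto\tilde{\mathbf{v}}_{SK}^{\mathbf{w}}$ and $\tilde{\mathbf{v}}_{TK}^{\mathbf{w}}\mapsto\tilde{\mathbf{v}}_{SK}^{\mathbf{w}}+\tilde{\mathbf{v}}_{TK}^{\mathbf{w}}$, and similarly for $T$. Equivalently, one must compute $\tilde{\mathbf{v}}_{SK}^{\mathbf{w}S}$ and $\tilde{\mathbf{v}}_{TK}^{\mathbf{w}S}$ in terms of the data at $\mathbf{w}$. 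I would do this directly from \Cref{lem: mutation of modified vectors}, using the branch sign information and the relabeling rules for $K,S,T$ along $S$ and $T$ steps. I would also double-check the ordering (left versus right multiplication) against the convention $M'X$, since the composition order determines whether the rule is applied to the first or the last letter.
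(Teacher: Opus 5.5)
Your proposal is correct and follows essentially the paper's own argument: induction on $X$ by prepending a letter, using that $\Glinearmap_{\mathbf{w}}^{M'}$ fixes $\mathfrak{g}_i=\tilde{\mathbf{g}}_F^{\mathbf{w}}$ and acts on $(\tilde{\mathbf{v}}_{SK}^{\mathbf{w}},\tilde{\mathbf{v}}_{TK}^{\mathbf{w}})$ by the rules of \Cref{lem: mutations for eigen vectors}, with the base case read off from \eqref{eq: recover g vectors}. The step you flag as the main obstacle is exactly that lemma, and it is a direct check from \Cref{lem: mutation of modified vectors}; for example $\tilde{\mathbf{v}}_{TK}^{\mathbf{w}S}=2\tilde{\mathbf{g}}_K^{\mathbf{w}}-\tilde{\mathbf{g}}_S^{\mathbf{w}}-\tilde{\mathbf{g}}_T^{\mathbf{w}}=\tilde{\mathbf{v}}_{SK}^{\mathbf{w}}+\tilde{\mathbf{v}}_{TK}^{\mathbf{w}}$, so your two matrices and your composition order $\Glinearmap_{\mathbf{w}}^{M'X}=\Glinearmap_{\mathbf{w}}^{M'}\Glinearmap_{\mathbf{w}}^{X}$ are right.
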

We can see \Cref{fig: axbx after 11} as an example of these coefficients $(q_{K;S}^{X},q_{K;T}^{X})$. Here, we would like to emphasize that this generating rule in \eqref{eq: intro generating rule} is so similar (but not the same) as the one of the {\em Calkin-Wilf tree} \cite{CW00}. On the other hand, in \cite{Cha12}, all the $c$- and $g$-vectors are constructed based on the tree structure of the {\em Farey triplets}, which is essentially equivalent to the {\em Stern-Brocot tree}. Although our theorem and \cite{Cha12} focus on the same objects, the construction method is completely different. In \cite{Gyo23}, a similar phenomenon is found through the {\em $d$-vectors}.
\par
As an application for $g$-vectors, note that the family $\{(q_{K;S}^{X},q_{K;T}^{X})\}_{X \in \mathcal{M}}$ satisfies many properties appearing in the Calkin-Wilf tree. In particular, all the pairs of positive coprime numbers appear in this family. Thus, we give a method to enumerate all the $g$-vectors from the properties of numbers, which has already appeared in \cite{Cha12,Rea15}.
\begin{corollary}[{\Cref{cor: coprime number expression}, \cite{Cha12, Rea15}}] \label{cor: intro cor: coprime number expression}
Fix one initial mutation direction $i=1,2,3$. Let $\tilde{\mathbf{v}}_{lm}=\tilde{\mathbf{e}}_{m}-\tilde{\mathbf{e}}_{l}$ for $l,m=1,2,3$.
Then, a vector $\tilde{\mathbf{g}} \in \mathbb{R}^3$ appears in the modified $G$-pattern $\widetilde{\mathbf{G}}^{\geq [i]}(B)$ if and only if it is of the form
\begin{equation}\label{eq: coprime number expression of g vectors}
\tilde{\mathbf{g}}=\mathfrak{g}_i+a\tilde{\mathbf{v}}_{t_0k_0}+b\tilde{\mathbf{v}}_{k_0s_0},
\end{equation}
where $(a,b) \in \mathbb{Z}^2$ satisfies either of the following two conditions:
\begin{itemize}
    \item $a,b \geq 1$ and $\gcd(a,b)=1$.
    \item $(a,b)=(1,0)$ (then $\tilde{\mathbf{g}}=\tilde{\mathbf{e}}_{s_0}$) or $(a,b)=(0,-1)$ (then $\tilde{\mathbf{g}}=\tilde{\mathbf{e}}_{t_0}$).
\end{itemize}
Moreover, all the above vectors are modified $g$-vectors.
\end{corollary}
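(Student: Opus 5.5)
We derive the corollary from the recursive formula in a branch (Theorem~\ref{thm: recursive formula in a branch}), applied to the first branches, together with a direct computation along the trunk $\mathcal{T}^{<[i]S^\infty}$. The plan has four steps.

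\emph{Step 1: the trunk.} We first compute the trunk explicitly. Since $B^{\mathbf{w}}=(-1)^{|\mathbf{w}|}B$, mutating repeatedly in the $S$-direction gives a linear recursion. We expect the $g$-vectors along $[i]S^n$ to consist of $\tilde{\mathbf{e}}_{s_0}$ and $\tilde{\mathbf{e}}_{t_0}$ (the two boundary cases $(a,b)=(1,0)$ and $(0,-1)$), together with vectors of the form $\mathfrak{g}_i+n\tilde{\mathbf{v}}_{t_0k_0}+\tilde{\mathbf{v}}_{k_0s_0}$ or a similar family. The trunk is exactly where the degenerate pairs occur. We would verify the trunk formula by induction on $n$, using the mutation rule of \Cref{lem: mutation of modified vectors} and the fact that $\mathfrak{g}_i$ is the relevant fixed point from \eqref{eq: fixed points of mutations}.

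\emph{Step 2: the minimal branches.} Every branch lies under some minimal branch $\mathbf{w}_n=[i]S^nT$. By Theorem~\ref{thm: intro inner isomorphisms between subfans in the Markov case}, all branches are linearly isomorphic via $\Psi$, and $\mathfrak{g}_i$ is fixed. Hence it suffices to compute $\tilde{\mathbf{v}}_{SK}^{\mathbf{w}_n}$ and $\tilde{\mathbf{v}}_{TK}^{\mathbf{w}_n}$ in the basis $\tilde{\mathbf{v}}_{t_0k_0},\tilde{\mathbf{v}}_{k_0s_0}$. From Step~1 we expect
\[
\tilde{\mathbf{v}}_{SK}^{\mathbf{w}_n}=\alpha_n\tilde{\mathbf{v}}_{t_0k_0}+\beta_n\tilde{\mathbf{v}}_{k_0s_0},
\qquad
\tilde{\mathbf{v}}_{TK}^{\mathbf{w}_n}=\gamma_n\tilde{\mathbf{v}}_{t_0k_0}+\delta_n\tilde{\mathbf{v}}_{k_0s_0},
\]
with a unimodular integer matrix $\begin{pmatrix}\alpha_n&\gamma_n\\ \beta_n&\delta_n\end{pmatrix}\in\mathrm{SL}_2(\mathbb{Z})$ with nonnegative entries, something like $\begin{pmatrix}1&n+1\\0&1\end{pmatrix}$ up to conventions. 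Then every $g$-vector in the branch below $\mathbf{w}_n$ has the form $\mathfrak{g}_i+a\tilde{\mathbf{v}}_{t_0k_0}+b\tilde{\mathbf{v}}_{k_0s_0}$, with $(a,b)$ the image of $(q_{M;S}^X,q_{M;T}^X)$ under this matrix.

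\emph{Step 3: coprimality and completeness.} The recursion \eqref{eq: intro generating rule} is the Calkin--Wilf-type rule $(a,b)\mapsto(a+b,b),(b,a+b)$. Starting from the initial coefficients at $X=\emptyset$ (for $M=K,S,T$ these are $(0,0)$-shifted pairs such as $(1,0),(0,1),(0,0)$ read off from the definition of $\tilde{\mathbf{v}}$), it preserves $\gcd=1$ and nonnegativity. Moreover, running the Euclidean algorithm backwards shows that every coprime pair is reached, since the maps are the two generators of the Calkin--Wilf tree up to a swap. Because the matrices in Step~2 are unimodular, coprimality is preserved.

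\emph{Step 4: assembling the union.} It remains to show that the union over $n$ of these images, together with the trunk, gives exactly the coprime pairs with $a,b\ge1$ plus the two exceptional pairs. We expect this to be the main obstacle. The union decomposes the positive quadrant of coprime pairs into the disjoint cones $\{(a,b): n b< a\le (n+1)b\}$, or an analogous decomposition. To make this precise, we would first pin down exactly which pairs the trunk contributes, namely $(n,1)$-type vectors. We would then check that the images of the Calkin--Wilf tree under the shear matrices for consecutive $n$ tile the quadrant. This amounts to the standard fact that the Calkin--Wilf tree rooted at $(1,1)$ enumerates each positive coprime pair exactly once, applied after composing with the shears $\begin{pmatrix}1&n\\0&1\end{pmatrix}$. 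Finally, ``all the above vectors are modified $g$-vectors'' follows from the surjectivity part of the same bookkeeping.
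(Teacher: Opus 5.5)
Your outline has the same architecture as the paper's proof: the trunk, then the maximal branches $[i]S^nT$ via \Cref{thm: recursive formula in a branch}, the Calkin--Wilf enumeration of \Cref{lem: CW tree coprimeness}, and finally a union over $n$. The gap is that the two pieces of explicit data everything rests on are only guessed, and the guesses are wrong in a way that breaks Step~4.

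By \eqref{eq: recover g vectors} and \Cref{lem: c g vectors in trunks}, the trunk gives $\tilde{\mathbf{g}}_K^{[i]S^n}=\mathfrak{g}_i+\tilde{\mathbf{v}}_{t_0k_0}+(n+1)\tilde{\mathbf{v}}_{k_0s_0}$, i.e.\ $(a,b)=(1,n+1)$, not $(n,1)$. At $n=0$ your family would even give $(0,1)$, which the statement excludes. By \Cref{lem: c g vectors at root of maximal branches}, $\tilde{\mathbf{v}}_{SK}^{[i]S^nT}=\tilde{\mathbf{v}}_{t_0k_0}+(n+1)\tilde{\mathbf{v}}_{k_0s_0}$ and $\tilde{\mathbf{v}}_{TK}^{[i]S^nT}=\tilde{\mathbf{v}}_{t_0k_0}+n\tilde{\mathbf{v}}_{k_0s_0}$. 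So $(q_{K;S}^X,q_{K;T}^X)=(\alpha,\beta)$ is sent to $(a,b)=(\alpha+\beta,\,n(\alpha+\beta)+\alpha)$ by $\begin{pmatrix}1&1\\ n+1&n\end{pmatrix}$, which has determinant $-1$. That it is not in $\mathrm{SL}_2(\mathbb{Z})$ is harmless; that it is not a shear matters.

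Both columns lie on the line $a=1$ at consecutive heights, so the open positive quadrant is carried onto the open cone $n<b/a<n+1$. Hence the $n$th maximal branch contributes exactly the coprime pairs with slope in $(n,n+1)$. The trunk contributes the integer slopes $b/a\ge 1$, where coprimality forces $a=1$. These pieces are disjoint and exhaust $\{a,b\ge 1,\ \gcd(a,b)=1\}$. This is precisely the tiling you flag as the main obstacle.

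It cannot come from shears. A shear keeps one column on a coordinate axis, so its images of the quadrant for different $n$ are nested cones, e.g.\ $\{a>nb\}$ for $\begin{pmatrix}1&n\\0&1\end{pmatrix}$. With your Step~2 matrices $\begin{pmatrix}1&n+1\\0&1\end{pmatrix}$, every branch vector would satisfy $a>b$, missing half the quadrant. No choice of conventions fixes this.

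The correct data is in fact forced by your own remark that Step~2 should follow from Step~1. In a branch $\tilde{\mathbf{g}}_F=\mathfrak{g}_i$, so by \eqref{eq: recover g vectors} and the trunk $T$-rule we get $\tilde{\mathbf{v}}_{SK}^{[i]S^nT}=\tilde{\mathbf{g}}_K^{[i]S^n}-\mathfrak{g}_i$ and $\tilde{\mathbf{v}}_{TK}^{[i]S^nT}=\tilde{\mathbf{g}}_S^{[i]S^n}-\mathfrak{g}_i$. These are two consecutive trunk vectors shifted by $-\mathfrak{g}_i$; the second is $\tilde{\mathbf{e}}_{s_0}-\mathfrak{g}_i$ when $n=0$.

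Two smaller corrections to Step~3. The seeds in \eqref{eq: initial conditions of p and q} are $(1,1),(0,1),(1,0)$ for $M=K,S,T$. A seed $(0,0)$ would be fixed by both maps and would put $\mathfrak{g}_i$ itself, i.e.\ $(a,b)=(0,0)$, into the pattern, contradicting the statement. More to the point, you only need the $K$-tree rooted at $(1,1)$, where \Cref{lem: CW tree coprimeness} applies verbatim. By \Cref{lem: mutation of modified vectors}, each mutation creates only the new $K$-vector. So every vector of $\widetilde{\mathbf{G}}^{\geq [i]}(B)$ is $\tilde{\mathbf{e}}_{s_0}$, $\tilde{\mathbf{e}}_{t_0}$ (the pairs $(1,0)$ and $(0,-1)$), or some $\tilde{\mathbf{g}}_K^{\mathbf{w}}$ with $\mathbf{w}\ge [i]$. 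With these inputs your plan becomes exactly the paper's argument.
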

We can obtain a similar formula for $c$-vectors in \Cref{cor: coprime number expression for c vectors}.
\par
Lastly, we give an application for the $G$-fan $\Delta(B)$, see \Cref{def: G-fan}. Some papers (e.g. \cite{Cha12, Rea15, FG16}) visualized the $G$-fan corresponding to the Markov quiver as in \Cref{fig: Markov G-fan}, but it is still difficult to describe the total structure. Thanks to \Cref{cor: intro cor: coprime number expression}, we can obtain the fact that the $G$-fan is contained in the half space
\begin{equation}
V=\{x_1\tilde{\mathbf{e}}_1+x_2\tilde{\mathbf{e}}_2+x_3\tilde{\mathbf{e}}_3 \mid x_1+x_2+x_3 > 0\} \cup \{\mathbf{0}\}.
\end{equation}
One natural and important question is to describe its complement $V \setminus |\Delta(B)|$. Let $\mathrm{Com}(\Delta(B))$ be the set of all connected components in $V \setminus |\Delta(B)|$. We decompose this set into the three pairwise disjoint subsets $\mathrm{Com}^i(\Delta(B))$ associated with the initial mutation direction $i=1,2,3$. Motivated by \Cref{cor: intro cor: coprime number expression}, we define
\begin{equation}
\mathfrak{G}_i=\{\mathfrak{g}_i+a\tilde{\mathbf{v}}_{t_0k_0}+b\tilde{\mathbf{v}}_{k_0s_0} \mid (a,b) \in \mathbb{Z}_{\geq 1}^2, \gcd(a,b)=1\} \cup \{\tilde{\mathbf{e}}_{s_0}\}.
\end{equation}
Then, the three sets $\mathfrak{G}_1,\mathfrak{G}_2,\mathfrak{G}_3$ present a decomposition of the set of all modified $g$-vectors. Then, we can give some expressions of $\mathrm{Com}^i(\Delta(B))$.
\begin{theorem}[\Cref{thm: pointwise expression of complements}]
For each $i=1,2,3$, there are the following one-to-one correspondences:
\\
\textup{($a$)} $\varphi_i \colon \mathfrak{G}_i \rightarrow \mathrm{Com}^{i}(\Delta(B))$ given by
\begin{equation}
\varphi_i(\tilde{\mathbf{g}})=\mathcal{C}^{\circ}(\tilde{\mathbf{g}},\tilde{\mathbf{g}}-\mathfrak{g}_i).
\end{equation}
\textup{($b$)} $\rho_i \colon \mathbb{Q}_{\geq 0}  \rightarrow  \mathrm{Com}^i(\Delta(B))$ given by
\begin{equation}
\rho_i\left(\frac{b}{a}\right)=\mathcal{C}^{\circ}(\mathfrak{g}_i+a\tilde{\mathbf{v}}_{t_0k_0}+b\tilde{\mathbf{v}}_{k_0s_0},a\tilde{\mathbf{v}}_{t_0k_0}+b\tilde{\mathbf{v}}_{k_0s_0}),
\end{equation}
where $\frac{b}{a} \in \mathbb{Q}_{\geq 0}$ is a irreducible fraction, that is, $a \in \mathbb{Z}_{\geq 1}$, $b \in \mathbb{Z}_{\geq 0}$, and $\gcd(a,b)=1$.
\end{theorem}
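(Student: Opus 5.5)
We prove the statement by reducing it to the branch recursion (\Cref{thm: recursive formula in a branch}), the coprime parametrization (\Cref{cor: intro cor: coprime number expression}), and the Calkin--Wilf-type description of the coefficient pairs. Throughout we write $\mathcal{C}^{\circ}(\mathbf{x},\mathbf{y})$ for the relative interior of the cone spanned by $\mathbf{x},\mathbf{y}$. Once (a) is established, (b) follows formally. Setting $\tilde{\mathbf{g}}=\mathfrak{g}_i+a\tilde{\mathbf{v}}_{t_0k_0}+b\tilde{\mathbf{v}}_{k_0s_0}$, we have $\rho_i(b/a)=\varphi_i(\tilde{\mathbf{g}})$, because $\tilde{\mathbf{g}}-\mathfrak{g}_i=a\tilde{\mathbf{v}}_{t_0k_0}+b\tilde{\mathbf{v}}_{k_0s_0}$. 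Irreducible fractions $b/a$ with $a,b\ge1$ correspond bijectively to the pairs appearing in $\mathfrak{G}_i$. The remaining fraction $0/1$ gives $(a,b)=(1,0)$, which \Cref{cor: intro cor: coprime number expression} identifies with $\tilde{\mathbf{e}}_{s_0}$. Hence $\rho_i=\varphi_i\circ(\text{bijection }\mathbb{Q}_{\ge0}\to\mathfrak{G}_i)$, and everything reduces to (a).

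For (a), the plan is to locate the gaps of $|\Delta(B)|$ inside the part of $V$ swept by the chambers $\mathcal{C}(\tilde{\mathbf{g}}^{\mathbf{v}}_1,\tilde{\mathbf{g}}^{\mathbf{v}}_2,\tilde{\mathbf{g}}^{\mathbf{v}}_3)$ with $\mathbf{v}\ge[i]$; that part is what defines $\mathrm{Com}^i$. By \Cref{thm: intro inner isomorphisms between subfans in the Markov case}, every branch $\mathcal{T}^{\ge\mathbf{w}}$ is a linear copy of any other branch via $\Glinearmap$, and these maps fix $\mathfrak{g}_i$. Using the recursive formula, each chamber in a branch is $\mathcal{C}(\mathfrak{g}_i+\text{lattice combinations of }\tilde{\mathbf{v}}_{SK}^{\mathbf{w}},\tilde{\mathbf{v}}_{TK}^{\mathbf{w}})$.

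Consider the affine plane through $\mathfrak{g}_i$ spanned by the directions $\tilde{\mathbf{v}}_{t_0k_0},\tilde{\mathbf{v}}_{k_0s_0}$. The chambers of the $[i]$-part meet this plane in triangles whose vertices are indexed by the coefficient pairs $(q_{M;S}^X,q_{M;T}^X)$. By the recursion \eqref{eq: intro generating rule}, the vertex triples of adjacent chambers behave exactly like Farey/Calkin--Wilf neighbors. Each mutation replaces one $g$-vector by the ``mediant'' of the other two, while the third direction stays pinned at $\mathfrak{g}_i$ in the limit. Every vector $\tilde{\mathbf{g}}\in\mathfrak{G}_i$ is a vertex at which infinitely many chambers accumulate. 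These are the chambers along the trunk starting at the chamber that first introduces $\tilde{\mathbf{g}}$, namely the $S^n$-sequence (respectively $T^n$). The other generators of these chambers are $\tilde{\mathbf{g}}+n(\tilde{\mathbf{g}}-\mathfrak{g}_i)+\text{bounded}$, so after normalization they converge to the ray of $\tilde{\mathbf{g}}-\mathfrak{g}_i$.

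We therefore show that the two-dimensional cone $\mathcal{C}(\tilde{\mathbf{g}},\tilde{\mathbf{g}}-\mathfrak{g}_i)$ is a limiting wall. The fan approaches it from both sides, through the two sequences $\tilde{\mathbf{g}}+n\,\tilde{\mathbf{u}}_\pm$ coming from the two neighbors of $\tilde{\mathbf{g}}$ in the Calkin--Wilf tree, but never covers its relative interior. This gives well-definedness of $\varphi_i$ as a connected component of $V\setminus|\Delta(B)|$. The component is literally this relatively open $2$-cone, since it is squeezed between chambers. By the linear isomorphisms $\Glinearmap_{\mathbf{w}}^{X}$ it suffices to verify this for a single representative $\tilde{\mathbf{g}}$, such as the base case $\tilde{\mathbf{e}}_{s_0}$ on the trunk, and transport it.

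Injectivity follows because distinct coprime pairs $(a,b)$ give distinct rays $\tilde{\mathbf{g}}-\mathfrak{g}_i$ in the plane, so the resulting cones are disjoint. Surjectivity is the main obstacle. We must show that every point of the $[i]$-region outside $|\Delta(B)|$ lies on one of these cones, that is, that the union of chambers together with the limiting walls exhausts the region.

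The first approach to surjectivity is a nested-triangle argument in the affine slice. Starting from the initial chamber, each branch subdivides its triangle via the mediant rule into two subtriangles plus the limiting segment from $\tilde{\mathbf{g}}$ toward $\mathfrak{g}_i$. Consider a point not in any chamber and not on any wall. At every level it determines an infinite path $X\in\{S,T\}^{\mathbb{N}}$ with at least one $T$ infinitely often (and one $S$ infinitely often). Along such a path the Calkin--Wilf growth of $(a^X,b^X)$ makes the nested triangles shrink to a point, since the gaps are governed by determinants equal to $1$. Irrational limits are then covered by chambers in the closure, which yields a contradiction with openness. This contradiction shows the complement consists only of the limiting cones.
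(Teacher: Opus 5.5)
\textbf{Verdict: there is a genuine gap.} The central claim, that the fan never meets the wall, is asserted rather than proved, and your surjectivity argument also fails as written.

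\textbf{What matches the paper.} Your reduction of (b) to (a), the injectivity argument, and the maximality mechanism are as in the paper. The mechanism is that the wall is approached from both sides by one-parameter families of chambers sharing the vertex $\tilde{\mathbf g}$. For $\tilde{\mathbf g}=\tilde{\mathbf g}_K^{\mathbf w}$ these families are $\mathcal C(G^{\mathbf wSTS^m})$ and $\mathcal C(G^{\mathbf wT^2S^m})$, with $\tilde{\mathbf g}$ as the fixed $T$-vertex. They are not an $S^n$- and a $T^n$-sequence from $\mathbf w$: the $T^n$-sequence loses $\tilde{\mathbf g}$ after two steps and tends to an irrational direction.

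\textbf{The main gap: the wall is never shown to avoid $|\Delta(B)|$.} You assert that $|\Delta(B)|$ never meets $\mathcal C^{\circ}(\tilde{\mathbf g},\tilde{\mathbf g}-\mathfrak g_i)$, but you never prove it, even for your base case $\tilde{\mathbf e}_{s_0}$.
\begin{itemize}
\item Transport cannot spread it. $\Glinearmap_{\mathbf w}^{X}$ carries the subfan $\Delta^{\ge\mathbf w}(B)$ onto $\Delta^{\ge\mathbf wX}(B)$, but it does not preserve $|\Delta(B)|$. For instance, $\Glinearmap_{[i]T}^{S}$ sends the chamber $\mathcal C(G^{[s_0]})=\mathcal C(2\tilde{\mathbf e}_{t_0}-\tilde{\mathbf e}_{s_0},\tilde{\mathbf e}_{t_0},\tilde{\mathbf e}_{k_0})$ onto $\mathcal C(\tilde{\mathbf e}_{t_0},\tilde{\mathbf e}_{s_0},\tilde{\mathbf e}_{k_0}+2\tilde{\mathbf e}_{s_0}-2\tilde{\mathbf e}_{t_0})$, which contains points of $F_i$. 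So ``being a component of $V\setminus|\Delta(B)|$'' is not a transportable property.
\item The missing ingredient is the upper bound for subfans. From $q^X_{K;S},q^X_{K;T}\ge 1$ one gets $|\Delta^{\ge\mathbf w}(B)|\subset\mathcal U^{\mathbf w}$. The disjoint decomposition $\mathcal U^{\mathbf w}=\mathcal C(G^{\mathbf w})\sqcup\mathcal U_\circ^{\mathbf wS}\sqcup\mathcal U_\circ^{\mathbf wT}\sqcup\mathcal C^{\circ}(\tilde{\mathbf g}_K^{\mathbf w},\tilde{\mathbf g}_K^{\mathbf w}-\mathfrak g_i)$ then shows the wall misses $|\Delta^{\ge\mathbf w}(B)|$.
\item This local half also follows from your own recursion. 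In coordinates $(a,b)$ with respect to $\tilde{\mathbf v}_{SK}^{\mathbf w},\tilde{\mathbf v}_{TK}^{\mathbf w}$, every vertex of a chamber in $\Delta^{\ge\mathbf wS}(B)$ satisfies $a\ge b$, with equality only at $(1,1)$; symmetrically $b\ge a$ for $\mathbf wT$.
\item Excluding all other chambers needs global separation: the nesting and disjointness of these pieces along the ancestors of $\mathbf w$, the fact that $\mathcal U^{[i]S^mT}$ and $\mathcal U^{[i]S^nT}$ meet at most in a ray $\mathcal C(\tilde{\mathbf g}_K^{[i]S^n})$, and $\mathfrak D_i\cap\mathfrak D_j=\{\mathbf 0\}$.
\item The walls $F_i$ and $\varphi_i(\tilde{\mathbf g}_K^{[i]S^n})$ border the trunk of $[t_0]$ or two different maximal branches. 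They need this separation directly, which is why the paper treats them as separate cases.
\end{itemize}

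\textbf{The surjectivity argument fails as written.}
\begin{itemize}
\item The nested regions are the unbounded sets $\mathcal U_\circ^{\mathbf wX}$, not triangles shrinking to a point.
\item Nothing forces a path to contain $S$ infinitely often; both eventually-$S^\infty$ and eventually-$T^\infty$ paths occur.
\item ``Covered by chambers in the closure, contradicting openness'' fails. $|\Delta(B)|$ is not closed, since the walls lie in its closure, and $V\setminus|\Delta(B)|$ is not open.
\end{itemize}
What is true, and suffices, is that $\bigcap_n\mathcal U_\circ^{\mathbf wM_1\cdots M_n}=\emptyset$ for every infinite word $M_1M_2\cdots$. In $H$, write $\mathcal U_\circ^{\mathbf u}=\{\mathfrak g_i+\alpha u_1+\beta u_2\mid \alpha,\beta>0,\ \alpha+\beta>1\}$ with $u_1=\tilde{\mathbf v}_{SK}^{\mathbf u}$ and $u_2=\tilde{\mathbf v}_{TK}^{\mathbf u}$.
\begin{itemize}
\item An eventually-$S^\infty$ word fixes $u_1$ and replaces $u_2$ by $u_2+nu_1$. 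A common point would then have $u_1$-coefficient $\alpha_0-n\beta<0$ for large $n$.
\item Any other word sends both $u_1$ and $u_2$ to infinity, which is incompatible with $\alpha+\beta>1$ at a fixed point.
\end{itemize}
Combine this with the decomposition of $\mathcal U^{\mathbf w}$, and with $\mathfrak D_i$ being the union of the $\mathcal U^{[i]S^nT}$, $F_i$ and the trunk walls. Then every point of $\mathfrak D_i$ lies in a chamber or a wall at a finite stage. The paper leaves this exhaustion implicit, so this is where your proposal could add something, but only once the upper-bound decomposition is in hand.
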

This theorem can be summarized as \Cref{fig: all complements}. We also present a recursive expression to enumerate $\mathrm{Com}^i(\Delta(B))$. Let
\begin{equation}
F_i=\varphi_i(\tilde{\mathbf{e}}_{s_0})=\rho_i(0)=\mathcal{C}^{\circ}(\tilde{\mathbf{e}}_{s_0},\tilde{\mathbf{e}}_{k_0}-\tilde{\mathbf{e}}_{t_0}).
\end{equation}
Thanks to \Cref{thm: intro inner isomorphisms between subfans in the Markov case}, we can obtain all the elements in $\mathrm{Com}^i(\Delta(B))$ as follows.
\begin{theorem}[\Cref{thm: intro inner isomorphisms between subfans in the Markov case}]
Each element in $\mathrm{Com}^i(\Delta(B))$ is obtained by applying three linear maps $\Glinearmap_{i,1}$, $\Glinearmap_{[i]T}^{S}$, and $\Glinearmap_{[i]T}^{T}$ to $F_i$ as in \Cref{fig: recursive expression of complements}.
\end{theorem}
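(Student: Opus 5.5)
We will derive the statement from the fractal isomorphisms of \Cref{thm: intro inner isomorphisms between subfans in the Markov case} together with the pointwise parametrization of $\mathrm{Com}^i(\Delta(B))$ by $\mathfrak{G}_i$ (equivalently by $\mathbb{Q}_{\geq 0}$) given in the preceding theorem. Through the bijection $\varphi_i$, it suffices to show two things. First, each of the three maps sends components of the form $\varphi_i(\tilde{\mathbf{g}})=\mathcal{C}^{\circ}(\tilde{\mathbf{g}},\tilde{\mathbf{g}}-\mathfrak{g}_i)$ to components of the same form. Second, starting from $\tilde{\mathbf{e}}_{s_0}$, repeated application of these maps reaches every element of $\mathfrak{G}_i$ exactly once, following the tree in \Cref{fig: recursive expression of complements}.

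\textbf{Step 1.} Linearity gives $\Psi(\mathcal{C}^{\circ}(\tilde{\mathbf{g}},\tilde{\mathbf{g}}-\mathfrak{g}_i))=\mathcal{C}^{\circ}(\Psi\tilde{\mathbf{g}},\Psi\tilde{\mathbf{g}}-\Psi\mathfrak{g}_i)$. Since $\mathfrak{g}_i$ is fixed by $\Glinearmap_{[i]T}^{X}$ for all $X$ (the fixed-point property from \eqref{eq: fixed points of mutations}), the maps $\Glinearmap_{[i]T}^{S}$ and $\Glinearmap_{[i]T}^{T}$ send $\varphi_i(\tilde{\mathbf{g}})$ to $\varphi_i(\Psi\tilde{\mathbf{g}})$, provided $\Psi\tilde{\mathbf{g}}\in\mathfrak{G}_i$. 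For this we use the branch recursive formula, written in the basis $\mathfrak{g}_i,\tilde{\mathbf{v}}_{SK}^{[i]T},\tilde{\mathbf{v}}_{TK}^{[i]T}$. In that basis $\Glinearmap_{[i]T}^{S}$ and $\Glinearmap_{[i]T}^{T}$ act on coefficient pairs by $(a,b)\mapsto(a+b,b)$ and $(a,b)\mapsto(b,a+b)$. Both maps preserve coprimality and positivity.

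\textbf{Step 2.} The first map $\Glinearmap_{i,1}$ is presumably the trunk-to-branch map, e.g.\ $\Glinearmap_{[i]}^{[i]T}$ or one relating $[i]S^n$ to $[i]S^{n+1}$. It should generate the components $\rho_i(b/a)$ lying over the trunk, namely those whose $g$-vectors involve $\tilde{\mathbf{e}}_{s_0}$ and the pairs $(a,1)$, starting from $F_i$. We will check that it fixes $\mathfrak{g}_i$, or at least maps the cone $F_i$ correctly, by computing its action on the generators $\tilde{\mathbf{e}}_{s_0}$ and $\tilde{\mathbf{e}}_{k_0}-\tilde{\mathbf{e}}_{t_0}$ with \Cref{lem: mutation of modified vectors}.

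\textbf{Step 3.} Surjectivity and injectivity reduce to the Calkin--Wilf-type fact that every coprime pair $(a,b)$ with $a,b\geq1$ is obtained uniquely from a root pair by the moves $(a,b)\mapsto(a+b,b)$ and $(a,b)\mapsto(b,a+b)$. We prove this by Euclidean descent: subtract the smaller coordinate from the larger one, swapping when needed. The trunk handles the remaining boundary pairs. Combined with the bijectivity of $\varphi_i$, this gives that every element of $\mathrm{Com}^i(\Delta(B))$ arises, each exactly once.

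\textbf{Main obstacle.} The delicate step is matching the coefficient recursion of the branch theorem to the maps as they act on $\mathfrak{G}_i$ in the fixed basis $\tilde{\mathbf{v}}_{t_0k_0},\tilde{\mathbf{v}}_{k_0s_0}$. The branch theorem is stated relative to $\tilde{\mathbf{v}}^{\mathbf{w}}$ at $\mathbf{w}=[i]T$, so we must compute the change of basis between these two pairs of vectors. We also need to confirm that the base cases $\tilde{\mathbf{e}}_{s_0}$ and $\tilde{\mathbf{e}}_{t_0}$ are placed correctly by $\Glinearmap_{i,1}$. We would first compute $\tilde{\mathbf{g}}^{[i]T}_{M}$ explicitly from \Cref{tab: List of initial indices}.
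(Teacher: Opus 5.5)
Your plan has a genuine gap: it never reaches the components lying in the maximal branches $\mathcal{T}^{\geq [i]S^nT}$ with $n\geq 1$. Carry out the change of basis you postpone. You get $\tilde{\mathbf{v}}_{SK}^{[i]T}=\tilde{\mathbf{v}}_{t_0k_0}+\tilde{\mathbf{v}}_{k_0s_0}$ and $\tilde{\mathbf{v}}_{TK}^{[i]T}=\tilde{\mathbf{v}}_{t_0k_0}$, so a branch pair $(\alpha,\beta)$ corresponds to $(a,b)=(\alpha+\beta,\alpha)$ in the parametrization of $\mathfrak{G}_i$. Hence the Calkin--Wilf moves of $\Psi_{[i]T}^{S},\Psi_{[i]T}^{T}$, started from $F_i$ (which is $(\alpha,\beta)=(0,1)$), produce exactly the $\rho_i(b/a)$ with $0\leq b/a<1$. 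The trunk gives only $b/a\in\mathbb{Z}_{\geq 1}$; its pairs are $(a,b)=(1,n+1)$, not $(a,1)$. So every $\rho_i(b/a)$ with $b/a>1$ non-integral, e.g.\ $\rho_i(3/2)$, is left over, and the sentence ``the trunk handles the remaining boundary pairs'' is false. You also cannot reach these by applying $\Psi_{[i]T}^{S},\Psi_{[i]T}^{T}$ outside the $[i]T$-branch, because they do not preserve $\mathfrak{G}_i$ there. For example, $\Psi_{[i]T}^{S}$ sends $\tilde{\mathbf{g}}_{K}^{[i]S}=\mathfrak{g}_i+\tilde{\mathbf{v}}_{t_0k_0}+2\tilde{\mathbf{v}}_{k_0s_0}$ to $\mathfrak{g}_i+\tilde{\mathbf{v}}_{k_0s_0}\notin\mathfrak{G}_i$.

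The missing idea is the correct identification and use of $\Psi_{i,1}$. It is not $\Psi_{[i]}^{[i]T}$: since $[i]$ lies in the trunk and $[i]T$ in a branch, that pair is not admissible and the fractal theorem does not apply to it. Instead $\Psi_{i,1}=\Psi_{[i]X}^{[i]SX}$ for every $X\in\mathcal{M}$, and $(\Psi_{i,1})^n=\Psi_{i,n}=\Psi_{[i]T}^{[i]S^nT}$ by \Cref{lem: insert S mutations}. From its matrix, it fixes $\mathfrak{g}_i$ and acts by $(a,b)\mapsto(a,a+b)$, i.e.\ it shifts slopes by $1$. Checking only that it ``maps $F_i$ correctly'' would not suffice; you need both that it fixes $\mathfrak{g}_i$ and that it intertwines the subpatterns. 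Since $\Psi_{i,1}(\tilde{\mathbf{e}}_{s_0})=\Psi_{[i]}^{[i]S}(\tilde{\mathbf{g}}_{S}^{[i]})=\tilde{\mathbf{g}}_{S}^{[i]S}=\tilde{\mathbf{g}}_{K}^{[i]}$, you get $(\Psi_{i,1})^n(F_i)=\varphi([i]S^{n-1})$. Moreover $\Psi_{[i]S^nT}^{M}=\Psi_{i,n}\Psi_{[i]T}^{M}\Psi_{i,-n}$ and $\varphi([i]S^nT)=\Psi_{i,n}\varphi([i]T)$. Together with $\Psi_{\mathbf{w}}^{M}(\varphi(\mathbf{w}X))=\varphi(\mathbf{w}MX)$ this yields
\[
\varphi([i]S^nTM_1\cdots M_r)=(\Psi_{i,1})^n\Psi_{[i]T}^{M_1}\cdots\Psi_{[i]T}^{M_r}\Psi_{[i]T}^{T}(F_i),
\]
which is the paper's argument. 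Once you have this, your Step~3 is unnecessary. Indeed $\mathrm{Com}^i(\Delta(B))=\{\varphi(\mathbf{w})\mid \mathbf{w}\geq[i]\}\cup\{F_i\}$, every $\mathbf{w}\geq[i]$ is $[i]S^n$ or $[i]S^nTM_1\cdots M_r$, and the fractal theorem sends $g$-vectors to $g$-vectors directly. So no Euclidean descent or coprimality check is needed.
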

\subsection{Structure of this paper}
This paper is organized as follows. In \Cref{sec: tropical signs}, we recall the recursions of tropical signs derived in \cite{AC25b}, and introduce some basic notations. In \Cref{sec: fractal structure}, we prove \Cref{thm: inner isomorphisms between subfans in the Markov case} concerning about the fractal structures of $C$- and $G$-patterns. In \Cref{sec: change of basis}, we introduce some bases that are compatible with the linear maps introduced by \Cref{thm: inner isomorphisms between subfans in the Markov case}. In \Cref{sec: formulas for c g vectors}, we show some formulas to express $c$- and $g$-vectors. In \Cref{sec: combinatorial method for the G fan}, we recall the definition of $G$-fans, and explain how we draw it in the class we focus on. In \Cref{sec: upper bounds}, we introduce some upper bounds of the $G$-fan. In \Cref{sec: support of the G fan}, we prove \Cref{thm: pointwise expression of complements} and \Cref{thm: recursive expression of complements} concerning about the shape of the $G$-fan. In the last \Cref{sec: open problem}, we further discuss some important open problems arising from this work.

\section{Simplification of mutations}\label{sec: tropical signs}
In this section, we exhibit the simple mutation recursion formulas for the $B$-invariant type exchange matrices, which is a special class of general cluster-cyclic type studied in \cite{AC25b, AC26}.
\subsection{Mutations of tropical signs}
It is known that every $c$-vector is sign-coherent, that is, each $\mathbf{c}_{j}^{\mathbf{w}}$ belongs to either $\mathbb{R}_{\geq 0}^3 \setminus \{\mathbf{0}\}$ or $\mathbb{R}_{\leq 0}^3 \setminus \{\mathbf{0}\}$. Let $\varepsilon_{j}^{\mathbf{w}} \in \{\pm 1\}$ be the sign of $\mathbf{c}_{j}^{\mathbf{w}}$. Then, for the $B$-invariant type, more generally the cluster-cyclic case, the following simple recursion has already been obtained.
\begin{theorem}[{\cite[Thm.~3.4]{AC25b}}]\label{thm: recursion for tropical signs}
For any $\mathbf{w} \in \mathcal{T} \setminus \{\emptyset\}$, let $k=1,2,3$ be the last index of $\mathbf{w}$.
\\
\textup{($a$)} There exists a unique $s \in \{1,2,3\} \setminus \{k\}$ such that
\begin{equation}
\varepsilon_{s}^{\mathbf{w}} \neq \varepsilon_{k}^{\mathbf{w}},
\quad
\varepsilon_{s}^{\mathbf{w}}b_{ks}^{\mathbf{w}} < 0.
\end{equation}
\textup{($b$)} Let $s \in \{1,2,3\}\setminus\{k\}$ be the above index, and let $t \in \{1,2,3\}\setminus\{k,s\}$ be the other index. Then, we have
\begin{equation}\label{eq: recursion of tropical signs}
(\varepsilon_{k}^{{\bf w}[s]},\varepsilon_{s}^{{\bf w}[s]},\varepsilon_{t}^{{\bf w}[s]})=(-\varepsilon_{k}^{{\bf w}},-\varepsilon_{s}^{{\bf w}},\varepsilon_{t}^{{\bf w}}),
\quad
(\varepsilon_{k}^{{\bf w}[t]},\varepsilon_{s}^{{\bf w}[t]},\varepsilon_{t}^{{\bf w}[t]})=(\varepsilon_{k}^{{\bf w}},\varepsilon_{s}^{{\bf w}},-\varepsilon_{t}^{{\bf w}}).
\end{equation}
\end{theorem}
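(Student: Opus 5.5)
The plan is to argue by induction on the length $r=|\mathbf{w}|$ and to prove (a) and (b) together. The inductive hypothesis has to be strengthened so that the sign rule can propagate. I work with the signed form of the $c$-vector mutation, which follows from \eqref{eq: recursion of C} and sign-coherence:
\[
\mathbf{c}_k^{\mathbf{w}[k]}=-\mathbf{c}_k^{\mathbf{w}},\qquad
\mathbf{c}_j^{\mathbf{w}[k]}=\mathbf{c}_j^{\mathbf{w}}+[\varepsilon_k^{\mathbf{w}}b_{kj}^{\mathbf{w}}]_+\mathbf{c}_k^{\mathbf{w}}\quad (j\neq k).
\]
Two structural facts about $B^{\mathbf{w}}=(-1)^{|\mathbf{w}|}B$ will be used throughout. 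First, every off-diagonal entry is nonzero and $\operatorname{sign} b_{sk}^{\mathbf{w}}=-\operatorname{sign} b_{ks}^{\mathbf{w}}$. Second, in each row $k$ the two off-diagonal entries have opposite signs, because of the cyclic pattern in \eqref{eq: invariant type}.

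For (a), fix $\mathbf{w}$ ending in $k$ and let $s,t$ be the other two indices. The uniqueness count reduces to a case analysis on the signs. If $\varepsilon_s^{\mathbf{w}}=\varepsilon_t^{\mathbf{w}}=-\varepsilon_k^{\mathbf{w}}$, then $\varepsilon_s b_{ks}$ and $\varepsilon_t b_{kt}$ have opposite signs, so exactly one index qualifies. If exactly one of $s,t$ has sign different from $\varepsilon_k$, then exactly one index qualifies provided that index satisfies $\varepsilon b_{k\cdot}<0$. If all three signs agree, no index qualifies. The invariant to carry is therefore the following.
\begin{itemize}
\item[(I)] For $\mathbf{w}$ ending in $k$, the signs $\varepsilon^{\mathbf{w}}$ are not all equal.
\item[(II)] If exactly one $j\neq k$ has $\varepsilon_j^{\mathbf{w}}\neq\varepsilon_k^{\mathbf{w}}$, then $\varepsilon_j^{\mathbf{w}}b_{kj}^{\mathbf{w}}<0$.
\end{itemize}
The base case $|\mathbf{w}|=1$ is a direct check: after $\mu_i$ from the initial seed the signs are $-1$ at $i$ and $+1$ elsewhere.

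For (b), mutate at $s$. The sign $\varepsilon_s$ flips trivially. Since $\varepsilon_s b_{ks}<0$, we get $\varepsilon_s b_{sk}>0$, so $\mathbf{c}_k^{\mathbf{w}[s]}=\mathbf{c}_k^{\mathbf{w}}+|b_{sk}|\mathbf{c}_s^{\mathbf{w}}$. This is a combination of vectors of opposite signs, and the claim is that the $\mathbf{c}_s$ term dominates. For $t$, the coefficient $[\varepsilon_s b_{st}]_+$ is zero or positive according to the row-sign pattern, and the claim is that $\mathbf{c}_t$ keeps its sign. Mutating at $t$ is symmetric: we need $\varepsilon_t b_{tk}\le 0$ or domination by $\mathbf{c}_k$, and likewise for $s$. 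Once (b) holds at $\mathbf{w}$, I check that (I) and (II) hold at $\mathbf{w}[s]$ and $\mathbf{w}[t]$ (whose last indices are $s$ and $t$). This is a finite sign bookkeeping using $B^{\mathbf{w}[\cdot]}=-B^{\mathbf{w}}$.

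The main obstacle is the domination inequalities: showing that $\mathbf{c}_k+|b_{sk}|\mathbf{c}_s$ has the sign of $\mathbf{c}_s$, and the analogous statement for $t$. Sign-coherence only says the result has some sign, so a quantitative invariant is needed. My first attempt is to pass to the modified vectors $\tilde{\mathbf{c}}$, where the coefficients become $\sqrt{d_sd_k}\,|b_{sk}|/d_k$-type numbers all equal to $2$ by $pp'=qq'=rr'=4$. I would then add to the induction a linear-functional inequality, for instance $\ell(\tilde{\mathbf{c}}_s^{\mathbf{w}})\ge \ell(\tilde{\mathbf{c}}_k^{\mathbf{w}})$ in absolute value, where $\ell$ is the sum of coordinates in the basis $\tilde{\mathbf{e}}_j$. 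The most recently mutated vector $\tilde{\mathbf{c}}_k^{\mathbf{w}}$ would be the smallest one, in the spirit of Markov-tree monotonicity. With coefficient $2$ this gives $|\ell(\tilde{\mathbf{c}}_k+2\tilde{\mathbf{c}}_s)|\ge|\ell(\tilde{\mathbf{c}}_s)|>0$ with the sign of $\tilde{\mathbf{c}}_s$. Sign-coherence then transfers the sign of $\ell$ to the whole vector, and the inequality reproduces itself after mutation. If a single functional does not propagate, the fallback is the rank-$2$ affine subpattern on $\{k,s\}$, where $|b_{ks}b_{sk}|=4$: there the alternating $c$-vectors grow linearly with explicitly known signs, and that behaviour supplies the needed inequality.
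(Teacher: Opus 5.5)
The paper does not prove this statement; it quotes it from \cite{AC25b}, where it is proved for the whole rank-$3$ cluster-cyclic class. Your argument therefore has to stand on its own, and it does. Your first attempt with $\ell$ already closes the induction, so the rank-$2$ fallback is unnecessary. Two points you leave open or misstate resolve by sign bookkeeping alone (write $b=b^{\mathbf w}$).

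\emph{The $s$-mutation.} The coefficient $[\varepsilon_s b_{st}]_+$ is always $0$, because $b_{st}$ and $b_{sk}$ have opposite signs and $\varepsilon_s b_{sk}>0$. So $\tilde{\mathbf c}_t$ is untouched.

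\emph{The $t$-mutation.} No domination is ever needed. If $\varepsilon_t=\varepsilon_s$, the opposite signs in row $k$ give $\varepsilon_t b_{kt}>0$, hence $\varepsilon_t b_{tk}<0$, and only $\tilde{\mathbf c}_s$ changes, to $\tilde{\mathbf c}_s+2\tilde{\mathbf c}_t$. If $\varepsilon_t=\varepsilon_k$, the same reasoning gives $\varepsilon_t b_{tk}>0$, and only $\tilde{\mathbf c}_k$ changes, to $\tilde{\mathbf c}_k+2\tilde{\mathbf c}_t$. In both cases the changed vector has the sign of $\tilde{\mathbf c}_t$.

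The same computations verify (I) and (II) at the children and identify the new distinguished index. It is $k$ at $\mathbf w[s]$, and at $\mathbf w[t]$ it is $s$, resp.\ $k$, in the two cases above. These are exactly the trunk and branch alternatives of the paper's index recursion.

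\emph{Propagation of your invariant.} The inequality $|\ell(\tilde{\mathbf c}_s^{\mathbf w})|\ge|\ell(\tilde{\mathbf c}_k^{\mathbf w})|$ holds at $[i]$, where it reads $3\ge 1$, and it passes to all children:
\begin{itemize}
\item At $\mathbf w[s]$ it is your own computation $2y-x\ge y$, with $x=|\ell(\tilde{\mathbf c}_k)|\le y=|\ell(\tilde{\mathbf c}_s)|$.
\item At $\mathbf w[t]$ the new pair is $(\tilde{\mathbf c}_j+2\tilde{\mathbf c}_t,\,-\tilde{\mathbf c}_t)$, with $\tilde{\mathbf c}_j$ of the same sign as $\tilde{\mathbf c}_t$, so the inequality is immediate.
\end{itemize}
Your heuristic that the last mutated vector is the smallest of the three is false: at $[i]S$ one has $|\ell(\tilde{\mathbf c}_K^{[i]S})|=3>1=|\ell(\tilde{\mathbf c}_T^{[i]S})|$. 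Only the comparison with the $S$-vector is used, so this does no harm.

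\emph{Sign-coherence.} You use it as a black box. The statement presupposes it, but for non-integer $B$ it is not supplied by the integer theory. If you want independence, run the same inequality coordinatewise in the basis $\tilde{\mathbf e}_1,\tilde{\mathbf e}_2,\tilde{\mathbf e}_3$. At $[i]$ this reads $(2,1,0)\ge(1,0,0)$ in the coordinates $k_0,s_0,t_0$. The same three computations show that it propagates and that every new $c$-vector is sign-coherent.

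Compared with citing \cite{AC25b}, your route gives a short self-contained proof for the $B$-invariant class. It uses only the sign pattern of $B^{\mathbf w}=\pm B$ and the fact that all modified coefficients equal $2$. In the same stroke it produces the index recursion and the modified mutation rules that the paper imports from \cite{AC25b,AC26}.
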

This theorem provides the recursive definition of tropical signs independent of $c$-vectors. By this recursion and the following initial conditions, all the tropical signs $\varepsilon_{j}^{\mathbf{w}}$ are uniquely determined.
\begin{equation}\label{eq: initial conditions for tropical signs}
\varepsilon_{j}^{\emptyset}=1,\quad
\varepsilon_{j}^{[i]}=\begin{cases}
-1 & \textup{if $i=j$},\\
1 & \textup{if $i \neq j$}.
\end{cases}
\end{equation}
For each $\mathbf{w} \in \mathcal{T}\setminus\{\emptyset\}$, write $k=K(\mathbf{w})$, $s=S(\mathbf{w})$, and $t=T(\mathbf{w})$, where $k$, $s$, $t$ are the ones in \Cref{thm: recursion for tropical signs}. For the later study, we give a simpler recursion of these indices. Note that the initial indices $k_0=K([i])=i$, $s_0=S([i])$, and $t_0=T([i])$ are given in \Cref{tab: List of initial indices}.
\begin{table}[htbp]
\captionsetup{skip=8pt}
\centering
\begin{tabular}{c||ccc|ccc}
$B_{t_0}$
&
\multicolumn{3}{c|}{$\left(\begin{smallmatrix}
0 & - & +\\
+ & 0 & -\\
- & + & 0
\end{smallmatrix}\right)$}
& 
\multicolumn{3}{c}{$\left(\begin{smallmatrix}
0 & + & -\\
- & 0 & +\\
+ & - & 0
\end{smallmatrix}\right)$}
\\
$i$ & $1$ & $2$ & $3$ & $1$ & $2$ & $3$
\\
\hline
$(k_0,s_0,t_0)$ 
&
$(1,3,2)$
&
$(2,1,3)$
&
$(3,2,1)$
&
$(1,2,3)$
&
$(2,3,1)$
&
$(3,2,1)$
\end{tabular}
\caption{The list of $k_0=K([i])$, $s_0=S([i])$, $t_0=T([i])$.}
\label{tab: List of initial indices}
\end{table}
\par
Let $\mathcal{M}$ be the free monoid generated by two letters $S$ and $T$ with the identity element $1_{\mathcal{M}}$. We introduce the right monoid action of $\mathcal{M}$ on $\mathcal{T} \setminus \{\emptyset\}$ by
\begin{equation}
\mathbf{w}S=\mathbf{w}[S(\mathbf{w})],
\quad
\mathbf{w}T=\mathbf{w}[T(\mathbf{w})].
\end{equation}
Fix one initial mutation direction $i=1,2,3$, and the subset $\mathcal{T}^{\geq [i]}$ is called the {\em subtree} in direction $i$. We define the subset
\begin{equation}
\mathcal{T}^{<[i]S^{\infty}}=\{[i]S^n \mid n \in \mathbb{Z}_{\geq 0}\},
\end{equation}
and call it the {\em trunk} of $\mathcal{T}^{\geq [i]}$. For each $X \in \mathcal{M}$ containing at least one letter $T$, the subset $\mathcal{T}^{\geq [i]X}$ is called a {\em branch} of $\mathcal{T}^{\geq [i]}$. In particular, $\mathcal{T}^{\geq [i]S^nT}$ is called the {\em $n$th maximal branch} of $\mathcal{T}^{\geq [i]}$. Then, depending on the trunk and branches, we obtain the following recursion.
\begin{lemma}[{\cite[Lem.~6.4]{AC25b}}]\label{lem: K S T mutation formula}
The following recurrence formulas of indices hold:
\begin{align}
(K({\bf w}S), S({\bf w}S), T({\bf w}S))&=(S({\bf w}), K({\bf w}), T({\bf w})), \label{eq: K S T mutation formula by S}
\\
(K({\bf w}T), S({\bf w}T), T({\bf w}T))&=
\begin{cases}
(T({\bf w}),S({\bf w}), K({\bf w})) & \textup{if ${\bf w}$ is in a trunk},\\
(T({\bf w}),K({\bf w}),S({\bf w})) & \textup{if ${\bf w}$ is in a branch}.
\end{cases}
\end{align}
\end{lemma}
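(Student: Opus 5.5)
We prove the lemma directly from \Cref{thm: recursion for tropical signs}, applied at $\mathbf{w}$ and at its one-step extension. Put $k=K(\mathbf{w})$, $s=S(\mathbf{w})$, $t=T(\mathbf{w})$. For $\mathbf{u}=\mathbf{w}[s]$ or $\mathbf{w}[t]$ the last index is $s$ or $t$, so $K(\mathbf{w}S)=s$ and $K(\mathbf{w}T)=t$ hold by definition. It remains to find the unique index in \Cref{thm: recursion for tropical signs}(a) for $\mathbf{u}$. That index is characterized by two conditions: its tropical sign differs from that of the last index, and its sign times $b^{\mathbf{u}}_{K(\mathbf{u}),\cdot}$ is negative. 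Both are computable from \eqref{eq: recursion of tropical signs} together with $B^{\mathbf{u}}=-B^{\mathbf{w}}$.

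\textbf{The case $\mathbf{w}S$.} The new signs are $(-\varepsilon_k,-\varepsilon_s,\varepsilon_t)$, with last index $s$.
\begin{itemize}
\item Candidate $k$: we have $-\varepsilon_k\neq-\varepsilon_s$, since $\varepsilon_k\neq\varepsilon_s$. Also $(-\varepsilon_k)(-b^{\mathbf{w}}_{sk})=\varepsilon_k b^{\mathbf{w}}_{sk}$. Skew-symmetrizability gives $\operatorname{sign} b_{sk}=-\operatorname{sign} b_{ks}$, and $\varepsilon_s b_{ks}<0$ with $\varepsilon_k=-\varepsilon_s$ gives $\varepsilon_k b_{ks}>0$. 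Hence $\varepsilon_k b_{sk}<0$, so $k$ satisfies the condition.
\item Uniqueness is part of \Cref{thm: recursion for tropical signs}(a), so $S(\mathbf{w}S)=k$ and $T(\mathbf{w}S)=t$.
\end{itemize}
This proves \eqref{eq: K S T mutation formula by S}.

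\textbf{The case $\mathbf{w}T$.} The new signs are $(\varepsilon_k,\varepsilon_s,-\varepsilon_t)$, with last index $t$. Since $\varepsilon_k\neq\varepsilon_s$, exactly one of $k,s$ has sign different from $-\varepsilon_t$: it is $k$ if $\varepsilon_t=\varepsilon_k$, and $s$ if $\varepsilon_t=\varepsilon_s$. So the dichotomy is governed by whether $\varepsilon_t^{\mathbf{w}}$ equals $\varepsilon_k^{\mathbf{w}}$ or $\varepsilon_s^{\mathbf{w}}$. The second condition of (a) must then hold automatically for the selected index, because existence is guaranteed by the theorem. Thus we must show the following.

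\textbf{Claim.} $\varepsilon_t^{\mathbf{w}}=\varepsilon_k^{\mathbf{w}}$ on trunks, and $\varepsilon_t^{\mathbf{w}}=\varepsilon_s^{\mathbf{w}}$ on branches.

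We prove the Claim by induction along the tree.
\begin{itemize}
\item \emph{Base.} At $[i]$ the signs are $\varepsilon_{k_0}=-1$ and $\varepsilon_{s_0}=\varepsilon_{t_0}=1$. So $\varepsilon_t=\varepsilon_s$ at $[i]$, which gives $S([i]T)=k_0$ and places $[i]T$ in the branch regime.
\item \emph{Trunk step.} Applying $S$ preserves the pattern: the new $(k,s,t)$ signs are $(-\varepsilon_s,-\varepsilon_k,\varepsilon_t)$, so the relation of $t$ to $\{k,s\}$ is swapped after relabeling. The relevant invariant is therefore which of $\varepsilon_k,\varepsilon_s$ equals $\varepsilon_t$, tracked through the relabelings.
\end{itemize}

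I expect the main obstacle to be pinning down this invariant precisely, since a naive reading suggests it alternates under $S$. I would first check carefully the small cases $[i]S$ and $[i]S^2$ directly against \Cref{tab: List of initial indices}. Then I would identify the correct conserved quantity, namely a statement about the sign of $\varepsilon_t$ relative to the sign of the product $\varepsilon_k\varepsilon_s$ combined with $(-1)^{|\mathbf{w}|}$. Finally I would verify that this quantity is preserved by $S$ and by $T$ within branches, and changes exactly when leaving the trunk via $T$.
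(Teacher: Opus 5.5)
Your $S$-case is correct, and so is your reduction of the $T$-case. Because $\varepsilon_k^{\mathbf{w}}\neq\varepsilon_s^{\mathbf{w}}$, the sign condition in part (a) alone selects $S(\mathbf{w}T)$: it is $k$ if $\varepsilon_t^{\mathbf{w}}=\varepsilon_k^{\mathbf{w}}$, and $s$ if $\varepsilon_t^{\mathbf{w}}=\varepsilon_s^{\mathbf{w}}$. Existence in (a) then makes the second condition automatic.

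The gap is the Claim, which is stated backwards. Write $\varepsilon_M^{\mathbf{w}}=\varepsilon_{M(\mathbf{w})}^{\mathbf{w}}$. The trunk formula $S(\mathbf{w}T)=S(\mathbf{w})$ needs $\varepsilon_T^{\mathbf{w}}=\varepsilon_S^{\mathbf{w}}$ on trunks. The branch formula $S(\mathbf{w}T)=K(\mathbf{w})$ needs $\varepsilon_T^{\mathbf{w}}=\varepsilon_K^{\mathbf{w}}$ on branches. Your Claim, fed into your dichotomy, would prove the two cases of the lemma interchanged. Your own base computation refutes it. The word $[i]$ lies in the trunk and has $\varepsilon_{t_0}^{[i]}=\varepsilon_{s_0}^{[i]}=1\neq\varepsilon_{k_0}^{[i]}$. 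By your rule this gives $S([i]T)=s_0$ and $T([i]T)=k_0$, not $S([i]T)=k_0$ as you wrote; compare $S([1]T)=3=s_0$ in \Cref{fig: example of tropical signs}.

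The inductive step is also not carried out. Your suggestion that $S$ swaps the relation, so that a parity factor $(-1)^{|\mathbf{w}|}$ is needed, is mistaken. Using the relabeling you proved for $S$, the labeled signs after an $S$-step are
$(\varepsilon_K^{\mathbf{w}S},\varepsilon_S^{\mathbf{w}S},\varepsilon_T^{\mathbf{w}S})=(-\varepsilon_S^{\mathbf{w}},-\varepsilon_K^{\mathbf{w}},\varepsilon_T^{\mathbf{w}})$.
Since $\varepsilon_K^{\mathbf{w}}=-\varepsilon_S^{\mathbf{w}}$, this equals $(\varepsilon_K^{\mathbf{w}},\varepsilon_S^{\mathbf{w}},\varepsilon_T^{\mathbf{w}})$, so an $S$-step changes nothing.

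The invariant to prove, simultaneously with the lemma by induction on $|\mathbf{w}|$, is the following. On trunks $(\varepsilon_K^{\mathbf{w}},\varepsilon_S^{\mathbf{w}},\varepsilon_T^{\mathbf{w}})=(-1,1,1)$, and on branches it equals $(\epsilon,-\epsilon,\epsilon)$ for some $\epsilon\in\{\pm1\}$.
\begin{itemize}
\item The base case is $[i]$.
\item $S$-steps preserve the triple.
\item A $T$-step out of the trunk, with relabeling $(T,S,K)$, gives $(-\varepsilon_T^{\mathbf{w}},\varepsilon_S^{\mathbf{w}},\varepsilon_K^{\mathbf{w}})=(-1,1,-1)$.
\item A $T$-step inside a branch, with relabeling $(T,K,S)$, gives $(-\varepsilon_T^{\mathbf{w}},\varepsilon_K^{\mathbf{w}},\varepsilon_S^{\mathbf{w}})=(-\epsilon,\epsilon,-\epsilon)$.
\end{itemize}
At each stage, the invariant at $\mathbf{w}$ produces the correct relabeling at $\mathbf{w}T$ through your dichotomy. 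That relabeling, together with the $S$-relabeling, produces the invariant at $\mathbf{w}S$ and $\mathbf{w}T$. With the Claim corrected and this induction supplied, your argument proves the lemma. The paper itself gives no proof here and only cites \cite[Lem.~6.4]{AC25b}.
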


\begin{example}
After the initial conditions in \eqref{eq: initial conditions for tropical signs} and \Cref{tab: List of initial indices}, we can calculate the tropical signs and indices $K(\mathbf{w})$, $S(\mathbf{w})$, and $T(\mathbf{w})$ based on \eqref{eq: recursion of tropical signs} and \Cref{lem: K S T mutation formula}. See \Cref{fig: example of tropical signs}.
\begin{figure}[htbp]
\centering
\subfile{Example}
\caption{An example of tropical signs.}
\label{fig: example of tropical signs}
\end{figure}
\end{example}

\subsection{Mutations of $c$- and $g$-vectors}
For any $\mathbf{w} \in \mathcal{T} \setminus \{\emptyset\}$ and $M=K,S,T$, we define
\begin{equation}
\tilde{\mathbf{c}}_{M}^{\mathbf{w}}=\tilde{\mathbf{c}}_{M(\mathbf{w})}^{\mathbf{w}},
\quad
\tilde{\mathbf{g}}_{M}^{\mathbf{w}}=\tilde{\mathbf{g}}_{M(\mathbf{w})}^{\mathbf{w}}.
\end{equation}
Based on the fact that $|b^{\mathbf{w}}_{ij}b^{\mathbf{w}}_{ji}|=4$ for any $\mathbf{w}$ and $i\neq j$, their mutation is given as follows.
\begin{lemma}[{\cite[Lem.~3.15,~3.16]{AC26}}]\label{lem: mutation of modified vectors}
Let $\mathbf{w} \in \mathcal{T} \setminus\{\emptyset\}$.
\\
\textup{($a$)} The $S$-mutation is given as follows:
\begin{equation}\label{eq: S mutation of c- g-vectors}
\tilde{\bf c}^{{\bf w}S}_{M}=\begin{cases}
-\tilde{\bf c}^{\bf w}_{S} & M=K,\\
\tilde{\bf c}^{\bf w}_{K}+2\tilde{\bf c}_{S}^{\bf w} & M=S,\\
\tilde{\bf c}^{\bf w}_{T} & M=T,
\end{cases}
\quad
\tilde{\bf g}^{{\bf w}S}_{M}=\begin{cases}
-\tilde{\bf g}_{S}^{\bf w}+2\tilde{\bf g}_{K}^{\bf w} & M=K,\\
\tilde{\bf g}_{K}^{\bf w} & M=S,\\
\tilde{\bf g}_{T}^{\bf w} & M=T.
\end{cases}
\end{equation}
\textup{($b$)} If $\mathbf{w}$ is in a trunk, the $T$-mutation is given as follows:
\begin{equation}\label{eq: T mutation of c- g-vectors in trunks}
\tilde{\bf c}^{{\bf w}T}_{M}=\begin{cases}
-\tilde{\bf c}^{\bf w}_{T} & M=K,\\
\tilde{\bf c}^{\bf w}_{S}+2\tilde{\bf c}_{T}^{\bf w} & M=S,\\
\tilde{\bf c}^{\bf w}_{K} & M=T,
\end{cases}
\quad
\tilde{\bf g}^{{\bf w}T}_{M}=\begin{cases}
-\tilde{\bf g}_{T}^{\bf w}+2\tilde{\bf g}_{S}^{\bf w} & M=K,\\
\tilde{\bf g}_{S}^{\bf w} & M=S,\\
\tilde{\bf g}_{K}^{\bf w} & M=T.
\end{cases}
\end{equation}
\textup{($c$)} If $\mathbf{w}$ is in a branch, the $T$-mutation is given as follows:
\begin{equation}\label{eq: T mutation of c- g-vectors in branches}
\tilde{\bf c}^{{\bf w}T}_{M}=\begin{cases}
-\tilde{\bf c}^{\bf w}_{T} & M=K,\\
\tilde{\bf c}^{\bf w}_{K}+2\tilde{\bf c}_{T}^{\bf w} & M=S,\\
\tilde{\bf c}^{\bf w}_{S} & M=T,
\end{cases}
\quad
\tilde{\bf g}^{{\bf w}T}_{M}=\begin{cases}
-\tilde{\bf g}_{T}^{\bf w}+2\tilde{\bf g}_{K}^{\bf w} & M=K,\\
\tilde{\bf g}_{K}^{\bf w} & M=S,\\
\tilde{\bf g}_{S}^{\bf w} & M=T.
\end{cases}
\end{equation}
\end{lemma}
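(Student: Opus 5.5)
The plan is to derive all three cases from the general sign-coherent form of the mutation rules, which is equivalent to \eqref{eq: recursion of C}:
\begin{equation*}
\mathbf{c}_j^{\mathbf{w}[k]}=\mathbf{c}_j^{\mathbf{w}}+[\varepsilon_k^{\mathbf{w}}b_{kj}^{\mathbf{w}}]_+\mathbf{c}_k^{\mathbf{w}}\ (j\neq k),\qquad
\mathbf{g}_k^{\mathbf{w}[k]}=-\mathbf{g}_k^{\mathbf{w}}+\sum_{l}[-\varepsilon_k^{\mathbf{w}}b_{lk}^{\mathbf{w}}]_+\mathbf{g}_l^{\mathbf{w}}.
\end{equation*}
The $k$th $c$-vector is negated and the other $g$-vectors are unchanged. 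The first point is that the modification by $\sqrt{d_j}$ turns every nonzero coefficient into $2$. Since $d_kb_{kj}=-d_jb_{jk}$, we get $b_{kj}^2d_k/d_j=|b_{kj}b_{jk}|=4$. Hence $|b_{kj}|\sqrt{d_k/d_j}=2$, and likewise for the $g$-coefficients $|b_{lk}|\sqrt{d_l/d_k}=2$. Dividing by $\sqrt{d_j}$ (resp. $\sqrt{d_k}$) gives
\begin{equation*}
\tilde{\mathbf{c}}_j^{\mathbf{w}[k]}=\tilde{\mathbf{c}}_j^{\mathbf{w}}+2\,\delta(\varepsilon_kb_{kj}>0)\,\tilde{\mathbf{c}}_k^{\mathbf{w}},\qquad
\tilde{\mathbf{g}}_k^{\mathbf{w}[k]}=-\tilde{\mathbf{g}}_k^{\mathbf{w}}+2\sum_l\delta(\varepsilon_kb_{lk}<0)\,\tilde{\mathbf{g}}_l^{\mathbf{w}}.
\end{equation*}
Here $\delta(\cdot)$ is $1$ if the condition holds and $0$ otherwise. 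It then remains to decide, for the mutation direction, which of the two other indices carries the $2$.

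For the $S$-mutation, mutate at $s=S(\mathbf{w})$ with $k=K(\mathbf{w})$ and $t=T(\mathbf{w})$. By \Cref{thm: recursion for tropical signs}(a) we have $\varepsilon_sb_{ks}<0$, hence $\varepsilon_sb_{sk}>0$. Because $B^{\mathbf{w}}=\pm B$ has the cyclic sign pattern of \eqref{eq: invariant type}, the arrows $k\to s$ and $s\to t$ have the same orientation, so $\operatorname{sign}b_{st}=\operatorname{sign}b_{ks}$. Therefore $\varepsilon_sb_{st}<0$. This gives $\tilde{\mathbf{c}}_k^{\mathbf{w}S}=\tilde{\mathbf{c}}_k+2\tilde{\mathbf{c}}_s$, $\tilde{\mathbf{c}}_t^{\mathbf{w}S}=\tilde{\mathbf{c}}_t$ and $\tilde{\mathbf{c}}_s^{\mathbf{w}S}=-\tilde{\mathbf{c}}_s$. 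Dually, $-\varepsilon_sb_{ks}>0$ and $-\varepsilon_sb_{ts}<0$ give $\tilde{\mathbf{g}}_s^{\mathbf{w}S}=-\tilde{\mathbf{g}}_s+2\tilde{\mathbf{g}}_k$. Relabelling by \eqref{eq: K S T mutation formula by S}, namely $K(\mathbf{w}S)=s$, $S(\mathbf{w}S)=k$ and $T(\mathbf{w}S)=t$, yields \eqref{eq: S mutation of c- g-vectors}.

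For the $T$-mutation we mutate at $t$. Cyclicity shows that $b_{tk}$ and $b_{ts}$ have opposite signs, so exactly one of $k,s$ receives $+2\tilde{\mathbf{c}}_t$. The same index appears in $\tilde{\mathbf{g}}_t^{\mathbf{w}T}$, since $\operatorname{sign}(-\varepsilon_tb_{jt})=\operatorname{sign}(\varepsilon_tb_{tj})$. The main obstacle is identifying that index, which requires knowing $\varepsilon_t$ relative to $\varepsilon_k$ and to the orientation. I would avoid direct bookkeeping and read it off \Cref{lem: K S T mutation formula}, which is taken as already established.

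At $\mathbf{w}T$ the last index is $t$. By \Cref{thm: recursion for tropical signs}(a) applied at $\mathbf{w}T$, the new $S$-index $j$ satisfies $\varepsilon_j^{\mathbf{w}T}b_{tj}^{\mathbf{w}T}<0$, with signs $\varepsilon_j^{\mathbf{w}T}=\varepsilon_j^{\mathbf{w}}$ and $b^{\mathbf{w}T}=-b^{\mathbf{w}}$. It also satisfies $\varepsilon_j^{\mathbf{w}T}\neq\varepsilon_t^{\mathbf{w}T}=-\varepsilon_t^{\mathbf{w}}$, so $\varepsilon_j^{\mathbf{w}}=\varepsilon_t^{\mathbf{w}}$. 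Combining these gives $\varepsilon_t^{\mathbf{w}}b_{tj}^{\mathbf{w}}>0$. So the index receiving the $2$ is precisely $S(\mathbf{w}T)$, which by \Cref{lem: K S T mutation formula} is $s$ in a trunk and $k$ in a branch.

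With the labels $K(\mathbf{w}T)=t$ and $T(\mathbf{w}T)=k$ (trunk) or $s$ (branch), this gives \eqref{eq: T mutation of c- g-vectors in trunks} and \eqref{eq: T mutation of c- g-vectors in branches}. The same argument applied at $\mathbf{w}S$ gives an independent consistency check of the $S$-case.
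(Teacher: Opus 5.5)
Your proof is correct and follows essentially the route the paper points to. The paper gives no argument of its own: it cites \cite[Lem.~3.15,~3.16]{AC26} and remarks that $|b^{\mathbf{w}}_{ij}b^{\mathbf{w}}_{ji}|=4$, which is exactly your use of $D$ to normalize the sign-coherent mutation coefficients to $2$, followed by reading off where the $2$ sits from \Cref{thm: recursion for tropical signs}~(a) (at $\mathbf{w}$ for the $S$-case, at $\mathbf{w}T$ for the $T$-case) together with \Cref{lem: K S T mutation formula}. The one step you state without justification is that the sign-coherent rule $\mathbf{g}_k^{\mathbf{w}[k]}=-\mathbf{g}_k^{\mathbf{w}}+\sum_l[-\varepsilon_k^{\mathbf{w}}b_{lk}^{\mathbf{w}}]_+\mathbf{g}_l^{\mathbf{w}}$ agrees with the Fomin--Zelevinsky $g$-recursion; this is the standard consequence of sign-coherence and the identity $G^{\mathbf{w}}B^{\mathbf{w}}=BC^{\mathbf{w}}$, so it is not a gap.
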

Thus, after the first mutation $[i]$, we can obtain all modified $c$-, $g$-vectors based on the above rules. From now on, we often fix one initial mutation $i=1,2,3$, and, unless there is a risk of confusion, we always set $k_0=K([i])$, $s_0=S([i])$, and $t_0=T([i])$, which are given by \Cref{tab: List of initial indices} explicitly. Then, the first modified $c$-, $g$-vectors are given as follows.
\begin{equation}\label{eq: initial setup}
\left\{\begin{aligned}
\tilde{\bf c}_{K}^{[i]}&=-\tilde{\bf e}_{k_0},
\\
\tilde{\bf c}_{S}^{[i]}&=\tilde{\bf e}_{s_0}+2\tilde{\bf e}_{k_0},
\\
\tilde{\bf c}_{T}^{[i]}&=\tilde{\bf e}_{t_0}.
\end{aligned}\right.
\quad
\left\{\begin{aligned}
\tilde{\bf g}_{K}^{[i]}&=-\tilde{\bf e}_{k_0}+2\tilde{\bf e}_{s_0},
\\
\tilde{\bf g}_{S}^{[i]}&=\tilde{\bf e}_{s_0},
\\
\tilde{\bf g}_{T}^{[i]}&=\tilde{\bf e}_{t_0},
\end{aligned}\right.
\end{equation}

\section{Fractal structure}\label{sec: fractal structure}
Our central observation in this paper is that, depending on trunks and branches, most sub $C$-, $G$-patterns are isomorphic via linear maps. In this section, we explain this phenomenon.
\begin{definition}
Let $\mathbf{w}, \mathbf{u} \in \mathcal{T} \setminus \{\emptyset\}$. We say that $\mathbf{w}$ and $\mathbf{u}$ form an \emph{admissible pair in trunks} if both $\mathbf{w}$ and $\mathbf{u}$ belong to trunks (not necessarily the same). Similarly, we say that $\mathbf{w}$ and $\mathbf{u}$ form an \emph{admissible pair in branches} if both $\mathbf{w}$ and $\mathbf{u}$ belong to branches. In either case, $\mathbf{w}$ and $\mathbf{u}$ are simply said to be \emph{admissible}.
\end{definition}
For example, pairs of sequences $(\mathbf{w},\mathbf{u})=([1]S^4,[1]S^3),([1]S^5,[2]S^2)$ are admissible in trunks. Also, $(\mathbf{w},\mathbf{u})=([1]S^2TS^2T,[2]T^2S),([1]TS,[2]TS)$ are admissible in branches.
\par
For any $\mathbf{w} \in \mathcal{T} \setminus \{\emptyset\}$, we write
\begin{equation}
\widetilde{\mathbf{C}}^{\geq \mathbf{w}}(B)=\{(\tilde{\mathbf{c}}_1^{\mathbf{u}},\tilde{\mathbf{c}}_2^{\mathbf{u}},\tilde{\mathbf{c}}_3^{\mathbf{u}})\}_{\mathbf{u} \geq \mathbf{w}},
\quad
\widetilde{\mathbf{G}}^{\geq \mathbf{w}}(B)=\{(\tilde{\mathbf{g}}_1^{\mathbf{u}},\tilde{\mathbf{g}}_2^{\mathbf{u}},\tilde{\mathbf{g}}_3^{\mathbf{u}})\}_{\mathbf{u} \geq \mathbf{w}}.
\end{equation}
\par
The following theorem serves as a central tool in this paper.

\begin{theorem}[Fractal structure]\label{thm: inner isomorphisms between subfans in the Markov case}
Let $B$ be a skew-symmetrizable matrix in \eqref{eq: invariant type}. Let ${\bf w},{\bf u} \in \mathcal{T}\backslash\{\emptyset\}$ be an admissible pair. Then, there exist linear automorphisms $\Clinearmap_{\bf w}^{\bf u},\Glinearmap_{\bf w}^{\bf u} \in \mathrm{GL}(\mathbb{R}^3)$ on $\mathbb{R}^3$ such that
\begin{equation}\label{eq: isomorphism in the Markov quiver}
\Clinearmap_{\bf w}^{\bf u}(\tilde{\bf c}_{M}^{{\bf w}X})=\tilde{\bf c}_{M}^{{\bf u}X},
\quad
\Glinearmap_{\bf w}^{\bf u}(\tilde{\bf g}_{M}^{{\bf w}X})=\tilde{\bf g}_{M}^{{\bf u}X}
\end{equation}
for any $X \in \mathcal{M}$ and $M=K,S,T$.
\end{theorem}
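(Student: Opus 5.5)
The plan is to define the maps by their action on the triple at the base point and then check that this choice propagates along the whole subtree. Let $\Clinearmap_{\bf w}^{\bf u}$ be the unique linear map with
\begin{equation*}
\Clinearmap_{\bf w}^{\bf u}(\tilde{\bf c}_M^{\bf w})=\tilde{\bf c}_M^{\bf u}\qquad (M=K,S,T),
\end{equation*}
and define $\Glinearmap_{\bf w}^{\bf u}$ by
\begin{equation*}
\Glinearmap_{\bf w}^{\bf u}(\tilde{\bf g}_M^{\bf w})=\tilde{\bf g}_M^{\bf u}\qquad (M=K,S,T).
\end{equation*}
For this to be well defined and invertible, the triples $(\tilde{\bf c}_K^{\bf v},\tilde{\bf c}_S^{\bf v},\tilde{\bf c}_T^{\bf v})$ and $(\tilde{\bf g}_K^{\bf v},\tilde{\bf g}_S^{\bf v},\tilde{\bf g}_T^{\bf v})$ must be bases of $\mathbb{R}^3$ for every ${\bf v}$.

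This basis property follows from \Cref{lem: mutation of modified vectors}. Each mutation rule there is an invertible linear transformation of the ordered triple, for example $(x_K,x_S,x_T)\mapsto(-x_S,\,x_K+2x_S,\,x_T)$. Such a transformation has determinant $\pm1$. Starting from the initial triples \eqref{eq: initial setup}, which are bases, induction on the length of ${\bf v}$ shows every triple is a basis. The $c$- and $g$-matrices at ${\bf v}$ are these triples up to the reordering given by $K,S,T$ and the scaling by $\sqrt{d_j}$. So the maps are well defined and lie in $\mathrm{GL}(\mathbb{R}^3)$.

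The key point is that, in \Cref{lem: mutation of modified vectors}, the new triple at ${\bf v}S$ or ${\bf v}T$ is obtained from the old triple at ${\bf v}$ by a fixed linear recipe. This recipe depends only on the letter ($S$ or $T$) and on whether ${\bf v}$ lies in a trunk or a branch. It does not depend on ${\bf v}$ itself, nor on the actual indices $K({\bf v}),S({\bf v}),T({\bf v})$, nor on $B$. Write the recipe as matrices $A^{S}$, $A^{T}_{\rm trunk}$, $A^{T}_{\rm branch}$ acting on the column index of the $3\times3$ array $[x_K\,x_S\,x_T]$, so that $[x^{{\bf v}L}]=[x^{\bf v}]A^L_\ast$ for a letter $L\in\{S,T\}$. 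Any linear map $\Phi$ acts on the row side, so $\Phi([x^{\bf v}]A)=(\Phi[x^{\bf v}])A$. This commutation is the whole mechanism.

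The proof is by induction on the length of $X$. The case $X=1_{\mathcal M}$ holds by definition. For the inductive step, assume $\Clinearmap_{\bf w}^{\bf u}$ sends the triple at ${\bf w}X$ to the triple at ${\bf u}X$. Applying the commutation with the matrix attached to the letter $L$ gives the claim for $XL$, provided ${\bf w}X$ and ${\bf u}X$ use the same recipe, i.e. both lie in trunks or both lie in branches. This holds for every $X$ by admissibility:
\begin{itemize}
\item If ${\bf w},{\bf u}$ are both in branches, then ${\bf w}X,{\bf u}X$ stay in branches, because branches are closed under extension.
\item If both are in trunks, write ${\bf w}=[i]S^n$ and ${\bf u}=[j]S^m$. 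Then ${\bf w}X$ is in a trunk if and only if $X=S^l$, if and only if ${\bf u}X$ is in a trunk.
\end{itemize}
The identical argument with the $g$-recipes gives $\Glinearmap_{\bf w}^{\bf u}$.

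The only real obstacle is bookkeeping: one must check that the trunk/branch status is synchronized along $X$ for admissible pairs. This is exactly why admissibility is required. A trunk element paired with a branch element would fail, since the $T$-rules differ. Everything else is the formal commutation of left and right matrix multiplication.
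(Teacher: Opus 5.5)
Your proposal is correct and follows the paper's proof: define the maps by their values on the $K,S,T$ triples at ${\bf w}$, then induct on $X$ by appending a letter, using that ${\bf w}X$ and ${\bf u}X$ obey the same linear mutation rule from \Cref{lem: mutation of modified vectors}. You also spell out two points the paper takes for granted: that each triple is a basis (the recipes are invertible), and that admissibility is preserved under ${\bf w}\mapsto{\bf w}X$.
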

Specifically, for any admissible pair $\mathbf{w}, \mathbf{u} \in \mathcal{T}$, there exist linear isomorphisms between $\widetilde{\mathbf{C}}^{\geq \mathbf{w}}(B)$ and $\widetilde{\mathbf{C}}^{\geq \mathbf{u}}(B)$, and similarly between $\widetilde{\mathbf{G}}^{\geq \mathbf{w}}(B)$ and $\widetilde{\mathbf{G}}^{\geq \mathbf{u}}(B)$. We write them as
\begin{equation}
\Clinearmap_{\mathbf{w}}^{\mathbf{u}}:\widetilde{\mathbf{C}}^{\geq \mathbf{w}}(B) \to \widetilde{\mathbf{C}}^{\geq \mathbf{u}}(B),
\quad
\Glinearmap_{\mathbf{w}}^{\mathbf{u}}:\widetilde{\mathbf{G}}^{\geq \mathbf{w}}(B) \to \widetilde{\mathbf{G}}^{\geq \mathbf{u}}(B).
\end{equation}
By considering the case of $X=1_{\mathcal{M}}$ in \eqref{eq: isomorphism in the Markov quiver}, these linear maps should satisfy
\begin{equation}\label{eq: definition of Clinear and Glinear}
\Clinearmap_{\mathbf{w}}^{\mathbf{u}}(\tilde{\mathbf{c}}_{M}^{\mathbf{w}})=\tilde{\mathbf{c}}_{M}^{\mathbf{u}},
\quad
\Glinearmap_{\mathbf{w}}^{\mathbf{u}}(\tilde{\mathbf{g}}_{M}^{\mathbf{w}})=\tilde{\mathbf{g}}_{M}^{\mathbf{u}}.
\end{equation}
Since $\{\tilde{\mathbf{c}}_{K}^{\mathbf{w}}, \tilde{\mathbf{c}}_{S}^{\mathbf{w}}, \tilde{\mathbf{c}}_{T}^{\mathbf{w}}\}$ and $\{\tilde{\mathbf{g}}_{K}^{\mathbf{w}}, \tilde{\mathbf{g}}_{S}^{\mathbf{w}}, \tilde{\mathbf{g}}_{T}^{\mathbf{w}}\}$ are bases of $\mathbb{R}^3$, these linear maps $\Clinearmap_{\mathbf{w}}^{\mathbf{u}}$ and $\Glinearmap_{\mathbf{w}}^{\mathbf{u}}$ are uniquely determined by \eqref{eq: definition of Clinear and Glinear}. In particular, it is direct that $(\Clinearmap_{\bf w}^{\bf u})^{-1}=\Clinearmap_{\bf u}^{\bf w}$ and $(\Glinearmap_{\bf w}^{\bf u})^{-1}=\Glinearmap_{\bf u}^{\bf w}$.
\begin{proof}
We show that the linear map $\Glinearmap_{\bf w}^{\bf u}$ given by \eqref{eq: definition of Clinear and Glinear} satisfies (\ref{eq: isomorphism in the Markov quiver}) by the induction on $X \in \mathcal{M}$. (We may do a similar argument for $c$-vectors.) When $X=1_{\mathcal{M}}$, by definition, the claim holds.
Suppose that the claim holds for some $X$. Then, we show that the claim holds for $XS$ and $XT$. Note that ${\bf w}X$ and ${\bf u}X$ are also admissible. Thus, they obey the same mutation rule in \eqref{eq: S mutation of c- g-vectors}, \eqref{eq: T mutation of c- g-vectors in trunks}, and \eqref{eq: T mutation of c- g-vectors in branches}. For $XS$, we may verify $\Glinearmap_{\bf w}^{\bf u}(\tilde{\bf g}_{K}^{{\bf w}XS})=\tilde{\bf g}_{K}^{{\bf u}XS}$ as follows:
\begin{equation}
\begin{aligned}
\Glinearmap_{\bf w}^{\bf u}(\tilde{\bf g}_{K}^{{\bf w}XS})
=-\Glinearmap_{\bf w}^{\bf u}(\tilde{\bf g}_{S}^{{\bf w}X})+2\Glinearmap_{\bf w}^{\bf u}(\tilde{\bf g}_{K}^{{\bf w}X})
=-\tilde{\bf g}_{S}^{{\bf u}X}+2\tilde{\bf g}_{K}^{{\bf u}X}
=\tilde{\bf g}_{K}^{{\bf u}XS},
\end{aligned}
\end{equation}
where the first and the last equalities come from \eqref{eq: S mutation of c- g-vectors}. We also have $\Glinearmap_{\bf w}^{\bf u}(\tilde{\bf g}_{S}^{{\bf w}XS})=\Glinearmap_{\bf w}^{\bf u}(\tilde{\bf g}_{K}^{{\bf w}X})=\tilde{\bf g}_{K}^{{\bf u}X}=\tilde{\bf g}_{S}^{{\bf u}XS}$ and, by the same argument, $\Glinearmap_{\bf w}^{\bf u}(\tilde{\bf g}_{T}^{{\bf w}XS})=\tilde{\bf g}_{T}^{{\bf u}XS}$. Thus, we show the claim. Similarly, we show the claim for $XT$.
\end{proof}
Let $\mathbf{w}$ be in a branch. Then, for any $X \in \mathcal{M}$, $\mathbf{w}X$ is also in a branch. Thus, due to \Cref{thm: inner isomorphisms between subfans in the Markov case}, we can find isomorphisms $\Clinearmap_{\mathbf{w}}^{\mathbf{w}X}:\widetilde{\mathbf{C}}^{\geq \mathbf{w}}(B) \to \widetilde{\mathbf{C}}^{\geq \mathbf{w}X}(B)$ and $\Glinearmap_{\mathbf{w}}^{\mathbf{w}X}:\widetilde{\mathbf{G}}^{\geq \mathbf{w}}(B) \to \widetilde{\mathbf{G}}^{\geq \mathbf{w}X}(B)$, which are inner isomorphisms. (This is a main reason we call \Cref{thm: inner isomorphisms between subfans in the Markov case} the fractal structure.) For simplicity, we introduce the following notation.
\begin{definition}
For each branch $\mathcal{T}^{\geq \mathbf{w}}$ and $X \in \mathcal{M}$, we define
\begin{equation}
\Clinearmap_{\mathbf{w}}^{X}=\Clinearmap_{\mathbf{w}}^{\mathbf{w}X}:\widetilde{\mathbf{C}}^{\geq \mathbf{w}}(B) \to \widetilde{\mathbf{C}}^{\geq \mathbf{w}X}(B),
\quad
\Glinearmap_{\mathbf{w}}^X=\Glinearmap_{\mathbf{w}}^{\mathbf{w}X}:\widetilde{\mathbf{G}}^{\geq \mathbf{w}}(B) \to \widetilde{\mathbf{G}}^{\geq \mathbf{w}X}(B).
\end{equation}
\end{definition}
These linear maps are characterized by the following equalities.
\begin{equation}\label{eq: notation X}
\Clinearmap_{\mathbf{w}}^{X}\tilde{\mathbf{c}}_{M}^{\mathbf{w}Y}=\tilde{\mathbf{c}}_{M}^{\mathbf{w}XY},
\quad
\Glinearmap_{\mathbf{w}}^{X}\tilde{\mathbf{g}}_{M}^{\mathbf{w}Y}=\tilde{\mathbf{g}}_{M}^{\mathbf{w}XY}
\quad (Y \in \mathcal{M},\ M=K,S,T).
\end{equation}

\section{Change of basis and representation matrices}\label{sec: change of basis}
In this section, we use the basis transformations in order to provide more simple recursion formulas and expressions for modified $c$-, $g$-vectors.
\subsection{Basis transformations}
The role of basis transformations is fundamental in our approach. In particular, we observe that one can choose a basis such that its mutation admits a remarkably simple expression. Hence, we define the following vectors.
\begin{align}
\tilde{\bf c}_{F}^{\bf w}&=\tilde{\bf c}_{K}^{\bf w}+\tilde{\bf c}_{S}^{\bf w}+\tilde{\bf c}_{T}^{\bf w},
&
\tilde{\bf x}_{SK}^{\bf w}&=\tilde{\bf c}_{K}^{\bf w}+\tilde{\bf c}_{S}^{\bf w},
&
\tilde{\bf x}_{TK}^{\bf w}&=-(\tilde{\bf c}_{K}^{\bf w}+\tilde{\bf c}_{T}^{\bf w}),
\label{eq: change of basis for c}
\\
\tilde{\mathbf{g}}_{F}^{\mathbf{w}}&=\tilde{\mathbf{g}}_{S}^{\mathbf{w}}+\tilde{\mathbf{g}}_{T}^{\mathbf{w}}-\tilde{\mathbf{g}}_{K}^{\mathbf{w}},
&
\tilde{\bf v}_{SK}^{\bf w}&=\tilde{\bf g}_{K}^{\bf w}-\tilde{\bf g}_{S}^{\bf w},
&
{\bf v}_{TK}^{\bf w}&=\tilde{\bf g}_{K}^{\bf w}-\tilde{\bf g}_{T}^{\bf w}.
\label{eq: change of basis for g}
\end{align}
Note that $\{\tilde{\mathbf{c}}_{F}^{\mathbf{w}},\tilde{\mathbf{x}}_{SK}^{\mathbf{w}},\tilde{\mathbf{x}}_{TK}^{\mathbf{w}}\}$ and $\{\tilde{\mathbf{g}}_{F}^{\mathbf{w}},\tilde{\mathbf{v}}_{SK}^{\mathbf{w}},\tilde{\mathbf{v}}_{TK}^{\mathbf{w}}\}$ are two bases of $\mbR^3$. We can also easily recover the modified $c$-, $g$-vectors by
\begin{align}
\tilde{\mathbf{c}}_{K}^{\mathbf{w}}&=-\tilde{\mathbf{c}}_{F}^{\mathbf{w}}+\tilde{\mathbf{x}}_{SK}^{\mathbf{w}}-\tilde{\mathbf{x}}_{TK}^{\mathbf{w}},
&
\tilde{\mathbf{c}}_{S}^{\mathbf{w}}&=\tilde{\mathbf{c}}_{F}^{\mathbf{w}}+\tilde{\mathbf{x}}_{TK}^{\mathbf{w}},
&
\tilde{\mathbf{c}}_{T}^{\mathbf{w}}&=\tilde{\mathbf{c}}_{F}^{\mathbf{w}}-\tilde{\mathbf{x}}_{SK}^{\mathbf{w}},
\label{eq: recover c vectors}
\\
\tilde{\mathbf{g}}_{K}^{\mathbf{w}}&=\tilde{\mathbf{g}}_{F}^{\mathbf{w}}+\tilde{\mathbf{v}}_{SK}^{\mathbf{w}}+\tilde{\mathbf{v}}_{TK}^{\mathbf{w}},
&
\tilde{\mathbf{g}}_{S}^{\mathbf{w}}&=\tilde{\mathbf{g}}_{F}^{\mathbf{w}}+\tilde{\mathbf{v}}_{TK}^{\mathbf{w}},
&
\tilde{\mathbf{g}}_{T}^{\mathbf{w}}&=\tilde{\mathbf{g}}_{F}^{\mathbf{w}}+\tilde{\mathbf{v}}_{SK}^{\mathbf{w}}.
\label{eq: recover g vectors}
\end{align}
Then, we give the mutation rules for these vectors based on \Cref{lem: mutation of modified vectors}.
\begin{lemma}\label{lem: mutations for eigen vectors}
Let $\mathbf{w} \in \mathcal{T}\setminus\{\emptyset\}$. The following recursion formulas hold.
\\
\textup{($a$)} The $S$-mutation rule is given by
\begin{equation}\label{eq: S mutation for eigen basis}
\left\{\begin{aligned}
\tilde{\mathbf{c}}_{F}^{\mathbf{w}S}&=\tilde{\mathbf{c}}_{F}^{\mathbf{w}},
\\
\tilde{\mathbf{x}}_{SK}^{\mathbf{w}S}&=\tilde{\mathbf{x}}_{SK}^{\mathbf{w}},
\\
\tilde{\mathbf{x}}_{TK}^{\mathbf{w}S}&=\tilde{\mathbf{x}}_{SK}^{\mathbf{w}}+\tilde{\mathbf{x}}_{TK}^{\mathbf{w}},
\end{aligned}\right.
\quad
\left\{\begin{aligned}
\tilde{\mathbf{g}}_{F}^{\mathbf{w}S}&=\tilde{\mathbf{g}}_{F}^{\mathbf{w}},
\\
\tilde{\mathbf{v}}_{SK}^{\mathbf{w}S}&=\tilde{\mathbf{v}}_{SK}^{\mathbf{w}},
\\
\tilde{\mathbf{v}}_{TK}^{\mathbf{w}S}&=\tilde{\mathbf{v}}_{SK}^{\mathbf{w}}+\tilde{\mathbf{v}}_{TK}^{\mathbf{w}}.
\end{aligned}\right.
\end{equation}
\textup{($b$)} If $\mathbf{w}$ is in a trunk, the $T$-mutation rule is given by
\begin{equation}
\left\{\begin{aligned}
\tilde{\mathbf{c}}_{F}^{\mathbf{w}T}&=\tilde{\mathbf{c}}_{F}^{\mathbf{w}},
\\
\tilde{\mathbf{x}}_{SK}^{\mathbf{w}T}&=2\tilde{\mathbf{c}}_{F}^{\mathbf{w}}-\tilde{\mathbf{x}}_{SK}^{\mathbf{w}}+\tilde{\mathbf{x}}_{TK}^{\mathbf{w}}.
\\
\tilde{\mathbf{x}}_{TK}^{\mathbf{w}T}&=2\tilde{\mathbf{c}}_{F}^{\mathbf{w}}-2\tilde{\mathbf{x}}_{SK}^{\mathbf{w}}+\tilde{\mathbf{x}}_{TK}^{\mathbf{w}},
\end{aligned}\right.
\quad
\left\{\begin{aligned}
\tilde{\mathbf{g}}_{F}^{\mathbf{w}T}&=\tilde{\mathbf{g}}_{F}^{\mathbf{w}}+2\tilde{\mathbf{v}}_{SK}^{\mathbf{w}},
\\
\tilde{\mathbf{v}}_{SK}^{\mathbf{w}T}&=\tilde{\mathbf{v}}_{TK}^{\mathbf{w}}-\tilde{\mathbf{v}}_{SK}^{\mathbf{w}},
\\
\tilde{\mathbf{v}}_{TK}^{\mathbf{w}T}&=-2\tilde{\mathbf{v}}_{SK}^{\mathbf{w}}+\tilde{\mathbf{v}}_{TK}^{\mathbf{w}}.
\end{aligned}\right.
\end{equation}
\textup{($b$)} If $\mathbf{w}$ is in a branch, the $T$-mutation rule is given by
\begin{equation}\label{eq: T mutation in branch for eigen basis}
\left\{\begin{aligned}
\tilde{\mathbf{c}}_{F}^{\mathbf{w}T}&=\tilde{\mathbf{c}}_{F}^{\mathbf{w}},
\\
\tilde{\mathbf{x}}_{SK}^{\mathbf{w}T}&=-\tilde{\mathbf{x}}_{TK}^{\mathbf{w}},
\\
\tilde{\mathbf{x}}_{TK}^{\mathbf{w}T}&=-(\tilde{\mathbf{x}}_{SK}^{\mathbf{w}}+\tilde{\mathbf{x}}_{TK}^{\mathbf{w}}),
\end{aligned}\right.
\quad
\left\{\begin{aligned}
\tilde{\mathbf{g}}_{F}^{\mathbf{w}T}&=\tilde{\mathbf{g}}_{F}^{\mathbf{w}},
\\
\tilde{\mathbf{v}}_{SK}^{\mathbf{w}T}&=\tilde{\mathbf{v}}_{TK}^{\mathbf{w}},
\\
\tilde{\mathbf{v}}_{TK}^{\mathbf{w}T}&=\tilde{\mathbf{v}}_{SK}^{\mathbf{w}}+\tilde{\mathbf{v}}_{TK}^{\mathbf{w}}.
\end{aligned}\right.
\end{equation}
\end{lemma}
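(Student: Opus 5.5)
The proof is a direct substitution: I will insert the mutation rules of \Cref{lem: mutation of modified vectors} into the definitions \eqref{eq: change of basis for c} and \eqref{eq: change of basis for g}. Then, where needed, I will rewrite the result in the new basis using \eqref{eq: recover c vectors} and \eqref{eq: recover g vectors}. No induction is required, because each of the three cases concerns a single mutation step from $\mathbf{w}$. I treat the $g$-vectors first, since their rules are simpler, and then the $c$-vectors in exactly the same way.

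For the $S$-mutation, \eqref{eq: S mutation of c- g-vectors} gives
\[
\tilde{\mathbf{g}}_K^{\mathbf{w}S}=2\tilde{\mathbf{g}}_K^{\mathbf{w}}-\tilde{\mathbf{g}}_S^{\mathbf{w}},\quad
\tilde{\mathbf{g}}_S^{\mathbf{w}S}=\tilde{\mathbf{g}}_K^{\mathbf{w}},\quad
\tilde{\mathbf{g}}_T^{\mathbf{w}S}=\tilde{\mathbf{g}}_T^{\mathbf{w}}.
\]
From these:
\begin{itemize}
\item $\tilde{\mathbf{g}}_F^{\mathbf{w}S}=\tilde{\mathbf{g}}_K+\tilde{\mathbf{g}}_T-2\tilde{\mathbf{g}}_K+\tilde{\mathbf{g}}_S=\tilde{\mathbf{g}}_F^{\mathbf{w}}$.
\item $\tilde{\mathbf{v}}_{SK}^{\mathbf{w}S}=\tilde{\mathbf{g}}_K-\tilde{\mathbf{g}}_S=\tilde{\mathbf{v}}_{SK}^{\mathbf{w}}$.
\item $\tilde{\mathbf{v}}_{TK}^{\mathbf{w}S}=2\tilde{\mathbf{g}}_K-\tilde{\mathbf{g}}_S-\tilde{\mathbf{g}}_T=\tilde{\mathbf{v}}_{SK}^{\mathbf{w}}+\tilde{\mathbf{v}}_{TK}^{\mathbf{w}}$.
\end{itemize}
The $c$-side works the same way. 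With $\tilde{\mathbf{c}}_K^{\mathbf{w}S}=-\tilde{\mathbf{c}}_S$, $\tilde{\mathbf{c}}_S^{\mathbf{w}S}=\tilde{\mathbf{c}}_K+2\tilde{\mathbf{c}}_S$ and $\tilde{\mathbf{c}}_T^{\mathbf{w}S}=\tilde{\mathbf{c}}_T$:
\begin{itemize}
\item the sum gives $\tilde{\mathbf{c}}_F^{\mathbf{w}S}=\tilde{\mathbf{c}}_K+\tilde{\mathbf{c}}_S+\tilde{\mathbf{c}}_T=\tilde{\mathbf{c}}_F^{\mathbf{w}}$;
\item $\tilde{\mathbf{x}}_{SK}^{\mathbf{w}S}=\tilde{\mathbf{c}}_K+\tilde{\mathbf{c}}_S=\tilde{\mathbf{x}}_{SK}^{\mathbf{w}}$;
\item $\tilde{\mathbf{x}}_{TK}^{\mathbf{w}S}=\tilde{\mathbf{c}}_S-\tilde{\mathbf{c}}_T$, which must be matched against $\tilde{\mathbf{x}}_{SK}+\tilde{\mathbf{x}}_{TK}=\tilde{\mathbf{c}}_S-\tilde{\mathbf{c}}_T$, and it is.
\end{itemize}

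The branch $T$-mutation is handled the same way using \eqref{eq: T mutation of c- g-vectors in branches}. For instance:
\begin{itemize}
\item $\tilde{\mathbf{v}}_{SK}^{\mathbf{w}T}=(2\tilde{\mathbf{g}}_K-\tilde{\mathbf{g}}_T)-\tilde{\mathbf{g}}_K=\tilde{\mathbf{v}}_{TK}^{\mathbf{w}}$;
\item $\tilde{\mathbf{x}}_{SK}^{\mathbf{w}T}=-\tilde{\mathbf{c}}_T+\tilde{\mathbf{c}}_K+2\tilde{\mathbf{c}}_T=\tilde{\mathbf{c}}_K+\tilde{\mathbf{c}}_T=-\tilde{\mathbf{x}}_{TK}^{\mathbf{w}}$;
\item $\tilde{\mathbf{x}}_{TK}^{\mathbf{w}T}=-(-\tilde{\mathbf{c}}_T+\tilde{\mathbf{c}}_S)$, which equals $-(\tilde{\mathbf{x}}_{SK}+\tilde{\mathbf{x}}_{TK})$.
\end{itemize}

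The trunk $T$-mutation uses \eqref{eq: T mutation of c- g-vectors in trunks}. Here the new vectors are not simply permutations of the old basis vectors, so after computing each one in terms of $\tilde{\mathbf{g}}_K,\tilde{\mathbf{g}}_S,\tilde{\mathbf{g}}_T$ I substitute \eqref{eq: recover g vectors}. For example,
\[
\tilde{\mathbf{g}}_F^{\mathbf{w}T}=\tilde{\mathbf{g}}_S+\tilde{\mathbf{g}}_K-2\tilde{\mathbf{g}}_S+\tilde{\mathbf{g}}_T=\tilde{\mathbf{g}}_K+\tilde{\mathbf{g}}_T-\tilde{\mathbf{g}}_S,
\]
and substituting gives $\tilde{\mathbf{g}}_F+2\tilde{\mathbf{v}}_{SK}$, as claimed.

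The main obstacle is this trunk case, and more precisely its $c$-side with the $2\tilde{\mathbf{c}}_F$ terms, where sign conventions, such as the minus sign built into $\tilde{\mathbf{x}}_{TK}$, are easy to get wrong. I will therefore check each identity by expanding both sides fully in $\tilde{\mathbf{c}}_K,\tilde{\mathbf{c}}_S,\tilde{\mathbf{c}}_T$ using \eqref{eq: recover c vectors}, rather than trying to guess the combination. Since these three vectors form a basis of $\mathbb{R}^3$, agreement of coefficients proves each equality.
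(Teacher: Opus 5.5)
Your proposal is correct and takes the same approach as the paper, whose proof just says that every identity follows by substituting the rules of \Cref{lem: mutation of modified vectors} into the definitions \eqref{eq: change of basis for c} and \eqref{eq: change of basis for g}. All the substitutions you describe check out, including the trunk $c$-side you flagged: there $\tilde{\mathbf{x}}_{SK}^{\mathbf{w}T}=\tilde{\mathbf{c}}_S^{\mathbf{w}}+\tilde{\mathbf{c}}_T^{\mathbf{w}}$ and $\tilde{\mathbf{x}}_{TK}^{\mathbf{w}T}=\tilde{\mathbf{c}}_T^{\mathbf{w}}-\tilde{\mathbf{c}}_K^{\mathbf{w}}$, which agree with the claimed right-hand sides once expanded.
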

\begin{proof}
All of them can be shown by \Cref{lem: mutation of modified vectors}.
\end{proof}
\begin{remark}
The sets of these vectors $[\tilde{\mathbf{c}}_{F}^{\mathbf{w}},\tilde{\mathbf{x}}_{SK}^{\mathbf{w}},\tilde{\mathbf{x}}_{TK}^{\mathbf{w}}]$ and $[\tilde{\mathbf{g}}_{F}^{\mathbf{w}},\tilde{\mathbf{v}}_{SK}^{\mathbf{w}},\tilde{\mathbf{v}}_{TK}^{\mathbf{w}}]$ are the Jordan bases of the $S$-mutations $\Clinearmap_{\mathbf{w}}^S$ and $\Glinearmap_{\mathbf{w}}^S$, respectively. One special phenomenon is that, for $T$-mutations $\Clinearmap_{\mathbf{w}}^T$ and $\Glinearmap_{\mathbf{w}}^T$ in branches, the mutation rule can also be written so simply as in \eqref{eq: T mutation in branch for eigen basis}.
\end{remark}
Based on these rules, we give the explicit expressions for some special $\mathbf{w}$.
\begin{lemma}\label{lem: c g vectors in trunks}
For any $n \in \mathbb{Z}_{\geq 0}$, we have
\begin{equation}\label{eq: c g vectors in trunks}
\left\{\begin{aligned}
\tilde{\mathbf{c}}_{F}^{[i]S^n}&=\tilde{\mathbf{e}}_{k_0} + \tilde{\mathbf{e}}_{s_0} + \tilde{\mathbf{e}}_{t_0},
\\
\tilde{\mathbf{x}}_{SK}^{[i]S^n}&=\tilde{\mathbf{e}}_{k_0}+\tilde{\mathbf{e}}_{s_0},
\\
\tilde{\mathbf{x}}_{TK}^{[i]S^n}&=(\tilde{\mathbf{e}}_{k_0}-\tilde{\mathbf{e}}_{t_0})+n(\tilde{\mathbf{e}}_{k_0}+\tilde{\mathbf{e}}_{s_0}),
\end{aligned}\right.
\quad
\left\{\begin{aligned}
\tilde{\mathbf{g}}_{F}^{[i]S^n}&=\tilde{\mathbf{e}}_{k_0} + \tilde{\mathbf{e}}_{t_0} - \tilde{\mathbf{e}}_{s_0},
\\
\tilde{\mathbf{v}}_{SK}^{[i]S^n}&=\tilde{\mathbf{e}}_{s_0}-\tilde{\mathbf{e}}_{k_0},
\\
\tilde{\mathbf{v}}_{TK}^{[i]S^n}&=(\tilde{\mathbf{e}}_{s_0}-\tilde{\mathbf{e}}_{t_0})+(n+1)(\tilde{\mathbf{e}}_{s_0}-\tilde{\mathbf{e}}_{k_0}).
\end{aligned}\right.
\end{equation}
\end{lemma}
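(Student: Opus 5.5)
Induction on $n$, using the $S$-mutation rule \eqref{eq: S mutation for eigen basis} of \Cref{lem: mutations for eigen vectors}. Every $[i]S^n$ lies in a trunk, and moving from $[i]S^n$ to $[i]S^{n+1}$ is an $S$-mutation. The $S$-rule is uniform (it does not depend on trunk or branch), so the whole computation reduces to iterating a single unipotent recursion.

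\emph{Base case $n=0$.} Substitute the initial data \eqref{eq: initial setup} into the definitions \eqref{eq: change of basis for c} and \eqref{eq: change of basis for g}. For the $c$-side:
\begin{align*}
\tilde{\mathbf{c}}_F^{[i]}&=-\tilde{\mathbf{e}}_{k_0}+\tilde{\mathbf{e}}_{s_0}+2\tilde{\mathbf{e}}_{k_0}+\tilde{\mathbf{e}}_{t_0}=\tilde{\mathbf{e}}_{k_0}+\tilde{\mathbf{e}}_{s_0}+\tilde{\mathbf{e}}_{t_0},\\
\tilde{\mathbf{x}}_{SK}^{[i]}&=\tilde{\mathbf{e}}_{k_0}+\tilde{\mathbf{e}}_{s_0},\\
\tilde{\mathbf{x}}_{TK}^{[i]}&=-(-\tilde{\mathbf{e}}_{k_0}+\tilde{\mathbf{e}}_{t_0})=\tilde{\mathbf{e}}_{k_0}-\tilde{\mathbf{e}}_{t_0}.
\end{align*}
For the $g$-side:
\begin{align*}
\tilde{\mathbf{g}}_F^{[i]}&=\tilde{\mathbf{e}}_{s_0}+\tilde{\mathbf{e}}_{t_0}+\tilde{\mathbf{e}}_{k_0}-2\tilde{\mathbf{e}}_{s_0}=\tilde{\mathbf{e}}_{k_0}+\tilde{\mathbf{e}}_{t_0}-\tilde{\mathbf{e}}_{s_0},\\
\tilde{\mathbf{v}}_{SK}^{[i]}&=-\tilde{\mathbf{e}}_{k_0}+2\tilde{\mathbf{e}}_{s_0}-\tilde{\mathbf{e}}_{s_0}=\tilde{\mathbf{e}}_{s_0}-\tilde{\mathbf{e}}_{k_0},\\
\tilde{\mathbf{v}}_{TK}^{[i]}&=-\tilde{\mathbf{e}}_{k_0}+2\tilde{\mathbf{e}}_{s_0}-\tilde{\mathbf{e}}_{t_0}=(\tilde{\mathbf{e}}_{s_0}-\tilde{\mathbf{e}}_{t_0})+(\tilde{\mathbf{e}}_{s_0}-\tilde{\mathbf{e}}_{k_0}).
\end{align*}
These are exactly \eqref{eq: c g vectors in trunks} at $n=0$. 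Note that for the $g$-side the formula carries the factor $n+1$, which equals $1$ here.

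\emph{Inductive step.} By \eqref{eq: S mutation for eigen basis}, $\tilde{\mathbf{c}}_F$, $\tilde{\mathbf{x}}_{SK}$, $\tilde{\mathbf{g}}_F$ and $\tilde{\mathbf{v}}_{SK}$ are unchanged by an $S$-mutation, so they stay constant along the trunk. The remaining two vectors pick up one extra copy of the corresponding constant vector at each step:
\begin{align*}
\tilde{\mathbf{x}}_{TK}^{[i]S^{n+1}}&=\tilde{\mathbf{x}}_{TK}^{[i]S^n}+\tilde{\mathbf{x}}_{SK}^{[i]S^n}, &
\tilde{\mathbf{v}}_{TK}^{[i]S^{n+1}}&=\tilde{\mathbf{v}}_{TK}^{[i]S^n}+\tilde{\mathbf{v}}_{SK}^{[i]S^n}.
\end{align*}
Adding $\tilde{\mathbf{e}}_{k_0}+\tilde{\mathbf{e}}_{s_0}$, respectively $\tilde{\mathbf{e}}_{s_0}-\tilde{\mathbf{e}}_{k_0}$, to the claimed expressions raises the coefficient $n$ to $n+1$, respectively $n+1$ to $n+2$. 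This completes the induction.

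No step is a real obstacle. The only places to be careful are the sign convention in $\tilde{\mathbf{x}}_{TK}=-(\tilde{\mathbf{c}}_K+\tilde{\mathbf{c}}_T)$ and the bookkeeping behind the offset $n+1$ on the $g$-side; both are settled by the base-case arithmetic above.
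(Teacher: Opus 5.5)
Your proof is correct and is the same argument the paper uses: induction on $n$, with the base case computed from \eqref{eq: initial setup} and the step given by the $S$-mutation rule \eqref{eq: S mutation for eigen basis} of \Cref{lem: mutations for eigen vectors}. Your base-case arithmetic checks out, including the sign in $\tilde{\mathbf{x}}_{TK}$ and the offset $n+1$ on the $g$-side.
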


\begin{lemma}\label{lem: c g vectors at root of maximal branches}
For any $n \in \mathbb{Z}_{\geq 0}$, we have
\begin{equation}\label{eq: root of maximal branches}
\left\{\begin{aligned}
\tilde{\mathbf{c}}_{F}^{[i]S^nT}&=\tilde{\mathbf{e}}_{k_0} + \tilde{\mathbf{e}}_{s_0} + \tilde{\mathbf{e}}_{t_0},
\\
\tilde{\mathbf{x}}_{SK}^{[i]S^nT}&=(\tilde{\mathbf{e}}_{k_0}+\tilde{\mathbf{e}}_{t_0})+(n+1)(\tilde{\mathbf{e}}_{k_0}+\tilde{\mathbf{e}}_{s_0}),
\\
\tilde{\mathbf{x}}_{TK}^{[i]S^nT}&=(\tilde{\mathbf{e}}_{k_0}+\tilde{\mathbf{e}}_{t_0})+n(\tilde{\mathbf{e}}_{k_0}+\tilde{\mathbf{e}}_{s_0}),
\end{aligned}\right.
\quad
\left\{\begin{aligned}
\tilde{\mathbf{g}}_{F}^{[i]S^nT}&=\tilde{\mathbf{e}}_{s_0}+\tilde{\mathbf{e}}_{t_0}-\tilde{\mathbf{e}}_{k_0},
\\
\tilde{\mathbf{v}}_{SK}^{[i]S^nT}&=(\tilde{\mathbf{e}}_{k_0}-\tilde{\mathbf{e}}_{t_0})+(n+1)(\tilde{\mathbf{e}}_{s_0}-\tilde{\mathbf{e}}_{k_0}),
\\
\tilde{\mathbf{v}}_{TK}^{[i]S^nT}&=(\tilde{\mathbf{e}}_{k_0}-\tilde{\mathbf{e}}_{t_0})+n(\tilde{\mathbf{e}}_{s_0}-\tilde{\mathbf{e}}_{k_0}).
\end{aligned}\right.
\end{equation}
\end{lemma}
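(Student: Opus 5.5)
The plan is a direct one-step computation. For every $n \in \mathbb{Z}_{\geq 0}$ the sequence $[i]S^n$ lies in the trunk $\mathcal{T}^{<[i]S^{\infty}}$. So the vectors at $[i]S^nT$ are obtained from those at $[i]S^n$ by the trunk $T$-mutation rule, i.e.\ part ($b$) of \Cref{lem: mutations for eigen vectors}. The input values at $[i]S^n$ are supplied by \Cref{lem: c g vectors in trunks}. No induction on $n$ is needed beyond what \Cref{lem: c g vectors in trunks} already provides; $n$ enters only as a parameter.

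First, the $g$-side. Substitute $\tilde{\mathbf{g}}_{F}^{[i]S^n}=\tilde{\mathbf{e}}_{k_0}+\tilde{\mathbf{e}}_{t_0}-\tilde{\mathbf{e}}_{s_0}$, $\tilde{\mathbf{v}}_{SK}^{[i]S^n}=\tilde{\mathbf{e}}_{s_0}-\tilde{\mathbf{e}}_{k_0}$ and $\tilde{\mathbf{v}}_{TK}^{[i]S^n}=(\tilde{\mathbf{e}}_{s_0}-\tilde{\mathbf{e}}_{t_0})+(n+1)(\tilde{\mathbf{e}}_{s_0}-\tilde{\mathbf{e}}_{k_0})$ into the three trunk rules. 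This gives:
\begin{itemize}
\item $\tilde{\mathbf{g}}_F^{\mathbf{w}T}=\tilde{\mathbf{g}}_F^{\mathbf{w}}+2\tilde{\mathbf{v}}_{SK}^{\mathbf{w}}=\tilde{\mathbf{e}}_{s_0}+\tilde{\mathbf{e}}_{t_0}-\tilde{\mathbf{e}}_{k_0}$;
\item $\tilde{\mathbf{v}}_{SK}^{\mathbf{w}T}=\tilde{\mathbf{v}}_{TK}^{\mathbf{w}}-\tilde{\mathbf{v}}_{SK}^{\mathbf{w}}=(\tilde{\mathbf{e}}_{s_0}-\tilde{\mathbf{e}}_{t_0})+n(\tilde{\mathbf{e}}_{s_0}-\tilde{\mathbf{e}}_{k_0})$;
\item $\tilde{\mathbf{v}}_{TK}^{\mathbf{w}T}=-2\tilde{\mathbf{v}}_{SK}^{\mathbf{w}}+\tilde{\mathbf{v}}_{TK}^{\mathbf{w}}=(\tilde{\mathbf{e}}_{s_0}-\tilde{\mathbf{e}}_{t_0})+(n-1)(\tilde{\mathbf{e}}_{s_0}-\tilde{\mathbf{e}}_{k_0})$.
\end{itemize}
To reach the stated form, rewrite $\tilde{\mathbf{e}}_{s_0}-\tilde{\mathbf{e}}_{t_0}=(\tilde{\mathbf{e}}_{k_0}-\tilde{\mathbf{e}}_{t_0})+(\tilde{\mathbf{e}}_{s_0}-\tilde{\mathbf{e}}_{k_0})$. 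This shifts the coefficient of $\tilde{\mathbf{e}}_{s_0}-\tilde{\mathbf{e}}_{k_0}$ up by one in each of the last two formulas, yielding exactly \eqref{eq: root of maximal branches}.

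Second, the $c$-side is handled the same way. Here $\tilde{\mathbf{c}}_F$ is unchanged. For the other two vectors,
\begin{align*}
\tilde{\mathbf{x}}_{SK}^{\mathbf{w}T}&=2\tilde{\mathbf{c}}_F^{\mathbf{w}}-\tilde{\mathbf{x}}_{SK}^{\mathbf{w}}+\tilde{\mathbf{x}}_{TK}^{\mathbf{w}}=2\tilde{\mathbf{e}}_{k_0}+\tilde{\mathbf{e}}_{s_0}+\tilde{\mathbf{e}}_{t_0}+n(\tilde{\mathbf{e}}_{k_0}+\tilde{\mathbf{e}}_{s_0}),\\
\tilde{\mathbf{x}}_{TK}^{\mathbf{w}T}&=2\tilde{\mathbf{c}}_F^{\mathbf{w}}-2\tilde{\mathbf{x}}_{SK}^{\mathbf{w}}+\tilde{\mathbf{x}}_{TK}^{\mathbf{w}}=\tilde{\mathbf{e}}_{k_0}+\tilde{\mathbf{e}}_{t_0}+n(\tilde{\mathbf{e}}_{k_0}+\tilde{\mathbf{e}}_{s_0}).
\end{align*}
Grouping $2\tilde{\mathbf{e}}_{k_0}+\tilde{\mathbf{e}}_{s_0}+\tilde{\mathbf{e}}_{t_0}=(\tilde{\mathbf{e}}_{k_0}+\tilde{\mathbf{e}}_{t_0})+(\tilde{\mathbf{e}}_{k_0}+\tilde{\mathbf{e}}_{s_0})$ gives the claimed expressions.

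There is no real obstacle here. The only point needing care is to use the trunk version of the $T$-mutation rather than the branch version: the vectors are evaluated at $[i]S^n$, which lies in the trunk, even though the result lives in a branch. After that, the work is the bookkeeping of rewriting the results in the stated basis combinations.
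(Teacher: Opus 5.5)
Your proof is correct and is essentially the paper's argument. The paper proves this lemma together with \Cref{lem: c g vectors in trunks} from \eqref{eq: initial setup} and \Cref{lem: mutations for eigen vectors}, and the values at $[i]S^nT$ come from one trunk $T$-mutation applied to the trunk values at $[i]S^n$, exactly as you do; your computations and the regrouping via $\tilde{\mathbf{e}}_{s_0}-\tilde{\mathbf{e}}_{t_0}=(\tilde{\mathbf{e}}_{k_0}-\tilde{\mathbf{e}}_{t_0})+(\tilde{\mathbf{e}}_{s_0}-\tilde{\mathbf{e}}_{k_0})$ check out.
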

\begin{proof}[Proof of \Cref{lem: c g vectors in trunks} and \Cref{lem: c g vectors at root of maximal branches}]
They can be shown by the induction on $n$ by using \eqref{eq: initial setup} and \Cref{lem: mutations for eigen vectors}.
\end{proof}
For later purposes, we mention that  by \eqref{eq: recover g vectors}, $\tilde{\mathbf{g}}_{M}^{[i]S^nT} (M=K,S,T)$ can be expressed as a nonnegative linear combination of $\{\tilde{\mathbf{e}}_{s_0}+\tilde{\mathbf{e}}_{t_0}-\tilde{\mathbf{e}}_{k_0}, \tilde{\mathbf{e}}_{k_0}-\tilde{\mathbf{e}}_{t_0}, \tilde{\mathbf{e}}_{s_0}-\tilde{\mathbf{e}}_{k_0}\}$.

From these lemmas, we obtain the following important observation.
\begin{lemma}\label{lem: independence of cF and gF}
We have the following claims.
\\
\textup{($a$)} For any $\mathbf{w} \in \mathcal{T}\setminus\{\emptyset\}$, the vector $\tilde{\mathbf{c}}^{\mathbf{w}}_{F}$ is independent of $\mathbf{w}$, which is given by
\begin{equation}
\tilde{\mathbf{c}}_{F}^{\mathbf{w}}=\tilde{\mathbf{e}}_{1}+\tilde{\mathbf{e}}_2+\tilde{\mathbf{e}}_3.
\end{equation}
\textup{($b$)} Let $\mathbf{w}$ be in a branch. Then, $\tilde{\mathbf{g}}_{F}^{\mathbf{w}}$ depends only on the initial mutation $i=1,2,3$, and it is given by
\begin{equation}
\tilde{\mathbf{g}}_{F}^{\mathbf{w}}=\tilde{\mathbf{e}}_{s_0}+\tilde{\mathbf{e}}_{t_0}-\tilde{\mathbf{e}}_{k_0}.
\end{equation}
\end{lemma}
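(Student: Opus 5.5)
The plan is to combine the invariance of $\tilde{\mathbf{c}}_F$ and $\tilde{\mathbf{g}}_F$ under the mutation rules of \Cref{lem: mutations for eigen vectors} with the explicit values computed in \Cref{lem: c g vectors in trunks} and \Cref{lem: c g vectors at root of maximal branches}. The key structural fact is that every $\mathbf{w} \in \mathcal{T}\setminus\{\emptyset\}$ can be written uniquely as $\mathbf{w}=[i]X$ with $X \in \mathcal{M}$. Moreover, if $\mathbf{w}$ is in a branch, then $X$ contains a $T$, so we may write $X=S^nTY$ with $n \geq 0$ and $Y \in \mathcal{M}$. In that case $\mathbf{w}$ lies in the $n$th maximal branch $\mathcal{T}^{\geq [i]S^nT}$.

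For (a), we argue by induction along $X$. The base case is $\mathbf{w}=[i]$, where \eqref{eq: initial setup} gives
\[
\tilde{\mathbf{c}}_F^{[i]}=-\tilde{\mathbf{e}}_{k_0}+(\tilde{\mathbf{e}}_{s_0}+2\tilde{\mathbf{e}}_{k_0})+\tilde{\mathbf{e}}_{t_0}=\tilde{\mathbf{e}}_{k_0}+\tilde{\mathbf{e}}_{s_0}+\tilde{\mathbf{e}}_{t_0}.
\]
This equals $\tilde{\mathbf{e}}_1+\tilde{\mathbf{e}}_2+\tilde{\mathbf{e}}_3$, since $\{k_0,s_0,t_0\}=\{1,2,3\}$. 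This agrees with the case $n=0$ of \Cref{lem: c g vectors in trunks}. For the inductive step, \Cref{lem: mutations for eigen vectors} shows that $\tilde{\mathbf{c}}_F^{\mathbf{w}S}=\tilde{\mathbf{c}}_F^{\mathbf{w}}$ and $\tilde{\mathbf{c}}_F^{\mathbf{w}T}=\tilde{\mathbf{c}}_F^{\mathbf{w}}$, and this holds in all three cases (the $S$-mutation, the $T$-mutation in a trunk, and the $T$-mutation in a branch). Hence $\tilde{\mathbf{c}}_F^{[i]X}=\tilde{\mathbf{c}}_F^{[i]}$ for every $X$. Since the result $\tilde{\mathbf{e}}_1+\tilde{\mathbf{e}}_2+\tilde{\mathbf{e}}_3$ does not depend on $i$ either, $\tilde{\mathbf{c}}_F^{\mathbf{w}}$ is independent of $\mathbf{w}$.

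For (b), let $\mathbf{w}=[i]S^nTY$ be in a branch. By \Cref{lem: c g vectors at root of maximal branches},
\[
\tilde{\mathbf{g}}_F^{[i]S^nT}=\tilde{\mathbf{e}}_{s_0}+\tilde{\mathbf{e}}_{t_0}-\tilde{\mathbf{e}}_{k_0},
\]
and this value is independent of $n$. We then induct on $Y$. Every $[i]S^nTY'$ lies in a branch, so only the $S$-rule and the branch $T$-rule of \Cref{lem: mutations for eigen vectors} are ever applied. Both rules leave $\tilde{\mathbf{g}}_F$ unchanged, which gives $\tilde{\mathbf{g}}_F^{\mathbf{w}}=\tilde{\mathbf{g}}_F^{[i]S^nT}$. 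The triple $(k_0,s_0,t_0)$ depends only on $i$ (and on the fixed $B$) by \Cref{tab: List of initial indices}, which proves the claim.

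Neither part has a real obstacle. The only care needed is in (b). There one must check that all intermediate words stay in a branch, so that the trunk $T$-rule, which changes $\tilde{\mathbf{g}}_F$ by $2\tilde{\mathbf{v}}_{SK}$, is never used. This follows from the definition of a branch: once $X$ contains a $T$, so does every extension of $X$.
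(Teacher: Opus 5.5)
Your proposal is correct and takes essentially the same approach as the paper. The paper derives this lemma directly from \Cref{lem: c g vectors in trunks}, \Cref{lem: c g vectors at root of maximal branches} and the rules of \Cref{lem: mutations for eigen vectors}: it uses that $\tilde{\mathbf{c}}_F$ is invariant under all three mutation rules, and that $\tilde{\mathbf{g}}_F$ is invariant under the $S$-rule and the branch $T$-rule, starting from the base values at $[i]$ and at the roots $[i]S^nT$. You also correctly note that the trunk $T$-rule is never applied once a word has entered a branch.
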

Due to the importance of these vectors, we write
\begin{equation}\label{eq: fixed points of mutations}
\mathfrak{c}=\tilde{\mathbf{e}}_{1}+\tilde{\mathbf{e}}_{2}+\tilde{\mathbf{e}}_{3},
\quad
\mathfrak{g}_i=\tilde{\mathbf{e}}_{s_0}+\tilde{\mathbf{e}}_{t_0}-\tilde{\mathbf{e}}_{k_0}.
\end{equation}
Then, for any admissible pair $\mathbf{w}, \mathbf{u} \in \mathcal{T}^{\geq [i]}$ in branches with the same initial mutation direction $i=1,2,3$, by \Cref{thm: inner isomorphisms between subfans in the Markov case} and \Cref{lem: independence of cF and gF}, we have
\begin{equation}
\Clinearmap_{\mathbf{w}}^{\mathbf{u}}(\mathfrak{c})=\mathfrak{c},
\quad
\Glinearmap_{\mathbf{w}}^{\mathbf{u}}(\mathfrak{g}_i)=\mathfrak{g}_i.
\end{equation}

\subsection{Representation matrices between different branches}
Before calculating representation matrices, we mention the following fact.
\begin{lemma}\label{lem: fundamental relations for linear maps}
The following relations hold.
\\
\textup{($a$)} Let $\mathbf{w}_0,\mathbf{w}_1,\mathbf{w}_2 \in \mathcal{T}\setminus\{\emptyset\}$, and suppose that each pair is admissible. Then, we have
\begin{equation}\label{eq: chain rule}
\Clinearmap_{\mathbf{w}_1}^{\mathbf{w}_2}\Clinearmap_{\mathbf{w}_0}^{\mathbf{w}_1}=\Clinearmap_{\mathbf{w}_0}^{\mathbf{w}_2},
\quad
\Glinearmap_{\mathbf{w}_1}^{\mathbf{w}_2}\Glinearmap_{\mathbf{w}_0}^{\mathbf{w}_1}=\Glinearmap_{\mathbf{w}_0}^{\mathbf{w}_2}.
\end{equation}
\textup{($b$)} For any admissible pair $\mathbf{w},\mathbf{u} \in \mathcal{T}\setminus\{\emptyset\}$ and $X \in \mathcal{M}$, $\mathbf{w}X$ and $\mathbf{u}X$ are also admissible, and we have
\begin{equation}\label{eq: X elimination}
\Clinearmap_{\mathbf{w}X}^{\mathbf{u}X}=\Clinearmap_{\mathbf{w}}^{\mathbf{u}},
\quad
\Glinearmap_{\mathbf{w}X}^{\mathbf{u}X}=\Glinearmap_{\mathbf{w}}^{\mathbf{u}}.
\end{equation}
\end{lemma}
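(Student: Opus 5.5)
Both parts rest on the uniqueness observation made right after \Cref{thm: inner isomorphisms between subfans in the Markov case}. For each $\mathbf{w}\in\mathcal{T}\setminus\{\emptyset\}$, the triples $\{\tilde{\mathbf{c}}_K^{\mathbf{w}},\tilde{\mathbf{c}}_S^{\mathbf{w}},\tilde{\mathbf{c}}_T^{\mathbf{w}}\}$ and $\{\tilde{\mathbf{g}}_K^{\mathbf{w}},\tilde{\mathbf{g}}_S^{\mathbf{w}},\tilde{\mathbf{g}}_T^{\mathbf{w}}\}$ are bases of $\mathbb{R}^3$. Hence a linear map is determined by where it sends such a basis. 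Both identities will follow by checking that the two sides agree on one of these bases. I give the argument for $\Glinearmap$; the argument for $\Clinearmap$ is identical.

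For (a), the defining property \eqref{eq: definition of Clinear and Glinear} gives $\Glinearmap_{\mathbf{w}_0}^{\mathbf{w}_1}(\tilde{\mathbf{g}}_M^{\mathbf{w}_0})=\tilde{\mathbf{g}}_M^{\mathbf{w}_1}$ for $M=K,S,T$. Applying $\Glinearmap_{\mathbf{w}_1}^{\mathbf{w}_2}$ then gives $\tilde{\mathbf{g}}_M^{\mathbf{w}_2}$. The map $\Glinearmap_{\mathbf{w}_0}^{\mathbf{w}_2}$ is defined because $(\mathbf{w}_0,\mathbf{w}_2)$ is assumed admissible, and by definition it also sends $\tilde{\mathbf{g}}_M^{\mathbf{w}_0}$ to $\tilde{\mathbf{g}}_M^{\mathbf{w}_2}$. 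The two maps agree on a basis, so they are equal.

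For (b), I first check admissibility of $(\mathbf{w}X,\mathbf{u}X)$. If $\mathbf{w}$ and $\mathbf{u}$ both lie in branches, then so do $\mathbf{w}X$ and $\mathbf{u}X$: writing $\mathbf{w}=[i]Y$ with $Y$ containing a $T$, the word $YX$ also contains a $T$. If $\mathbf{w}=[i]S^m$ and $\mathbf{u}=[j]S^n$ lie in trunks, then $\mathbf{w}X=[i]S^mX$ and $\mathbf{u}X=[j]S^nX$. These are both in trunks when $X=S^\ell$, and both in branches otherwise. Either way the pair is admissible.

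Next, \Cref{thm: inner isomorphisms between subfans in the Markov case}, applied to the pair $(\mathbf{w},\mathbf{u})$ with the word $X$, gives $\Glinearmap_{\mathbf{w}}^{\mathbf{u}}(\tilde{\mathbf{g}}_M^{\mathbf{w}X})=\tilde{\mathbf{g}}_M^{\mathbf{u}X}$ for $M=K,S,T$. This is exactly the characterization of $\Glinearmap_{\mathbf{w}X}^{\mathbf{u}X}$ from \eqref{eq: definition of Clinear and Glinear}. Since $\{\tilde{\mathbf{g}}_M^{\mathbf{w}X}\}_{M=K,S,T}$ is a basis, the two maps coincide.

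No step is a serious obstacle. The only point needing care is the admissibility bookkeeping in (b) for a trunk pair, since $\mathbf{w}X$ may leave the trunk; it does so for $\mathbf{w}$ and $\mathbf{u}$ simultaneously because that depends only on whether $X$ contains a $T$.
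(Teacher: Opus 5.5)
Your proposal is correct and follows essentially the same argument as the paper: in both parts you check that the two linear maps agree on the basis $\{\tilde{\mathbf{g}}_M^{\mathbf{w}_0}\}_{M=K,S,T}$ (respectively $\{\tilde{\mathbf{g}}_M^{\mathbf{w}X}\}_{M=K,S,T}$), using the defining property of $\Glinearmap$ and \Cref{thm: inner isomorphisms between subfans in the Markov case}. Your explicit verification that $(\mathbf{w}X,\mathbf{u}X)$ stays admissible, according to whether $X$ contains a $T$, is a small detail the paper leaves implicit.
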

\begin{proof}
($a$) We prove $\Glinearmap_{\mathbf{w}_1}^{\mathbf{w}_2}\Glinearmap_{\mathbf{w}_0}^{\mathbf{w}_1}=\Glinearmap_{\mathbf{w}_0}^{\mathbf{w}_2}$ based on \Cref{thm: inner isomorphisms between subfans in the Markov case}. For each $M=K,S,T$, we have
\begin{equation}
\Glinearmap_{\mathbf{w}_1}^{\mathbf{w}_2}\Glinearmap_{\mathbf{w}_0}^{\mathbf{w}_1}(\tilde{\mathbf{g}}_{M}^{\mathbf{w}_0})=\tilde{\mathbf{g}}_{M}^{\mathbf{w}_2}=\Glinearmap_{\mathbf{w}_0}^{\mathbf{w}_2}(\tilde{\mathbf{g}}_{M}^{\mathbf{w}_0}).
\end{equation}
Since $\{\tilde{\mathbf{g}}_{K}^{\mathbf{w}_0},\tilde{\mathbf{g}}_{S}^{\mathbf{w}_0},\tilde{\mathbf{g}}_{T}^{\mathbf{w}_0}\}$ is a basis of $\mathbb{R}^3$, the above equality implies $\Glinearmap_{\mathbf{w}_1}^{\mathbf{w}_2}\Glinearmap_{\mathbf{w}_0}^{\mathbf{w}_1}=\Glinearmap_{\mathbf{w}_0}^{\mathbf{w}_2}$. The equality $\Clinearmap_{\mathbf{w}_1}^{\mathbf{w}_2}\Clinearmap_{\mathbf{w}_0}^{\mathbf{w}_1}=\Clinearmap_{\mathbf{w}_0}^{\mathbf{w}_2}$ is shown by replacing $\tilde{\mathbf{g}}_{M}^{\mathbf{w}_0}$ with $\tilde{\mathbf{c}}_{M}^{\mathbf{w}_0}$.
\\
($b$) We prove $\Glinearmap_{\mathbf{w}X}^{\mathbf{u}X}=\Glinearmap_{\mathbf{w}}^{\mathbf{u}}$. For each $M=K,S,T$, we have
\begin{equation}
\Glinearmap_{\mathbf{w}}^{\mathbf{u}}(\tilde{\mathbf{g}}_{M}^{\mathbf{w}X})=\tilde{\mathbf{g}}_{M}^{\mathbf{u}X}=\Glinearmap_{\mathbf{w}X}^{\mathbf{u}X}(\tilde{\mathbf{g}}_{M}^{\mathbf{w}X}).
\end{equation}
Thus, the statements for $\Clinearmap$ are similar and the lemma holds.
\end{proof}
For later use, we calculate some representation matrices of $\Clinearmap_{\mathbf{w}}^{\mathbf{u}}$ and $\Glinearmap_{\mathbf{w}}^{\mathbf{u}}$.
\begin{lemma}[Change of the initial mutations]\label{lem: change i to j}
For any $i,j \in \{1,2,3\}$ and $X \in \mathcal{M}$, the linear maps $\Clinearmap_{[i]X}^{[j]X}$ and $\Glinearmap_{[i]X}^{[j]X}$ are determined by the following three conditions.
\begin{equation}
\tilde{\mathbf{e}}_{K([i])} \mapsto \tilde{\mathbf{e}}_{K([j])},
\quad
\tilde{\mathbf{e}}_{S([i])} \mapsto \tilde{\mathbf{e}}_{S([j])},
\quad
\tilde{\mathbf{e}}_{T([i])} \mapsto \tilde{\mathbf{e}}_{T([j])}.
\end{equation}
\end{lemma}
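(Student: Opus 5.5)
First, by \Cref{lem: fundamental relations for linear maps}($b$), the pair $[i]X,[j]X$ is admissible (both in trunks if $X=S^n$, otherwise both in branches), and $\Clinearmap_{[i]X}^{[j]X}=\Clinearmap_{[i]}^{[j]}$, $\Glinearmap_{[i]X}^{[j]X}=\Glinearmap_{[i]}^{[j]}$. So it suffices to treat $X=1_{\mathcal{M}}$. Note that $[i]$ and $[j]$ both lie in trunks ($[i]=[i]S^0$), so \Cref{thm: inner isomorphisms between subfans in the Markov case} applies. By \eqref{eq: definition of Clinear and Glinear}, the maps $\Clinearmap_{[i]}^{[j]}$ and $\Glinearmap_{[i]}^{[j]}$ are the unique linear maps sending $\tilde{\mathbf{c}}_M^{[i]}\mapsto\tilde{\mathbf{c}}_M^{[j]}$ and $\tilde{\mathbf{g}}_M^{[i]}\mapsto\tilde{\mathbf{g}}_M^{[j]}$ for $M=K,S,T$.

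Let $L$ be the linear map with $\tilde{\mathbf{e}}_{K([i])}\mapsto\tilde{\mathbf{e}}_{K([j])}$, $\tilde{\mathbf{e}}_{S([i])}\mapsto\tilde{\mathbf{e}}_{S([j])}$, $\tilde{\mathbf{e}}_{T([i])}\mapsto\tilde{\mathbf{e}}_{T([j])}$. This is well-defined and invertible because $\{K([i]),S([i]),T([i])\}=\{1,2,3\}$ and the modified standard vectors form a basis. The key step is to compare the explicit formulas \eqref{eq: initial setup}. There, each of $\tilde{\mathbf{c}}_M^{[i]}$ and $\tilde{\mathbf{g}}_M^{[i]}$ is given by one universal expression in $\tilde{\mathbf{e}}_{k_0},\tilde{\mathbf{e}}_{s_0},\tilde{\mathbf{e}}_{t_0}$, with coefficients independent of $i$; for example, $\tilde{\mathbf{c}}_S^{[i]}=\tilde{\mathbf{e}}_{s_0}+2\tilde{\mathbf{e}}_{k_0}$. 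Applying $L$ replaces $(k_0,s_0,t_0)$ for $i$ by the corresponding triple for $j$, so $L(\tilde{\mathbf{c}}_M^{[i]})=\tilde{\mathbf{c}}_M^{[j]}$ and $L(\tilde{\mathbf{g}}_M^{[i]})=\tilde{\mathbf{g}}_M^{[j]}$ for all $M$. By uniqueness, $L=\Clinearmap_{[i]}^{[j]}=\Glinearmap_{[i]}^{[j]}$, and this gives the claim for every $X$.

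The only point needing care is \eqref{eq: initial setup} itself, namely that the first mutation produces the same formulas for every $i$ with the table's $(k_0,s_0,t_0)$. The paper states this, and it follows from the original recursion \eqref{eq: recursion of C} together with the normalization by $\sqrt{d_j}$ and $|b_{kj}b_{jk}|=4$. I take it as given. Beyond that, the argument is a direct substitution. One could check it additionally against \Cref{lem: c g vectors in trunks}: for instance, $L(\tilde{\mathbf{e}}_{k_0}+\tilde{\mathbf{e}}_{s_0}+\tilde{\mathbf{e}}_{t_0})=\mathfrak{c}$ is consistent with \Cref{lem: independence of cF and gF}($a$).
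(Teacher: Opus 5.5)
Your proposal is correct and follows the paper's proof. The paper also reduces to $X=1_{\mathcal{M}}$ via \Cref{lem: fundamental relations for linear maps}($b$) and then settles that case ``by the definition.'' That step amounts to your comparison of the uniform formulas \eqref{eq: initial setup} for $[i]$ and $[j]$, combined with uniqueness of a linear map determined on a basis.
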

\begin{proof}
By \Cref{lem: fundamental relations for linear maps}, it suffices to show the case of $X=1_{\mathcal{M}}$.
This can be shown by the definition. 
\end{proof}
Now, we focus on one subtree $\mathcal{T}^{\geq [i]}$. Set $k_0=K([i])$, $s_0=S([i])$, and $t_0=T([i])$.
The set $\mathcal{T}^{\geq [i]}$ is decomposed into the trunk $\mathcal{T}^{<[i]S^{\infty}}$ and the maximal branches $\mathcal{T}^{\geq [i]S^nT}$ with $n \in \mathbb{Z}_{\geq 0}$. The relationship among the maximal branches can be given as follows.
\begin{lemma}[Change of the maximal branches]\label{lem: insert S mutations}
For any $X \in \mathcal{M}$, the representation matrices of $\Clinearmap_{[i]X}^{[i]SX}$ and $\Glinearmap_{[i]X}^{[i]SX}$ with respect to $[\tilde{\mathbf{e}}_{k_0},\tilde{\mathbf{e}}_{s_0},\tilde{\mathbf{e}}_{t_0}]$ are
\begin{equation}
\left(\begin{matrix}
2 & -1 & 0\\
1 & 0 & 0\\
0 & 0 & 1
\end{matrix}\right),
\quad
\left(\begin{matrix}
0 & -1 & 0\\
1 & 2 & 0\\
0 & 0 & 1
\end{matrix}\right).
\end{equation}
The representation matrices of $\Clinearmap_{[i]S^mX}^{[i]S^nX}$ and $\Glinearmap_{[i]S^mX}^{[i]S^nX}$ with respect to $[\tilde{\mathbf{e}}_{k_0},\tilde{\mathbf{e}}_{s_0},\tilde{\mathbf{e}}_{t_0}]$ are
\begin{equation}
\left(\begin{matrix}
(n-m)+1 & -(n-m) & 0\\
n-m & -(n-m)+1 & 0\\
0 & 0 & 1
\end{matrix}\right),
\quad
\left(\begin{matrix}
-(n-m)+1 & -(n-m) & 0\\
n-m & (n-m)+1 & 0\\
0 & 0 & 1
\end{matrix}\right),
\end{equation}
and they depends on $n-m$, but not $n$ or $m$.
\end{lemma}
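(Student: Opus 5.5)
By \Cref{lem: fundamental relations for linear maps} ($b$), the first claim reduces to $X=1_{\mathcal{M}}$, and the second reduces to computing powers of a single matrix. Note that $[i]S^mX$ and $[i]S^nX$ are admissible for every $X$: they are both in the trunk if $X\in\{S\}^*$, and both in branches otherwise.

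\emph{Step 1 (the map $[i]\to[i]S$).} Since $[i]$ and $[i]S$ both lie in the trunk, they are admissible. By \eqref{eq: X elimination}, $\Glinearmap_{[i]X}^{[i]SX}=\Glinearmap_{[i]}^{[i]S}$, and similarly for $\Clinearmap$. The map $\Glinearmap_{[i]}^{[i]S}$ is determined by \eqref{eq: definition of Clinear and Glinear}: it sends $\tilde{\mathbf{g}}_M^{[i]}$ to $\tilde{\mathbf{g}}_M^{[i]S}$. The vectors at $[i]$ are given by \eqref{eq: initial setup}, and those at $[i]S$ follow from \eqref{eq: S mutation of c- g-vectors}, namely
\[
\tilde{\mathbf{g}}_K^{[i]S}=-2\tilde{\mathbf{e}}_{k_0}+3\tilde{\mathbf{e}}_{s_0},\quad
\tilde{\mathbf{g}}_S^{[i]S}=-\tilde{\mathbf{e}}_{k_0}+2\tilde{\mathbf{e}}_{s_0},\quad
\tilde{\mathbf{g}}_T^{[i]S}=\tilde{\mathbf{e}}_{t_0}.
\]
We then express the standard vectors in terms of the $g$-vectors at $[i]$: $\tilde{\mathbf{e}}_{s_0}=\tilde{\mathbf{g}}_S^{[i]}$, $\tilde{\mathbf{e}}_{k_0}=2\tilde{\mathbf{g}}_S^{[i]}-\tilde{\mathbf{g}}_K^{[i]}$, $\tilde{\mathbf{e}}_{t_0}=\tilde{\mathbf{g}}_T^{[i]}$. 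Applying the map gives
\[
\tilde{\mathbf{e}}_{k_0}\mapsto\tilde{\mathbf{e}}_{s_0},\quad
\tilde{\mathbf{e}}_{s_0}\mapsto-\tilde{\mathbf{e}}_{k_0}+2\tilde{\mathbf{e}}_{s_0},\quad
\tilde{\mathbf{e}}_{t_0}\mapsto\tilde{\mathbf{e}}_{t_0},
\]
which is the stated matrix for $\Glinearmap$. The same computation with $\tilde{\mathbf{c}}_M^{[i]}$ from \eqref{eq: initial setup} and $\tilde{\mathbf{c}}_M^{[i]S}$ from \eqref{eq: S mutation of c- g-vectors} gives the matrix for $\Clinearmap$. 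One could instead use the Jordan basis from \Cref{lem: c g vectors in trunks} with $n=0,1$, which gives a quick consistency check.

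\emph{Step 2 (general $m,n$).} By \eqref{eq: X elimination}, $\Glinearmap_{[i]S^mX}^{[i]S^nX}=\Glinearmap_{[i]S^m}^{[i]S^n}$. Also $\Glinearmap_{[i]S^{j}}^{[i]S^{j+1}}=\Glinearmap_{[i]}^{[i]S}$ for every $j$, again by \eqref{eq: X elimination} applied with $X=S^j$. The chain rule \eqref{eq: chain rule} then gives $\Glinearmap_{[i]S^m}^{[i]S^n}=(\Glinearmap_{[i]}^{[i]S})^{n-m}$. When $n<m$, this uses $(\Glinearmap_{\mathbf{w}}^{\mathbf{u}})^{-1}=\Glinearmap_{\mathbf{u}}^{\mathbf{w}}$ to interpret the negative power. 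In particular the map depends only on $d=n-m$.

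\emph{Step 3 (powers).} Write the two matrices from Step 1 as $I+N_C$ and $I+N_G$, where
\[
N_C=\left(\begin{matrix}1&-1&0\\1&-1&0\\0&0&0\end{matrix}\right),\qquad
N_G=\left(\begin{matrix}-1&-1&0\\1&1&0\\0&0&0\end{matrix}\right).
\]
A direct check shows $N_C^2=N_G^2=0$. Hence $(I+N)^d=I+dN$ for every $d\in\mathbb{Z}$; for negative $d$ this holds because $(I+N)^{-1}=I-N$. This is exactly the displayed formula.

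The only real work is the bookkeeping in Step 1. The structural reductions are immediate from \Cref{lem: fundamental relations for linear maps}, and the nilpotency in Step 3 is what produces the linear dependence on $n-m$.
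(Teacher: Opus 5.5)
Your proof is correct: the admissibility checks, the images of $\tilde{\mathbf{e}}_{k_0},\tilde{\mathbf{e}}_{s_0},\tilde{\mathbf{e}}_{t_0}$ under $\Clinearmap_{[i]}^{[i]S}$ and $\Glinearmap_{[i]}^{[i]S}$, and $N_C^2=N_G^2=0$ all hold. The paper gives no written proof of this lemma, and your argument is the evident intended one: reduce to $X=1_{\mathcal{M}}$ via \Cref{lem: fundamental relations for linear maps} exactly as in the proof of \Cref{lem: change i to j}, then compute directly. The only small difference is how the dependence on $n-m$ is obtained. You take powers of a unipotent matrix, while \Cref{lem: c g vectors in trunks} gives it in one step, since $\tilde{\mathbf{g}}_F$ and $\tilde{\mathbf{v}}_{SK}$ are fixed and $\tilde{\mathbf{v}}_{TK}$ shifts by $(n-m)\tilde{\mathbf{v}}_{SK}$.
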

For any $i=1,2,3$ and $l \in \mathbb{Z}$, let $\Clinearmap_{i,l},\Glinearmap_{i,l} \in \mathrm{GL}(\mathbb{R}^3)$ be linear maps whose representation matrices with respect to $[\tilde{\mathbf{e}}_{k_0},\tilde{\mathbf{e}}_{s_0},\tilde{\mathbf{e}}_{t_0}]$ are given by
\begin{equation}
\left(\begin{matrix}
l+1 & -l & 0\\
l & -l+1 & 0\\
0 & 0 & 1
\end{matrix}\right),
\quad
\left(\begin{matrix}
-l+1 & -l & 0\\
l & l+1 & 0\\
0 & 0 & 1
\end{matrix}\right).
\end{equation}
\begin{example}
Fix one branch $i=1,2,3$. Then, the subtree $\mathcal{T}^{\geq [i]}$ is decomposed into the trunk $\mathcal{T}^{<[i]S^{\infty}}$ and the maximal branches $\mathcal{T}^{\geq [i]S^nT}$. Then, the relationships between two maximal branches $\mathcal{T}^{\geq [i]S^mT}$ and $\mathcal{T}^{\geq [i]S^nT}$ is given by
\begin{equation}
\Clinearmap_{[i]S^mT}^{[i]S^nT}=\Clinearmap_{i,n-m},
\quad
\Glinearmap_{[i]S^mT}^{[i]S^nT}=\Glinearmap_{i,n-m}.
\end{equation}
Let $\mathbf{w}$ and $\mathbf{u}$ be an admissible pair in branches. Let $\mathbf{w}=[i]S^nTX$ and $\mathbf{u}=[j]S^mTY$ with $n,m \in \mathbb{Z}_{\geq 0}$ and $X,Y \in \mathcal{M}$.
By \eqref{eq: chain rule}, we can express $\Clinearmap_{\mathbf{w}}^{\mathbf{u}}$ and $\Glinearmap_{\mathbf{w}}^{\mathbf{u}}$ as follows.
\begin{align}
\Clinearmap_{\mathbf{w}}^{\mathbf{u}}&=\Clinearmap^{[j]S^mTY}_{[1]S^mTY}\Clinearmap^{[1]S^mTY}_{[1]TY}\Clinearmap^{[1]TY}_{[1]T}\Clinearmap^{[1]T}_{[1]TX}\Clinearmap^{[1]TX}_{[1]S^nTX}\Clinearmap^{[1]S^nTX}_{[i]S^nTX},
\\
\Glinearmap_{\mathbf{w}}^{\mathbf{u}}&=\Glinearmap^{[j]S^mTY}_{[1]S^mTY}\Glinearmap^{[1]S^mTY}_{[1]TY}\Glinearmap^{[1]TY}_{[1]T}\Glinearmap^{[1]T}_{[1]TX}\Glinearmap^{[1]TX}_{[1]S^nTX}\Glinearmap^{[1]S^nTX}_{[i]S^nTX}.
\end{align}
By \eqref{eq: X elimination}, we can simplify each factor as follows:
\begin{align}
\Clinearmap_{\mathbf{w}}^{\mathbf{u}}&=\Clinearmap^{[j]}_{[1]}\Clinearmap^{[1]S^m}_{[1]}\Clinearmap^{[1]TY}_{[1]T}\Clinearmap^{[1]T}_{[1]TX}\Clinearmap^{[1]}_{[1]S^n}\Clinearmap^{[1]}_{[i]}=\Clinearmap_{[1]}^{[j]}\Clinearmap_{1,m}\Clinearmap_{[1]T}^{Y}(\Clinearmap_{[1]T}^{X})^{-1}\Clinearmap_{1,-n}\Clinearmap_{[i]}^{[1]},
\\
\Glinearmap_{\mathbf{w}}^{\mathbf{u}}&=\Glinearmap^{[j]}_{[1]}\Glinearmap^{[1]S^m}_{[1]}\Glinearmap^{[1]TY}_{[1]T}\Glinearmap^{[1]T}_{[1]TX}\Glinearmap^{[1]}_{[1]S^n}\Glinearmap^{[1]}_{[i]}=\Glinearmap_{[1]}^{[j]}\Glinearmap_{1,m}\Glinearmap_{[1]T}^{Y}(\Glinearmap_{[1]T}^{X})^{-1}\Glinearmap_{1,-n}\Glinearmap_{[i]}^{[1]}.
\end{align}
All the factors except $\Clinearmap_{[1]T}^{X}$, $\Clinearmap_{[1]T}^{Y}$, $\Glinearmap_{[1]T}^{X}$, and $\Glinearmap_{[1]T}^{Y}$ in the above formulas have already calculated in \Cref{lem: change i to j} and \Cref{lem: insert S mutations}. Thus, by applying \Cref{lem: fundamental relations for linear maps}, \Cref{lem: change i to j}, and \Cref{lem: insert S mutations}, the maps $\Clinearmap_{\mathbf{w}}^{\mathbf{u}}$ and $\Glinearmap_{\mathbf{w}}^{\mathbf{u}}$ for the admissible pair $\mathbf{w}$, $\mathbf{u}$ in branches are reduced to the one of the form $\Clinearmap_{[1]T}^{X}$ and $\Glinearmap_{[1]T}^{X}$ with $X \in \mathcal{M}$.
\end{example}

\subsection{Representation matrices inside one fixed branch}
In this subsection, we fix one branch $\mathcal{T}^{\geq \mathbf{w}}$.
\begin{lemma}
Suppose that $\mathbf{w}$ is in a branch. Then, for any $X,Y \in \mathcal{M}$, we have
\begin{equation}
\Clinearmap_{\mathbf{w}}^{XY}=\Clinearmap_{\mathbf{w}}^{X}\Clinearmap_{\mathbf{w}}^{Y},
\quad
\Glinearmap_{\mathbf{w}}^{XY}=\Glinearmap_{\mathbf{w}}^{X}\Glinearmap_{\mathbf{w}}^{Y}.\label{equ: XY}
\end{equation}
\end{lemma}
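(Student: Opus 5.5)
Since $\mathbf{w}$ lies in a branch, so do $\mathbf{w}X$ and $\mathbf{w}XY$ for all $X,Y\in\mathcal{M}$. Hence every pair among $\mathbf{w},\mathbf{w}X,\mathbf{w}XY$ is admissible in branches, and all the maps below exist by \Cref{thm: inner isomorphisms between subfans in the Markov case}. The plan is to rewrite $\Glinearmap_{\mathbf{w}}^{XY}=\Glinearmap_{\mathbf{w}}^{\mathbf{w}XY}$ as a composite of two maps using the chain rule \eqref{eq: chain rule}, and then identify one factor with $\Glinearmap_{\mathbf{w}}^{Y}$ using the $X$-elimination rule \eqref{eq: X elimination}.

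\emph{Step 1 (factor through $\mathbf{w}X$).} By \eqref{eq: chain rule} applied to $\mathbf{w}_0=\mathbf{w}$, $\mathbf{w}_1=\mathbf{w}X$, $\mathbf{w}_2=\mathbf{w}XY$,
\begin{equation*}
\Glinearmap_{\mathbf{w}}^{\mathbf{w}XY}=\Glinearmap_{\mathbf{w}X}^{\mathbf{w}XY}\,\Glinearmap_{\mathbf{w}}^{\mathbf{w}X}=\Glinearmap_{\mathbf{w}X}^{\mathbf{w}XY}\,\Glinearmap_{\mathbf{w}}^{X}.
\end{equation*}
This is the \emph{wrong order}: it gives $\Glinearmap_{\mathbf{w}}^{XY}=\Glinearmap_{\mathbf{w}X}^{\mathbf{w}XY}\Glinearmap_{\mathbf{w}}^{X}$, whereas we want $\Glinearmap_{\mathbf{w}}^{X}$ on the left. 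So I would instead factor through $\mathbf{w}Y$.

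\emph{Step 1$'$ (factor through $\mathbf{w}Y$).} The pairs $(\mathbf{w},\mathbf{w}Y)$ and $(\mathbf{w}Y,\mathbf{w}XY)$ are admissible, so \eqref{eq: chain rule} gives
\begin{equation*}
\Glinearmap_{\mathbf{w}}^{\mathbf{w}XY}=\Glinearmap_{\mathbf{w}Y}^{\mathbf{w}XY}\,\Glinearmap_{\mathbf{w}}^{\mathbf{w}Y}.
\end{equation*}
Now apply \eqref{eq: X elimination} with the pair $(\mathbf{w},\mathbf{w}X)$ and right factor $Y$, which gives $\Glinearmap_{\mathbf{w}Y}^{\mathbf{w}XY}=\Glinearmap_{\mathbf{w}}^{\mathbf{w}X}=\Glinearmap_{\mathbf{w}}^{X}$. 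Substituting, we get $\Glinearmap_{\mathbf{w}}^{XY}=\Glinearmap_{\mathbf{w}}^{X}\Glinearmap_{\mathbf{w}}^{Y}$, as claimed.

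The same two steps, with every $\Glinearmap$ replaced by $\Clinearmap$, prove the $c$-vector identity.

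The only real obstacle is choosing the intermediate point correctly. Passing through $\mathbf{w}X$ yields the factorization in the reversed order, while passing through $\mathbf{w}Y$ matches the statement. It is also worth checking directly on the defining property \eqref{eq: notation X}, as a sanity check of the order: $\Glinearmap_{\mathbf{w}}^{X}\Glinearmap_{\mathbf{w}}^{Y}\tilde{\mathbf{g}}_{M}^{\mathbf{w}Z}=\Glinearmap_{\mathbf{w}}^{X}\tilde{\mathbf{g}}_{M}^{\mathbf{w}YZ}=\tilde{\mathbf{g}}_{M}^{\mathbf{w}XYZ}$. Taking $Z=1_{\mathcal{M}}$ and using that $\{\tilde{\mathbf{g}}_{M}^{\mathbf{w}}\}_{M=K,S,T}$ is a basis of $\mathbb{R}^3$ confirms the identity independently of the chain-rule argument.
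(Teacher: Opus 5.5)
Your proof is correct and essentially the paper's: the factorization through $\mathbf{w}Y$ followed by \eqref{eq: X elimination} is valid, and your closing ``sanity check'' (evaluating both sides on the basis $\{\tilde{\mathbf{g}}_M^{\mathbf{w}}\}$ via \eqref{eq: notation X}) is exactly the paper's proof, while the chain-rule route repackages it, since \eqref{eq: chain rule} and \eqref{eq: X elimination} are proved by the same basis evaluation. Your Step 1 is not wrong either: it is the true identity $\Glinearmap_{\mathbf{w}}^{XY}=\Glinearmap_{\mathbf{w}X}^{Y}\Glinearmap_{\mathbf{w}}^{X}$, which combined with the lemma gives the conjugation formula $\Glinearmap_{\mathbf{w}X}^{Y}=\Glinearmap_{\mathbf{w}}^{X}\Glinearmap_{\mathbf{w}}^{Y}(\Glinearmap_{\mathbf{w}}^{X})^{-1}$ that the paper uses immediately afterwards.
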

\begin{proof}
We prove $\Glinearmap_{\mathbf{w}}^{XY}=\Glinearmap_{\mathbf{w}}^{X}\Glinearmap_{\mathbf{w}}^{Y}$. For each $M=K,S,T$, we have
\begin{equation}
\Glinearmap_{\mathbf{w}}^{X}\Glinearmap_{\mathbf{w}}^{Y}(\tilde{\mathbf{g}}_{M}^{\mathbf{w}})=\tilde{\mathbf{g}}_{M}^{\mathbf{w}XY}=\Glinearmap_{\mathbf{w}}^{XY}(\tilde{\mathbf{g}}_{M}^{\mathbf{w}}).
\end{equation}
Thus, the claim holds.
\end{proof}
Let $\mathbf{w}=[i]S^nTX$ and $Y \in \mathcal{M}$. Then, by \eqref{eq: chain rule} and \eqref{equ: XY}, we have
\begin{equation}
\Glinearmap_{\mathbf{w}}^{Y}=\Glinearmap^{[i]S^nTXY}_{[i]S^nT}\Glinearmap^{[i]S^nT}_{[i]S^nTX}=\Glinearmap_{[i]S^nT}^{XY}(\Glinearmap_{[i]S^nT}^{X})^{-1}=\Glinearmap_{[i]S^nT}^{X}\Glinearmap_{[i]S^nT}^{Y}(\Glinearmap_{[i]S^nT}^{X})^{-1}.
\end{equation}
From this lemma, it suffices to calculate $\Clinearmap_{[i]S^nT}^{S}$, $\Clinearmap_{[i]S^nT}^{T}$, $\Glinearmap_{[i]S^nT}^{S}$, and $\Glinearmap_{[i]S^nT}^{T}$.
\begin{lemma}[Internal isomorphism of the branches]\label{lem: representation matrices in a branch}
The representation matrix of $\Glinearmap_{[i]S^nT}^{S}$ with respect to $[\tilde{\mathbf{e}}_{k_0},\tilde{\mathbf{e}}_{s_0},\tilde{\mathbf{e}}_{t_0}]$ is 
\begin{equation}
\left(\begin{smallmatrix}
-n+1 & -n & 0\\
n & n+1 & 0\\
0 & 0 & 1
\end{smallmatrix}\right)
\left(\begin{smallmatrix}
1 & 0 & 0\\
2 & 2 & 1\\
-2 & -1 & 0
\end{smallmatrix}\right)
\left(\begin{smallmatrix}
n+1 & n & 0\\
-n & -n+1 & 0\\
0 & 0 & 1
\end{smallmatrix}\right)
=
\left(\begin{smallmatrix}
-n^2-2n+1 & -n^2-n & -n\\
n^2+3n+2 & n^2+2n+2 & n+1\\
-n-2 & -n-1 & 0
\end{smallmatrix}\right).
\end{equation}
The representation matrix of $\Glinearmap_{[i]S^nT}^{T}$ with respect to $[\tilde{\mathbf{e}}_{k_0},\tilde{\mathbf{e}}_{s_0},\tilde{\mathbf{e}}_{t_0}]$ is
\begin{equation}
\left(\begin{smallmatrix}
-n+1 & -n & 0\\
n & n+1 & 0\\
0 & 0 & 1
\end{smallmatrix}\right)
\left(\begin{smallmatrix}
0 & 0 & -1\\
3 & 2 & 2\\
-2 & -1 & 0
\end{smallmatrix}\right)
\left(\begin{smallmatrix}
n+1 & n & 0\\
-n & -n+1 & 0\\
0 & 0 & 1
\end{smallmatrix}\right)
=
\left(\begin{smallmatrix}
-n^2-3n & -n^2-2n & -n-1\\
n^2+4n+3 & n^2+3n+2 & n+2\\
-n-2 & -n-1 & 0
\end{smallmatrix}\right).
\end{equation}
\end{lemma}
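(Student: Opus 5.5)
The plan is to reduce everything to the root $[i]T$ of the $0$th maximal branch, where all $g$-vectors are explicit, and then conjugate by the maps $\Glinearmap_{i,\pm n}$ of \Cref{lem: insert S mutations}.

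\textbf{Reduction.} For $M'=S,T$ we apply the chain rule \eqref{eq: chain rule}:
\begin{equation*}
\Glinearmap_{[i]S^nT}^{M'}=\Glinearmap_{[i]S^nT}^{[i]S^nTM'}
=\Glinearmap_{[i]TM'}^{[i]S^nTM'}\,\Glinearmap_{[i]T}^{[i]TM'}\,\Glinearmap_{[i]S^nT}^{[i]T}.
\end{equation*}
This is legitimate because all sequences involved lie in branches, so every pair is admissible. By \eqref{eq: X elimination} we have $\Glinearmap_{[i]TM'}^{[i]S^nTM'}=\Glinearmap_{[i]}^{[i]S^n}$ and $\Glinearmap_{[i]S^nT}^{[i]T}=\Glinearmap_{[i]S^n}^{[i]}$. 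By \Cref{lem: insert S mutations} these equal $\Glinearmap_{i,n}$ and $\Glinearmap_{i,-n}$. Substituting $l=\pm n$ into the defining matrix of $\Glinearmap_{i,l}$ gives exactly the outer factors in the statement. Hence
\begin{equation*}
\Glinearmap_{[i]S^nT}^{M'}=\Glinearmap_{i,n}\,\Glinearmap_{[i]T}^{M'}\,\Glinearmap_{i,-n}.
\end{equation*}
It therefore remains to compute the two middle matrices $\Glinearmap_{[i]T}^{S}$ and $\Glinearmap_{[i]T}^{T}$ in the basis $[\tilde{\mathbf{e}}_{k_0},\tilde{\mathbf{e}}_{s_0},\tilde{\mathbf{e}}_{t_0}]$.

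\textbf{The middle factors.} Setting $n=0$ in \Cref{lem: c g vectors at root of maximal branches} gives
\begin{equation*}
\tilde{\mathbf{g}}_F^{[i]T}=\tilde{\mathbf{e}}_{s_0}+\tilde{\mathbf{e}}_{t_0}-\tilde{\mathbf{e}}_{k_0},\quad
\tilde{\mathbf{v}}_{SK}^{[i]T}=\tilde{\mathbf{e}}_{s_0}-\tilde{\mathbf{e}}_{t_0},\quad
\tilde{\mathbf{v}}_{TK}^{[i]T}=\tilde{\mathbf{e}}_{k_0}-\tilde{\mathbf{e}}_{t_0}.
\end{equation*}
Since $[i]T$ lies in a branch, \Cref{lem: mutations for eigen vectors} describes $\Glinearmap_{[i]T}^{S}$ and $\Glinearmap_{[i]T}^{T}$ very simply in the basis $[\tilde{\mathbf{g}}_F,\tilde{\mathbf{v}}_{SK},\tilde{\mathbf{v}}_{TK}]$:
\begin{itemize}
\item $\Glinearmap_{[i]T}^{S}$ fixes $\tilde{\mathbf{g}}_F$ and $\tilde{\mathbf{v}}_{SK}$, and sends $\tilde{\mathbf{v}}_{TK}\mapsto\tilde{\mathbf{v}}_{SK}+\tilde{\mathbf{v}}_{TK}$;
\item $\Glinearmap_{[i]T}^{T}$ fixes $\tilde{\mathbf{g}}_F$, and sends $\tilde{\mathbf{v}}_{SK}\mapsto\tilde{\mathbf{v}}_{TK}$ and $\tilde{\mathbf{v}}_{TK}\mapsto\tilde{\mathbf{v}}_{SK}+\tilde{\mathbf{v}}_{TK}$.
\end{itemize}
I then conjugate these by the change-of-basis matrix $P$ whose columns are the three vectors above written in $\tilde{\mathbf{e}}_{k_0},\tilde{\mathbf{e}}_{s_0},\tilde{\mathbf{e}}_{t_0}$ coordinates. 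Concretely, I compute $PJP^{-1}$, where $J$ is the respective Jordan-type matrix.

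Alternatively, one can read the matrix off directly. Compute $\tilde{\mathbf{g}}_M^{[i]T}$ from \eqref{eq: recover g vectors}, which gives $2\tilde{\mathbf{e}}_{s_0}-\tilde{\mathbf{e}}_{t_0}$, $\tilde{\mathbf{e}}_{s_0}$ and $2\tilde{\mathbf{e}}_{s_0}-\tilde{\mathbf{e}}_{k_0}$. Compute $\tilde{\mathbf{g}}_M^{[i]TM'}$ from \Cref{lem: mutation of modified vectors}. Then solve for the map sending the first triple to the second.

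As a sanity check, the $S$-case middle matrix should come out as $\left(\begin{smallmatrix}1&0&0\\2&2&1\\-2&-1&0\end{smallmatrix}\right)$. For instance, its column for $\tilde{\mathbf{e}}_{k_0}$ must send $\tilde{\mathbf{v}}_{TK}+\tilde{\mathbf{e}}_{t_0}$ to $\tilde{\mathbf{v}}_{SK}+\tilde{\mathbf{v}}_{TK}+\Glinearmap(\tilde{\mathbf{e}}_{t_0})$, and similarly for the other columns.

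\textbf{Final products.} Finally I multiply the three $3\times3$ matrices to obtain the stated closed forms in $n$. The only real obstacle is bookkeeping: getting the correct order and sign of $\Glinearmap_{i,\pm n}$ in the conjugation, and inverting $P$ correctly. I would check the final polynomial matrices at $n=0$, where they must reduce to the middle factors, and at $n=1$ against a direct computation from \Cref{lem: c g vectors at root of maximal branches}.
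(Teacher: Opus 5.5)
Your proposal is correct and is essentially the paper's proof: the paper also writes $\Glinearmap_{[i]S^nT}^{M}=\Glinearmap^{[i]S^nTM}_{[i]TM}\Glinearmap^{M}_{[i]T}\Glinearmap^{[i]T}_{[i]S^nT}=\Glinearmap_{i,n}\Glinearmap_{[i]T}^{M}\Glinearmap_{i,-n}$, using the chain rule, $X$-elimination and \Cref{lem: insert S mutations}, and it treats the $n=0$ middle factors as a direct computation. Your values $\tilde{\mathbf{g}}_{K}^{[i]T}=2\tilde{\mathbf{e}}_{s_0}-\tilde{\mathbf{e}}_{t_0}$, $\tilde{\mathbf{g}}_{S}^{[i]T}=\tilde{\mathbf{e}}_{s_0}$ and $\tilde{\mathbf{g}}_{T}^{[i]T}=2\tilde{\mathbf{e}}_{s_0}-\tilde{\mathbf{e}}_{k_0}$ do produce the two stated middle matrices, and the triple products give the stated closed forms in $n$.
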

\begin{proof}
When $n=0$, this can be shown directly. In general, we obtain the claim by
\begin{equation}
\Glinearmap_{[i]S^nT}^{M}=\Glinearmap^{[i]S^nTM}_{[i]TM}\Glinearmap^{M}_{[i]T}\Glinearmap^{[i]T}_{[i]S^nT}=\Glinearmap_{i,n}\Glinearmap_{[i]T}^{M}\Glinearmap_{i,-n}.
\end{equation}
\end{proof}

\section{Formulas for $c$-vectors and $g$-vectors in branches}\label{sec: formulas for c g vectors}
In this section, we fix one branch $\mathcal{T}^{\geq \mathbf{w}}$, and we use the notation in \Cref{sec: change of basis}. We exhibit the combinatorial formulas for modified $c$-, $g$-vectors. In particular, for modified $g$-vectors, we reveal the connections between their generating coefficients and the Calkin-Wilf tree \cite{CW00}.
\par
By summarizing \Cref{thm: inner isomorphisms between subfans in the Markov case} and \eqref{lem: mutations for eigen vectors}, we can give the following simple recursions for modified $c$- and $g$-vectors.
\begin{theorem}\label{thm: recursive formula in a branch}
Fix a branch $\mathcal{T}^{\geq \mathbf{w}}$ with an initial mutation direction $i=1,2,3$. Then, for any $X \in \mathcal{M}$ and $M=K,S,T$, we may express
\begin{equation}\label{eq: positive expression of g-vector in the Markocv}
\begin{aligned}
\tilde{\bf c}_{M}^{{\bf w}X}&=\begin{cases}
-\mathfrak{c}+p_{K;S}^{X}\tilde{\bf x}_{SK}^{\mathbf{w}}+p_{K;T}^{X}\tilde{\bf x}_{TK}^{\bf w} & M=K,\\
\mathfrak{c}+p_{M;S}^{X}\tilde{\bf x}_{SK}^{\bf w}+p_{M;T}^{X}\tilde{\bf x}_{TK}^{\bf w} & M=S,T,
\end{cases}
\\
\tilde{\bf g}_{M}^{{\bf w}X}&=\mathfrak{g}_i+q_{M;S}^{X}\tilde{\bf v}_{SK}^{\bf w}+q_{M;T}^{X}\tilde{\bf v}_{TK}^{\bf w}.
\end{aligned}
\end{equation}
for some $(p_{M;S}^{X}, p_{M;T}^{X}) \in \mathbb{Z}^2$ and $(q_{M;S}^{X}, q_{M;T}^{X}) \in \mathbb{Z}_{\geq 0}^2$ independent of ${\bf w}$.
Moreover, for any $M=K,S,T$ and $X \in \mathcal{M}$, the coefficients $(a^{X},b^X)=(p_{M;S}^{X},p_{M;T}^{X})$ obey the following recursion independent of $M=K,S,T$:
\begin{equation}\label{eq: recursion for p}
(a^{M'X},b^{M'X})=\begin{cases}
(a^{X}+b^{X},b^X) & M'=S,\\
(-b^X,-a^X-b^X) & M'=T.
\end{cases}
\end{equation}
Also, the coefficients $(a^{X},b^X)=(q_{M;S}^{X},q_{M;T}^{X})$ obey the following recursion independent of $M=K,S,T$:
\begin{equation}\label{eq: recursion for q}
(a^{M'X},b^{M'X})=\begin{cases}
(a^{X}+b^{X},b^X) & M'=S,\\
(b^X,a^X+b^X) & M'=T.
\end{cases}
\end{equation}
\end{theorem}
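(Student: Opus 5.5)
The plan is to combine \Cref{thm: inner isomorphisms between subfans in the Markov case} with the basis mutation rules of \Cref{lem: mutations for eigen vectors} in the branch case. The central device is the linear map $\Glinearmap_{\mathbf{w}}^{X}$ of \eqref{eq: notation X}. Since $\mathbf{w}$ lies in a branch, every $\mathbf{w}Y$ lies in a branch, so only the $S$-rule \eqref{eq: S mutation for eigen basis} and the branch $T$-rule \eqref{eq: T mutation in branch for eigen basis} ever occur. By \Cref{lem: independence of cF and gF}, $\tilde{\mathbf{g}}_F^{\mathbf{w}Y}=\mathfrak{g}_i$ and $\tilde{\mathbf{c}}_F^{\mathbf{w}Y}=\mathfrak{c}$ for all $Y$.

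First I would describe the action of $\Glinearmap_{\mathbf{w}}^{S}$ and $\Glinearmap_{\mathbf{w}}^{T}$ on the basis $[\mathfrak{g}_i,\tilde{\mathbf{v}}_{SK}^{\mathbf{w}},\tilde{\mathbf{v}}_{TK}^{\mathbf{w}}]$. Because $\Glinearmap_{\mathbf{w}}^{Y}$ sends $\tilde{\mathbf{g}}_M^{\mathbf{w}}$ to $\tilde{\mathbf{g}}_M^{\mathbf{w}Y}$, linearity gives $\Glinearmap_{\mathbf{w}}^{Y}(\tilde{\mathbf{v}}_{SK}^{\mathbf{w}})=\tilde{\mathbf{v}}_{SK}^{\mathbf{w}Y}$, and similarly for $\tilde{\mathbf{v}}_{TK}$ and $\tilde{\mathbf{g}}_F$. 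Hence \Cref{lem: mutations for eigen vectors} yields the following. The map $\Glinearmap_{\mathbf{w}}^{S}$ fixes $\mathfrak{g}_i$, sends $\tilde{\mathbf{v}}_{SK}^{\mathbf{w}}\mapsto\tilde{\mathbf{v}}_{SK}^{\mathbf{w}}$ and sends $\tilde{\mathbf{v}}_{TK}^{\mathbf{w}}\mapsto\tilde{\mathbf{v}}_{SK}^{\mathbf{w}}+\tilde{\mathbf{v}}_{TK}^{\mathbf{w}}$. The map $\Glinearmap_{\mathbf{w}}^{T}$ fixes $\mathfrak{g}_i$, sends $\tilde{\mathbf{v}}_{SK}^{\mathbf{w}}\mapsto\tilde{\mathbf{v}}_{TK}^{\mathbf{w}}$ and sends $\tilde{\mathbf{v}}_{TK}^{\mathbf{w}}\mapsto\tilde{\mathbf{v}}_{SK}^{\mathbf{w}}+\tilde{\mathbf{v}}_{TK}^{\mathbf{w}}$. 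These matrices are the same for every branch point $\mathbf{w}$, since they are expressed in that point's own basis. The $c$-side is analogous: $\mathfrak{c}$ is fixed, $\Clinearmap_{\mathbf{w}}^{S}$ acts by $(\tilde{\mathbf{x}}_{SK},\tilde{\mathbf{x}}_{TK})\mapsto(\tilde{\mathbf{x}}_{SK},\tilde{\mathbf{x}}_{SK}+\tilde{\mathbf{x}}_{TK})$, and $\Clinearmap_{\mathbf{w}}^{T}$ acts by $(\tilde{\mathbf{x}}_{SK},\tilde{\mathbf{x}}_{TK})\mapsto(-\tilde{\mathbf{x}}_{TK},-\tilde{\mathbf{x}}_{SK}-\tilde{\mathbf{x}}_{TK})$.

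Next I would define the coefficients at $X=1_{\mathcal{M}}$ from \eqref{eq: recover g vectors} and \eqref{eq: recover c vectors}. For $g$-vectors this gives $(q_{K;S},q_{K;T})=(1,1)$, $(q_{S;S},q_{S;T})=(0,1)$ and $(q_{T;S},q_{T;T})=(1,0)$. For $c$-vectors it gives $p_K=(1,-1)$ with the $-\mathfrak{c}$ term, $p_S=(0,1)$ and $p_T=(-1,0)$. I would then run an induction on the length of $X$, extending by a letter on the \emph{left}: write $\tilde{\mathbf{g}}_M^{\mathbf{w}M'X}=\Glinearmap_{\mathbf{w}}^{M'}(\tilde{\mathbf{g}}_M^{\mathbf{w}X})$ using \eqref{equ: XY}. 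Since $\tilde{\mathbf{g}}_M^{\mathbf{w}X}=\mathfrak{g}_i+a\tilde{\mathbf{v}}_{SK}^{\mathbf{w}}+b\tilde{\mathbf{v}}_{TK}^{\mathbf{w}}$ by the induction hypothesis, applying $\Glinearmap_{\mathbf{w}}^{S}$ gives $\mathfrak{g}_i+(a+b)\tilde{\mathbf{v}}_{SK}^{\mathbf{w}}+b\tilde{\mathbf{v}}_{TK}^{\mathbf{w}}$. Applying $\Glinearmap_{\mathbf{w}}^{T}$ gives $\mathfrak{g}_i+b\tilde{\mathbf{v}}_{SK}^{\mathbf{w}}+(a+b)\tilde{\mathbf{v}}_{TK}^{\mathbf{w}}$. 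This is exactly \eqref{eq: recursion for q}. The same computation with $\pm\mathfrak{c}$ fixed gives \eqref{eq: recursion for p}, and the sign in front of $\mathfrak{c}$ is preserved for each $M$.

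Independence from $\mathbf{w}$ follows because the initial coefficients and the recursion involve no data of $\mathbf{w}$. Integrality is immediate. Nonnegativity of the $q$'s follows because the initial values lie in $\mathbb{Z}_{\geq0}^2$ and the recursion \eqref{eq: recursion for q} preserves nonnegativity.

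The main point requiring care is the order of composition. The left-extension step needs $\Glinearmap_{\mathbf{w}}^{M'X}=\Glinearmap_{\mathbf{w}}^{M'}\Glinearmap_{\mathbf{w}}^{X}$ from \eqref{equ: XY}, and it must be paired with the fact that the matrix of $\Glinearmap_{\mathbf{w}}^{M'}$ is expressed in the basis at $\mathbf{w}$ itself. A right extension would instead need the basis at $\mathbf{w}X$, which is why the recursion is naturally stated with $M'$ prepended.
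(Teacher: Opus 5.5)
Your proposal is correct and follows the paper's proof. Both argue by induction on $X$, prepending a letter and writing $\tilde{\mathbf{g}}_{M}^{\mathbf{w}M'X}=\Glinearmap_{\mathbf{w}}^{M'}(\tilde{\mathbf{g}}_{M}^{\mathbf{w}X})$, read off the action of $\Glinearmap_{\mathbf{w}}^{M'}$ on $\mathfrak{g}_i,\tilde{\mathbf{v}}_{SK}^{\mathbf{w}},\tilde{\mathbf{v}}_{TK}^{\mathbf{w}}$ from \Cref{lem: mutations for eigen vectors}, and take the base case from \eqref{eq: recover g vectors}. You also spell out two points the paper leaves implicit: that $\Glinearmap_{\mathbf{w}}^{Y}$ carries the basis at $\mathbf{w}$ to the basis at $\mathbf{w}Y$, and the $c$-vector case.
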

\begin{proof}
We show the claim for the modified $g$-vectors by the induction on $X \in \mathcal{M}$, and we can show the other case by a similar argument. When $X=1_{\mathcal{M}}$, the claim holds by \eqref{eq: recover g vectors}. Suppose that the claim holds for some $X \in \mathcal{M}$. Then, for the $S$-mutation, we have
\begin{equation}
\begin{aligned}
\tilde{\mathbf{g}}_{M}^{\mathbf{w}SX}&=\Glinearmap_{\mathbf{w}}^{S}(\tilde{\mathbf{g}}_{M}^{\mathbf{w}X})=\Glinearmap_{\mathbf{w}}^{S}(\mathfrak{g}_i+q_{M;S}^{X}\mathbf{v}_{SK}^{\mathbf{w}}+q_{M;T}^{X}\mathbf{v}_{TK}^{\mathbf{w}})\\
\overset{\eqref{eq: S mutation for eigen basis}}&{=} \mathfrak{g}_i+q_{M;S}^{X}\mathbf{v}_{SK}^{\mathbf{w}}+q_{M;T}^{X}(\mathbf{v}_{SK}^{\mathbf{w}}+\mathbf{v}_{TK}^{\mathbf{w}})=\mathfrak{g}_i+(q_{M;S}^{X}+q_{M;T})\mathbf{v}_{SK}^{\mathbf{w}}+q_{M;T}^{X}\mathbf{v}_{TK}^{\mathbf{w}}.
\end{aligned}
\end{equation}
Also for the $T$-mutation, by the induction hypothesis, we have
\begin{equation}
\begin{aligned}
\tilde{\mathbf{g}}_{M}^{\mathbf{w}TX}&=\Glinearmap_{\mathbf{w}}^{T}(\tilde{\mathbf{g}}_{M}^{\mathbf{w}X})\overset{\eqref{eq: T mutation in branch for eigen basis}}{=}\mathfrak{g}_i+q_{M;S}^X\mathbf{v}_{TK}^{\mathbf{w}}+q_{M;T}^{X}(\mathbf{v}_{SK}^{\mathbf{w}}+\mathbf{v}_{TK}^{\mathbf{w}})
\\
&=\mathfrak{g}_i+q_{M;T}^X\mathbf{v}_{SK}^{\mathbf{w}}+(q_{M;S}^{X}+q_{M;T}^{X})\mathbf{v}_{TK}^{\mathbf{w}}.
\end{aligned}
\end{equation}
Thus, we may express each modified $g$-vector as in \eqref{eq: positive expression of g-vector in the Markocv}, and its coefficients obey the recursion \eqref{eq: recursion for q}.
\end{proof}

\begin{example}
Note that the initial conditions are given by
\begin{equation}\label{eq: initial conditions of p and q}
(p_{M;S}^{1_{\mathcal{M}}},p_{M;T}^{1_{\mathcal{M}}})=\begin{cases}
(1,-1) & M=K,\\
(0,1) & M=S,\\
(-1,0) & M=T,
\end{cases}
\quad
(q_{M;S}^{1_{\mathcal{M}}},q_{M;T}^{1_{\mathcal{M}}})=\begin{cases}
(1,1) & M=K,\\
(0,1) & M=S,\\
(1,0) & M=T.
\end{cases}
\end{equation}
Let us calculate $\mathbf{g}_{K}^{[i]S^nTS^2T^2S}$ based on \Cref{thm: recursive formula in a branch}. Let $\mathbf{w}=[i]S^nT$ and $X=S^2T^2S$. Then, following the rule \eqref{eq: recursion for q}, we obtain $(q_{K;S}^{X},q_{K;T}^{X})$ as follows:
\begin{equation}
(1,1) \xrightarrow{S} (2,1) \xrightarrow{T} (1,3) \xrightarrow{T} (3,4) \xrightarrow{S} (7,4) \xrightarrow{S} (11,4).
\end{equation}
Thus, by \eqref{eq: root of maximal branches} and \Cref{thm: recursive formula in a branch}, we obtain
\begin{equation}
\mathbf{g}_{K}^{[i]S^nTS^2T^2S}=\mathfrak{g}_i+11\tilde{\mathbf{v}}_{SK}^{[i]S^nT}+4\tilde{\mathbf{v}}_{TK}^{[i]S^nT}=(-15n+3)\tilde{\mathbf{e}}_{k_0}+(15n+12)\tilde{\mathbf{e}}_{s_0}-14\tilde{\mathbf{e}}_{t_0}.
\end{equation}
\par
By \Cref{thm: recursive formula in a branch}, we conclude that \Cref{lem: c g vectors in trunks}, \Cref{lem: c g vectors at root of maximal branches}, and the coefficients $(q^{X}_{K;S},q^{X}_{K;T})$ have enough information to recover all modified $g$-vectors. In \Cref{fig: axbx after 11}, we present examples of these coefficients $(q^X_{K;S},q^X_{K;T})$.
\begin{figure}[htbp]
\centering
\subfile{alphabeta_after_11}
\caption{$(q^{X}_{K;S},q^X_{K;T})$.}
\label{fig: axbx after 11}
\end{figure}
\par
Let $\#_T(X) \in \mathbb{Z}_{\geq 0}$ be the number of the letter $T$ appearing in $X \in \mathcal{M}$. Then, we note that $((-1)^{\#_T(X)}p_{M;S}^X, (-1)^{\#_T(X)}p_{M;T}^{X})$ obeys the rule in \eqref{eq: recursion for q}. Thus, by induction and $(p_{S;S}^S,p_{S;T}^S)=-(p_{S;S}^T,p_{S;T}^{T})=(q_{K;S}^{1_{\mathcal{M}}},q_{K;T}^{1_{\mathcal{M}}})=(1,1)$, we can relate $(p_{S;S}^{X},p_{S;T}^{X})$ to $(q_{K;S}^{X},q_{K;T}^{X})$ as follows: 
\begin{equation}\label{eq: relation between p and q}
(p_{S;S}^{XS},p_{S;T}^{XS})=(-1)^{\#_T(X)}(q_{K;S}^{X},q_{K;T}^{X}),
\quad
(p_{S;S}^{XT},p_{S;T}^{XT})=-(-1)^{\#_T(X)}(q_{K;S}^{X},q_{K;T}^{X}).
\end{equation}
\end{example}
The tree in \Cref{fig: axbx after 11} can be realized as a rearrangement of the \emph{Calkin-Wilf tree} \cite{CW00} and, equivalently, the \emph{Stern-Brocot tree}. (Note that both $(a,b)$ and $(b,a)$ appear at the same depth.) In particular, we obtain the following claim.
\begin{lemma}\label{lem: CW tree coprimeness}
In the family $\{(q_{K;S}^X,q_{K;T}^X)\}_{X \in \mathcal{M}}$, all the pairs of coprime positive numbers $(a,b) \in \mathbb{Z}_{\geq 1}^2$ appear exactly once.
\end{lemma}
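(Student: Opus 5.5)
The family $(a^X,b^X)=(q_{K;S}^X,q_{K;T}^X)$ is generated from the root $(a^{1_{\mathcal{M}}},b^{1_{\mathcal{M}}})=(1,1)$ by the recursion \eqref{eq: recursion for q}, which acts by prepending letters: $(a^{SX},b^{SX})=(a^X+b^X,b^X)$ and $(a^{TX},b^{TX})=(b^X,a^X+b^X)$. Since prepending builds the word from the right, I index the pairs by reading $X$ from right to left. Equivalently, define maps $\sigma(a,b)=(a+b,b)$ and $\tau(a,b)=(b,a+b)$ on $\mathbb{Z}_{\geq1}^2$; then $(a^X,b^X)=\sigma^{\epsilon_1}\cdots(1,1)$, obtained by applying the letters of $X$ from the last to the first. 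So it suffices to show that every coprime pair $(a,b)\in\mathbb{Z}_{\geq 1}^2$ arises as $\phi(1,1)$ for exactly one word $\phi$ in the free monoid generated by $\sigma,\tau$.

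Coprimality is preserved, since $\gcd(a+b,b)=\gcd(a,b)$, and so every pair in the family is a coprime positive pair. For the converse I would run the inverse process. Both images satisfy $a'\neq b'$ unless the input is degenerate. Indeed $\sigma(a,b)$ has first coordinate $a+b>b$, so it lies in $\{a'>b'\}$. For $\tau(a,b)=(b,a+b)$ the first coordinate $b$ is less than $a+b$, so it lies in $\{a'<b'\}$. Hence the images of $\sigma$ and $\tau$ are disjoint.

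The inverses on these regions are $\sigma^{-1}(a',b')=(a'-b',b')$ for $a'>b'$, and $\tau^{-1}(a',b')=(b'-a',a')$ for $a'<b'$. Both produce positive coordinates and preserve the gcd. So given a coprime pair $(a,b)\neq(1,1)$, exactly one inverse applies, because a coprime pair with $a=b$ must be $(1,1)$. Applying it strictly decreases $a+b$, since the new sum is $a$ in the first case and $b$ in the second. Iterating therefore terminates, and it can only stop at a coprime pair with $a=b$, namely $(1,1)$. This gives existence of a word.

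Uniqueness follows from the same two facts. The images of $\sigma$ and $\tau$ are disjoint, and the root $(1,1)$ lies in neither image. Hence if $\phi(1,1)=\psi(1,1)$, the outermost letters of $\phi$ and $\psi$ must agree, or else one side is the root while the other is not. Cancelling that letter, which is possible because $\sigma$ and $\tau$ are injective, and inducting on the sum $a+b$ gives $\phi=\psi$.

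The main care point is bookkeeping rather than mathematics. One must check that the recursion in \Cref{thm: recursive formula in a branch} really prepends letters, so that $X\mapsto(a^X,b^X)$ is the composite of $\sigma,\tau$ in reversed order; reversing the order is a bijection on $\mathcal{M}$ and so does not affect "exactly once". One must also check that the root is $(1,1)$ by \eqref{eq: initial conditions of p and q}. With these in place, the argument is the standard Calkin--Wilf/Euclidean algorithm argument.
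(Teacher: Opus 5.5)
Your proposal is correct and is exactly the Calkin--Wilf/Euclidean-algorithm argument that the paper invokes by citing \cite{CW00}, carried out for the maps $(a,b)\mapsto(a+b,b)$ and $(a,b)\mapsto(b,a+b)$ with root $(1,1)$. One cosmetic remark: prepending a letter to $X$ corresponds to post-composing with $\sigma$ or $\tau$, so $X\mapsto(a^X,b^X)$ is already the word $X$, read left to right, taken as a composite and applied to $(1,1)$; no reversal of words is needed, though invoking it does no harm.
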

\begin{proof}
We can do the same argument in \cite{CW00}.
\end{proof}
We can obtain an analogous claim for $c$-vectors.
\begin{lemma}\label{lem: CW tree lemma for p}
In the family $\{(p_{S;S}^{X},p_{S;T}^{X})\}_{X \in \mathcal{M}}$, all the elements in $(a,b) \in \mathbb{Z}^2$ satisfying either of the following conditions appear exactly once.
\begin{itemize}
    \item $(a,b)=(0,1)$.
    \item $ab \geq 1$ and $\gcd(|a|,|b|)=1$. 
\end{itemize}
\end{lemma}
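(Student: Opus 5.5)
We reduce the statement to \Cref{lem: CW tree coprimeness} via the relation \eqref{eq: relation between p and q}. Every $X\in\mathcal{M}$ is either $1_{\mathcal{M}}$, or of the form $YS$ or $YT$ with $Y\in\mathcal{M}$ unique. This splits the family $\{(p_{S;S}^{X},p_{S;T}^{X})\}_{X}$ into three disjoint pieces, and we treat each one separately.

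\emph{Step 1 (the root).} For $X=1_{\mathcal{M}}$, the initial conditions \eqref{eq: initial conditions of p and q} give $(p_{S;S}^{1_{\mathcal{M}}},p_{S;T}^{1_{\mathcal{M}}})=(0,1)$. This produces the exceptional pair.

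\emph{Step 2 (words ending in $S$).} For $X=YS$, \eqref{eq: relation between p and q} gives
\[
(p_{S;S}^{YS},p_{S;T}^{YS})=(-1)^{\#_T(Y)}(q_{K;S}^{Y},q_{K;T}^{Y}).
\]
By \Cref{lem: CW tree coprimeness}, $(q_{K;S}^{Y},q_{K;T}^{Y})$ runs bijectively over the coprime pairs in $\mathbb{Z}_{\geq 1}^2$ as $Y$ ranges over $\mathcal{M}$. So these pairs are $\pm$ coprime positive pairs, and each has $ab\geq 1$.

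\emph{Step 3 (words ending in $T$).} For $X=YT$, \eqref{eq: relation between p and q} gives
\[
(p_{S;S}^{YT},p_{S;T}^{YT})=-(-1)^{\#_T(Y)}(q_{K;S}^{Y},q_{K;T}^{Y}).
\]
This uses the same underlying pair as Step 2, but with the opposite sign.

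\emph{Step 4 (combining).} Fix a coprime pair $(a,b)\in\mathbb{Z}_{\geq 1}^2$. By \Cref{lem: CW tree coprimeness} there is exactly one $Y$ with $(q_{K;S}^{Y},q_{K;T}^{Y})=(a,b)$. Then $YS$ and $YT$ contribute exactly $\{(a,b),(-a,-b)\}$, one of each sign, whatever the parity of $\#_T(Y)$. No other $X$ can produce $\pm(a,b)$:
\begin{itemize}
    \item any $X\neq 1_{\mathcal{M}}$ yields $\pm(q_{K;S}^{Y'},q_{K;T}^{Y'})$ for its unique $Y'$, and injectivity of $Y\mapsto (q_{K;S}^{Y},q_{K;T}^{Y})$ forces $Y'=Y$;
    \item the root yields $(0,1)$, which is not of this form.
\end{itemize}
The set of $(a,b)\in\mathbb{Z}^2$ with $ab\geq 1$ and $\gcd(|a|,|b|)=1$ is exactly $\{\pm(a,b)\}$ over coprime positive pairs. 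Hence each such pair appears exactly once, and $(0,1)$ appears exactly once.

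\emph{Main obstacle.} The only delicate input is \eqref{eq: relation between p and q}, which the paper asserts with a brief induction. I would verify it carefully rather than take it on trust. The $M'$-recursion in \eqref{eq: recursion for p} acts by prepending letters, $X\mapsto M'X$, so the induction must run along prefixes. The base cases are $(p^{S}_{S;\bullet})=(1,1)$ and $(p^{T}_{S;\bullet})=(-1,-1)$, which come from $(0,1)\mapsto(1,1)$ and $(0,1)\mapsto(-1,-1)$. The step is the observation that $(-1)^{\#_T}p$ satisfies \eqref{eq: recursion for q}. If the indexing in \eqref{eq: relation between p and q} turned out to be off, with the last letter playing the role of the first, the argument would adapt verbatim: it only needs the splitting by the distinguished letter, and that the two letter choices give opposite signs of the same $q$-pair.
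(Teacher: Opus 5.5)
Your proposal is correct and takes the same route as the paper, whose proof simply cites \eqref{eq: relation between p and q} together with \Cref{lem: CW tree coprimeness}. Splitting $\mathcal{M}\setminus\{1_{\mathcal{M}}\}$ by the last letter, and checking that $(-1)^{\#_T(X)}(p_{S;S}^{X},p_{S;T}^{X})$ obeys \eqref{eq: recursion for q} starting from $(p_{S;S}^{S},p_{S;T}^{S})=(1,1)$ and $(p_{S;S}^{T},p_{S;T}^{T})=(-1,-1)$, supplies exactly the details the paper leaves implicit and confirms that the indexing in \eqref{eq: relation between p and q} is right as stated.
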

\begin{proof}
This can be shown by \eqref{eq: relation between p and q} and \Cref{lem: CW tree coprimeness}.
\end{proof}

As a corollary, we obtain the following claim.
\begin{corollary}\label{cor: coprime numbers expression in a branch}
Fix one branch $\mathcal{T}^{\geq {\mathbf{w}}}$ with $\mathbf{w} \geq [i]$.
\\
\textup{($a$)} All the modified $c$-vectors appearing in $\widetilde{\mathbf{C}}^{\geq \mathbf{w}}(B)$ can be uniquely expressed as
\begin{equation}
\epsilon\mathfrak{c} + a\tilde{\mathbf{x}}_{SK}^{\mathbf{w}} + b\tilde{\mathbf{x}}_{TK}^{\mathbf{w}},
\end{equation}
where $(\epsilon; a,b) \in \{\pm \} \times (\mathbb{Z}^2 \setminus\{(0,0)\})$ satisfies either of the following:
\begin{itemize}
    \item $(\epsilon;a,b)=(-;1,-1),\ (-;0,-1),\ (-;1,0),\ (+;0,1),\ (+;-1,0)$.
    \item $\epsilon$ is arbitrary and $(a,b) \in \mathbb{Z}^2 \setminus \{(0,0)\}$ satisfies $ab \geq 1$ and $\gcd(|a|,|b|)=1$. 
\end{itemize}
\textup{($b$)} All the modified $g$-vectors appearing in $\widetilde{\mathbf{G}}^{\geq \mathbf{w}}(B)$ can be uniquely expressed as
\begin{equation}
\mathfrak{g}_i+a\tilde{\mathbf{v}}_{SK}^{\mathbf{w}}+b\tilde{\mathbf{v}}_{TK}^{\mathbf{w}},
\end{equation}
where $(a,b) \in \mathbb{Z}^2$ satisfies either of the following:
\begin{itemize}
    \item $(a,b)=(1,0),(0,1)$
    \item $a,b \geq 1$ and $\gcd(a,b)=1$.
\end{itemize}
\end{corollary}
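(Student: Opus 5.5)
The plan is to read off the corollary from \Cref{thm: recursive formula in a branch}, combined with the enumeration results \Cref{lem: CW tree coprimeness} and \Cref{lem: CW tree lemma for p}. Uniqueness of each expression is immediate: $\{\mathfrak{c},\tilde{\mathbf{x}}_{SK}^{\mathbf{w}},\tilde{\mathbf{x}}_{TK}^{\mathbf{w}}\}$ and $\{\mathfrak{g}_i,\tilde{\mathbf{v}}_{SK}^{\mathbf{w}},\tilde{\mathbf{v}}_{TK}^{\mathbf{w}}\}$ are bases of $\mathbb{R}^3$, since $\mathfrak{c}=\tilde{\mathbf{c}}_F^{\mathbf{w}}$ and $\mathfrak{g}_i=\tilde{\mathbf{g}}_F^{\mathbf{w}}$ by \Cref{lem: independence of cF and gF}. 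So the content is to identify exactly which coefficient pairs occur. By \Cref{thm: recursive formula in a branch}, the $g$-vectors in $\widetilde{\mathbf{G}}^{\geq\mathbf{w}}(B)$ are $\mathfrak{g}_i+q^X_{M;S}\tilde{\mathbf{v}}_{SK}^{\mathbf{w}}+q^X_{M;T}\tilde{\mathbf{v}}_{TK}^{\mathbf{w}}$ with $X\in\mathcal{M}$ and $M=K,S,T$. Likewise, the $c$-vectors are $\pm\mathfrak{c}+p^X_{M;S}\tilde{\mathbf{x}}_{SK}^{\mathbf{w}}+p^X_{M;T}\tilde{\mathbf{x}}_{TK}^{\mathbf{w}}$, with the sign $-$ exactly for $M=K$.

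\textbf{Part (b).} I will show that the pairs for $M=S,T$ are already among those for $M=K$, apart from the two boundary pairs. From \Cref{lem: mutation of modified vectors}(a),(c), $\tilde{\mathbf{g}}_S^{\mathbf{w}XS}=\tilde{\mathbf{g}}_K^{\mathbf{w}X}$ and $\tilde{\mathbf{g}}_S^{\mathbf{w}XT}=\tilde{\mathbf{g}}_K^{\mathbf{w}X}$. For $T$, the rules $\tilde{\mathbf{g}}_T^{\mathbf{w}XS}=\tilde{\mathbf{g}}_T^{\mathbf{w}X}$ and $\tilde{\mathbf{g}}_T^{\mathbf{w}XT}=\tilde{\mathbf{g}}_S^{\mathbf{w}X}$ give, by induction on the length of $X$, that every $\tilde{\mathbf{g}}_T^{\mathbf{w}X}$ is either $\tilde{\mathbf{g}}_T^{\mathbf{w}}$, $\tilde{\mathbf{g}}_S^{\mathbf{w}}$, or some $\tilde{\mathbf{g}}_K^{\mathbf{w}Y}$. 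Since $\tilde{\mathbf{g}}_S^{\mathbf{w}}$ and $\tilde{\mathbf{g}}_T^{\mathbf{w}}$ have coefficient pairs $(0,1)$ and $(1,0)$ by \eqref{eq: initial conditions of p and q}, the set of $g$-vectors equals $\{\mathfrak{g}_i+a\tilde{\mathbf{v}}_{SK}^{\mathbf{w}}+b\tilde{\mathbf{v}}_{TK}^{\mathbf{w}}\}$ with $(a,b)$ ranging over $\{(1,0),(0,1)\}\cup\{(q^X_{K;S},q^X_{K;T})\}_{X}$. By \Cref{lem: CW tree coprimeness}, the latter family is exactly the coprime pairs in $\mathbb{Z}_{\geq1}^2$, which proves (b).

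\textbf{Part (a).} The argument is parallel.
\begin{itemize}
\item By \Cref{lem: mutation of modified vectors}, $\tilde{\mathbf{c}}_K^{\mathbf{w}XS}=-\tilde{\mathbf{c}}_S^{\mathbf{w}X}$ and $\tilde{\mathbf{c}}_K^{\mathbf{w}XT}=-\tilde{\mathbf{c}}_T^{\mathbf{w}X}$. Also, $\tilde{\mathbf{c}}_T$ is preserved under $S$ and becomes the old $\tilde{\mathbf{c}}_S$ under $T$.
\item Hence every $c$-vector other than those attached to the initial data is $\pm\tilde{\mathbf{c}}_S^{\mathbf{w}Y}$ for some $Y\neq 1_{\mathcal{M}}$.
\item By \Cref{lem: CW tree lemma for p}, as $Y$ ranges, the pairs $(p^Y_{S;S},p^Y_{S;T})$ fill exactly the pairs with $ab\ge1$ and $\gcd(|a|,|b|)=1$, together with $(0,1)$. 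The pair $(0,1)$ occurs only at $Y=1_{\mathcal{M}}$.
\item Negating $\mathfrak{c}+a\tilde{\mathbf{x}}_{SK}^{\mathbf{w}}+b\tilde{\mathbf{x}}_{TK}^{\mathbf{w}}$ gives $-\mathfrak{c}+(-a)\tilde{\mathbf{x}}_{SK}^{\mathbf{w}}+(-b)\tilde{\mathbf{x}}_{TK}^{\mathbf{w}}$. Since $ab\ge1$ is stable under $(a,b)\mapsto(-a,-b)$, both signs $\epsilon$ occur for every such pair.
\item The exceptional pairs come from the initial vectors, read off from \eqref{eq: initial conditions of p and q}: $(-;1,-1)$ from $\tilde{\mathbf{c}}_K^{\mathbf{w}}$, $(+;0,1)$ from $\tilde{\mathbf{c}}_S^{\mathbf{w}}$, and $(+;-1,0)$ from $\tilde{\mathbf{c}}_T^{\mathbf{w}}$.
\item Their negatives arising one step later are $\tilde{\mathbf{c}}_K^{\mathbf{w}S}=-\tilde{\mathbf{c}}_S^{\mathbf{w}}$, giving $(-;0,-1)$, and $\tilde{\mathbf{c}}_K^{\mathbf{w}T}=-\tilde{\mathbf{c}}_T^{\mathbf{w}}$, giving $(-;1,0)$.
\end{itemize}

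The main obstacle is the bookkeeping in (a). I must verify that the $p$-pairs for $M=K$ and $M=T$ contribute nothing outside the listed set, and that every listed pair genuinely occurs. The plan is to track each $\tilde{\mathbf{c}}_M^{\mathbf{w}X}$ back to either an initial vector or some $\pm\tilde{\mathbf{c}}_S^{\mathbf{w}Y}$ via the mutation rules above, and then match signs using \eqref{eq: relation between p and q}.
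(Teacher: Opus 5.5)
Your proposal is correct and follows essentially the same route as the paper. Uniqueness comes from the bases $\{\mathfrak{c},\tilde{\mathbf{x}}_{SK}^{\mathbf{w}},\tilde{\mathbf{x}}_{TK}^{\mathbf{w}}\}$ and $\{\mathfrak{g}_i,\tilde{\mathbf{v}}_{SK}^{\mathbf{w}},\tilde{\mathbf{v}}_{TK}^{\mathbf{w}}\}$. The exceptional tuples are read off from the initial vectors and from $\tilde{\mathbf{c}}_K^{\mathbf{w}S},\tilde{\mathbf{c}}_K^{\mathbf{w}T}$. The generic ones come from \Cref{thm: recursive formula in a branch} together with \Cref{lem: CW tree coprimeness} and \Cref{lem: CW tree lemma for p}, and the sign $\epsilon=-$ comes from $\tilde{\mathbf{c}}_K^{\mathbf{w}XS}=-\tilde{\mathbf{c}}_S^{\mathbf{w}X}$.

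What you add is an explicit proof of the converse inclusion, which the paper's proof leaves implicit: you show that $S$- and $T$-labelled vectors reduce to $K$-labelled or initial ones, so no coefficient pair outside the listed set occurs.
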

\begin{proof}
Uniqueness is shown since $\{\mathfrak{c},\tilde{\mathbf{x}}_{SK}^{\mathbf{w}},\tilde{\mathbf{x}}_{TK}^{\mathbf{w}}\}$ and $\{\mathfrak{g}_i,\tilde{\mathbf{v}}_{SK}^{\mathbf{w}},\tilde{\mathbf{v}}_{TK}^{\mathbf{w}}\}$ are bases of $\mathbb{R}^3$.
We show the existence.
\\
($a$) By \eqref{eq: initial conditions of p and q} and some direct calculation, the cases on the first line correspond to the following list:
\begin{equation}
\begin{array}{c|ccccc}
(\epsilon; a,b) & (-;1,-1) & (-;0,-1) & (-;1,0) & (+;0,1) & (+;-1,0)
\\ 
\hline
\textup{modified $c$-vector} & \tilde{\mathbf{c}}_{K}^{\mathbf{w}} & \tilde{\mathbf{c}}_{K}^{\mathbf{w}S} & \tilde{\mathbf{c}}_{K}^{\mathbf{w}T} & \tilde{\mathbf{c}}_{S}^{\mathbf{w}} & \tilde{\mathbf{c}}_{T}^{\mathbf{w}}
\end{array}
\end{equation}
In the case on the second line, if $\epsilon=+$, we verify that all cases can be expressed as $\tilde{\mathbf{c}}_{S}^{\mathbf{w}X}=\mathfrak{c}+a\tilde{\mathbf{x}}_{SK}^{\mathbf{w}}+b\tilde{\mathbf{x}}_{TK}^{\mathbf{w}}$ thanks to \Cref{thm: recursive formula in a branch} and \Cref{lem: CW tree lemma for p}. If $\epsilon=-$, the claim can be obtained by $\tilde{\mathbf{c}}_{K}^{\mathbf{w}XS}=-\tilde{\mathbf{c}}_{S}^{\mathbf{w}X}$.
\\
($b$) By \eqref{eq: initial conditions of p and q}, the cases on the first line can be obtained by $\tilde{\mathbf{g}}_{S}^{\mathbf{w}}=\mathfrak{g}_i+\tilde{\mathbf{v}}_{TK}$ and $\tilde{\mathbf{g}}_{T}^{\mathbf{w}}=\mathfrak{g}_i+\tilde{\mathbf{v}}_{SK}$. The case on the second line can be obtained by \Cref{thm: recursive formula in a branch} and \Cref{lem: CW tree coprimeness}.
\end{proof}
For any $l,m =1,2,3$, let $\tilde{\mathbf{v}}_{lm}=\tilde{\mathbf{e}}_{m}-\tilde{\mathbf{e}}_{l}$.
The above claim provides a simple expression for $g$-vectors, which has already been shown in \cite[Thm.~3.1.5]{Cha12} and \cite[Cor.~7.3]{Rea15}.
\begin{corollary}[{\cite{Cha12}, \cite[Cor.~7.3]{Rea15}}]\label{cor: coprime number expression}
Fix one initial mutation direction $i=1,2,3$.
Then, a vector $\tilde{\mathbf{g}} \in \mathbb{R}^3$ appears in the modified $G$-pattern $\widetilde{\mathbf{G}}^{\geq [i]}(B)$ if and only if it is of the form
\begin{equation}\label{eq: coprime number expression of g vectors}
\tilde{\mathbf{g}}=\mathfrak{g}_i+a\tilde{\mathbf{v}}_{t_0k_0}+b\tilde{\mathbf{v}}_{k_0s_0},
\end{equation}
where $(a,b) \in \mathbb{Z}^2$ satisfies either of the following two conditions:
\begin{itemize}
    \item $a,b \geq 1$ and $\gcd(a,b)=1$.
    \item $(a,b)=(1,0)$ (then $\tilde{\mathbf{g}}=\tilde{\mathbf{e}}_{s_0}$) or $(a,b)=(0,-1)$ (then $\tilde{\mathbf{g}}=\tilde{\mathbf{e}}_{t_0}$).
\end{itemize}
Moreover, all the above vectors are modified $g$-vectors.
\end{corollary}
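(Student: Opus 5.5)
The plan is to split the subtree $\mathcal{T}^{\geq [i]}$ into its trunk $\mathcal{T}^{<[i]S^{\infty}}$ and its maximal branches $\mathcal{T}^{\geq [i]S^nT}$, $n \in \mathbb{Z}_{\geq 0}$. Every modified $g$-vector of $\widetilde{\mathbf{G}}^{\geq [i]}(B)$ is some $\tilde{\mathbf{g}}_M^{\mathbf{u}}$ with $\mathbf{u}$ in one of these pieces. I will compute the set of pairs $(a,b)$ contributed by each piece in the coordinates of \eqref{eq: coprime number expression of g vectors}, and then check that their union is exactly the set described in the statement. Since $\{\mathfrak{g}_i,\tilde{\mathbf{v}}_{t_0k_0},\tilde{\mathbf{v}}_{k_0s_0}\}$ is a basis of $\mathbb{R}^3$, each vector determines $(a,b)$ uniquely. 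Reading off the $\tilde{\mathbf{e}}_{t_0},\tilde{\mathbf{e}}_{s_0},\tilde{\mathbf{e}}_{k_0}$ coordinates of $\mathfrak{g}_i+a\tilde{\mathbf{v}}_{t_0k_0}+b\tilde{\mathbf{v}}_{k_0s_0}$ gives $1-a$, $1+b$ and $-1+a-b$, respectively.

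\emph{Trunk.} I will use \Cref{lem: c g vectors in trunks} together with \eqref{eq: recover g vectors}. This gives $\tilde{\mathbf{g}}_T^{[i]S^n}=\tilde{\mathbf{e}}_{t_0}$, which corresponds to $(a,b)=(0,-1)$. It also gives $\tilde{\mathbf{g}}_S^{[i]S^n}=-n\tilde{\mathbf{e}}_{k_0}+(n+1)\tilde{\mathbf{e}}_{s_0}$, which corresponds to $(a,b)=(1,n)$. Finally it gives $\tilde{\mathbf{g}}_K^{[i]S^n}=-(n+1)\tilde{\mathbf{e}}_{k_0}+(n+2)\tilde{\mathbf{e}}_{s_0}$, which corresponds to $(1,n+1)$. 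So the trunk contributes $(0,-1)$ and the pairs $(1,m)$ with $m \geq 0$. All of these belong to the allowed set: $(1,0)$ gives $\tilde{\mathbf{e}}_{s_0}$, and $(1,m)$ with $m\ge1$ is coprime.

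\emph{Maximal branches.} For the branch $\mathcal{T}^{\geq [i]S^nT}$, \Cref{cor: coprime numbers expression in a branch}($b$) says the $g$-vectors there are exactly $\mathfrak{g}_i+a\tilde{\mathbf{v}}_{SK}^{[i]S^nT}+b\tilde{\mathbf{v}}_{TK}^{[i]S^nT}$. Here $(a,b)$ ranges over $(1,0)$, $(0,1)$, and the coprime pairs with $a,b\ge1$; equivalently, over all coprime $(a,b)\in\mathbb{Z}_{\geq0}^2\setminus\{(0,0)\}$. By \Cref{lem: c g vectors at root of maximal branches},
\[
\tilde{\mathbf{v}}_{SK}^{[i]S^nT}=\tilde{\mathbf{v}}_{t_0k_0}+(n+1)\tilde{\mathbf{v}}_{k_0s_0},
\qquad
\tilde{\mathbf{v}}_{TK}^{[i]S^nT}=\tilde{\mathbf{v}}_{t_0k_0}+n\tilde{\mathbf{v}}_{k_0s_0}.
\]
Hence the new coordinates are $(A,B)=(a+b,(n+1)a+nb)$. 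This transformation has determinant $-1$, with inverse $a=B-nA$ and $b=(n+1)A-B$. Since it is unimodular it preserves coprimality. Therefore the branch $n$ contributes exactly the coprime pairs $(A,B)$ with $A\ge1$ and $nA\le B\le (n+1)A$.

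\emph{Union.} Taking the union over $n\ge0$, the branches give all coprime $(A,B)$ with $A\ge1$ and $B\ge0$. When $B\ge1$ these are the coprime pairs with $A,B\ge1$. When $B=0$ coprimality forces $A=1$, which gives $(1,0)$. Adding the trunk contributions yields precisely the set in the statement; overlaps such as $(1,m)$ are harmless, because only set membership is claimed. The final sentence of the statement, that all these vectors are modified $g$-vectors, is the "if" direction, and it follows from the same computation. I expect the main obstacle to be bookkeeping the coordinate change between $\{\tilde{\mathbf{v}}_{SK}^{\mathbf{w}},\tilde{\mathbf{v}}_{TK}^{\mathbf{w}}\}$ and $\{\tilde{\mathbf{v}}_{t_0k_0},\tilde{\mathbf{v}}_{k_0s_0}\}$ correctly, in particular the signs. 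The key observation to verify carefully is that the slope intervals $[n,n+1]$ for $B/A$ tile $[0,\infty)$.
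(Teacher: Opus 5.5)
Your proposal is correct and follows essentially the same route as the paper. Both split $\mathcal{T}^{\geq [i]}$ into the trunk and the maximal branches $\mathcal{T}^{\geq [i]S^nT}$, rewrite the branch coordinates via \Cref{lem: c g vectors at root of maximal branches}, and use a unimodular change of coordinates to send each branch's coprime pairs onto the coprime pairs whose slope $b/a$ lies between $n$ and $n+1$. The differences are cosmetic: you track all three $g$-vectors, so each branch gives the closed interval $[n,n+1]$ and you check coprimality explicitly via the determinant $-1$, whereas the paper uses only $\tilde{\mathbf{g}}_{K}$, gets open intervals $(n,n+1)$, and lets the trunk supply the integer slopes; also your coefficient $(n+1)a+nb$ is the correct one where the paper writes $n(\alpha+\beta)+\beta$, a harmless $\alpha\leftrightarrow\beta$ swap.
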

In \cite{Rea15}, this claim was shown by using the shear coordinates of allowable curves in the once-punctured torus. Here, we give an alternative proof, which only relies on the recursions in \Cref{lem: mutation of modified vectors}.
\begin{proof}
Firstly, by \eqref{eq: recover g vectors} and \Cref{lem: c g vectors in trunks}, we have 
\begin{equation}
\tilde{\mathbf{g}}_{K}^{[i]S^n}=\mathfrak{g}_i+\tilde{\mathbf{v}}_{t_0k_0}+(n+1)\tilde{\mathbf{v}}_{k_0s_0}.
\end{equation}
Thus, the above modified $g$-vectors correspond to the case of $\frac{b}{a}=1,2,\dots$ in \eqref{eq: coprime number expression of g vectors}.
According to \Cref{lem: c g vectors at root of maximal branches}, the other modified $g$-vectors $\tilde{\mathbf{g}}_{K}^{[i]S^nTX}$ can be expressed as
\begin{equation}
\mathfrak{g}_i+\alpha\tilde{\mathbf{v}}_{SK}^{[i]S^nT}+\beta\tilde{\mathbf{v}}_{TK}^{[i]S^nT}=\mathfrak{g}_i+(\alpha+\beta)\tilde{\mathbf{v}}_{t_0k_0}+((\alpha+\beta)n+\beta)\tilde{\mathbf{v}}_{k_0s_0},
\end{equation}
where $(\alpha,\beta) \in \mathbb{Z}_{\geq 1}^2$ is a pair of coprime numbers.
Set $(a,b)=(\alpha+\beta,n(\alpha+\beta)+\beta)$. As $(\alpha,\beta) \in \mathbb{Z}_{\geq 1}^2$ ranges over all pairs of coprime numbers, all such $(a,b)$ cover all pairs of coprime numbers such that $n < \frac{b}{a} < n+1$. Thus, the claim holds.
\end{proof}
Following the same method as above, we obtain the following corollary.
\begin{corollary}[{cf. \cite{Cha12}}]\label{cor: coprime number expression for c vectors}
Fix an initial mutation direction $i=1,2,3$. Set
\begin{equation}
\tilde{\mathbf{x}}_{s_0k_0}=\tilde{\mathbf{e}}_{k_0}+\tilde{\mathbf{e}}_{s_0},
\quad
\tilde{\mathbf{x}}_{t_0k_0}=\tilde{\mathbf{e}}_{k_0}+\tilde{\mathbf{e}}_{t_0}.
\end{equation}
Then, a vector $\tilde{\mathbf{c}} \in \mathbb{R}^3$ appears in the $C$-pattern $\widetilde{\mathbf{C}}^{\geq [i]}(B)$ if and only if it is of the form
\begin{equation}
\tilde{\mathbf{c}}=\epsilon\mathfrak{c}+a\tilde{\mathbf{x}}_{t_0k_0}+b\tilde{\mathbf{x}}_{s_0k_0},
\end{equation}
where $\epsilon \in \{\pm\}$ and $(a,b) \in \mathbb{Z}^2 \setminus \{(0,0)\}$ satisfy either of the following: 
\begin{itemize}
    \item $ab \geq 1$ and $\gcd(|a|,|b|)=1$ except for $(\epsilon;a,b)=(-;1,1)$.
    \item either $(+;0,-1)$, $(-;0,1)$, $(+;1,0)$, $(-;-1,0)$.
\end{itemize}
\end{corollary}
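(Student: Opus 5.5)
The plan is to imitate the proof of \Cref{cor: coprime number expression}. I split $\widetilde{\mathbf{C}}^{\geq [i]}(B)$ into the modified $c$-vectors at the trunk vertices $[i]S^n$ and those in the maximal branches $\mathcal{T}^{\geq [i]S^nT}$, $n \in \mathbb{Z}_{\geq 0}$. Each piece will be rewritten in the basis $\{\mathfrak{c},\tilde{\mathbf{x}}_{t_0k_0},\tilde{\mathbf{x}}_{s_0k_0}\}$, and then I will check that the union of the resulting coefficient sets $(\epsilon;a,b)$ is exactly the set described in the statement. First I record the elementary relations
\begin{equation*}
\tilde{\mathbf{e}}_{t_0}=\mathfrak{c}-\tilde{\mathbf{x}}_{s_0k_0},\quad \tilde{\mathbf{e}}_{s_0}=\mathfrak{c}-\tilde{\mathbf{x}}_{t_0k_0},\quad \tilde{\mathbf{e}}_{k_0}=\tilde{\mathbf{x}}_{t_0k_0}+\tilde{\mathbf{x}}_{s_0k_0}-\mathfrak{c}.
\end{equation*}
They give $\tilde{\mathbf{e}}_{k_0}-\tilde{\mathbf{e}}_{t_0}=-2\mathfrak{c}+\tilde{\mathbf{x}}_{t_0k_0}+2\tilde{\mathbf{x}}_{s_0k_0}$.

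\emph{Branches.} By \Cref{lem: c g vectors at root of maximal branches},
\begin{equation*}
\tilde{\mathbf{x}}_{SK}^{[i]S^nT}=\tilde{\mathbf{x}}_{t_0k_0}+(n+1)\tilde{\mathbf{x}}_{s_0k_0},\qquad \tilde{\mathbf{x}}_{TK}^{[i]S^nT}=\tilde{\mathbf{x}}_{t_0k_0}+n\tilde{\mathbf{x}}_{s_0k_0}.
\end{equation*}
Hence $\epsilon\mathfrak{c}+\alpha\tilde{\mathbf{x}}_{SK}^{[i]S^nT}+\beta\tilde{\mathbf{x}}_{TK}^{[i]S^nT}=\epsilon\mathfrak{c}+a\tilde{\mathbf{x}}_{t_0k_0}+b\tilde{\mathbf{x}}_{s_0k_0}$ with $(a,b)=(\alpha+\beta,\,n(\alpha+\beta)+\alpha)$. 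The map $(\alpha,\beta)\mapsto(a,b)$ is unimodular (its determinant is $-1$), so it preserves $\gcd$. It sends pairs with $\alpha\beta\geq 1$ bijectively onto pairs with $ab\geq1$ and $n<b/a<n+1$, and both signs of $(\alpha,\beta)$ are allowed. I then feed in \Cref{cor: coprime numbers expression in a branch}($a$). Its second line, with $\epsilon$ arbitrary, yields exactly the $(\epsilon;a,b)$ with $ab\geq1$, $\gcd(|a|,|b|)=1$ and $b/a\notin\mathbb{Z}$. Its five exceptional vectors are $\tilde{\mathbf{c}}_{K}^{\mathbf{w}},\tilde{\mathbf{c}}_{K}^{\mathbf{w}S},\tilde{\mathbf{c}}_{K}^{\mathbf{w}T},\tilde{\mathbf{c}}_{S}^{\mathbf{w}},\tilde{\mathbf{c}}_{T}^{\mathbf{w}}$ for $\mathbf{w}=[i]S^nT$. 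Under the same substitution they become $(\epsilon;a,b)$ with $b/a\in\{n,n+1\}$ and $|a|=1$, up to the listed sign patterns.

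\emph{Trunk.} Using \Cref{lem: c g vectors in trunks} and \eqref{eq: recover c vectors}, I compute $\tilde{\mathbf{c}}_{M}^{[i]S^n}$ for $M=K,S,T$ explicitly. For instance,
\begin{equation*}
\tilde{\mathbf{c}}_{S}^{[i]S^n}=\mathfrak{c}+\tilde{\mathbf{x}}_{TK}^{[i]S^n}=-\mathfrak{c}+\tilde{\mathbf{x}}_{t_0k_0}+(n+2)\tilde{\mathbf{x}}_{s_0k_0},
\end{equation*}
and $\tilde{\mathbf{c}}_{T}^{[i]S^n}=\tilde{\mathbf{e}}_{t_0}$ is the fixed vector $(+;0,-1)$. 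Together with $\tilde{\mathbf{c}}_{K}^{[i]S^{n+1}}=-\tilde{\mathbf{c}}_{S}^{[i]S^n}$, the trunk gives the integer-ratio pairs $(\pm;\pm1,\pm m)$. It also gives the boundary vectors $(+;1,0)$, $(-;-1,0)$, $(+;0,-1)$ and $(-;0,1)$; for example, $\tilde{\mathbf{c}}_{S}^{[i]}$ and $\tilde{\mathbf{c}}_{K}^{[i]}=-\tilde{\mathbf{e}}_{k_0}$ should land on these.

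Finally, I take the union of the trunk list with the branch lists over all $n$. I must show that every $(\epsilon;a,b)$ with $ab\ge1$, $\gcd=1$ and $b/a\in\mathbb{Z}$ appears, except $(-;1,1)$, which is the vector $\tilde{\mathbf{e}}_{k_0}$ and is not sign-coherent-negative as a $c$-vector. I must also show that nothing outside the stated set appears. The main obstacle is this bookkeeping at integer slopes $b/a=m$. There, contributions from the trunk and from the exceptional first-line vectors of the two adjacent maximal branches ($n=m-1$ and $n=m$) overlap, and I have to verify carefully that together they produce both signs $\epsilon$ and both orientations $\pm(1,m)$ for every $m\geq 1$, while $(-;1,1)$ is missed. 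I would first tabulate these vectors for $n=0,1$ by hand to fix the pattern, and then prove it for general $n$ by the explicit formulas above.
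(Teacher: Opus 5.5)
Your decomposition into the trunk and the maximal branches, with the unimodular substitution $(\alpha,\beta)\mapsto(\alpha+\beta,\,n(\alpha+\beta)+\alpha)$, is the same as the paper's argument. It correctly accounts for every $(\epsilon;a,b)$ with $ab\ge 1$, $\gcd(|a|,|b|)=1$ and $b/a\notin\mathbb{Z}$.

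\textbf{The step that fails.} The problem is the step you flag as the main obstacle: $(-;1,1)$ is \emph{not} missed. Apply the trunk $T$-rule at $[i]$ and then the $T$-rule at $[i]T$ from \Cref{lem: mutation of modified vectors}:
\[
\tilde{\mathbf c}_K^{[i]TT}=-\tilde{\mathbf c}_T^{[i]T}=-\tilde{\mathbf c}_K^{[i]}=\tilde{\mathbf e}_{k_0}=-\mathfrak c+\tilde{\mathbf x}_{t_0k_0}+\tilde{\mathbf x}_{s_0k_0}.
\]
In your own bookkeeping this is the exceptional branch vector $\tilde{\mathbf c}_K^{\mathbf wT}$. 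In the basis at $\mathbf w=[i]S^nT$ it is $(-;1,0)$, which becomes $(-;1,n+1)$ globally; take $n=0$.

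Concretely, take the Markov matrix and $i=1$, so $k_0=1$. Then \eqref{eq: recursion of C} gives, at $\mathbf u=[1,2,1]=[1]TT$,
\[
(\mathbf c_1^{\mathbf u},\mathbf c_2^{\mathbf u},\mathbf c_3^{\mathbf u})=(\mathbf e_1,\,-2\mathbf e_1-\mathbf e_2,\,2\mathbf e_1+2\mathbf e_2+\mathbf e_3).
\]

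The reason you offer for the exclusion does not work either. Here $\epsilon$ is the coefficient of $\mathfrak c$, not a tropical sign. For instance, $\tilde{\mathbf c}_S^{[i]}=\tilde{\mathbf e}_{s_0}+2\tilde{\mathbf e}_{k_0}$ is a positive vector with $\epsilon=-$, and $\tilde{\mathbf e}_{k_0}$ is perfectly sign-coherent.

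Since $\{\mathfrak c,\tilde{\mathbf x}_{t_0k_0},\tilde{\mathbf x}_{s_0k_0}\}$ is a basis, $\tilde{\mathbf e}_{k_0}$ has no other expression. So the ``only if'' direction of the statement as written is false, and no argument can complete this step.

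\textbf{What your method actually proves.} It proves the statement with the exception deleted. For every $m\ge1$, all four vectors occur:
\[
(+;1,m)=\tilde{\mathbf c}_S^{[i]S^mT},\quad (-;-1,-m)=\tilde{\mathbf c}_K^{[i]S^mTS},\quad (-;1,m)=\tilde{\mathbf c}_K^{[i]S^{m-1}TT},\quad (+;-1,-m)=\tilde{\mathbf c}_K^{[i]S^{m-1}}.
\]
The four listed vectors with $ab=0$ occur as well.

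The paper's proof has the same blind spot. At the roots of the maximal branches it records only $\tilde{\mathbf c}_K^{[i]S^nT}$, $\tilde{\mathbf c}_S^{[i]S^nT}$ and $\tilde{\mathbf c}_K^{[i]S^nTS}$. It omits $\tilde{\mathbf c}_K^{[i]S^nTT}=(-;1,n+1)$, which coincides with the trunk vector $\tilde{\mathbf c}_S^{[i]S^{n-1}}$ for $n\ge1$ but is new for $n=0$.

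\textbf{Smaller slips in your trunk paragraph.}
First, the trunk yields only $(+;-1,-m)$ for $m\ge1$, $(-;1,m)$ for $m\ge2$, and $(+;0,-1)$; it does not give all sign patterns.
Second, $\tilde{\mathbf c}_S^{[i]}=(-;1,2)$ and $\tilde{\mathbf c}_K^{[i]}=(+;-1,-1)$ are not boundary vectors. The vectors $(-;0,1)$, $(+;1,0)$ and $(-;-1,0)$ come from $\tilde{\mathbf c}_K^{[i]S^nT}$, $\tilde{\mathbf c}_S^{[i]T}$ and $\tilde{\mathbf c}_K^{[i]TS}$ respectively.
Third, the exceptional vector $\tilde{\mathbf c}_K^{\mathbf w}$ gives $a=0$, not $|a|=1$.
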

\begin{proof}
Firstly, for any $n \in \mathbb{Z}_{\geq 0}$, by \eqref{eq: recover c vectors} and \Cref{lem: c g vectors in trunks}, we have
\begin{equation}
\begin{aligned}
\tilde{\mathbf{c}}_{K}^{[i]S^n}=\mathfrak{c}-\tilde{\mathbf{x}}_{t_0k_0}-(n+1)\tilde{\mathbf{x}}_{s_0k_0},
\quad
\tilde{\mathbf{c}}_{S}^{[i]S^n}=-\mathfrak{c}+\tilde{\mathbf{x}}_{t_0k_0}+(n+2)\tilde{\mathbf{x}}_{s_0k_0},
\quad
\tilde{\mathbf{c}}_{T}^{[i]S^n}=\mathfrak{c}-\tilde{\mathbf{x}}_{s_0k_0}.
\end{aligned}
\end{equation}
By \Cref{lem: mutation of modified vectors}, we also have 
\begin{equation}
\tilde{\mathbf{c}}_{K}^{[i]S^nT}=-\mathfrak{c}+\tilde{\mathbf{x}}_{s_0k_0}
\quad
\tilde{\mathbf{c}}_{S}^{[i]S^nT}=\mathfrak{c}+\tilde{\mathbf{x}}_{t_0k_0}+n\tilde{\mathbf{x}}_{s_0k_0},
\quad
\tilde{\mathbf{c}}_{K}^{[i]S^nTS}=-\mathfrak{c}-\tilde{\mathbf{x}}_{t_0k_0}-n\tilde{\mathbf{x}}_{s_0k_0}.
\end{equation}
Thus, all the cases of $|a|=0,1$ appear.
Let us consider the modified $c$-vectors appearing in each maximal branch $\widetilde{\mathbf{C}}^{\geq [i]S^nT}(B)$. By \Cref{lem: c g vectors at root of maximal branches} and the second condition of \Cref{cor: coprime numbers expression in a branch}~$(a)$, they can be expressed as
\begin{equation}
\epsilon \mathfrak{c} + (\alpha + \beta) \tilde{\mathbf{x}}_{t_0k_0} + ((\alpha + \beta)n+\alpha) \tilde{\mathbf{x}}_{s_0k_0},
\end{equation}
where $(\alpha,\beta) \in \mathbb{Z}^2$ satisfies $\alpha \beta \geq 1$ and $\gcd(|\alpha|,|\beta|)=1$. Then, all such $(a,b)=(\alpha+\beta, n(\alpha+\beta)+\alpha) \in \mathbb{Z}^2 \setminus \{(0,0)\}$ cover the pairs of coprime numbers satisfying $n<\frac{b}{a}<n+1$. Thus, the claim holds.
\end{proof}

\section{Combinatorial method to draw the $G$-fan}\label{sec: combinatorial method for the G fan}
From now on, we derive some applications for the $G$-fan.
In this section, we shortly recall the definition of the $G$-fan and introduce a combinatorial method for drawing it for the class we focus on.
\subsection{Definition of the $G$-fan}
For any vectors $\mathbf{a}_1,\dots,\mathbf{a}_r \in \mathbb{R}^3$ and set $J \subset \{1,\dots,r\}$, we define the \emph{(polyhedral) cone} $\mathcal{C}_{J}(\mathbf{a}_1,\dots,\mathbf{a}_r)$ by
\begin{equation}
\mathcal{C}_{J}(\mathbf{a}_1,\dots,\mathbf{a}_r)=\left\{\sum_{j \in J}\lambda_j\mathbf{a}_j\ \middle|\ \lambda_j \geq 0 \right\}.
\end{equation}
By convention, we define $\mathcal{C}_{\emptyset}(\mathbf{a}_1,\dots,\mathbf{a}_r)=\{\mathbf{0}\}$.
If $J=\{1,\dots,r\}$, we omit it and simply write $\mathcal{C}(\mathbf{a}_1,\dots,\mathbf{a}_r)$.
Its \emph{relative interior} is denoted by 
\begin{equation}
\mathcal{C}^{\circ}_{J}(\mathbf{a}_1,\dots,\mathbf{a}_r)=\left\{ \sum_{j \in J} \lambda_j\mathbf{a}_j\ \middle|\ \lambda_j > 0 \right\}.
\end{equation}
Note that both sets are \emph{convex cones}; that is, for positive numbers $\lambda,\mu > 0$ and elements $\mathbf{a},\mathbf{b}$ in each set, $\lambda\mathbf{a}+\mu\mathbf{b}$ also belongs to the set.
When $\mathbf{a}_1,\dots,\mathbf{a}_r$ are linearly independent, the cone $\mathcal{C}(\mathbf{a}_1,\dots,\mathbf{a}_r)$ is said to be {\em simplicial}, and each $\mathcal{C}_{J}(\mathbf{a}_1,\dots,\mathbf{a}_r)$ is called a \emph{face} of $\mathcal{C}(\mathbf{a}_1,\dots,\mathbf{a}_r)$.
\par
For the cluster algebra theory, the following fan is one of the important objects for the structure of the cluster patterns.
\begin{definition}\label{def: G-fan}
Let $B \in \mathrm{M}_3(\mathbb{R})$ be a skew-symmetrizable matrix. We define a \emph{$G$-cone} $\mathcal{C}(G^{\mathbf{w}})$ by
\begin{equation}
\mathcal{C}(G^{\mathbf{w}})=\mathcal{C}(\tilde{\mathbf{g}}_1^{\mathbf{w}},\tilde{\mathbf{g}}_2^{\mathbf{w}},\tilde{\mathbf{g}}_3^{\mathbf{w}}).
\end{equation}
The set of all $G$-cones and their faces
\begin{equation}
\Delta(B)=\{\mathcal{C}_J(\tilde{\mathbf{g}}_1^{\mathbf{w}},\tilde{\mathbf{g}}_{2}^{\mathbf{w}},\tilde{\mathbf{g}}_{3}^{\mathbf{w}}) \mid \mathbf{w} \in \mathcal{T}, J \subset \{1,2,3\}\}
\end{equation}
is called the \emph{$G$-fan} associated with $B$.
\end{definition}
It is known that, for the ordinary (integer) cluster algebras, the $G$-fan is indeed a fan in the usual sense \cite{GHKK18}, that is, it is closed under intersection and the face relation. In general, we need the sign-coherence and some conjectures to generalize this structure, see \cite{AC25a}. However, if an initial exchange matrix is given by \eqref{eq: integer examples of B invariant type}, this generalization is easily obtained from the fact for the integer case, see \cite[Prop.~8.20]{Rea14} and \cite[Thm.~3.5]{AC25a}.
For each $\mathbf{w}_0 \in \mathcal{T}$, we often focus on the following subset of $\Delta(B)$:
\begin{equation}
\Delta^{\geq \mathbf{w}_0}(B)=\{\mathcal{C}_J(\tilde{\mathbf{g}}_1^{\mathbf{w}},\tilde{\mathbf{g}}_2^{\mathbf{w}},\tilde{\mathbf{g}}_3^{\mathbf{w}}) \mid \mathbf{w} \geq \mathbf{w}_0, J \subset \{1,2,3\}\}.
\end{equation}
This set is called a {\em sub $G$-fan} of $\Delta(B)$ after $\mathbf{w}_0$.
\subsection{Method to draw the $G$-fan}
Thanks to \Cref{cor: coprime number expression}, we can immediately prove the followig lemma.
\begin{lemma}[cf.~{\cite{Cha12, Rea15}}]\label{lem: plane for the modified g-vectors}
If $B$ is of $B$-invariant type, every modified $g$-vector $\tilde{\mathbf{g}}_{j}^{\bf w}$ ($j=1,2,3$, ${\bf w} \in \mathcal{T}$) is on the following affine plane:
\begin{equation}
H=\{x_1\tilde{\mathbf{e}}_1+x_2\tilde{\mathbf{e}}_2+x_3\tilde{\mathbf{e}}_3 \in \mathbb{R}^3 \mid x_1+x_2+x_3=1\}.
\end{equation}
In particular, the $G$-fan $\Delta(B)$ is contained in the half space
\begin{equation}
V=\{x_1\tilde{\mathbf{e}}_1+x_2\tilde{\mathbf{e}}_2+x_3\tilde{\mathbf{e}}_3 \in \mathbb{R}^3 \mid x_1+x_2+x_3 > 0 \} \cup \{\mathbf{0}\}.
\end{equation}
\end{lemma}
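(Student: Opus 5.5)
Since $\mathcal{T}=\{\emptyset\}\cup\bigcup_{i=1}^3\mathcal{T}^{\geq [i]}$, we check the initial seed separately and then use \Cref{cor: coprime number expression} on each subtree $\mathcal{T}^{\geq [i]}$. For $\mathbf{w}=\emptyset$ we have $\tilde{\mathbf{g}}_j^{\emptyset}=\frac{1}{\sqrt{d_j}}\mathbf{e}_j=\tilde{\mathbf{e}}_j$. Its coordinates in the basis $\tilde{\mathbf{e}}_1,\tilde{\mathbf{e}}_2,\tilde{\mathbf{e}}_3$ are the standard unit vector, with coordinate sum $1$, so it lies in $H$.

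Now fix $i$ and $\mathbf{w}\geq[i]$. By \Cref{cor: coprime number expression}, every modified $g$-vector in $\widetilde{\mathbf{G}}^{\geq [i]}(B)$ has the form
\[
\tilde{\mathbf{g}}=\mathfrak{g}_i+a\tilde{\mathbf{v}}_{t_0k_0}+b\tilde{\mathbf{v}}_{k_0s_0},
\qquad (a,b)\in\mathbb{Z}^2.
\]
Let $\sigma(x_1\tilde{\mathbf{e}}_1+x_2\tilde{\mathbf{e}}_2+x_3\tilde{\mathbf{e}}_3)=x_1+x_2+x_3$. This is a linear functional, because $\{\tilde{\mathbf{e}}_j\}$ is a basis. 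We have $\sigma(\mathfrak{g}_i)=\sigma(\tilde{\mathbf{e}}_{s_0}+\tilde{\mathbf{e}}_{t_0}-\tilde{\mathbf{e}}_{k_0})=1$, and $\sigma(\tilde{\mathbf{v}}_{lm})=\sigma(\tilde{\mathbf{e}}_m-\tilde{\mathbf{e}}_l)=0$. By linearity, $\sigma(\tilde{\mathbf{g}})=1$, so $\tilde{\mathbf{g}}\in H$.

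There is a small point to check: the ``modified $G$-pattern'' in \Cref{cor: coprime number expression} must include the $g$-vectors at every $\mathbf{w}\geq[i]$, including $\mathbf{w}=[i]$. It does, since $\widetilde{\mathbf{G}}^{\geq[i]}(B)$ is indexed by all $\mathbf{v}\geq[i]$. A direct alternative avoids relying on that corollary. Note that $\sigma(\tilde{\mathbf{g}}_M^{[i]})=1$ for $M=K,S,T$ by \eqref{eq: initial setup}. Each mutation in \Cref{lem: mutation of modified vectors} sends $g$-vectors to affine combinations with coefficient sum $1$: for example $-\tilde{\mathbf{g}}_S+2\tilde{\mathbf{g}}_K$ has coefficients summing to $1$, and the other rules are copies of single vectors. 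Hence $\sigma\equiv 1$ propagates by induction on $X\in\mathcal{M}$. I would present this induction as the main argument, since it is self-contained, and mention the corollary as a cross-check.

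For the second claim, every cone in $\Delta(B)$ is $\mathcal{C}_J(\tilde{\mathbf{g}}^{\mathbf{w}}_1,\tilde{\mathbf{g}}^{\mathbf{w}}_2,\tilde{\mathbf{g}}^{\mathbf{w}}_3)$. Any element $\sum_{j\in J}\lambda_j\tilde{\mathbf{g}}_j^{\mathbf{w}}$ with $\lambda_j\geq0$ satisfies $\sigma=\sum_{j \in J}\lambda_j\geq 0$, with equality only when all $\lambda_j=0$, i.e.\ only at $\mathbf{0}$. So every element lies in $V$. There is no real obstacle. The only care needed is that $\sigma$ uses coordinates with respect to the modified basis $\tilde{\mathbf{e}}_j$ rather than $\mathbf{e}_j$, which is exactly how $H$ and $V$ are defined.
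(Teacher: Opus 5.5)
Your proof is correct. Your first route, through \Cref{cor: coprime number expression} with $\mathfrak{g}_i$ a point of $H$ and $\tilde{\mathbf{v}}_{t_0k_0},\tilde{\mathbf{v}}_{k_0s_0}$ direction vectors, is exactly the paper's argument. The induction you propose to make primary is a genuinely different and more elementary route.

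The induction needs only \eqref{eq: initial setup} and one observation: each rule in \Cref{lem: mutation of modified vectors} writes the new $g$-vectors as affine combinations of the old ones, either a single vector or $-\tilde{\mathbf{g}}+2\tilde{\mathbf{g}}'$. This holds uniformly in trunks and branches, so $\sigma\equiv 1$ propagates along $\mathcal{M}$. It bypasses the whole chain behind the corollary: \Cref{thm: recursive formula in a branch}, \Cref{lem: CW tree coprimeness}, and the trunk and maximal-branch computations in \Cref{lem: c g vectors in trunks} and \Cref{lem: c g vectors at root of maximal branches}. It also shows why the plane is $H$: the normalization by $\sqrt{d_j}$ turns the coefficient into $2=\sqrt{|b_{ij}b_{ji}|}$, and this makes every $g$-mutation affine.

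The paper's route, in exchange, gets the lemma for free from a parametrization it needs anyway. It also immediately gives the picture of $\mathfrak{g}_i$ as a base point with lattice directions on $H$, which the paper uses afterwards to draw the fan and describe the complements.

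You also treat two points explicitly that the paper leaves implicit: the seed $\mathbf{w}=\emptyset$, and the passage from $H$ to $V$ (nonnegative coefficients with $\sum_{j}\lambda_j=0$ force the zero vector).
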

For the proof, we introduce some basic notations of affine planes.
\begin{itemize}
\item If $x_1+x_2+x_3 = 1$, ${\bf x}$ appears as a {\em point} on $H$. In particular, each modified standard vector $\tilde{\mathbf{e}}_i$ appears as a point on $H$.
\item If $x_1+x_2+x_3=0$, ${\bf x}$ appears as a \emph{direction vector} on $H$. In particular, for any two vectors ${\bf y},{\bf z} \in H$ which appear as points on $H$, ${\bf z}-{\bf y}$ can be seen as the direction vector from ${\bf y}$ to ${\bf z}$.
\end{itemize}
Then, we can prove \Cref{lem: plane for the modified g-vectors} with the following geometric understanding:
\begin{proof}
By \Cref{cor: coprime number expression}, this can be shown due to the following two facts:
\begin{itemize}
    \item $\mathfrak{g}_i=\tilde{\mathbf{e}}_{s_0}+\tilde{\mathbf{e}}_{s_0}-\tilde{\mathbf{e}}_{k_0}$ appears as a point on $H$.
    \item $\tilde{\mathbf{v}}_{t_0k_0}=\tilde{\mathbf{e}}_{k_0}-\tilde{\mathbf{e}}_{t_0}$ and $\tilde{\mathbf{v}}_{k_0s_0}=\tilde{\mathbf{e}}_{s_0}-\tilde{\mathbf{e}}_{k_0}$ are direction vectors on $H$.
\end{itemize}
\end{proof}
This observation provides a simple method to draw the $G$-fan as a section intersecting $H$. For $M,M' \in \{K,S,T\}$ and $\mathbf{w} \in \mathcal{T} \setminus \{\emptyset\}$ let $\tilde{\mathbf{v}}_{MM'}^{\mathbf{w}}=\tilde{\mathbf{g}}_{M'}^{\mathbf{w}}-\tilde{\mathbf{g}}_{M}^{\mathbf{w}}$ denote the direction vector from $\tilde{\mathbf{g}}_{M}^{\mathbf{w}}$ to $\tilde{\mathbf{g}}_{M'}^{\mathbf{w}}$.
Now, let us view the $S$-mutation rule $\tilde{\bf g}_{M}^{{\bf w}S}=-\tilde{\bf g}_{S}^{\bf w}+2\tilde{\bf g}_{K}^{\bf w}$ in \eqref{eq: S mutation of c- g-vectors} as follows:
\begin{equation}\label{eq: S-mutation to draw it on H}
\tilde{\bf g}_{K}^{{\bf w}S}=\tilde{\bf g}_{K}^{\bf w}+\tilde{\mathbf{v}}_{SK}^{\mathbf{w}}.
\end{equation}
On the plane $H$, by \Cref{lem: plane for the modified g-vectors}, $\tilde{\bf g}_{K}^{\bf w}$ appears as a point. Hence, the $S$-mutated vector $\tilde{\bf g}_{K}^{{\bf w}S}$ may be illustrated as a point on $H$. By doing a similar argument, we may view the $T$-mutation as follows:
\begin{equation}\label{eq: T-mutation to draw it on H}
\tilde{\bf g}_{K}^{{\bf w}T}=\begin{cases}
\tilde{\bf g}_{S}^{\bf w}+\tilde{\mathbf{v}}_{TS}^{\mathbf{w}} & \textup{if ${\bf w}$ is in a trunk},\\
\tilde{\bf g}_{K}^{\bf w}+\tilde{\mathbf{v}}_{TK}^{\mathbf{w}} & \textup{if ${\bf w}$ is in a branch}.
\end{cases}
\end{equation}
Thus, the mutation of $G$-cones may be illustrated as in Figure~\ref{fig: mutation in trunks} and Figure~\ref{fig: mutation in branches}. Note that the lines with the same color have the same length. (The red color is needed to draw $\mathcal{C}(G^{{\bf w}S})$, and the blue color is needed to draw $\mathcal{C}(G^{{\bf w}T})$.)
\begin{figure}[htbp]
\centering
\begin{minipage}[c]{0.45\linewidth}
\centering
\begin{tikzpicture}
\coordinate (A) at (0,0);
\coordinate (B) at (1,2);
\coordinate (C) at (3,0);
\fill (A) circle (0.05);
\draw ($(A)+(0.4,0.25)$) node {$T$};
\fill (B) circle (0.05);
\draw ($(B)+(0.03,-0.35)$) node {$K$};
\fill (C) circle (0.05);
\draw ($(C)+(-0.6,0.25)$) node {$S$};
\draw (A)--(B)--(C)--cycle;
\draw ($0.333*(A)+0.333*(B)+0.333*(C)$) node {${\bf w}$};

\coordinate (D) at ($2*(B)-(C)$);
\fill (D) circle (0.05);
\draw (A)--(B)--(D)--cycle;
\draw ($0.333*(A)+0.333*(B)+0.333*(D)$) node {${\bf w}S$};
\draw ($(A)+(0.075,0.5)$) node {$T$};
\draw ($(B)+(-0.3,-0.125)$) node {$S$};
\draw ($(D)+(0.5,-0.85)$) node {$K$};

\coordinate (E) at ($2*(C)-(A)$);
\fill (E) circle (0.05);
\draw (B)--(C)--(E)--cycle;
\draw ($0.333*(B)+0.333*(C)+0.333*(E)$) node {${\bf w}T$};
\draw ($(B)+(1,-0.7)$) node {$T$};
\draw ($(C)+(0.1,0.25)$) node {$S$};
\draw ($(E)+(-1.3,0.25)$) node {$K$};

\draw[red, thick] (D)--(C);
\draw[blue, thick] (A)--(E);

\foreach \x in {A,B,C,D,E}
    {
    \fill (\x) circle (0.05);
    };

\end{tikzpicture}
\caption{In a trunk.}\label{fig: mutation in trunks}
\end{minipage}
\begin{minipage}[c]{0.45\linewidth}
\centering
\begin{tikzpicture}
\coordinate (A) at (0,0);
\coordinate (B) at (1,2);
\coordinate (C) at (3,0);
\fill (A) circle (0.05);
\draw ($(A)+(0.4,0.25)$) node {$T$};
\fill (B) circle (0.05);
\draw ($(B)+(0.03,-0.35)$) node {$K$};
\fill (C) circle (0.05);
\draw ($(C)+(-0.6,0.25)$) node {$S$};
\draw (A)--(B)--(C)--cycle;
\draw ($0.333*(A)+0.333*(B)+0.333*(C)$) node {${\bf w}$};

\coordinate (D) at ($2*(B)-(C)$);
\fill (D) circle (0.05);
\draw (A)--(B)--(D)--cycle;
\draw ($0.333*(A)+0.333*(B)+0.333*(D)$) node {${\bf w}S$};
\draw ($(A)+(0.075,0.5)$) node {$T$};
\draw ($(B)+(-0.3,-0.125)$) node {$S$};
\draw ($(D)+(0.5,-0.85)$) node {$K$};

\coordinate (E) at ($2*(B)-(A)$);
\fill (E) circle (0.05);
\draw (B)--(C)--(E)--cycle;
\draw ($0.333*(B)+0.333*(C)+0.333*(E)$) node {${\bf w}T$};
\draw ($(B)+(0.3,0.1)$) node {$S$};
\draw ($(C)+(-0.4,0.7)$) node {$T$};
\draw ($(E)+(-0.1,-0.6)$) node {$K$};

\draw[red, thick] (C)--(D);
\draw[blue, thick] (A)--(E);

\foreach \x in {A,B,C,D,E}
    {
    \fill (\x) circle (0.05);
    };
\end{tikzpicture}
\caption{In a branch.}\label{fig: mutation in branches}
\end{minipage}
\end{figure}
\begin{example}
Repeating the procedures in Figure~\ref{fig: mutation in trunks} and Figure~\ref{fig: mutation in branches}, we can visualize the picture of the $G$-fan as the section of $H$ as in Figure~\ref{fig: Markov G-fan}. This figure has already been illustrated in some papers \cite{Cha12, Rea15, FG16}. Note that the region enclosed by the red lines is trunks. Later, we will show that the blue and red dashed lines are the complement of this fan, see \Cref{thm: pointwise expression of complements}.
\par
From this picture, one can observe that all dashed lines reach a black point on the red dashed line by extending it. In fact, the corresponding vectors are $\mathfrak{g}_1$ $\mathfrak{g}_2$, and $\mathfrak{g}_3$ in \eqref{eq: fixed points of mutations}, and thus the union of all $G$-cones can be easily understood as in \Cref{fig: all complements}.
\begin{figure}[htbp]
\centering
\subfile{Markov_G_fan_on_the_plane}
\caption{$G$-fan corresponding to the $B$-invariant type.}\label{fig: Markov G-fan}
\end{figure}
\end{example}
\par
Fix one $\mathbf{w} \in \mathcal{T}\setminus\{\emptyset\}$. Let us consider the union $\bigcup_{n \in \mathbb{Z}_{\geq 0}}\mathcal{C}(G^{\mathbf{w}S^n})$. Then, by repeating the process in \Cref{fig: mutation in trunks} and \Cref{fig: mutation in branches}, they can be illustrated in \Cref{fig: S mutations}. 
\begin{figure}[htbp]
\subfile{S_mutations}
\caption{$S$-mutations.}\label{fig: S mutations}
\end{figure}
\par
Note the following two facts:
\begin{itemize}
\item $\tilde{\mathbf{g}}_{T}^{\mathbf{w}S^n}$ and $\tilde{\mathbf{v}}_{SK}^{\mathbf{w}S^n}=\tilde{\mathbf{g}}_{K}^{\mathbf{w}S^n}-\tilde{\mathbf{g}}_{S}^{\mathbf{w}S^n}$ are invariant.
\item $\tilde{\mathbf{g}}_{K}^{\mathbf{w}S^n}$ approaches the point at infinity along the half-line $\tilde{\mathbf{g}}_{K}^{\mathbf{w}}+\mathbb{R}_{\geq 0}\tilde{\mathbf{v}}_{SK}^{\mathbf{w}}$.
\end{itemize}
Thus, the asymptotic phenomenon in two dimensional faces of these cones can be expressed as follows:
\begin{itemize}
\item All $\mathcal{C}(\tilde{\mathbf{g}}_{K}^{\mathbf{w}S^n},\tilde{\mathbf{g}}_{S}^{\mathbf{w}S^n})$ with $n \in \mathbb{Z}_{\geq 0}$ compose $\mathcal{C}^{\circ}(\tilde{\mathbf{g}}_{S}^{\mathbf{w}},\tilde{\mathbf{v}}_{SK}^{\mathbf{w}}) \cup \mathcal{C}(\tilde{\mathbf{g}}_{S}^{\mathbf{w}})$, and they never reach $\mathcal{C}^{\circ}(\tilde{\mathbf{v}}_{SK}^{\mathbf{w}})$. This set is expressed as the red line in \Cref{fig: S mutations}.
\item $\mathcal{C}(\tilde{\mathbf{g}_{T}^{\mathbf{w}S^n}},\tilde{\mathbf{g}}_{K}^{\mathbf{w}S^n})$ approaches to the cone $\mathcal{C}^{\circ}(\tilde{\mathbf{g}}_{T}^{\mathbf{w}},\tilde{\mathbf{v}}_{SK}^{\mathbf{w}})\cup \mathcal{\mathcal{C}}(\tilde{\mathbf{g}
}_{T}^{\mathbf{w}})$, and they never reach to $\mathcal{C}^{\circ}(\tilde{\mathbf{g}}_{T}^{\mathbf{w}},\tilde{\mathbf{v}}_{SK}^{\mathbf{w}})$. This set $\mathcal{C}^{\circ}(\tilde{\mathbf{g}}_{T}^{\mathbf{w}},\tilde{\mathbf{v}}_{SK}^{\mathbf{w}})$ is expressed as the blue dashed line in \Cref{fig: S mutations}.
\end{itemize}

\section{Upper bounds for branches} \label{sec: upper bounds}
The contents in this section are parallel to the ones in \cite[\S~7,8]{AC26}, but the results are refined and the proof here becomes much simpler due to the specific properties of $B$-invariant type.
\subsection{Inside a branch}
We give a corollary of \Cref{thm: recursive formula in a branch} about the support of the $G$-fan. We introduce the following sets:
\begin{equation}
\mathcal{U}_{\circ}^{\mathbf{w}}
=
\mathcal{C}^{\circ}(\tilde{\mathbf{g}}_{S}^{\mathbf{w}},\tilde{\mathbf{g}}_{T}^{\mathbf{w}}, \tilde{\mathbf{v}}_{SK}^{\mathbf{w}},\tilde{\mathbf{v}}_{TK}^{\mathbf{w}}),
\quad
\mathcal{U}^{\mathbf{w}}
=
\mathcal{U}^{\mathbf{w}}_{\circ} \cup \mathcal{C}(\tilde{\mathbf{g}}_{S}^{\mathbf{w}},\tilde{\mathbf{g}}_{T}^{\mathbf{w}}).
\end{equation}
The set $\mathcal{U}^{\mathbf{w}}$ presents the upper bound of $|\Delta^{\geq \mathbf{w}}(B)|$.
\begin{lemma}\label{lem: simple upper bounds}
Let $\mathbf{w} \in \mathcal{T}^{\geq [i]}$ be in a branch. Then, we have the following inclusions.
\begin{equation}\label{eq: support upper bound in a branch}
|\Delta^{\geq \mathbf{w}}(B)|
\subset
\mathcal{U}^{\mathbf{w}}
\subset
\mathcal{C}^{\circ}(\mathfrak{g}_i,\tilde{\mathbf{v}}_{SK}^{\mathbf{w}},\tilde{\mathbf{v}}_{TK}^{\mathbf{w}}) \cup \mathcal{C}(\tilde{\mathbf{g}}_{S}^{\mathbf{w}}) \cup \mathcal{C}(\tilde{\mathbf{g}}_{T}^{\mathbf{w}}).
\end{equation}
Moreover, for any $\mathbf{u} \in \mathcal{T}^{\geq \mathbf{w}}$, the modified $g$-vector $\tilde{\mathbf{g}}_{K}^{\mathbf{u}}$ belongs to $\mathcal{U}_{\circ}^{\mathbf{w}}$, not $\mathcal{C}(\tilde{\mathbf{g}}_{S}^{\mathbf{w}},\tilde{\mathbf{g}}_{T}^{\mathbf{w}})$.
\end{lemma}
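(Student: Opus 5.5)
The whole argument will run through the coordinate description of \Cref{thm: recursive formula in a branch}: for every $X \in \mathcal{M}$ and $M=K,S,T$ we have $\tilde{\mathbf{g}}_{M}^{\mathbf{w}X}=\mathfrak{g}_i+q_{M;S}^{X}\tilde{\mathbf{v}}_{SK}^{\mathbf{w}}+q_{M;T}^{X}\tilde{\mathbf{v}}_{TK}^{\mathbf{w}}$ with nonnegative integer coefficients. By \eqref{eq: recover g vectors} we also have $\tilde{\mathbf{g}}_{S}^{\mathbf{w}}=\mathfrak{g}_i+\tilde{\mathbf{v}}_{TK}^{\mathbf{w}}$ and $\tilde{\mathbf{g}}_{T}^{\mathbf{w}}=\mathfrak{g}_i+\tilde{\mathbf{v}}_{SK}^{\mathbf{w}}$. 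We therefore work in the basis $\{\mathfrak{g}_i,\tilde{\mathbf{v}}_{SK}^{\mathbf{w}},\tilde{\mathbf{v}}_{TK}^{\mathbf{w}}\}$, writing a vector as $x\mathfrak{g}_i+y\tilde{\mathbf{v}}_{SK}^{\mathbf{w}}+z\tilde{\mathbf{v}}_{TK}^{\mathbf{w}}$. In these coordinates the generators of $\mathcal{U}^{\mathbf{w}}$ are $(1,0,1)$, $(1,1,0)$, $(0,1,0)$ and $(0,0,1)$.

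\textbf{Step 1: the right-hand inclusion.} This is pure linear algebra. A point of $\mathcal{U}^{\mathbf{w}}_{\circ}$ is $\lambda_1(1,0,1)+\lambda_2(1,1,0)+\lambda_3(0,1,0)+\lambda_4(0,0,1)$ with all $\lambda_j>0$. It has coordinates $x=\lambda_1+\lambda_2>0$, $y=\lambda_2+\lambda_3>0$ and $z=\lambda_1+\lambda_4>0$, so it lies in $\mathcal{C}^{\circ}(\mathfrak{g}_i,\tilde{\mathbf{v}}_{SK}^{\mathbf{w}},\tilde{\mathbf{v}}_{TK}^{\mathbf{w}})$. Now take a point $\lambda\tilde{\mathbf{g}}_S^{\mathbf{w}}+\mu\tilde{\mathbf{g}}_T^{\mathbf{w}}$ of $\mathcal{C}(\tilde{\mathbf{g}}_{S}^{\mathbf{w}},\tilde{\mathbf{g}}_{T}^{\mathbf{w}})$. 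If both $\lambda$ and $\mu$ are positive, all three coordinates are positive. If one of them vanishes, the point lies in $\mathcal{C}(\tilde{\mathbf{g}}_S^{\mathbf{w}})$ or $\mathcal{C}(\tilde{\mathbf{g}}_T^{\mathbf{w}})$.

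\textbf{Step 2: the ``moreover'' claim.} Every $\mathbf{u}\in\mathcal{T}^{\geq\mathbf{w}}$ has the form $\mathbf{w}X$. Write $(a,b)=(q_{K;S}^{X},q_{K;T}^{X})$. These start at $(1,1)$ and follow \eqref{eq: recursion for q}, so by induction both entries are $\geq 1$. Hence $\tilde{\mathbf{g}}_{K}^{\mathbf{u}}$ has coordinates $(1,a,b)$ with $a,b\geq 1$.

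First, $\tilde{\mathbf{g}}_{K}^{\mathbf{u}}\notin\mathcal{C}(\tilde{\mathbf{g}}_S^{\mathbf{w}},\tilde{\mathbf{g}}_T^{\mathbf{w}})$. Indeed, that cone consists of the points $(\lambda+\mu,\mu,\lambda)$, so membership would force $a+b=1$, contradicting $a,b\geq1$.

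Second, $\tilde{\mathbf{g}}_{K}^{\mathbf{u}}\in\mathcal{U}^{\mathbf{w}}_{\circ}$. We need $\lambda_j>0$ with $\lambda_1+\lambda_2=1$, $\lambda_2+\lambda_3=a$ and $\lambda_1+\lambda_4=b$. Take $\lambda_1=\lambda_2=\tfrac12$, $\lambda_3=a-\tfrac12$ and $\lambda_4=b-\tfrac12$; all four are positive.

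\textbf{Step 3: the left-hand inclusion.} The set $|\Delta^{\geq\mathbf{w}}(B)|$ is the union of the cones $\mathcal{C}(G^{\mathbf{w}X})$, and each of these is spanned by $\tilde{\mathbf{g}}_{K}^{\mathbf{w}X}$, $\tilde{\mathbf{g}}_{S}^{\mathbf{w}X}$ and $\tilde{\mathbf{g}}_{T}^{\mathbf{w}X}$.

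For $X=1_{\mathcal{M}}$, Step 2 puts $\tilde{\mathbf{g}}_K^{\mathbf{w}}$ in $\mathcal{U}^{\mathbf{w}}_{\circ}$, and the other two generators lie in $\mathcal{C}(\tilde{\mathbf{g}}_S^{\mathbf{w}},\tilde{\mathbf{g}}_T^{\mathbf{w}})$.

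For $X\neq 1_{\mathcal{M}}$, the mutation rules of \Cref{lem: mutation of modified vectors} show that the $S$- and $T$-vectors at $\mathbf{w}X$ are either $K$-vectors at some $\mathbf{w}Y$, or equal to $\tilde{\mathbf{g}}_S^{\mathbf{w}}$ or $\tilde{\mathbf{g}}_T^{\mathbf{w}}$. So every generator lies in $\mathcal{U}^{\mathbf{w}}$.

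It remains to check that $\mathcal{U}^{\mathbf{w}}$ is closed under nonnegative combinations of these points. The set $\mathcal{U}^{\mathbf{w}}_{\circ}$ is a convex cone. A sum $p+q$ with $p\in\mathcal{U}^{\mathbf{w}}_{\circ}$ and $q\in\mathcal{C}(\tilde{\mathbf{g}}_S^{\mathbf{w}},\tilde{\mathbf{g}}_T^{\mathbf{w}})$ stays in $\mathcal{U}^{\mathbf{w}}_{\circ}$, since adding nonnegative multiples of generators to a strictly positive combination keeps it strictly positive. Thus $\mathcal{U}^{\mathbf{w}}$ is a convex cone. Every face of a $G$-cone is then a nonnegative combination of points of $\mathcal{U}^{\mathbf{w}}$, so the whole support $|\Delta^{\geq\mathbf{w}}(B)|$ lies in $\mathcal{U}^{\mathbf{w}}$.

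\textbf{Main obstacle.} I expect the only delicate point to be this closure argument: making sure that no combination of boundary generators escapes $\mathcal{U}^{\mathbf{w}}$. It is handled by the observation that the boundary part $\mathcal{C}(\tilde{\mathbf{g}}_S^{\mathbf{w}},\tilde{\mathbf{g}}_T^{\mathbf{w}})$ is itself a cone spanned by two of the generators of $\mathcal{U}^{\mathbf{w}}_{\circ}$.
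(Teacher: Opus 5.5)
Your proof is correct and follows essentially the same route as the paper. Both use \Cref{thm: recursive formula in a branch} to write $\tilde{\mathbf{g}}_K^{\mathbf{w}X}$ with coefficients $q_{K;S}^X,q_{K;T}^X\geq 1$, exhibit it as $\tfrac12\tilde{\mathbf{g}}_S^{\mathbf{w}}+\tfrac12\tilde{\mathbf{g}}_T^{\mathbf{w}}+(q_{K;S}^X-\tfrac12)\tilde{\mathbf{v}}_{SK}^{\mathbf{w}}+(q_{K;T}^X-\tfrac12)\tilde{\mathbf{v}}_{TK}^{\mathbf{w}}\in\mathcal{U}_\circ^{\mathbf{w}}$, and conclude by convexity of $\mathcal{U}^{\mathbf{w}}$; you additionally spell out points the paper leaves implicit, namely the convexity check, the reduction of the $S$- and $T$-vectors to earlier $K$-vectors or $\tilde{\mathbf{g}}_S^{\mathbf{w}},\tilde{\mathbf{g}}_T^{\mathbf{w}}$, and the non-membership in $\mathcal{C}(\tilde{\mathbf{g}}_S^{\mathbf{w}},\tilde{\mathbf{g}}_T^{\mathbf{w}})$.
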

\begin{proof}
For the first inclusion, note that the set in the right hand side is a convex cone. Thus, it suffices to show that all modified $g$-vectors in $\Delta^{\geq {\mathbf{w}}}(B)$ belongs to the right hand side.
The vectors $\tilde{\mathbf{g}}_{S}^{\mathbf{w}}$ and $\tilde{\mathbf{g}}_T^{\mathbf{w}}$ belongs to $\mathcal{C}(\tilde{\mathbf{g}}_{S}^{\mathbf{w}},\tilde{\mathbf{g}}_{T}^{\mathbf{w}})$. All the other vectors can be expressed as $\tilde{\mathbf{g}}_{K}^{\mathbf{u}}$ for some $\mathbf{u} \geq \mathbf{w}$. Set $\mathbf{u}=\mathbf{w}X$. 
By \Cref{thm: recursive formula in a branch}, we have
\begin{equation}\label{eq: gK equality in a branch}
\begin{aligned}
\tilde{\mathbf{g}}_{K}^{\mathbf{u}}&=\mathfrak{g}_i+q_{K;S}^{X} \tilde{\mathbf{v}}_{SK}^{\mathbf{\mathbf{w}}}+q_{K;T}^{X}\tilde{\mathbf{v}}_{TK}^{\mathbf{w}}\\
&=\tilde{\mathbf{g}}_{K}^{\mathbf{w}}+(q_{K;S}^X-1)\tilde{\mathbf{v}}_{SK}^{\mathbf{w}}+(q_{K;T}^{X}-1)\tilde{\mathbf{v}}_{TK}^{\mathbf{w}}.
\end{aligned}
\end{equation}
Note that $q_{K;S}^{X},q_{K;T}^{X} \geq 1$. Thus, both coefficients of $\tilde{\mathbf{v}}_{SK}^{\mathbf{w}}$ and $\tilde{\mathbf{v}}_{TK}^{\mathbf{w}}$ are nonnegative. Since
\begin{equation}\label{equ: K-expression}
\tilde{\mathbf{g}}_{K}^{\mathbf{w}}=\frac{1}{2}(\tilde{\mathbf{g}}_{S}^{\mathbf{w}}+\tilde{\mathbf{g}}_{T}^{\mathbf{w}}+\tilde{\mathbf{v}}_{SK}^{\mathbf{w}}+\tilde{\mathbf{v}}_{TK}^{\mathbf{w}}) \in \mathcal{U}^{\mathbf{w}}_{\circ},
\end{equation}
we have the first inclusion and $\tilde{\mathbf{g}}_{K}^{\mathbf{u}} \in \mathcal{U}_{\circ}^{\mathbf{w}}$. The second one can be shown by
\begin{equation}
\mathcal{C}^{\circ}(\tilde{\mathbf{g}}_{S}^{\mathbf{w}},\tilde{\mathbf{g}}_{T}^{\mathbf{w}},\tilde{\mathbf{v}}_{SK}^{\mathbf{w}},\tilde{\mathbf{v}}_{TK}^{\mathbf{w}}) \cup \mathcal{C}^{\circ}(\tilde{\mathbf{g}}_{S}^{\mathbf{w}},\tilde{\mathbf{g}}_{T}^{\mathbf{w}}) \subset \mathcal{C}^{\circ}(\mathfrak{g}_i,\tilde{\mathbf{v}}_{SK}^{\mathbf{w}},\tilde{\mathbf{v}}_{TK}^{\mathbf{w}}).
\end{equation}
\end{proof} 
Note that this upper bound $\mathcal{U}^{\mathbf{w}}$ has the following decomposition.
\begin{lemma}\label{lem: direct sum decomposition of upper bounds}
We have the following direct sum decomposition.
\begin{equation}\label{eq: direct sum decomposition of local upper bounds}
\mathcal{U}^{\mathbf{w}}
=
\mathcal{C}(G^{\mathbf{w}}) \sqcup \mathcal{U}_{\circ}^{\mathbf{w}S} \sqcup \mathcal{U}_{\circ}^{\mathbf{w}T} \sqcup \mathcal{C}^{\circ}(\tilde{\mathbf{g}}_{K}^{\mathbf{w}},\tilde{\mathbf{v}}_{SK}^{\mathbf{w}}+\tilde{\mathbf{v}}_{TK}^{\mathbf{w}}).
\end{equation}
\end{lemma}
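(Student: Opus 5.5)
The plan is to reduce everything to explicit inequalities in a single coordinate system adapted to the branch, and then compare the four pieces directly. Since $\mathbf{w}$ is in a branch, \Cref{lem: independence of cF and gF} gives $\tilde{\mathbf{g}}_{F}^{\mathbf{w}}=\mathfrak{g}_i$. Then $\{\mathfrak{g}_i,\tilde{\mathbf{v}}_{SK}^{\mathbf{w}},\tilde{\mathbf{v}}_{TK}^{\mathbf{w}}\}$ is a basis, and every vector can be written uniquely as $\alpha\mathfrak{g}_i+\beta\tilde{\mathbf{v}}_{SK}^{\mathbf{w}}+\gamma\tilde{\mathbf{v}}_{TK}^{\mathbf{w}}$. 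By \eqref{eq: recover g vectors} we have
$\tilde{\mathbf{g}}_S^{\mathbf{w}}=(1;0,1)$, $\tilde{\mathbf{g}}_T^{\mathbf{w}}=(1;1,0)$ and $\tilde{\mathbf{g}}_K^{\mathbf{w}}=(1;1,1)$ in coordinates $(\alpha;\beta,\gamma)$.

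First I describe $\mathcal{U}^{\mathbf{w}}$ in these coordinates. Take a general element $\lambda_1\tilde{\mathbf{g}}_S^{\mathbf{w}}+\lambda_2\tilde{\mathbf{g}}_T^{\mathbf{w}}+\mu_1\tilde{\mathbf{v}}_{SK}^{\mathbf{w}}+\mu_2\tilde{\mathbf{v}}_{TK}^{\mathbf{w}}$ with all coefficients positive. Its coordinates are $\alpha=\lambda_1+\lambda_2$, $\beta=\lambda_2+\mu_1$ and $\gamma=\lambda_1+\mu_2$. Eliminating the coefficients, one checks that $\mathcal{U}_\circ^{\mathbf{w}}=\{\alpha>0,\ \beta>0,\ \gamma>0,\ \beta+\gamma>\alpha\}$. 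Indeed, such $\lambda_2\in(0,\alpha)$ exists with $\lambda_2<\beta$ and $\alpha-\lambda_2<\gamma$ exactly under these inequalities. Also $\mathcal{C}(\tilde{\mathbf{g}}_S^{\mathbf{w}},\tilde{\mathbf{g}}_T^{\mathbf{w}})=\{\beta,\gamma\ge0,\ \beta+\gamma=\alpha\}$.

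All sets in \eqref{eq: direct sum decomposition of local upper bounds} are cones. In each of them the only point with $\alpha=0$ is $\mathbf{0}$, and $\mathbf{0}$ lies only in $\mathcal{C}(G^{\mathbf{w}})$. So it suffices to compare the slices at $\alpha=1$. There
\[
\mathcal{U}^{\mathbf{w}}|_{\alpha=1}=\{\beta>0,\gamma>0,\beta+\gamma\ge1\}\cup\{(1,0),(0,1)\}.
\]

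Next I compute the coordinates of the other pieces using \Cref{lem: mutations for eigen vectors}. Both $\mathbf{w}S$ and $\mathbf{w}T$ are in branches, and $\mathfrak{g}_i$ is fixed.

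\begin{itemize}
\item For $\mathbf{w}S$ we have $\tilde{\mathbf{v}}_{SK}^{\mathbf{w}S}=\tilde{\mathbf{v}}_{SK}^{\mathbf{w}}$ and $\tilde{\mathbf{v}}_{TK}^{\mathbf{w}S}=\tilde{\mathbf{v}}_{SK}^{\mathbf{w}}+\tilde{\mathbf{v}}_{TK}^{\mathbf{w}}$. So the new coordinates are $(\alpha;\beta-\gamma,\gamma)$, and the description above gives $\mathcal{U}_\circ^{\mathbf{w}S}|_{\alpha=1}=\{\beta>\gamma>0,\ \beta>1\}$.
\item For $\mathbf{w}T$ we have $\tilde{\mathbf{v}}_{SK}^{\mathbf{w}T}=\tilde{\mathbf{v}}_{TK}^{\mathbf{w}}$ and $\tilde{\mathbf{v}}_{TK}^{\mathbf{w}T}=\tilde{\mathbf{v}}_{SK}^{\mathbf{w}}+\tilde{\mathbf{v}}_{TK}^{\mathbf{w}}$. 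So the new coordinates are $(\alpha;\gamma-\beta,\beta)$, and $\mathcal{U}_\circ^{\mathbf{w}T}|_{\alpha=1}=\{\gamma>\beta>0,\ \gamma>1\}$.
\item $\mathcal{C}(G^{\mathbf{w}})|_{\alpha=1}$ is the closed triangle with vertices $(1,1),(0,1),(1,0)$, namely $\{\beta\le1,\ \gamma\le1,\ \beta+\gamma\ge1\}$.
\item $\mathcal{C}^\circ(\tilde{\mathbf{g}}_K^{\mathbf{w}},\tilde{\mathbf{v}}_{SK}^{\mathbf{w}}+\tilde{\mathbf{v}}_{TK}^{\mathbf{w}})|_{\alpha=1}$ is the open ray $\{\beta=\gamma>1\}$.
\end{itemize}

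Finally I compare the slices. The triangle is exactly the set of points of $\mathcal{U}^{\mathbf{w}}|_{\alpha=1}$ with $\max(\beta,\gamma)\le1$; this includes $(1,0)$ and $(0,1)$, and uses $\beta+\gamma\ge1$. The remaining points are those with $\beta,\gamma>0$ and $\max(\beta,\gamma)>1$, where $\beta+\gamma\ge1$ is automatic. Splitting them according to $\beta>\gamma$, $\gamma>\beta$ or $\beta=\gamma$ yields precisely the other three pieces. This split is disjoint, and the triangle is separated from the rest by the condition $\max\le1$ versus $\max>1$, so the union in \eqref{eq: direct sum decomposition of local upper bounds} is disjoint and equals $\mathcal{U}^{\mathbf{w}}$.

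The only delicate step is the first one: describing $\mathcal{U}_\circ^{\mathbf{w}}$ by linear inequalities, and in particular checking the strictness of the boundary conditions ($\beta,\gamma>0$ strict, $\beta+\gamma>\alpha$ strict). These strictness conditions are what make the boundary pieces land in exactly one of the four sets.
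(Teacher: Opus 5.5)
Your proof is correct and is essentially the paper's argument. The paper only asserts that the decomposition follows from a direct calculation, pictured in its figure on the slice where the $\mathfrak{g}_i$-coordinate equals $1$. Your computation in the basis $\{\mathfrak{g}_i,\tilde{\mathbf{v}}_{SK}^{\mathbf{w}},\tilde{\mathbf{v}}_{TK}^{\mathbf{w}}\}$ carries out that calculation explicitly, with correct inequality descriptions of $\mathcal{U}_{\circ}^{\mathbf{w}}$, $\mathcal{U}_{\circ}^{\mathbf{w}S}$ and $\mathcal{U}_{\circ}^{\mathbf{w}T}$, correct treatment of $\mathbf{0}$ and the vertices $\tilde{\mathbf{g}}_{S}^{\mathbf{w}},\tilde{\mathbf{g}}_{T}^{\mathbf{w}}$, and proper use of the standing hypothesis that $\mathbf{w}$ lies in a branch.
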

\begin{proof}
This can be shown by a direct calculation, see \Cref{fig: direct sum dicomposition}.
\end{proof}
\begin{figure}[htbp]
\centering
\subfile{proof_of_fractal_theorem}
\caption{Direct sum decomposition of $\mathcal{U}^{\mathbf{w}}$.}\label{fig: direct sum dicomposition}
\end{figure}
Due to the above lemma, we can improve the upper bound in \eqref{eq: support upper bound in a branch} as follows.
\begin{proposition}\label{prop: detailed upper bounds}
We have
\begin{equation}\label{eq: local upper bound theorem}
|\Delta^{\geq \mathbf{w}}(B)| \subset \mathcal{C}(G^{\mathbf{w}}) \sqcup \mathcal{U}_{\circ}^{\mathbf{w}S} \sqcup \mathcal{U}_{\circ}^{\mathbf{w}T}.
\end{equation}
\end{proposition}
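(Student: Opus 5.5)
The plan is to combine the first inclusion of \Cref{lem: simple upper bounds} with the decomposition in \Cref{lem: direct sum decomposition of upper bounds}. Since $|\Delta^{\geq \mathbf{w}}(B)| \subset \mathcal{U}^{\mathbf{w}}$ and $\mathcal{U}^{\mathbf{w}}$ splits as in \eqref{eq: direct sum decomposition of local upper bounds}, it suffices to show that $|\Delta^{\geq \mathbf{w}}(B)|$ never meets the last piece $\mathcal{C}^{\circ}(\tilde{\mathbf{g}}_{K}^{\mathbf{w}},\tilde{\mathbf{v}}_{SK}^{\mathbf{w}}+\tilde{\mathbf{v}}_{TK}^{\mathbf{w}})$. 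To do this I split the cones of $\Delta^{\geq \mathbf{w}}(B)$ according to the first letter of $X$ in $\mathbf{u}=\mathbf{w}X$:
\begin{equation*}
|\Delta^{\geq \mathbf{w}}(B)| = \mathcal{C}(G^{\mathbf{w}}) \cup |\Delta^{\geq \mathbf{w}S}(B)| \cup |\Delta^{\geq \mathbf{w}T}(B)|.
\end{equation*}

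Since $\mathbf{w}$ is in a branch, $\mathbf{w}S$ and $\mathbf{w}T$ are also in branches, so \Cref{lem: simple upper bounds} applies to them as well, giving $|\Delta^{\geq \mathbf{w}M}(B)| \subset \mathcal{U}^{\mathbf{w}M}=\mathcal{U}^{\mathbf{w}M}_{\circ}\cup\mathcal{C}(\tilde{\mathbf{g}}_{S}^{\mathbf{w}M},\tilde{\mathbf{g}}_{T}^{\mathbf{w}M})$ for $M=S,T$. By \Cref{lem: mutation of modified vectors}, the pair $\{\tilde{\mathbf{g}}_{S}^{\mathbf{w}M},\tilde{\mathbf{g}}_{T}^{\mathbf{w}M}\}$ consists of $\tilde{\mathbf{g}}_{K}^{\mathbf{w}}$ together with $\tilde{\mathbf{g}}_{T}^{\mathbf{w}}$ (for $M=S$) or $\tilde{\mathbf{g}}_{S}^{\mathbf{w}}$ (for $M=T$, branch case). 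Hence $\mathcal{C}(\tilde{\mathbf{g}}_{S}^{\mathbf{w}M},\tilde{\mathbf{g}}_{T}^{\mathbf{w}M})$ is a face of $\mathcal{C}(G^{\mathbf{w}})$. Therefore
\begin{equation*}
|\Delta^{\geq \mathbf{w}}(B)| \subset \mathcal{C}(G^{\mathbf{w}}) \cup \mathcal{U}_{\circ}^{\mathbf{w}S} \cup \mathcal{U}_{\circ}^{\mathbf{w}T},
\end{equation*}
and disjointness of the union is inherited from \Cref{lem: direct sum decomposition of upper bounds}.

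The only real content is thus the identification of the faces, together with the fact (already packaged in \Cref{lem: direct sum decomposition of upper bounds}) that $\mathcal{U}_{\circ}^{\mathbf{w}S}$ and $\mathcal{U}_{\circ}^{\mathbf{w}T}$ are genuinely disjoint from the ray-piece $\mathcal{C}^{\circ}(\tilde{\mathbf{g}}_{K}^{\mathbf{w}},\tilde{\mathbf{v}}_{SK}^{\mathbf{w}}+\tilde{\mathbf{v}}_{TK}^{\mathbf{w}})$. I expect the main point of care to be checking the $T$-mutation in the branch case via \eqref{eq: T mutation of c- g-vectors in branches}, which gives $\tilde{\mathbf{g}}_{S}^{\mathbf{w}T}=\tilde{\mathbf{g}}_{K}^{\mathbf{w}}$ and $\tilde{\mathbf{g}}_{T}^{\mathbf{w}T}=\tilde{\mathbf{g}}_{S}^{\mathbf{w}}$. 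If needed, I would re-verify the decomposition by expressing everything in the basis $\{\tilde{\mathbf{g}}_{F}^{\mathbf{w}},\tilde{\mathbf{v}}_{SK}^{\mathbf{w}},\tilde{\mathbf{v}}_{TK}^{\mathbf{w}}\}$ using \Cref{lem: mutations for eigen vectors}. In that basis $\mathcal{U}_{\circ}^{\mathbf{w}S}$ and $\mathcal{U}_{\circ}^{\mathbf{w}T}$ lie strictly on either side of the plane spanned by $\tilde{\mathbf{g}}_{K}^{\mathbf{w}}$ and $\tilde{\mathbf{v}}_{SK}^{\mathbf{w}}+\tilde{\mathbf{v}}_{TK}^{\mathbf{w}}$.
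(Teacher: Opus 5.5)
Your proposal is correct and is essentially the paper's own argument. You split $|\Delta^{\geq \mathbf{w}}(B)|$ into $\mathcal{C}(G^{\mathbf{w}})$ and the supports after $\mathbf{w}S$ and $\mathbf{w}T$, apply the simple upper-bound lemma to $\mathbf{w}S$ and $\mathbf{w}T$, absorb the closed pieces $\mathcal{C}(\tilde{\mathbf{g}}_{S}^{\mathbf{w}M},\tilde{\mathbf{g}}_{T}^{\mathbf{w}M})$ as faces of $\mathcal{C}(G^{\mathbf{w}})$ (which is what the paper's shorthand $|\Delta^{\geq \mathbf{w}M}(B)|^{\circ}$ encodes), and take disjointness from the direct sum decomposition of $\mathcal{U}^{\mathbf{w}}$. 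Your explicit face identification via the branch $T$-mutation rule supplies the detail the paper leaves implicit, and the opening reduction to avoiding $\mathcal{C}^{\circ}(\tilde{\mathbf{g}}_{K}^{\mathbf{w}},\tilde{\mathbf{v}}_{SK}^{\mathbf{w}}+\tilde{\mathbf{v}}_{TK}^{\mathbf{w}})$ is unnecessary, since your argument already proves the inclusion directly.
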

\begin{proof}
This follows from $|\Delta^{\geq \mathbf{w}}(B)| = \mathcal{C}(G^{\mathbf{w}}) \cup |\Delta^{\geq \mathbf{w}S}(B)|^{\circ} \cup |\Delta^{\geq \mathbf{w}T}(B)|^{\circ}$ and \Cref{lem: simple upper bounds}.
\end{proof}
\subsection{Among branches}
For each $i=1,2,3$, $k_0=K([i]), s_0=S([i]), t_0=T([i])$, let
\begin{equation}
\mathfrak{D}_i=\mathcal{C}(\tilde{\mathbf{e}}_{s_0},\tilde{\mathbf{v}}_{t_0k_0},\tilde{\mathbf{v}}_{k_0s_0}) \cap V.
\end{equation}
This set can be illustrated as the region bounded by the red lines in \Cref{fig: maximal branches}.
As \Cref{fig: maximal branches} shows, we can directly prove that
\begin{equation}
\mathcal{U}^{[i]S^nT} \subset \mathfrak{D}_i
\end{equation}
for any $n \in \mathbb{Z}_{\geq 0}$. Moreover, as \Cref{fig: Markov G-fan} shows, for any $i,j=1,2,3$, if $i \neq j$, we have
\begin{equation}
\mathfrak{D}_i \cap \mathfrak{D}_j = \{\mathbf{0}\}.
\end{equation}
Based on this fact, we prove the following facts.
\begin{lemma}\label{lem: separateness among maximal branches}
Fix one initial mutation direction $i=1,2,3$.
\\
\textup{($a$)} For any $\mathbf{w} \in \mathcal{T}^{\geq [i]}$, $\tilde{\mathbf{g}}_{K}^{\mathbf{w}}$ belongs to $\mathfrak{D}_i$. Moreover, $\tilde{\mathbf{g}}_{K}^{\mathbf{w}}$ belongs to the interior $\mathfrak{D}_i^{\circ}$ if and only if $\mathbf{w}$ is in a branch.
\\
\textup{($b$)} For any $m,n \in \mathbb{Z}_{\geq 0}$ with $m>n$, we have
\begin{equation}
|\mathcal{U}^{[i]S^mT}| \cap |\mathcal{U}^{[i]S^nT}|=\begin{cases}
\mathcal{C}(\tilde{\mathbf{g}}_{K}^{[i]S^n}) & \textup{if $m=n+1$},\\
\emptyset & \textup{if $m \geq n+2$}.
\end{cases}
\end{equation}
\end{lemma}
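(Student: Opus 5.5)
The plan is to work entirely on the affine plane $H$ of \Cref{lem: plane for the modified g-vectors}, in the affine coordinates already suggested by \Cref{cor: coprime number expression}. For $(a,b)\in\mathbb{R}^2$, write
\begin{equation*}
P(a,b)=\mathfrak{g}_i+a\tilde{\mathbf{v}}_{t_0k_0}+b\tilde{\mathbf{v}}_{k_0s_0}.
\end{equation*}
Since $\tilde{\mathbf{v}}_{t_0k_0}$ and $\tilde{\mathbf{v}}_{k_0s_0}$ are linearly independent direction vectors on $H$ and $\mathfrak{g}_i\in H$, the map $(a,b)\mapsto P(a,b)$ is an affine bijection $\mathbb{R}^2\to H$. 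Every nonzero element of $V$ is a positive multiple of a unique point of $H$. Hence each cone contained in $V$ is determined by its section with $H$, and it suffices to compare sections.

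\textbf{Sections of the relevant cones.} First compute $\mathfrak{D}_i\cap H$. One has $\tilde{\mathbf{e}}_{s_0}=\mathfrak{g}_i+\tilde{\mathbf{v}}_{t_0k_0}=P(1,0)$. A point $\lambda\tilde{\mathbf{e}}_{s_0}+\mu\tilde{\mathbf{v}}_{t_0k_0}+\nu\tilde{\mathbf{v}}_{k_0s_0}$ with $\lambda,\mu,\nu\ge 0$ lies on $H$ exactly when $\lambda=1$, so
\begin{equation*}
\mathfrak{D}_i\cap H=\{P(a,b)\mid a\ge 1,\ b\ge 0\},
\end{equation*}
whose relative interior is $\{a>1,\ b>0\}$.

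Next, for $\mathcal{U}^{[i]S^nT}$, use \Cref{lem: c g vectors at root of maximal branches} together with \eqref{eq: recover g vectors}. This gives
\begin{equation*}
\tilde{\mathbf{g}}_F^{[i]S^nT}=\mathfrak{g}_i,\quad \tilde{\mathbf{v}}_{SK}^{[i]S^nT}\leftrightarrow(1,n+1),\quad \tilde{\mathbf{v}}_{TK}^{[i]S^nT}\leftrightarrow(1,n),
\end{equation*}
the last two as direction vectors. Consequently $\tilde{\mathbf{g}}_S^{[i]S^nT}=P(1,n)$ and $\tilde{\mathbf{g}}_T^{[i]S^nT}=P(1,n+1)$. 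Since the coefficients of $\tilde{\mathbf{g}}_S,\tilde{\mathbf{g}}_T$ must sum to $1$ on $H$, the section of $\mathcal{U}^{[i]S^nT}$ is the open wedge
\begin{equation*}
\{P(a,b)\mid a>1,\ na<b<(n+1)a\}
\end{equation*}
together with the closed segment $\{P(1,b)\mid n\le b\le n+1\}$.

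\textbf{Part (a).} By \Cref{lem: c g vectors in trunks} and \eqref{eq: recover g vectors}, the trunk vector is $\tilde{\mathbf{g}}_K^{[i]S^n}=P(1,n+1)$. It lies in $\mathfrak{D}_i$ but on the boundary line $a=1$. For $\mathbf{w}$ in a branch, write $\mathbf{w}=[i]S^nTX$. By \Cref{thm: recursive formula in a branch} and the computation in the proof of \Cref{cor: coprime number expression}, $\tilde{\mathbf{g}}_K^{\mathbf{w}}=P(\alpha+\beta,\,n(\alpha+\beta)+\beta)$ with $\alpha,\beta\ge 1$. Hence $a\ge 2$ and $b\ge 1$, so the point lies in the interior. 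This gives both the membership and the ``if and only if''.

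\textbf{Part (b).} Compare the two sections for $m>n$. The open wedges are the disjoint angular sectors $n<b/a<n+1$ and $m<b/a<m+1$. Each segment lies on the line $a=1$, which no wedge meets, and the segments are $[n,n+1]$ and $[m,m+1]$ in the $b$-coordinate. Therefore the sections meet only in $P(1,n+1)$ when $m=n+1$, and not at all when $m\ge n+2$. By part (a), $P(1,n+1)=\tilde{\mathbf{g}}_K^{[i]S^n}$, so the intersection of the cones is the ray $\mathcal{C}(\tilde{\mathbf{g}}_K^{[i]S^n})$. In the case $m\ge n+2$ the cones share only $\mathbf{0}$; I read the ``$\emptyset$'' in the statement as referring to the section on $H$.

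The main obstacle is the bookkeeping for $\mathcal{U}$. Its interior part is open, but it contains the closed face $\mathcal{C}(\tilde{\mathbf{g}}_S,\tilde{\mathbf{g}}_T)$, so one must check carefully that rays spanned only by $\tilde{\mathbf{v}}_{SK},\tilde{\mathbf{v}}_{TK}$ (directions at infinity) are excluded. It must also be verified that the shared endpoint $P(1,n+1)$ belongs to both closed segments, since it is $\tilde{\mathbf{g}}_T^{[i]S^nT}=\tilde{\mathbf{g}}_S^{[i]S^{n+1}T}$.
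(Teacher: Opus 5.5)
Your proof is correct and is essentially the paper's argument made explicit: the paper also works on $H$ with the sectors at $\mathfrak{g}_i$ whose slopes $b/a$ lie in $(n,n+1)$ (via \Cref{lem: c g vectors at root of maximal branches}), but it reads their disjointness off \Cref{fig: maximal branches} using the coarser bound $\mathcal{C}^{\circ}(\mathfrak{g}_i,\tilde{\mathbf{v}}_{SK}^{[i]S^nT},\tilde{\mathbf{v}}_{TK}^{[i]S^nT})\cup\mathcal{C}(\tilde{\mathbf{g}}_{S}^{[i]S^nT})\cup\mathcal{C}(\tilde{\mathbf{g}}_{T}^{[i]S^nT})$, whereas your $(a,b)$-coordinates compute the sections of $\mathfrak{D}_i$ and $\mathcal{U}^{[i]S^nT}$ exactly. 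You are right that the case $m\geq n+2$ gives $\{\mathbf{0}\}$ rather than $\emptyset$; also, the second coordinate of $\tilde{\mathbf{g}}_{K}^{\mathbf{w}}$ should be $n(\alpha+\beta)+\alpha$ with $\alpha=q_{K;S}^{X}$ (a typo carried over from the proof of \Cref{cor: coprime number expression}), which is harmless since you only need $a\geq 2$ and $b\geq 1$.
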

\begin{proof}
($a$) By \eqref{eq: recover g vectors} and \eqref{eq: root of maximal branches}, we have $\tilde{\mathbf{g}}_{K}^{[i]S^n}=\tilde{\mathbf{e}}_{s_0}+n\tilde{\mathbf{v}}_{k_0s_0} \in \mathcal{C}(\tilde{\mathbf{e}}_{s_0},\tilde{\mathbf{v}}_{k_0s_0}) \cap V \subset \partial\mathfrak{D}_i$, where $\partial \mathfrak{D}_i$ is the boundary of $\mathfrak{D}_i$. If $\mathbf{w}$ is in a branch, we have $\tilde{\mathbf{g}}_{K}^{\mathbf{w}} \in \mathfrak{D}_i^{\circ}$ by \Cref{lem: simple upper bounds}.
\\
($b$) We use the right most upper bound in \eqref{eq: support upper bound in a branch}. By \Cref{lem: c g vectors at root of maximal branches}, we have $\tilde{\mathbf{v}}_{TK}^{[i]S^{n+1}T}=\tilde{\mathbf{v}}_{SK}^{[i]S^nT}$ and $\tilde{\mathbf{v}}_{SK}^{[i]S^{n+1}T}=\tilde{\mathbf{v}}_{SK}^{[i]S^nT}+(\tilde{\mathbf{e}}_{s_0}-\tilde{\mathbf{e}}_{k_0})$. Thus, $\mathcal{C}^{\circ}(\mathfrak{g}_i,\tilde{\mathbf{v}}_{SK}^{[i]S^nT},\tilde{\mathbf{v}}_{TK}^{[i]S^nT})$ can be illustrated in \Cref{fig: maximal branches}, and they do not have any intersection each other. Thus, if the intersection exists between two maximal branches, it is on $\mathcal{C}(\tilde{\mathbf{g}}_{S}^{[i]S^nT})=\mathcal{C}(\tilde{\mathbf{g}}_{S}^{[i]S^n})=\mathcal{C}(\tilde{\mathbf{g}}_{K}^{[i]S^{n-1}})$ or $\mathcal{C}(\tilde{\mathbf{g}}_{T}^{[i]S^nT})=\mathcal{C}(\tilde{\mathbf{g}}_{K}^{[i]S^n})$. Thus, the claim holds.
\begin{figure}[htbp]
\centering
\subfile{proof_of_pointwise_complement_expressions}
\caption{Maximal branches. Here, we set $\tilde{\mathbf{v}}_{lm}=\tilde{\mathbf{e}}_{m}-\tilde{\mathbf{e}}_{l}$.} \label{fig: maximal branches}
\end{figure}
\end{proof}
As a corollary in this section, we obtain the following facts.
\begin{proposition}\label{prop: representative of g-vectors}
Every modified $g$-vector can be expressed as $\tilde{\mathbf{e}}_j$ ($j=1,2,3$) or $\tilde{\mathbf{g}}_{K}^{\mathbf{w}}$ ($\mathbf{w} \neq \emptyset$) uniquely.
\end{proposition}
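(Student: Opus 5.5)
Existence comes first, and I will show it by induction on the length of $\mathbf{w}$. The modified $g$-vectors of the initial seed are $\tilde{\mathbf{e}}_1,\tilde{\mathbf{e}}_2,\tilde{\mathbf{e}}_3$. For a mutation $\mathbf{w}[k]$ with $\mathbf{w}\neq\emptyset$, the recursion \eqref{eq: recursion of C} changes only the $k$th $g$-vector. Write $\mathbf{w}[k]=\mathbf{w}M$ with $M\in\{S,T\}$. By \Cref{lem: mutation of modified vectors}, the only new vector is $\tilde{\mathbf{g}}_{K}^{\mathbf{w}M}$, and the other two vectors are the old ones relabelled. For $\mathbf{w}=[i]$, \eqref{eq: initial setup} shows the new vector is $\tilde{\mathbf{g}}_K^{[i]}$. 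Hence every vector in $\{\tilde{\mathbf{g}}_j^{\mathbf{w}}\}$ is either some $\tilde{\mathbf{e}}_j$ or some $\tilde{\mathbf{g}}_K^{\mathbf{u}}$ with $\mathbf{u}\neq\emptyset$.

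Uniqueness has three parts. First, $\tilde{\mathbf{g}}_K^{\mathbf{w}}$ is never a modified standard vector. Second, vectors $\tilde{\mathbf{g}}_K^{\mathbf{w}}$ with different initial directions are distinct. Third, within one subtree $\mathcal{T}^{\geq[i]}$ the map $\mathbf{w}\mapsto\tilde{\mathbf{g}}_K^{\mathbf{w}}$ is injective.

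For the first two parts I use \Cref{lem: separateness among maximal branches}~($a$), which gives $\tilde{\mathbf{g}}_K^{\mathbf{w}}\in\mathfrak{D}_i$. Combined with $\mathfrak{D}_i\cap\mathfrak{D}_j=\{\mathbf{0}\}$ for $i\ne j$, and the fact that these vectors are nonzero because they lie on $H$, this settles the second part.

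For the first part I use the explicit forms. By \Cref{cor: coprime number expression}, $\tilde{\mathbf{g}}_K^{\mathbf{w}}=\mathfrak{g}_i+a\tilde{\mathbf{v}}_{t_0k_0}+b\tilde{\mathbf{v}}_{k_0s_0}$, and via the proof of that corollary we have $a,b\ge1$. In the basis $[\tilde{\mathbf{e}}_{k_0},\tilde{\mathbf{e}}_{s_0},\tilde{\mathbf{e}}_{t_0}]$ its coordinates are $(-1+a-b,\;1+b,\;1-a)$. This equals some $\tilde{\mathbf{e}}_j$ only if two coordinates vanish, but $1+b\ge2$, so that is impossible.

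For the third part I compute the coefficient pair $(a,b)$ for each $\mathbf{w}\in\mathcal{T}^{\geq[i]}$. On the trunk, $\tilde{\mathbf{g}}_K^{[i]S^n}$ has $(a,b)=(1,n+1)$, so $b/a=n+1$, and these are distinct for distinct $n$. In the $n$th maximal branch, $\mathbf{w}=[i]S^nTX$, and the proof of \Cref{cor: coprime number expression} gives $(a,b)=(\alpha+\beta,\,n(\alpha+\beta)+\beta)$ with $(\alpha,\beta)=(q^X_{K;S},q^X_{K;T})$. Here $\gcd(\alpha,\beta)=1$ and both are at least $1$, so $n<b/a<n+1$. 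Consequently vectors from different maximal branches, or from a branch and the trunk, have different values of $b/a$, and hence are different vectors, because $\{\mathfrak{g}_i,\tilde{\mathbf{v}}_{t_0k_0},\tilde{\mathbf{v}}_{k_0s_0}\}$ is a basis.

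Inside one maximal branch, the map $(\alpha,\beta)\mapsto(a,b)$ is injective, since $\beta=b-na$ and $\alpha=a-\beta$. By \Cref{lem: CW tree coprimeness} each coprime pair $(\alpha,\beta)$ arises from exactly one $X\in\mathcal{M}$. So $X$, and therefore $\mathbf{w}$, is determined by the vector.

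The main obstacle is this injectivity within a single subtree, which rests on the "exactly once" statement of \Cref{lem: CW tree coprimeness}. The underlying fact to check is that the recursion \eqref{eq: recursion for q} is invertible in a way that lets one read off the last letter: $S$ produces $a>b$ and $T$ produces $a<b$, starting from $(1,1)$. This is the same argument as for the Calkin–Wilf tree, and I would spell it out if needed.
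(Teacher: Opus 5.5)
Your proof is correct. The existence step, and the separation of different initial directions via $\mathfrak{D}_i\cap\mathfrak{D}_j=\{\mathbf{0}\}$, are exactly what the paper does. You diverge on injectivity inside one subtree $\mathcal{T}^{\geq[i]}$.

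The paper argues geometrically. It reads off from \Cref{fig: maximal branches} that coincidences can only occur inside a single maximal branch. Inside a branch it uses \Cref{prop: detailed upper bounds}: at the longest common prefix $\mathbf{v}$, the vector $\tilde{\mathbf{g}}_K^{\mathbf{v}}$, the $K$-vectors after $\mathbf{v}S$, and those after $\mathbf{v}T$ lie in the disjoint pieces $\mathcal{C}(G^{\mathbf{v}})\sqcup\mathcal{U}_{\circ}^{\mathbf{v}S}\sqcup\mathcal{U}_{\circ}^{\mathbf{v}T}$.

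You instead work entirely in the lattice coordinates $(a,b)$ of \Cref{cor: coprime number expression}. The slope $b/a$ separates the trunk (integers $\geq 1$) from the $n$th maximal branch (open interval $(n,n+1)$). Within a branch you use injectivity of the Calkin--Wilf-type labelling.

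What your route buys:
\begin{itemize}
\item It is purely arithmetic and does not lean on pictures.
\item It handles $\tilde{\mathbf{g}}_K^{\mathbf{w}}\neq\tilde{\mathbf{e}}_j$ explicitly, which the paper leaves implicit.
\item It upgrades \Cref{cor: coprime number expression} from a set equality to an injective correspondence $\mathbf{w}\mapsto(a,b)$.
\end{itemize}
You only need the ``at most once'' half of \Cref{lem: CW tree coprimeness}, which is exactly the invertibility you describe. Note that the letter read off from $a>b$ versus $a<b$ is the leftmost letter of $X$, since \eqref{eq: recursion for q} prepends letters.

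What the paper's route buys: it reuses cone-separation results it needs anyway for the complement theorem, and it separates supports of sub-fans, not just $K$-rays.

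One small correction inherited from the paper. Since $\tilde{\mathbf{v}}_{SK}^{[i]S^nT}=\tilde{\mathbf{v}}_{t_0k_0}+(n+1)\tilde{\mathbf{v}}_{k_0s_0}$ and $\tilde{\mathbf{v}}_{TK}^{[i]S^nT}=\tilde{\mathbf{v}}_{t_0k_0}+n\tilde{\mathbf{v}}_{k_0s_0}$, the second coordinate is $b=n(\alpha+\beta)+\alpha$, not $n(\alpha+\beta)+\beta$. This agrees with the paper's worked example, where $(11,4)$ gives the coefficient $15n+11$. It changes nothing in your argument: $(\alpha,\beta)\mapsto(a,b)$ is still injective and $n<b/a<n+1$ still holds.
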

\begin{proof}
The existence of this expression is obvious by \Cref{lem: mutation of modified vectors}. We show the uniqueness. Note that $\mathfrak{D}_i \cap \mathfrak{D}_j = \{\mathbf{0}\}$ when $i \neq j$. Thus, we can fix an initial mutation direction $i=1,2,3$. Moreover, as \Cref{fig: maximal branches} shows, if the coincidence happens, it should happen in the same maximal branch. Thus, for a given maximal branch $\mathcal{T}^{\geq [i]S^nT}$, it suffices to show that $\tilde{\mathbf{g}}_{K}^{\mathbf{w}} \neq \tilde{\mathbf{g}}_{K}^{\mathbf{u}}$ when $\mathbf{w} \neq \mathbf{u} \in \mathcal{T}^{\geq [i]S^nT}$. This can be shown by \Cref{prop: detailed upper bounds}.
\end{proof}
\section{Support of the $G$-fan} \label{sec: support of the G fan}
In this section, we investigate the support of the $G$-fan. By \Cref{lem: plane for the modified g-vectors}, the $G$-fan is contained in $V$.
We focus on its complement. Let $\mathrm{Com}(\Delta(B))$ be the set of all connected components in $V \setminus |\Delta(B)|$. As \Cref{fig: Markov G-fan} shows, each complement is a subset of $\mathfrak{D}_i$. Let $\mathrm{Com}^i(\Delta(B))$ be the set of all complements included in $\mathfrak{D}_i$. Then, we have the direct sum decomposition
\begin{equation}
\mathrm{Com}(\Delta(B))=\mathrm{Com}^1(\Delta(B)) \sqcup \mathrm{Com}^2(\Delta(B)) \sqcup \mathrm{Com}^3(\Delta(B)).
\end{equation}
In the following, we give some expressions of these complements.
\subsection{Pointwise expression}
Let $\mathfrak{G}$ be the set of all modified $g$-vectors. By \Cref{lem: separateness among maximal branches}, this set can be decomposed into the direct sum $\mathfrak{G}=\mathfrak{G}_1 \sqcup \mathfrak{G}_2 \sqcup \mathfrak{G}_3$, where
\begin{equation}
\mathfrak{G}_i=\mathfrak{G} \cap \mathfrak{D}_i = \{\mathbf{g}_{K}^{\mathbf{w}} \mid \mathbf{w} \in \mathcal{T}^{\geq [i]}\} \cup \{\tilde{\mathbf{e}}_{s_0}\}.
\end{equation}
Note that $s_0=S([i])$ depends on the initial mutation direction $i$. Let $\tilde{\mathbf{v}}_{lm}=\tilde{\mathbf{e}}_{m}-\tilde{\mathbf{e}}_{l}$ for $l,m=1,2,3$. By \Cref{cor: coprime number expression}, each component can also be expressed as
\begin{equation}\label{eq: coprime expression of Gi}
\mathfrak{G}_i=\{\mathfrak{g}_i+a\tilde{\mathbf{v}}_{t_0k_0}+b\tilde{\mathbf{v}}_{k_0s_0} \mid (a,b) \in \mathbb{Z}_{\geq 1}^2, \gcd(a,b)=1\} \cup \{\tilde{\mathbf{e}}_{s_0}\}
\end{equation}
Then, we can assign the modified $g$-vectors to the connected components of the complement.
\begin{theorem}[cf.~{\cite[\S~2.2]{FG16}}]\label{thm: pointwise expression of complements}
For each $i=1,2,3$, there are the following one-to-one correspondences:
\\
\textup{($a$)} $\varphi_i \colon \mathfrak{G}_i \rightarrow \mathrm{Com}^{i}(\Delta(B))$ given by
\begin{equation}
\varphi_i(\tilde{\mathbf{g}})=\mathcal{C}^{\circ}(\tilde{\mathbf{g}},\tilde{\mathbf{g}}-\mathfrak{g}_i).
\end{equation}
\textup{($b$)} $\rho_i \colon \mathbb{Q}_{\geq 0}  \rightarrow  \mathrm{Com}^i(\Delta(B))$ given by
\begin{equation}
\rho_i\left(\frac{b}{a}\right)=\mathcal{C}^{\circ}(\mathfrak{g}_i+a\tilde{\mathbf{v}}_{t_0k_0}+b\tilde{\mathbf{v}}_{k_0s_0},a\tilde{\mathbf{v}}_{t_0k_0}+b\tilde{\mathbf{v}}_{k_0s_0}),
\end{equation}
where $\frac{b}{a} \in \mathbb{Q}_{\geq 0}$ is a irreducible fraction, that is, $a \in \mathbb{Z}_{\geq 1}$, $b \in \mathbb{Z}_{\geq 0}$, and $\gcd(a,b)=1$.
\end{theorem}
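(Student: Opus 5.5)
The plan is to prove (a) first and then deduce (b) from it by a reparametrization. For (b): by \eqref{eq: coprime expression of Gi}, every $\tilde{\mathbf{g}}\in\mathfrak{G}_i$ is $\mathfrak{g}_i+a\tilde{\mathbf{v}}_{t_0k_0}+b\tilde{\mathbf{v}}_{k_0s_0}$ with $a\ge1$, $b\ge 0$, $\gcd(a,b)=1$. Here $(a,b)=(1,0)$ gives $\tilde{\mathbf{e}}_{s_0}$, since $\mathfrak{g}_i+\tilde{\mathbf{v}}_{t_0k_0}=\tilde{\mathbf{e}}_{s_0}$. Hence $b/a\mapsto \tilde{\mathbf{g}}$ is a bijection $\mathbb{Q}_{\ge0}\to\mathfrak{G}_i$. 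Since $\tilde{\mathbf{g}}-\mathfrak{g}_i=a\tilde{\mathbf{v}}_{t_0k_0}+b\tilde{\mathbf{v}}_{k_0s_0}$, we get $\rho_i=\varphi_i\circ(\text{this bijection})$, so (b) follows from (a).

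For (a), I would first describe $\varphi_i(\tilde{\mathbf{g}})$ geometrically. On the plane $H$ it is the open half-line starting at the point $\tilde{\mathbf{g}}$ and pointing away from $\mathfrak{g}_i$. It lies on the line through $\mathfrak{g}_i$ of slope $b/a$ in the affine frame $(\mathfrak{g}_i;\tilde{\mathbf{v}}_{t_0k_0},\tilde{\mathbf{v}}_{k_0s_0})$. Distinct slopes give distinct lines through $\mathfrak{g}_i$, and these lines meet only at $\mathfrak{g}_i$, which is not on any of the half-lines. So the sets $\varphi_i(\tilde{\mathbf{g}})$ are pairwise disjoint, which gives injectivity. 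Each of them is connected, and its closure in $V$ adds only $\mathcal{C}(\tilde{\mathbf{g}})\subset|\Delta(B)|$.

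It then suffices to prove
\begin{equation*}
(V\setminus|\Delta(B)|)\cap\mathfrak{D}_i=\bigsqcup_{\tilde{\mathbf{g}}\in\mathfrak{G}_i}\varphi_i(\tilde{\mathbf{g}}).
\end{equation*}
Then each $\varphi_i(\tilde{\mathbf{g}})$ is relatively closed in the complement (by the closure remark above) and is a union of connected components; since it is connected, it is exactly one component. This gives both well-definedness and surjectivity of $\varphi_i$.

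The main tool for this equality is \Cref{lem: direct sum decomposition of upper bounds}. By \eqref{eq: recover g vectors} and \Cref{lem: independence of cF and gF}, in a branch we have $\tilde{\mathbf{v}}_{SK}^{\mathbf{w}}+\tilde{\mathbf{v}}_{TK}^{\mathbf{w}}=\tilde{\mathbf{g}}_K^{\mathbf{w}}-\mathfrak{g}_i$. So the leftover piece $\mathcal{C}^{\circ}(\tilde{\mathbf{g}}_K^{\mathbf{w}},\tilde{\mathbf{v}}_{SK}^{\mathbf{w}}+\tilde{\mathbf{v}}_{TK}^{\mathbf{w}})$ is exactly $\varphi_i(\tilde{\mathbf{g}}_K^{\mathbf{w}})$. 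Iterating the decomposition down a maximal branch $\mathcal{T}^{\ge[i]S^nT}$, and using \Cref{prop: detailed upper bounds} together with \Cref{lem: simple upper bounds}, the open leftover pieces are disjoint from $|\Delta(B)|$. The leftover pieces at nodes $\mathbf{u}\ge\mathbf{w}$ are disjoint from the $\mathcal{C}(G^{\mathbf{u}'})$ at other nodes, because they lie in different summands of the nested decompositions; the cones $\mathcal{C}(G^{\mathbf{u}'})$ at nodes not below $\mathbf{w}$ are excluded by \Cref{lem: separateness among maximal branches}. Consequently,
\begin{equation*}
\mathcal{U}^{[i]S^nT}\setminus|\Delta(B)|=\bigsqcup_{\mathbf{w}\ge[i]S^nT}\varphi_i(\tilde{\mathbf{g}}_K^{\mathbf{w}})\ \sqcup\ L,
\end{equation*}
where $L$ consists of the points lying in $\mathcal{U}_\circ^{\mathbf{w}_k}$ for an infinite chain $\mathbf{w}_0<\mathbf{w}_1<\cdots$. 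The region of $\mathfrak{D}_i$ outside all the $\mathcal{U}^{[i]S^nT}$ is the trunk region. There, the discussion after \Cref{fig: S mutations} (the $S^\infty$ asymptotics) and \Cref{lem: separateness among maximal branches}(b) show that what remains uncovered consists of the half-lines $\mathcal{C}^{\circ}(\tilde{\mathbf{g}}_T^{\mathbf{w}},\tilde{\mathbf{v}}_{SK}^{\mathbf{w}})$ and $\mathcal{C}^\circ(\tilde{\mathbf{e}}_{s_0},\tilde{\mathbf{v}}_{t_0k_0}-\ldots)$-type boundary pieces. I would verify by \Cref{lem: c g vectors in trunks} and \Cref{lem: c g vectors at root of maximal branches} that these are precisely $\varphi_i(\tilde{\mathbf{g}}_K^{[i]S^n})$ (integer slopes $b/a=n+1$) and $\varphi_i(\tilde{\mathbf{e}}_{s_0})$.

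The main obstacle is showing $L\subset|\Delta(B)|$, i.e.\ that no complement points arise along infinite paths. I would write a point $x\in\mathcal{U}_\circ^{\mathbf{w}}$ as $x=\lambda\mathfrak{g}_i+\mu(\alpha\tilde{\mathbf{v}}_{SK}^{\mathbf{w}}+\beta\tilde{\mathbf{v}}_{TK}^{\mathbf{w}})$ with $\alpha+\beta=1$ and $\lambda,\mu>0$. By \eqref{eq: S mutation for eigen basis} and \eqref{eq: T mutation in branch for eigen basis}, passing to $\mathbf{w}S$ or $\mathbf{w}T$ fixes $\mathfrak{g}_i$ and acts on the pair $(\alpha,\beta)$ by a Calkin--Wilf-type step (a Euclidean-algorithm/continued-fraction step), while the ratio $\mu/\lambda$ decreases by a factor at least $2$ at each step where a $T$ occurs. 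When $\beta/\alpha$ is rational the descent terminates at a finite node, so $x$ lies in some $\mathcal{C}(G^{\mathbf{u}})$ or some $\varphi_i$-piece and is not in $L$. When $\beta/\alpha$ is irrational, infinitely many $T$'s occur, so the $\tilde{\mathbf{v}}$-weight of $x$ relative to $\mathfrak{g}_i$ tends to $0$ along the path. On the other hand, by \eqref{equ: K-expression} each cone $\mathcal{C}(G^{\mathbf{u}})$ contains vectors of every $\tilde{\mathbf{v}}$-to-$\mathfrak{g}_i$ weight ratio up to a fixed positive proportion. Comparing these two weights should force $x$ into some $\mathcal{C}(G^{\mathbf{w}_k})$, contradicting membership in the open summands, and hence $L\subset|\Delta(B)|$. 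Making this weight comparison precise is where I expect the real work to be.
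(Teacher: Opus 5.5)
Your architecture is a genuinely different route from the paper's. You reduce (b) to (a), get injectivity from distinct slopes, and then identify $(V\setminus|\Delta(B)|)\cap\mathfrak{D}_i$ globally by iterating \Cref{lem: direct sum decomposition of upper bounds}. In one respect this is more complete than the paper. The paper only checks, case by case, that each $\varphi_i(\tilde{\mathbf{g}})$ is a component, using a local argument: the half-line is flanked on both sides by accumulating fans such as $\bigcup_m\mathcal{C}(G^{\mathbf{w}STS^m})$ and $\bigcup_m\mathcal{C}(G^{\mathbf{w}T^2S^m})$. It does not spell out why there are no further components, and your route targets exactly that. However, the decisive step is left open, and the mechanism you propose for it is false.

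First, a point of $L$ automatically avoids every $G$-cone, since at every level it lies in a different summand. So what must be shown is $L=\emptyset$. Write $x=\mathfrak{g}_i+u\tilde{\mathbf{v}}_{SK}^{\mathbf{w}}+v\tilde{\mathbf{v}}_{TK}^{\mathbf{w}}\in H$. Then:
\begin{itemize}
\item $\mathcal{U}_\circ^{\mathbf{w}}\cap H=\{u,v>0,\ u+v>1\}$;
\item $\mathcal{C}(G^{\mathbf{w}})\cap H=\{u\le1,\ v\le1,\ u+v\ge1\}$;
\item $\mathcal{U}_\circ^{\mathbf{w}S}\cap H=\{u>v>0,\ u>1\}$ and $\mathcal{U}_\circ^{\mathbf{w}T}\cap H=\{v>u>0,\ v>1\}$;
\item by \eqref{eq: S mutation for eigen basis} and \eqref{eq: T mutation in branch for eigen basis}, the coordinates at the child are $(u-v,v)$, resp.\ $(v-u,u)$.
\end{itemize}
So your $\mu=u+v$ is replaced by $\max(u,v)$ at every step, i.e.\ multiplied by a factor in $(\tfrac12,1)$; no single $T$ halves it. What is true is that $\{u,v\}$ runs the subtractive Euclidean algorithm, with each $T$ beginning a new division step, and Euclidean remainders satisfy $r_{n+1}<r_{n-1}/2$. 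Hence, if $u/v\notin\mathbb{Q}$, eventually $\max(u_k,v_k)\le1$. Then $x\in\mathcal{C}(G^{\mathbf{w}_k})$ and $x$ lies in neither child, because points of $\mathcal{U}_\circ^{\mathbf{w}_{k+1}}$ need $u_{k+1}+v_{k+1}=\max(u_k,v_k)>1$. No weight comparison against the $G$-cones is needed. If $u/v\in\mathbb{Q}$, the descent stops at a finite node with $x\in\mathcal{C}(G^{\mathbf{w}_k})\sqcup\varphi(\mathbf{w}_k)$.

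Two further points need repair.

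\emph{Topology.} ``Relatively closed and connected'' does not make $\varphi_i(\tilde{\mathbf{g}})$ a union of components; you also need it to be relatively open in $V\setminus|\Delta(B)|$. This follows from local finiteness: the half-line of slope $b/a$ starts at the lattice point $\mathfrak{g}_i+a\tilde{\mathbf{v}}_{t_0k_0}+b\tilde{\mathbf{v}}_{k_0s_0}$ and points away from $\mathfrak{g}_i$, so only finitely many of them meet a bounded part of $H$. But this works only once the complement is known on a full neighbourhood of each half-line. Your equality is confined to $\mathfrak{D}_i$, while $F_i=\varphi_i(\tilde{\mathbf{e}}_{s_0})$ lies on $\partial\mathfrak{D}_i$.

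\emph{The trunk is misplaced.} The cones $\mathcal{C}(G^{[i]S^n})$ lie outside $\mathfrak{D}_i$ and meet it only along the boundary ray through the points $\tilde{\mathbf{g}}_K^{[i]S^n}$. Inside $\mathfrak{D}_i$, what the $\mathcal{U}^{[i]S^nT}$ leave uncovered is just $F_i$ and the integer-slope rays $\varphi_i(\tilde{\mathbf{g}}_K^{[i]S^n})$. These rays are the common boundary rays of consecutive maximal branches (\Cref{lem: separateness among maximal branches}, \Cref{lem: c g vectors at root of maximal branches}), not limits of trunk mutations. The trunk's limit ray $\mathcal{C}^{\circ}(\tilde{\mathbf{e}}_{t_0},\tilde{\mathbf{v}}_{k_0s_0})$ is $F_j$ for a different initial direction $j$. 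Its real role is that the trunk cones of $[i]$ cover a one-sided neighbourhood of $F_j$ outside $\mathfrak{D}_j$, which is exactly what openness of $F_j$ requires.

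So either do this bookkeeping for the trunks and the initial cone, obtaining the equality on all of $V$, or prove openness of each $\varphi_i(\tilde{\mathbf{g}})$ locally via the two flanking fans, as the paper does.
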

In \cite{FG16}, they mentioned that each complement can be depicted as a ray on $H$. Here, we give more explicit expressions.
\begin{proof}
By \eqref{eq: coprime expression of Gi}, the claim ($a$) implies ($b$). Thus, we focus on proving ($a$).
For a given modified $g$-vector $\tilde{\mathbf{g}} \in \mathfrak{G}_i$, let us consider the following three cases.
\\
(1) Let $\tilde{\mathbf{g}}=\tilde{\mathbf{e}}_{s_0}$. Then, as \Cref{fig: maximal branches} shows, $\varphi_i(\tilde{\mathbf{e}}_{s_0})=\mathcal{C}^{\circ}(\tilde{\mathbf{e}}_{s_0}, \tilde{\mathbf{e}}_{t_0}-\tilde{\mathbf{e}}_{k_0})$ is in the boundary of $\mathfrak{D}_i$. Thus, this is a part of the complement. Moreover, every connected set strictly including $\varphi(\tilde{\mathbf{e}}_{s_0})$ intersects $\bigcup_{n}\mathcal{C}(G^{[i]T^2S^n})$ or $\bigcup_{n}\mathcal{C}(G^{[t_0]S^n})$. Thus, this is certainly a connected component of $V \setminus |\Delta(B)|$.
\\
(2) Let $\tilde{\mathbf{g}}=\tilde{\mathbf{g}}_{K}^{[i]S^n}$. As \Cref{fig: maximal branches} shows, $\varphi(\tilde{\mathbf{g}}_{K}^{[i]S^n})$ is in the boundary of both $\mathcal{U}^{[i]S^nT}$ and $\mathcal{U}^{[i]S^{n+1}T}$. Thus, this is a part of the complement. Moreover, every connected set strictly including $\varphi(\tilde{\mathbf{g}}_{K}^{[i]S^n})$ intersects $\bigcup_m \mathcal{C}(G^{[i]S^nTS^m})$ or $\bigcup_{m}\mathcal{C}(G^{[i]S^{n+1}T^2S^m})$. Thus, this is a connected component of $V \setminus |\Delta(B)|$.
\\
(3) Let $\tilde{\mathbf{g}}=\tilde{\mathbf{g}}_{K}^{\mathbf{w}}$, where $\mathbf{w}$ is in a branch. Then, by \Cref{lem: direct sum decomposition of upper bounds} and \Cref{prop: detailed upper bounds}, $\varphi(\tilde{\mathbf{g}}_{K}^{\mathbf{w}})$ is a part of the complement. See \Cref{fig: direct sum dicomposition}. Moreover, every connected set strictly including $\varphi(\tilde{\mathbf{g}}_{K}^{\mathbf{w}})$ intersects $\bigcup_m(G^{\mathbf{w}STS^m})$ or $\bigcup_m(G^{\mathbf{w}T^2S^m})$. Thus, this is a connected component of $V \setminus |\Delta(B)|$.
\end{proof}
\begin{example}
Thanks to \Cref{thm: pointwise expression of complements}, all the complements in $\mathfrak{D}_i$ can be illustrated as the dashed lines in \Cref{fig: all complements}. To make \Cref{thm: pointwise expression of complements} more clear, we draw the $2$-dimensional lattice 
$\mathfrak{g}_i+\mathbb{Z}\tilde{\mathbf{v}}_{t_0k_0} \oplus \mathbb{Z}\tilde{\mathbf{v}}_{k_0s_0} \subset H$ in this picture.
The red points express the modified $g$-vectors, which correspond to the primitive vectors in this lattice. We can assign a rational numbers $\frac{b}{a}$ in \Cref{cor: coprime number expression} to each complement. This assigned rational number coincides with the slope of each line.
\begin{figure}[htbp]
\centering
\subfile{complements.tex}
\caption{Complement in $\mathfrak{D}_i$.}
\label{fig: all complements}
\end{figure}
\end{example}
\subsection{Recursive expression}
Let $\varphi:\mathfrak{G} \to \mathrm{Com}(\Delta(B))$ be the map defined by $\varphi|_{\mathfrak{G}_i}=\varphi_i$. To simplify the notation, we also write $\varphi(\mathbf{w})=\varphi(\tilde{\mathbf{g}}_{K}^{\mathbf{w}})$ for any $\mathbf{w} \in \mathcal{T} \setminus\{\emptyset\}$. Then, by \Cref{prop: representative of g-vectors}, except for the three complements $\varphi(\tilde{\mathbf{e}}_1)$, $\varphi(\tilde{\mathbf{e}}_2)$, and $\varphi(\tilde{\mathbf{e}}_3)$, we can express all the elements in $\mathrm{Com}(\Delta(B))$ as $\varphi(\mathbf{w})$.
For each $\mathbf{w} \in \mathcal{T} \setminus \{\emptyset\}$, we define
\begin{equation}
\mathrm{Com}^{\geq \mathbf{w}}(\Delta(B))=\{\varphi(\mathbf{u}) \mid \mathbf{u} \geq \mathbf{w}\}.
\end{equation}
For any multiplicative submonoid $\Gamma \subset \mathrm{GL}(\mathbb{R}^3)$ and any subset $A \subset \mathbb{R}^3$, we can define its orbit
\begin{equation}
\Gamma [A] = \{\gamma(A) \mid \gamma \in \Gamma\}.
\end{equation}
Firstly, we give an recursive expression of $\mathrm{Com}^{\geq {\mathbf{w}}}(\Delta(B))$. Define $\Gamma_{\mathbf{w}}=\langle \Glinearmap_{\mathbf{w}}^{S}, \Glinearmap_{\mathbf{w}}^{T}\rangle_{\mathrm{mono}}$ be the multiplicative submonoid of $\mathrm{GL}(\mathbb{R}^3)$ generated by $\Glinearmap_{\mathbf{w}}^{S}$ and $\Glinearmap_{\mathbf{w}}^{T}$.
\begin{lemma}\label{lem: recursive expression in a branch}
Let $\Delta^{\geq \mathbf{w}}(B)$ be a branch. For any $X \in \mathcal{M}$ and $M=S,T$, we have
\begin{equation}
\Glinearmap_{\mathbf{w}}^{M}(\varphi(\mathbf{w}X))=\varphi(\mathbf{w}MX).
\end{equation}
In particular, the following relation holds.
\begin{equation}
\mathrm{Com}^{\geq \mathbf{w}}(\Delta(B)) = \Gamma_{\mathbf{w}}[\varphi(\mathbf{w})].
\end{equation}
\end{lemma}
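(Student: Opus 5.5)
The plan is to compute $\Glinearmap_{\mathbf{w}}^{M}$ applied to the cone $\varphi(\mathbf{w}X)=\mathcal{C}^{\circ}(\tilde{\mathbf{g}}_{K}^{\mathbf{w}X},\tilde{\mathbf{g}}_{K}^{\mathbf{w}X}-\mathfrak{g}_i)$ directly, using only linearity together with two facts established earlier. Here $i$ is the initial mutation direction of $\mathbf{w}$, and $\mathbf{w}X$, $\mathbf{w}MX$ both lie in $\mathcal{T}^{\geq [i]}$, so $\varphi$ restricts to $\varphi_i$ on both.

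Since $\Glinearmap_{\mathbf{w}}^{M}$ is a linear isomorphism, it maps the relative interior of the cone spanned by two vectors onto the relative interior of the cone spanned by their images. Hence
\begin{equation*}
\Glinearmap_{\mathbf{w}}^{M}(\varphi(\mathbf{w}X))=\mathcal{C}^{\circ}\bigl(\Glinearmap_{\mathbf{w}}^{M}(\tilde{\mathbf{g}}_{K}^{\mathbf{w}X}),\ \Glinearmap_{\mathbf{w}}^{M}(\tilde{\mathbf{g}}_{K}^{\mathbf{w}X})-\Glinearmap_{\mathbf{w}}^{M}(\mathfrak{g}_i)\bigr).
\end{equation*}
The two facts are then applied as follows.
\begin{itemize}
\item By \eqref{eq: notation X} with $Y=X$, we have $\Glinearmap_{\mathbf{w}}^{M}(\tilde{\mathbf{g}}_{K}^{\mathbf{w}X})=\tilde{\mathbf{g}}_{K}^{\mathbf{w}MX}$.
\item Since $\mathbf{w}$ and $\mathbf{w}M$ are both in branches with the same initial direction $i$, the invariance noted after \eqref{eq: fixed points of mutations} gives $\Glinearmap_{\mathbf{w}}^{M}(\mathfrak{g}_i)=\mathfrak{g}_i$. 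Concretely, \Cref{lem: independence of cF and gF}~($b$) gives $\tilde{\mathbf{g}}_F^{\mathbf{w}}=\tilde{\mathbf{g}}_F^{\mathbf{w}M}=\mathfrak{g}_i$, and $\tilde{\mathbf{g}}_F$ is a fixed linear combination of $\tilde{\mathbf{g}}_K,\tilde{\mathbf{g}}_S,\tilde{\mathbf{g}}_T$.
\end{itemize}
Substituting both gives exactly $\varphi(\mathbf{w}MX)$, which proves the first identity.

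For the orbit statement, I would first extend the identity to arbitrary words. Let $Y=M_1\cdots M_r\in\mathcal{M}$. By \eqref{equ: XY}, $\Glinearmap_{\mathbf{w}}^{Y}=\Glinearmap_{\mathbf{w}}^{M_1}\cdots\Glinearmap_{\mathbf{w}}^{M_r}$. Iterating the first identity from the innermost factor outward yields $\Glinearmap_{\mathbf{w}}^{Y}(\varphi(\mathbf{w}))=\varphi(\mathbf{w}Y)$:
\begin{itemize}
\item first $\Glinearmap_{\mathbf{w}}^{M_r}(\varphi(\mathbf{w}))=\varphi(\mathbf{w}M_r)$;
\item then $\Glinearmap_{\mathbf{w}}^{M_{r-1}}(\varphi(\mathbf{w}M_r))=\varphi(\mathbf{w}M_{r-1}M_r)$;
\item and so on through $M_1$.
\end{itemize}
Every element of $\Gamma_{\mathbf{w}}$ is such a product, and $\mathcal{T}^{\geq\mathbf{w}}=\{\mathbf{w}Y\mid Y\in\mathcal{M}\}$. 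Both inclusions of $\mathrm{Com}^{\geq\mathbf{w}}(\Delta(B))=\Gamma_{\mathbf{w}}[\varphi(\mathbf{w})]$ follow immediately.

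The only delicate point is the invariance $\Glinearmap_{\mathbf{w}}^{M}(\mathfrak{g}_i)=\mathfrak{g}_i$. It relies on $\mathbf{w}$ lying in a branch, since on the trunk $\tilde{\mathbf{g}}_F$ is not $\mathfrak{g}_i$. I would verify it via the $\tilde{\mathbf{g}}_F$ formulas of \Cref{lem: mutations for eigen vectors} for the branch case, where $\tilde{\mathbf{g}}_F^{\mathbf{w}S}=\tilde{\mathbf{g}}_F^{\mathbf{w}T}=\tilde{\mathbf{g}}_F^{\mathbf{w}}$. Everything else is formal.
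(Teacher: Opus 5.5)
Your proof is correct and follows the paper's own argument. The paper likewise obtains the identity from linearity of $\Glinearmap_{\mathbf{w}}^{M}$ together with the two facts $\Glinearmap_{\mathbf{w}}^{M}(\mathfrak{g}_i)=\mathfrak{g}_i$ and $\Glinearmap_{\mathbf{w}}^{M}(\tilde{\mathbf{g}}_{K}^{\mathbf{w}X})=\tilde{\mathbf{g}}_{K}^{\mathbf{w}MX}$. Your iteration over words for the orbit equality simply writes out a step the paper leaves implicit.
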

\begin{proof}
Let $i=1,2,3$ be the initial mutation direction of $\mathbf{w}$. Then, the equality $\Glinearmap_{\mathbf{w}}^{[i]}(\varphi(\mathbf{w}X))=\varphi(\mathbf{w}MX)$ follows from $\Glinearmap_{\mathbf{w}}^M(\mathfrak{g}_i)=\mathfrak{g}_i$ and $\Glinearmap_{\mathbf{w}}^M(\tilde{\mathbf{g}}_{K}^{\mathbf{w}X})=\tilde{\mathbf{g}}_{K}^{\mathbf{w}MX}$.
\end{proof}
Recall that $\mathrm{Com}^{i}(\Delta(B))=\{\varphi(\mathbf{w}) \mid \mathbf{w} \geq [i]\} \cup \{\varphi(\tilde{\mathbf{e}}_{s_0})\}$. Set 
\begin{equation}
F_i=\varphi(\tilde{\mathbf{e}}_{s_0})=\rho_i(0)=\mathcal{C}^{\circ}(\tilde{\mathbf{e}}_{s_0}, \tilde{\mathbf{e}}_{k_0}-\tilde{\mathbf{e}}_{t_0}).
\end{equation}
Then, we obtain all complements in $\mathfrak{D}_i$ by applying the linear maps to $F_i$ as follows.
\begin{theorem}\label{thm: recursive expression of complements}
For any $n \in \mathbb{Z}_{\geq 1}$, we have the following relations:
\begin{align}
(\Glinearmap_{i,1})^n(F_i)=\varphi([i]S^{n-1}),
\quad
\Glinearmap_{[i]T}^{T}(F_i)=\varphi([i]T),
\quad
\Glinearmap_{[i]S^{n}T}^{T}(\varphi([i]S^{n-1}))=\varphi([i]S^{n}T).
\end{align}
For any $\mathbf{w} \in \mathcal{T}^{\geq [i]}$, we obtain $\varphi(\mathbf{w})$ by applying $\Glinearmap_{i,1}$, $\Glinearmap_{[i]T}^{S}$, and $\Glinearmap_{[i]T}^{T}$ to $F_i$ as follows:
\\
\textup{($a$)} If $\mathbf{w}=[i]S^n$, we have
\begin{equation}
\varphi(\mathbf{w})=(\Glinearmap_{i,1})^n(F_i).
\end{equation}
\textup{($b$)} If $\mathbf{w}=[i]S^nTM_1M_2\dots M_r$ with $M_i=S,T$, we have
\begin{equation}
\varphi(\mathbf{w})=(\Glinearmap_{i,1})^n\Glinearmap_{[i]T}^{M_1}\cdots\Glinearmap_{[i]T}^{M_r}\Glinearmap_{[i]T}^{T}(F_i).
\end{equation}
\end{theorem}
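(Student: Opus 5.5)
The whole proof rests on one observation. Every map involved fixes $\mathfrak{g}_i$. Consequently, for such a linear map $\Psi$ and any $\tilde{\mathbf{g}}\in\mathfrak{G}_i$ we get
\begin{equation*}
\Psi(\mathcal{C}^{\circ}(\tilde{\mathbf{g}},\tilde{\mathbf{g}}-\mathfrak{g}_i))=\mathcal{C}^{\circ}(\Psi\tilde{\mathbf{g}},\Psi\tilde{\mathbf{g}}-\mathfrak{g}_i).
\end{equation*}
By \Cref{thm: pointwise expression of complements}, each relation between complements therefore reduces to the corresponding relation between $g$-vectors. So the plan has two steps: check that $\mathfrak{g}_i$ is fixed, then track which $g$-vector goes where.

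\emph{Fixed point.} For $\Glinearmap_{[i]T}^{M}$ and $\Glinearmap_{[i]S^nT}^{T}$, invariance of $\mathfrak{g}_i$ is \Cref{lem: independence of cF and gF}~(b), since these maps connect points of branches. For $\Glinearmap_{i,1}$ I check it directly from its matrix in the basis $[\tilde{\mathbf{e}}_{k_0},\tilde{\mathbf{e}}_{s_0},\tilde{\mathbf{e}}_{t_0}]$. This matrix sends
\begin{equation*}
\tilde{\mathbf{e}}_{k_0}\mapsto\tilde{\mathbf{e}}_{s_0},\qquad \tilde{\mathbf{e}}_{s_0}\mapsto 2\tilde{\mathbf{e}}_{s_0}-\tilde{\mathbf{e}}_{k_0},\qquad \tilde{\mathbf{e}}_{t_0}\mapsto\tilde{\mathbf{e}}_{t_0},
\end{equation*}
so $\mathfrak{g}_i=\tilde{\mathbf{e}}_{s_0}+\tilde{\mathbf{e}}_{t_0}-\tilde{\mathbf{e}}_{k_0}$ is fixed.

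\emph{The three displayed relations.}
\begin{itemize}
\item First relation. We have $\Glinearmap_{i,1}(\tilde{\mathbf{e}}_{s_0})=2\tilde{\mathbf{e}}_{s_0}-\tilde{\mathbf{e}}_{k_0}=\tilde{\mathbf{g}}_{K}^{[i]}$ by \eqref{eq: initial setup}. By \Cref{lem: insert S mutations}, $(\Glinearmap_{i,1})^{m}=\Glinearmap_{[i]}^{[i]S^{m}}$, which sends $\tilde{\mathbf{g}}_{K}^{[i]}$ to $\tilde{\mathbf{g}}_{K}^{[i]S^{m}}$. Taking $m=n-1$ gives $(\Glinearmap_{i,1})^n(\tilde{\mathbf{e}}_{s_0})=\tilde{\mathbf{g}}_{K}^{[i]S^{n-1}}$.
\item Second relation. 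By the trunk rule \eqref{eq: T mutation of c- g-vectors in trunks}, $\tilde{\mathbf{g}}_{S}^{[i]T}=\tilde{\mathbf{g}}_{S}^{[i]}=\tilde{\mathbf{e}}_{s_0}$. Applying \eqref{eq: notation X}, $\Glinearmap_{[i]T}^{T}$ sends this vector to $\tilde{\mathbf{g}}_{S}^{[i]TT}$, which equals $\tilde{\mathbf{g}}_{K}^{[i]T}$ by the branch rule \eqref{eq: T mutation of c- g-vectors in branches}.
\item Third relation. By the $S$-rule and the trunk $T$-rule, $\tilde{\mathbf{g}}_{K}^{[i]S^{n-1}}=\tilde{\mathbf{g}}_{S}^{[i]S^{n}}=\tilde{\mathbf{g}}_{S}^{[i]S^nT}$. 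The map $\Glinearmap_{[i]S^nT}^{T}$ sends this to $\tilde{\mathbf{g}}_{S}^{[i]S^nTT}=\tilde{\mathbf{g}}_{K}^{[i]S^nT}$.
\end{itemize}

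\emph{Parts (a) and (b).} Part (a) is the first relation. The index needs care: the relation gives $\varphi([i]S^{m})=(\Glinearmap_{i,1})^{m+1}(F_i)$, so I would state (a) with exponent $n+1$ to fix this off-by-one consistently.

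For (b), I first apply \Cref{lem: recursive expression in a branch} repeatedly in the branch $\mathcal{T}^{\geq[i]T}$. This gives
\begin{equation*}
\varphi([i]TM_1\cdots M_r)=\Glinearmap_{[i]T}^{M_1}\cdots\Glinearmap_{[i]T}^{M_r}(\varphi([i]T)).
\end{equation*}
By the second relation, $\varphi([i]T)=\Glinearmap_{[i]T}^{T}(F_i)$.

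It remains to move from $[i]TX$ to $[i]S^nTX$, where $X=M_1\cdots M_r$. By \eqref{eq: X elimination} and \Cref{lem: insert S mutations}, $\Glinearmap_{[i]TX}^{[i]S^nTX}=\Glinearmap_{i,n}=(\Glinearmap_{i,1})^n$. The last equality holds because the matrices $\Glinearmap_{i,l}$ satisfy $\Glinearmap_{i,l}\Glinearmap_{i,l'}=\Glinearmap_{i,l+l'}$, which follows from the chain rule \eqref{eq: chain rule}. This map sends $\tilde{\mathbf{g}}_{K}^{[i]TX}$ to $\tilde{\mathbf{g}}_{K}^{[i]S^nTX}$ and fixes $\mathfrak{g}_i$, which proves (b).

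The only delicate points are bookkeeping ones. The first is the order of composition in \Cref{lem: recursive expression in a branch}: $\Glinearmap_{\mathbf{w}}^{M}$ prepends $M$ to $X$, so the leftmost map corresponds to $M_1$. The second is keeping the trunk and branch mutation rules straight when identifying $\tilde{\mathbf{e}}_{s_0}$ and $\tilde{\mathbf{g}}_{K}^{[i]S^{n-1}}$ as $S$-vectors of the appropriate seeds.
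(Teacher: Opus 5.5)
Your proof is correct and follows the paper's own argument. You reduce each relation to one between $g$-vectors using that the maps fix $\mathfrak{g}_i$, realize $\tilde{\mathbf{e}}_{s_0}$ and $\tilde{\mathbf{g}}_{K}^{[i]S^{n-1}}$ as $S$-vectors of the appropriate seeds, and iterate \Cref{lem: recursive expression in a branch}. Your direct transport by $\Glinearmap_{[i]TX}^{[i]S^nTX}=\Glinearmap_{i,n}$ in (b) is only a slightly shorter route than passing through the third relation.

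You are also right that (a) as printed is off by one: the first relation, like Figure~\ref{fig: recursive expression of complements}, gives $\varphi([i]S^n)=(\Glinearmap_{i,1})^{n+1}(F_i)$. Your explicit check that $\Glinearmap_{i,1}$ fixes $\mathfrak{g}_i$ makes precise a step the paper uses without stating it.
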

\begin{proof}
The first equality $(\Glinearmap_{i,1})^{n}(F_i)=\varphi([i]S^{n-1})$ follows from $\tilde{\mathbf{e}}_{s_0}=\tilde{\mathbf{g}}_{S}^{[i]}$, $(\Glinearmap_{i,1})^n=\Glinearmap_{[i]}^{[i]S^n}$, and
\begin{equation}
(\Glinearmap_{i,1})^n(\tilde{\mathbf{e}}_{s_0})=\tilde{\mathbf{g}}_{S}^{[i]S^n}=\tilde{\mathbf{g}}_{K}^{[i]S^{n-1}}.
\end{equation}
This equality implies ($a$).
The second equality follows from $\tilde{\mathbf{e}}_{s_0}=\tilde{\mathbf{g}}_{S}^{[i]T}$ and $\Glinearmap_{[i]T}^T(\tilde{\mathbf{e}}_{s_0})=\tilde{\mathbf{g}}_{S}^{[i]T^2}=\tilde{\mathbf{g}}_{K}^{[i]T}$. The third equality also follows from $\tilde{\mathbf{g}}_{K}^{[i]S^{n-1}}=\tilde{\mathbf{g}}_{S}^{[i]S^nT}$ and $\Glinearmap_{[i]S^{n}T}^T(\tilde{\mathbf{g}}_{K}^{[i]S^{n-1}})=\tilde{\mathbf{g}}_{S}^{[i]S^nT^2}=\tilde{\mathbf{g}}_{K}^{[i]S^nT}$. These three equalities and \Cref{lem: recursive expression in a branch} imply ($b$).
\end{proof}
\begin{example}
Let us consider the case where the initial exchange matrix is
\begin{equation}
B=\left(\begin{matrix}
0 & -2 & 2\\
2 & 0 & -2\\
-2 & 2 & 0
\end{matrix}\right),
\end{equation} which is skew-symmetric. Hence, all the modified $g$-vectors are the same as the ordinary $g$-vectors.
Set $i=1$. Then, the initial indices are given by $k_0=1$, $s_0=3$, and $t_0=2$. In particular, the initial complement $F_1$ is given by
\begin{equation}
F_1=\mathcal{C}^{\circ}(\mathbf{e}_{3},\mathbf{e}_1-\mathbf{e}_2).
\end{equation}
Following the rules in \Cref{thm: recursive expression of complements}, we can obtain all the complement in $\mathfrak{D}_1$ by applying the linear maps $\Glinearmap_{1,1}$, $\Glinearmap_{[1]T}^{S}$, and $\Glinearmap_{[1]T}^{T}$. Their representation matrices with respect to the standard basis $[\mathbf{e}_1,\mathbf{e}_2,\mathbf{e}_3]$ are given by
\begin{equation}
\Glinearmap_{1,1}=\left(\begin{smallmatrix}
-2 & 0 & -1\\
0 & 1 & 0 \\
1 & 0 & 0
\end{smallmatrix}\right),
\quad
\Glinearmap_{[1]T}^{S}=\left(\begin{smallmatrix}
1 & 0 & 0\\
-2 & 0 & -1\\
2 & 1 & 2
\end{smallmatrix}\right),
\quad
\Glinearmap_{[1]T}^{T}=\left(\begin{smallmatrix}
0 & -1 & 0\\
-2 & 0 & -1\\
3 & 2 & 2
\end{smallmatrix}\right).
\end{equation}
\begin{figure}[htbp]
\centering
\subfile{complements_orbit}
\caption{Recursive process to obtain all the complements.}
\label{fig: recursive expression of complements}
\end{figure}
\end{example}

\section{Open problem}\label{sec: open problem}
In \cite{AC26}, we can observe that the $G$-fan structure of $B$-invariant type is a degeneration of the ones of {\em cluster-cyclic} exchange matrices. For example, we can find the following correspondence:
\begin{table}[htbp]
    \centering
    \begin{tabular}{c|cc}
        & $B$-invariant type (in this paper) & cluster-cyclic type \cite{AC26}
        \\
        \hline
        upper bound & half space (\Cref{lem: plane for the modified g-vectors}) & three hyperboloids \cite[Thm.~4.12]{AC26}
        \\
        complement & $2$-dimensional cones (\Cref{thm: pointwise expression of complements}) & $3$-dimensional cones \cite[Thm.~7.4]{AC26}
    \end{tabular}
\end{table}
\par
On the other hand, we have not formulated the counterpart of \Cref{thm: inner isomorphisms between subfans in the Markov case}, though we have already shown that they have the same tropical signs \cite{AC25b}. By considering the fact that all modified $g$-vectors are on one hyperboloid after an initial mutation, the following question naturally happens.
\begin{question}\label{Q: Mobious construction}
Can we get an analogous structure of \Cref{thm: inner isomorphisms between subfans in the Markov case} for the $G$-fan associated with the cluster-cyclic exchange matrices of rank $3$? Can we get the fractal structure via proper Lorentz transformations?
\end{question}
For the acyclic affine type, the corresponding $G$-fan is constructed by the {\em affine Weyl groups}, which has a geometric realization via the affine transformations. If \Cref{Q: Mobious construction} is answered, we might construct the analogy of this construction, and it will be helpful to understand explicit behavior of $G$-fans.

\subsection*{Acknowledgements} 
 The authors would like to sincerely thank Tomoki Nakanishi for his thoughtful guidance. The authors are also grateful to Yasuaki Gyoda, Salvatore Stella and Zhe Sun for their valuable discussions and insightful suggestions. In addition, Z. Chen  wants to thank Peigen Cao, Xiaowu Chen and Yu Ye for their help and support. R. Akagi is supported by JSPS KAKENHI Grant Number JP25KJ1438 and Chubei Itoh Foundation. Z. Chen is supported by National Natural Science Foundation of China (Grant No. 124B2003) and China Scholarship Council (Grant No. 202406340022).

\bibliography{Markov_G_fan}
\bibliographystyle{alpha}
\end{document}

%% file: Example.tex
\newcommand{\unitdist}{0.9}
\begin{tikzpicture}
\node (O) at (0,0) {$\underset{\tiny \textup{initial}}{(+,+,+)}$};
\node (i) at (0,-1.5) {$(\underset{k}{-},\underset{t}{+},\underset{s}{+})$};
\node (iS) at (0,-3) {$(\underset{s}{+},\underset{t}{+},\underset{k}{-})$};
\node (iSS) at (0,-4.5) {$(\underset{k}{-},\underset{t}{+},\underset{s}{+})$};
\node (iSSS) at (0,-6) {$(\underset{s}{+},\underset{t}{+},\underset{k}{-})$};

\node (iSST) at (2*\unitdist,-6) {$(\underset{t}{-},\underset{k}{-},\underset{s}{+})$};
\node (iSTS) at (4*\unitdist,-6) {$(\underset{k}{-},\underset{s}{+},\underset{t}{-})$};
\node (iSTT) at (6*\unitdist,-6) {$(\underset{t}{+},\underset{s}{-},\underset{k}{+})$};
\node (iTSS) at (8*\unitdist,-6) {$(\underset{t}{-},\underset{k}{-},\underset{s}{+})$};
\node (iTST) at (10*\unitdist,-6) {$(\underset{k}{+},\underset{t}{+},\underset{s}{-})$};
\node (iTTS) at (12*\unitdist,-6) {$(\underset{s}{-},\underset{k}{+},\underset{t}{+})$};
\node (iTTT) at (14*\unitdist,-6) {$(\underset{s}{+},\underset{t}{-},\underset{k}{-})$};

\node (iST) at (5*\unitdist,-4.5) {$(\underset{s}{+},\underset{k}{-},\underset{t}{-})$};
\node (iTS) at (9*\unitdist,-4.5) {$(\underset{t}{-},\underset{s}{+},\underset{k}{-})$};
\node (iTT) at (13*\unitdist,-4.5) {$(\underset{k}{+},\underset{s}{-},\underset{t}{+})$};
\node (iT) at (11*\unitdist,-3) {$(\underset{t}{-},\underset{k}{-},\underset{s}{+})$};

\draw[very thick] (iSSS.south west)--($(i.north west)+(0,0.1)$)--($(i.north east)+(0.05,0.1)$) node [right] {\small Trunk $\mathcal{T}^{<[i]S^{\infty}}$}--($(iSSS.south east)+(0.05,0)$);

\draw[preaction={draw=white, line width=4pt}] (-0.375,-0.5)--(-0.375,-1.2) node [pos=0.3, auto=left, inner sep=1pt] {$i=1$};
\draw[->] (-0.375,-0.5)->(-0.375,-1.2);
\draw[->] (i.-130)->(iS.130) node [pos=0.5, auto=left, inner sep=1pt] {$S=3$};
\draw[->] (iS.-130)->(iSS.130) node [pos=0.5, auto=left, inner sep=1pt] {$S=1$};
\draw[->] (iSS.-130)->(iSSS.130) node [pos=0.5, auto=left, inner sep=1pt] {$S=3$};

\draw[preaction={draw=white, line width=4pt} ,->] (i.-10)->(iT.170) node [pos=0.5, auto=left, inner sep=1pt] {$T=2$}; 
\draw[preaction={draw=white, line width=4pt} ,->] (iS.-10)->(iST.170) node [pos=0.5, auto=left, inner sep=1pt] {$T=2$}; 
\draw[preaction={draw=white, line width=4pt} ,->] (iSS.-10)->(iSST.100) node [pos=0.5, auto=left, inner sep=1pt] {$T=2$};

\draw[->] (iT.-150)->(iTS.45) node [pos=0.5, auto=right, inner sep=1pt] {$S=3$};
\draw[->] (iT.-30)->(iTT.135) node [pos=0.5, auto=left, inner sep=1pt] {$T=1$};
\draw[->] (iST.-130)->(iSTS.90) node [pos=0.5, auto=right, inner sep=1pt] {$S=1$};
\draw[->] (iST.-50)->(iSTT.90) node [pos=0.5, auto=left, inner sep=1pt] {$T=3$};
\draw[->] (iTS.-130)->(iTSS.90) node [pos=0.5, auto=right, inner sep=1pt] {$S=2$};
\draw[->] (iTS.-50)->(iTST.90) node [pos=0.5, auto=left, inner sep=1pt] {$T=1$};
\draw[->] (iTT.-130)->(iTTS.90) node [pos=0.5, auto=right, inner sep=1pt] {$S=2$};
\draw[->] (iTT.-50)->(iTTT.90) node [pos=0.5, auto=left, inner sep=1pt] {$T=3$};
\end{tikzpicture}

%% file: alphabeta_after_11.tex
\newcommand{\unitdist}{1}
\newcommand{\unithight}{1}
\begin{tikzpicture}
\node (SSSX) at (-7*\unitdist,0) {$(4,1)$};
\node (TSSX) at (-5*\unitdist,0) {$(1,4)$};
\node (STSX) at (-3*\unitdist,0) {$(4,3)$};
\node (TTSX) at (-1*\unitdist,0) {$(3,4)$};
\node (SSTX) at (1*\unitdist,0) {$(5,2)$};
\node (TSTX) at (3*\unitdist,0) {$(2,5)$};
\node (STTX) at (5*\unitdist,0) {$(5,3)$};
\node (TTTX) at (7*\unitdist,0) {$(3,5)$};

\node (SSX) at (-6*\unitdist,1*\unithight) {$(3,1)$};
\node (TSX) at (-2*\unitdist,1*\unithight) {$(1,3)$};
\node (STX) at (2*\unitdist,1*\unithight) {$(3,2)$};
\node (TTX) at (6*\unitdist,1*\unithight) {$(2,3)$};

\node (SX) at (-4*\unitdist,2*\unithight) {$(2,1)$};
\node (TX) at (4*\unitdist,2*\unithight) {$(1,2)$};
\node (X) at (0,3*\unithight) {$(1,1)$};

\node at (X) [yshift=-9] {\tiny $1_{\mathcal{M}}$}; 
\foreach \x in {T,S,TT,TS,ST,SS,TTT,TTS,TST,TSS,STT,STS,SST,SSS}
{
\node at (\x X) [yshift=-9] {\tiny $\x$};
}

\draw[->] (X)->(SX) node [pos=0.5, above left] {$S\times$};
\draw[->] (X)->(TX) node [pos=0.5, above right] {$T \times$};
\draw[->] (SX)->(SSX) node [pos=0.5, above left] {$S\times$};
\draw[->] (SX)->(TSX) node [pos=0.5, above right] {$T\times$};
\draw[->] (TX)->(STX) node [pos=0.5, above left] {$S\times$};
\draw[->] (TX)->(TTX) node [pos=0.5, above right] {$T\times$};

\draw[->] (SSX)->(SSSX) node [pos=0.5, above left] {$S\times$};
\draw[->] (SSX)->(TSSX) node [pos=0.5, above right] {$T \times$};
\draw[->] (TSX)->(STSX) node [pos=0.5, above left] {$S\times$};
\draw[->] (TSX)->(TTSX) node [pos=0.5, above right] {$T\times$};
\draw[->] (STX)->(SSTX) node [pos=0.5, above left] {$S\times$};
\draw[->] (STX)->(TSTX) node [pos=0.5, above right] {$T\times$};
\draw[->] (TTX)->(STTX) node [pos=0.5, above left] {$S\times$};
\draw[->] (TTX)->(TTTX) node [pos=0.5, above right] {$T\times$};
\end{tikzpicture}

%% file: S_mutations.tex
\begin{tikzpicture}
\coordinate (T) at (0,0);
\fill (T) circle [radius=0.06];
\foreach \x in {0,1,2,3,4,5,6,7}
{
\coordinate (A\x) at (2*\x-1.5,2);
\fill (A\x) circle [radius=0.06];
\draw (T)--(A\x);
}
\draw[thick, red] (A0)--(A7);
\draw[dashed, blue] (T)--(12.5,0);
\foreach \x in {8,...,20}
{
\draw (T)--(12.5,{25/(2*\x-1.5)});
}
\node at (T) [xshift=-2, yshift=12] {$T$};
\node at (A0) [xshift=15, yshift=-8] {$S$};
\node at (A1) [xshift=-12, yshift=-8] {$K$};
\node at (A1) [xshift=7, yshift=-8] {$S$};
\node at (A2) [xshift=-20, yshift=-8] {$K$};
\node at (A2) [xshift=0, yshift=-8] {$S$};
\node at (A3) [xshift=-30, yshift=-8] {$K$};
\node at (T) [below] {$\tilde{\mathbf{g}}_{T}^{\mathbf{w}}=\tilde{\mathbf{g}}_{T}^{\mathbf{w}S^n}$};

\node at (-0.25,1) {$\mathbf{w}$};
\node at (0.7,1) {$\mathbf{w}S$};
\node at (1.7,1) {$\mathbf{w}S^2$};
\end{tikzpicture}

%% file: proof_of_fractal_theorem.tex
\begin{tikzpicture}
\node[fill, circle, minimum size=0.12cm, inner sep=0pt] (O) at (0,0) {};
\node[fill, circle, minimum size=0.12cm, inner sep=0pt] (Sofw) at (2,0) {};
\node[fill, circle, minimum size=0.12cm, inner sep=0pt] (Tofw) at (1.5,1) {};
\node[fill, circle, minimum size=0.12cm, inner sep=0pt] (Kofw) at (3.5,1) {};

\node at (O) [below left] {$\mathfrak{g}_i$};

\draw[->] (O)->(Tofw) node [pos=0.5, above left] {$\tilde{\mathbf{v}}_{SK}^{\mathbf{w}}$};
\draw[->] (O)->(Sofw) node [pos=0.5, below] {$\tilde{\mathbf{v}}_{TK}^{\mathbf{w}}$};
\draw[->] (Tofw)->(Kofw);
\draw[->] (Sofw)->(Kofw);

\draw (Sofw)--(Tofw);

\node at (2.33,0.66) {\tiny $\mathbf{w}$};
\node at (Kofw) [xshift=-15, yshift=-5] {\tiny $K$};
\node at (Sofw) [xshift=2, yshift=7] {\tiny $S$};
\node at (Tofw) [xshift=8, yshift=-5] {\tiny $T$};

\coordinate (SofwS) at (Kofw);
\coordinate (TofwS) at (Tofw);
\node[fill, circle, minimum size=0.12cm, inner sep=0pt] (KofwS) at (5,2) {};

\draw[->] (SofwS)->(KofwS);
\draw[->] (TofwS)->(KofwS);

\node at (3.333,1.4) {\tiny $\mathbf{w}S$};
\node at (KofwS) [xshift=-22, yshift=-10] {\tiny $K$};
\node at (SofwS) [xshift=0, yshift=5] {\tiny $S$};
\node at (TofwS) [xshift=30, yshift=5] {\tiny $T$};

\coordinate (SofwT) at (Kofw);
\coordinate (TofwT) at (Sofw);
\node[fill, circle, minimum size=0.12cm, inner sep=0pt] (KofwT) at (5.5,1) {};

\draw[->] (SofwT)->(KofwT);
\draw[->] (TofwT)->(KofwT);

\node at (3.666,0.6) {\tiny $\mathbf{w}T$};
\node at (KofwT) [xshift=-30, yshift=-5] {\tiny $K$};
\node at (SofwT) [yshift=-5] {\tiny $S$};
\node at (TofwT) [xshift=22, yshift=10] {\tiny $T$};

\draw[red] (3.5,1)--(7,2) node [above right] {$\mathcal{C}^{\circ}(\tilde{\mathbf{g}}_{K}^{\mathbf{w}},\tilde{\mathbf{v}}_{SK}^{\mathbf{w}}+\tilde{\mathbf{v}}_{TK}^{\mathbf{w}})$};
\draw[blue, dashed] (1.5,1)--(4.5,3);
\draw[blue, dashed] (2,0)--(7,0);

\fill[gray, opacity=0.2] (1.5,1)--(4.5,3)--(7,3)--(7,2)--(3.5,1)--cycle;
\fill[gray, opacity=0.2] (3.5,1)--(7,2)--(7,0)--(2,0)--cycle;

\node at (KofwS) [above] {$\mathcal{U}_{\circ}^{\mathbf{w}S}$};
\node at (KofwT) [right] {$\mathcal{U}_{\circ}^{\mathbf{w}T}$};
\end{tikzpicture}

%% file: proof_of_pointwise_complement_expressions.tex
\newcommand{\unitlengthproof}{1.5}
\begin{tikzpicture}
\clip (-5,-3) rectangle (10,0.5);
\node[fill, circle, minimum size=0.12cm, inner sep=0pt] (O) at (0,0) {};
\node at (O) [xshift=-10] {$\mathfrak{g}_{i}$};

\foreach \x in {0,1,2,3,4,5,6,7,8,9,10}
{
\node[fill, circle, minimum size=0.12cm, inner sep=0pt] (A\x) at (-\unitlengthproof+\unitlengthproof*\x,-1) {};
\draw[->] (O)->(A\x);
\coordinate (B\x) at (-4*\unitlengthproof+4*\unitlengthproof*\x,-4);
\draw[dashed] (A\x)--(B\x);
}

\foreach \x [evaluate=\x as \nextx using int(\x+1)] in {3,4,5,6,7,8,9}
{
\draw[red, thick, ->] (A\x)->(A\nextx);
}

\draw[red, thick, ->] (A0)->(A1) node [pos=0.7, above] {\tiny $\tilde{\mathbf{v}}_{k_0s_0}$};
\draw[red, thick, ->] (A1)->(A2) node [pos=0.4, above] {\tiny $\tilde{\mathbf{v}}_{k_0s_0}$};
\draw[red, thick, ->] (A2)->(A3) node [pos=0.1, above] {\tiny $\tilde{\mathbf{v}}_{k_0s_0}$};

\foreach \x [evaluate=\x as \nextx using int(\x+1)] in {0,1,2,3,4,5,6,7,8,9}
{
\node[fill, circle, minimum size=0.12cm, inner sep=0pt] (C\x) at (-\unitlengthproof+2*\unitlengthproof*\x,-2) {};
\draw (A\x)--(C\x)--(A\nextx);
}

\node at (C0) [below] {$\mathcal{U}^{[i]T}$};
\node at (C1) [below] {$\mathcal{U}^{[i]ST}$};
\node at (C2) [below right] {$\mathcal{U}^{[i]S^2T}$};
\node at (C3) [below right] {$\mathcal{U}^{[i]S^3T}$};


\node at (A0) [above left] {$\tilde{\mathbf{e}}_{s_0}$};
\draw[red, thick, dashed] (A0)--(B0);
\draw[red, thick, ->, dashed] (A0)->(-1.7*\unitlengthproof,-1.7) node [below right] {\tiny $\tilde{\mathbf{v}}_{t_0k_0}$}; 
\end{tikzpicture}

%% file: complements.tex
\begin{tikzpicture}[scale=0.4]
\clip (-4,-14.2) rectangle (24.2,14.2);

\fill (0,0) circle [radius=0.12] node [left] {$\mathfrak{g}_i$};
\draw[->] (0,0)->(1,2) node [left] {$\tilde{\mathbf{v}}_{t_0k_0}$};
\draw[->] (0,0)->(1,-2) node [left] {$\tilde{\mathbf{v}}_{k_0s_0}$};
\fill (1,-2) circle [radius=0.12];
\fill (2,-4) circle [radius=0.12];
\fill (3,-6) circle [radius=0.12];
\fill (4,-8) circle [radius=0.12];
\fill (5,-10) circle [radius=0.12];
\fill[red] (1,2) circle [radius=0.12];
\draw[dashed, red, ultra thick] (1,2)--(5.10000000000000,10.2000000000000) node [above] {$\frac{0}{1}$};
\fill[red] (2,0) circle [radius=0.12];
\draw[dashed, red, ultra thick] (2,0)--(20.2000000000000,0.000000000000000) node [right] {$\frac{1}{1}$};
\fill[red] (3,-2) circle [radius=0.12];
\draw[dashed, red, ultra thick] (3,-2)--(15.3000000000000,-10.2000000000000) node [below] {$\frac{2}{1}$};
\fill[red] (4,-4) circle [radius=0.12];
\draw[dashed, red, ultra thick] (4,-4)--(10.2000000000000,-10.2000000000000) node [below] {$\frac{3}{1}$};
\fill[red] (5,-6) circle [radius=0.12];
\draw[dashed, red, ultra thick] (5,-6)--(8.50000000000000,-10.2000000000000) node [below] {$\frac{4}{1}$};
\fill[red] (6,-8) circle [radius=0.12];
\draw[dashed, red, ultra thick] (6,-8)--(7.65000000000000,-10.2000000000000) node [below] {$\frac{5}{1}$};
\fill[red] (7,-10) circle [radius=0.12];
\draw[dashed, red, ultra thick] (7,-10)--(7.14000000000000,-10.2000000000000);
\fill (2,4) circle [radius=0.12];
\fill[red] (3,2) circle [radius=0.12];
\draw[dashed, red] (3,2)--(15.3000000000000,10.2000000000000) node [above] {$\frac{1}{2}$};
\fill (4,0) circle [radius=0.12];
\fill[red] (5,-2) circle [radius=0.12];
\draw[dashed, red] (5,-2)--(20.2000000000000,-8.08000000000000) node [right] {$\frac{3}{2}$};
\fill (6,-4) circle [radius=0.12];
\fill[red] (7,-6) circle [radius=0.12];
\draw[dashed, red] (7,-6)--(11.9000000000000,-10.2000000000000) node [below] {$\frac{5}{2}$};
\fill (8,-8) circle [radius=0.12];
\fill[red] (9,-10) circle [radius=0.12];
\draw[dashed, red] (9,-10)--(9.18000000000000,-10.2000000000000);
\fill (3,6) circle [radius=0.12];
\fill[red] (4,4) circle [radius=0.12];
\draw[dashed, red] (4,4)--(10.2000000000000,10.2000000000000) node [above] {$\frac{1}{3}$};
\fill[red] (5,2) circle [radius=0.12];
\draw[dashed, red] (5,2)--(20.2000000000000,8.08000000000000) node [right] {$\frac{2}{3}$};
\fill (6,0) circle [radius=0.12];
\fill[red] (7,-2) circle [radius=0.12];
\draw[dashed, red] (7,-2)--(20.2000000000000,-5.77142857142857) node [right] {$\frac{4}{3}$};
\fill[red] (8,-4) circle [radius=0.12];
\draw[dashed, red] (8,-4)--(20.2000000000000,-10.1000000000000) node [right] {$\frac{5}{3}$};
\fill (9,-6) circle [radius=0.12];
\fill[red] (10,-8) circle [radius=0.12];
\draw[dashed, red] (10,-8)--(12.7500000000000,-10.2000000000000);
\fill[red] (11,-10) circle [radius=0.12];
\draw[dashed, red] (11,-10)--(11.2200000000000,-10.2000000000000);
\fill (4,8) circle [radius=0.12];
\fill[red] (5,6) circle [radius=0.12];
\draw[dashed, red] (5,6)--(8.50000000000000,10.2000000000000) node [above] {$\frac{1}{4}$};
\fill (6,4) circle [radius=0.12];
\fill[red] (7,2) circle [radius=0.12];
\draw[dashed, red] (7,2)--(20.2000000000000,5.77142857142857) node [right] {$\frac{3}{4}$};
\fill (8,0) circle [radius=0.12];
\fill[red] (9,-2) circle [radius=0.12];
\draw[dashed, red] (9,-2)--(20.2000000000000,-4.48888888888889) node [right] {$\frac{5}{4}$};
\fill (10,-4) circle [radius=0.12];
\fill[red] (11,-6) circle [radius=0.12];
\draw[dashed, red] (11,-6)--(18.7000000000000,-10.2000000000000);
\fill (12,-8) circle [radius=0.12];
\fill[red] (13,-10) circle [radius=0.12];
\draw[dashed, red] (13,-10)--(13.2600000000000,-10.2000000000000);
\fill (5,10) circle [radius=0.12];
\fill[red] (6,8) circle [radius=0.12];
\draw[dashed, red] (6,8)--(7.65000000000000,10.2000000000000) node [above] {$\frac{1}{5}$};
\fill[red] (7,6) circle [radius=0.12];
\draw[dashed, red] (7,6)--(11.9000000000000,10.2000000000000) node [above] {$\frac{2}{5}$};
\fill[red] (8,4) circle [radius=0.12];
\draw[dashed, red] (8,4)--(20.2000000000000,10.1000000000000) node [right] {$\frac{3}{5}$};
\fill[red] (9,2) circle [radius=0.12];
\draw[dashed, red] (9,2)--(20.2000000000000,4.48888888888889) node [right] {$\frac{4}{5}$};
\fill (10,0) circle [radius=0.12];
\fill[red] (11,-2) circle [radius=0.12];
\draw[dashed, red] (11,-2)--(20.2000000000000,-3.67272727272727);
\fill[red] (12,-4) circle [radius=0.12];
\draw[dashed, red] (12,-4)--(20.2000000000000,-6.73333333333333);
\fill[red] (13,-6) circle [radius=0.12];
\draw[dashed, red] (13,-6)--(20.2000000000000,-9.32307692307692);
\fill[red] (14,-8) circle [radius=0.12];
\draw[dashed, red] (14,-8)--(17.8500000000000,-10.2000000000000);
\fill (15,-10) circle [radius=0.12];
\fill[red] (7,10) circle [radius=0.12];
\draw[dashed, red] (7,10)--(7.14000000000000,10.2000000000000);
\fill (8,8) circle [radius=0.12];
\fill (9,6) circle [radius=0.12];
\fill (10,4) circle [radius=0.12];
\fill[red] (11,2) circle [radius=0.12];
\draw[dashed, red] (11,2)--(20.2000000000000,3.67272727272727);
\fill (12,0) circle [radius=0.12];
\fill[red] (13,-2) circle [radius=0.12];
\draw[dashed, red] (13,-2)--(20.2000000000000,-3.10769230769231);
\fill (14,-4) circle [radius=0.12];
\fill (15,-6) circle [radius=0.12];
\fill (16,-8) circle [radius=0.12];
\fill[red] (17,-10) circle [radius=0.12];
\draw[dashed, red] (17,-10)--(17.3400000000000,-10.2000000000000);
\fill[red] (9,10) circle [radius=0.12];
\draw[dashed, red] (9,10)--(9.18000000000000,10.2000000000000);
\fill[red] (10,8) circle [radius=0.12];
\draw[dashed, red] (10,8)--(12.7500000000000,10.2000000000000);
\fill[red] (11,6) circle [radius=0.12];
\draw[dashed, red] (11,6)--(18.7000000000000,10.2000000000000);
\fill[red] (12,4) circle [radius=0.12];
\draw[dashed, red] (12,4)--(20.2000000000000,6.73333333333333);
\fill[red] (13,2) circle [radius=0.12];
\draw[dashed, red] (13,2)--(20.2000000000000,3.10769230769231);
\fill (14,0) circle [radius=0.12];
\fill[red] (15,-2) circle [radius=0.12];
\draw[dashed, red] (15,-2)--(20.2000000000000,-2.69333333333333);
\fill[red] (16,-4) circle [radius=0.12];
\draw[dashed, red] (16,-4)--(20.2000000000000,-5.05000000000000);
\fill[red] (17,-6) circle [radius=0.12];
\draw[dashed, red] (17,-6)--(20.2000000000000,-7.12941176470588);
\fill[red] (18,-8) circle [radius=0.12];
\draw[dashed, red] (18,-8)--(20.2000000000000,-8.97777777777778);
\fill[red] (19,-10) circle [radius=0.12];
\draw[dashed, red] (19,-10)--(19.3800000000000,-10.2000000000000);
\fill[red] (11,10) circle [radius=0.12];
\draw[dashed, red] (11,10)--(11.2200000000000,10.2000000000000);
\fill (12,8) circle [radius=0.12];
\fill[red] (13,6) circle [radius=0.12];
\draw[dashed, red] (13,6)--(20.2000000000000,9.32307692307692);
\fill (14,4) circle [radius=0.12];
\fill[red] (15,2) circle [radius=0.12];
\draw[dashed, red] (15,2)--(20.2000000000000,2.69333333333333);
\fill (16,0) circle [radius=0.12];
\fill[red] (17,-2) circle [radius=0.12];
\draw[dashed, red] (17,-2)--(20.2000000000000,-2.37647058823529);
\fill (18,-4) circle [radius=0.12];
\fill[red] (19,-6) circle [radius=0.12];
\draw[dashed, red] (19,-6)--(20.2000000000000,-6.37894736842105);
\fill (20,-8) circle [radius=0.12];
\fill[red] (13,10) circle [radius=0.12];
\draw[dashed, red] (13,10)--(13.2600000000000,10.2000000000000);
\fill[red] (14,8) circle [radius=0.12];
\draw[dashed, red] (14,8)--(17.8500000000000,10.2000000000000);
\fill (15,6) circle [radius=0.12];
\fill[red] (16,4) circle [radius=0.12];
\draw[dashed, red] (16,4)--(20.2000000000000,5.05000000000000);
\fill[red] (17,2) circle [radius=0.12];
\draw[dashed, red] (17,2)--(20.2000000000000,2.37647058823529);
\fill (18,0) circle [radius=0.12];
\fill[red] (19,-2) circle [radius=0.12];
\draw[dashed, red] (19,-2)--(20.2000000000000,-2.12631578947368);
\fill[red] (20,-4) circle [radius=0.12];
\draw[dashed, red] (20,-4)--(20.2000000000000,-4.04000000000000);
\fill (15,10) circle [radius=0.12];
\fill (16,8) circle [radius=0.12];
\fill[red] (17,6) circle [radius=0.12];
\draw[dashed, red] (17,6)--(20.2000000000000,7.12941176470588);
\fill (18,4) circle [radius=0.12];
\fill[red] (19,2) circle [radius=0.12];
\draw[dashed, red] (19,2)--(20.2000000000000,2.12631578947368);
\fill (20,0) circle [radius=0.12];
\fill[red] (17,10) circle [radius=0.12];
\draw[dashed, red] (17,10)--(17.3400000000000,10.2000000000000);
\fill[red] (18,8) circle [radius=0.12];
\draw[dashed, red] (18,8)--(20.2000000000000,8.97777777777778);
\fill[red] (19,6) circle [radius=0.12];
\draw[dashed, red] (19,6)--(20.2000000000000,6.37894736842105);
\fill[red] (20,4) circle [radius=0.12];
\draw[dashed, red] (20,4)--(20.2000000000000,4.04000000000000);
\fill[red] (19,10) circle [radius=0.12];
\draw[dashed, red] (19,10)--(19.3800000000000,10.2000000000000);
\fill (20,8) circle [radius=0.12];
\end{tikzpicture}

%% file: complements_orbit.tex
\begin{tikzpicture}[scale=0.9]
\node (O) at (0,0) {$F_i$};
\node (A0) at (4,0) {$\varphi([i])$};
\foreach \x in {1,2}
{
\node (A\x) at (4*\x+4,0) {$\varphi([i]S^{\x})$};
}

\node (B0) at (2,-2) {$\varphi([i]T)$};
\node (C0) at (2,-4) {\tiny $\Gamma_{[i]T}$-orbit};
\draw[thick] ($(C0.south west)+(0,-1)$)--($(C0.north west)+(0,0.2)$)--($(C0.north east)+(0,0.2)$)--($(C0.south east)+(0,-1)$);

\foreach \x in {1,2,3}
{
\node (B\x) at (4*\x+2,-2) {$\varphi([i]S^{\x}T)$};
\node (C\x) at (4*\x+2,-4) {\tiny $\Gamma_{[i]S^{\x}T}$-orbit};
\draw[thick] ($(C\x.south west)+(0,-1)$)--($(C\x.north west)+(0,0.2)$)--($(C\x.north east)+(0,0.2)$)--($(C\x.south east)+(0,-1)$);
}

\draw[->] (B0.-120)->(C0.150) node [pos=0.5, left] {$\Glinearmap_{[i]T}^{S}$};
\draw[->] (B0.-60)->(C0.30) node [pos=0.5, right] {$\Glinearmap_{[i]T}^{T}$};
\foreach \x in {1,2,3}
{
\draw[->] (B\x.-120)->(C\x.150) node [pos=0.5, left] {$\Glinearmap_{[i]S^{\x}T}^{S}$};
\draw[->] (B\x.-60)->(C\x.30) node [pos=0.5, right] {$\Glinearmap_{[i]S^{\x}T}^{T}$};
}
\draw[->] (O)->(A0) node [pos=0.5, above] {$\Psi_{i,1}$};
\draw[->] (O)->(B0) node [pos=0.5, right] {$\Psi_{[i]T}^{T}$};
\foreach \x [evaluate=\x as \nextx using int(\x+1)] in {0,1}
{
\draw[->] (A\x)->(A\nextx) node [pos=0.5, above] {$\Psi_{i,1}$};
}
\foreach \x [evaluate=\x as \nextx using int(\x+1)] in {0,1,2}
{
\draw[->] (A\x)->(B\nextx) node [pos=0.5, right] {$\Glinearmap_{[i]S^{\nextx}T}^{T}$};
\draw[->] (B\x)->(B\nextx) node [pos=0.5, above] {$\Glinearmap_{i,1}$};
\draw[->, double distance=2] (C\x.-20)->(C\nextx.-160) node [pos=0.5, below] {$\Glinearmap_{i,1}$};
}
\draw[->] (A2)->(14,0);
\end{tikzpicture}